\newtheorem{thm}{Theorem}[section]
\theoremstyle{plain}
\newtheorem{lem}[thm]{Lemma}
\newtheorem{prop}[thm]{Proposition}
\newtheorem{cor}[thm]{Corollary}
\theoremstyle{definition}
\newtheorem{defn}[thm]{Definition}
\newtheorem{example}[thm]{Example}
\theoremstyle{remark}
\newtheorem{rem}[thm]{Remark}
\newtheorem{conjecture}[thm]{Conjecture}
\definecolor{A}{rgb}{.75,1,.75}
\numberwithin{equation}{section}
\newcommand{\ds}{\displaystyle}
\newcommand{\I}{\mathbb{I}}
\newcommand{\var}{\varepsilon}
\newcommand{\F}{\mathbb F}
\newcommand{\mhcn}{\mathfrak{H}_n^{\mathfrak{c}}} % affine Sergeev superalgebra
\newcommand{\mpcn}{\mathcal{P}_n^{\mathfrak{c}}} % superalgebra generatedy by x and c
\newcommand{\RImhcn}{\operatorname{Rep}_{\mathbb{I}}\mhcn} %integral representations of affine sergeev algebras
\begin{document}

\title[Affine Hecke-Clifford algebras]{Completely splittable representations
of affine Hecke-Clifford algebras}
%\author{Jinkui Wan and Weiqiang Wang}\thanks{Partially supported by an NSF grant.}
\author{Jinkui Wan}
\address{Department of Mathematics, University of Virginia,
Charlottesville,VA 22904, USA.}\email{jw5ar@virginia.edu}

\begin{abstract} We classify and construct irreducible completely splittable
representations of affine and finite Hecke-Clifford algebras over an
algebraically closed field of characteristic  not equal to $2$.
\end{abstract}
\maketitle

 \setcounter{tocdepth}{1}
\tableofcontents
\section{Introduction}
Let $\F$ be an algebraically closed field of characteristic $p$ and
denote by $S_n$ the symmetric group on $n$ letters. In~\cite{M},
Mathieu gave the dimension of the irreducible $\F S_n$-modules
associated to the partitions $\lambda=(\lambda_1,\ldots,\lambda_l)$
of $n$ with length $l$ and $\lambda_1-\lambda_l\leq (l-p)$ by using
the well-known Schur-Weyl duality. Subsequently, Kleshchev~\cite{K1}
showed that these representations are exactly these whose
restrictions to the subgroup $S_k$ are semi-simple for any $k\leq n$
or equivalently on which the Jucys-Murphy elements in $\F S_n$ act
semisimply. These $\F S_n$-modules are called {\em completely
splittable} in~\cite{K1}. By using the modular branching rules for
$\F S_n$ (cf. \cite{K2}), a formula for the dimensions of completely
splittable modules was obtained in terms of the paths in Young
modular graphs, which recovers Mathieu's result~\cite{M}.
Generalizing the work in~\cite{K1, M}, Ruff~\cite{Ru} formulated and
classified the irreducible completely splittable representations of
degenerate affine Hecke algebras $\mathcal{H}_n$ (introduced by
Drinfeld \cite{Dr} and Lusztig \cite{Lus}). Over the complex field
$\mathbb{C}$, these $\mathcal{H}_n$-modules were constructed and
classified originally by Cherednik~\cite{C1}. Generalizations were
established to affine Hecke algebras of type A in \cite{C2} and
Ram~\cite{Ra} and to Khovanov-Lauda-Rouquier algebras in~\cite{KR}.

From now on let us assume $p\neq 2$. This paper aims to classify and
construct completely splittable representations of affine
Hecke-Clifford algebras $\mathfrak{H}^{\mathfrak{c}}_n$ over
$\mathbb{F}$. The algebra $\mathfrak{H}^{\mathfrak{c}}_n$ was
introduced by Nazarov~\cite{Na2}(called affine Sergeev algebra) to
study the spin (or projective) representations of the symmetric
group $S_n$ or equivalently to study the representations of the spin
symmetric group algebra $\F S_n^-$. Our construction is a
generalization of Young's seminormal construction of the irreducible
representations of symmetric groups and affine Hecke algebras of
type A (cf. \cite{C2, Ra}). The approach is similar in spirit to the
technique introduced by Okounkov and Vershik~\cite{OV} on symmetric
groups over $\mathbb{C}$.

Let us denote by $x_1,\ldots, x_n$ the polynomial generators of the
algebra $\mhcn$ (cf. subsection~\ref{basicofAHCa} for the
definition). According to Brundan and Kleshchev~\cite{BK} (cf.
\cite[Part II]{K2}), one can reduce the study of the
 finite dimensional $\mhcn$-modules to these so-called integral modules on
which the eigenvalues of $x_1^2,\ldots,x_n^2$ are of the form $q(i)$
for $i\in\I$ (cf. (\ref{defn:I}) and~(\ref{qi}) for notations). Then
each finite dimensional $\mhcn$-module $M$ admits a decomposition as
$ M=\oplus_{\underline{i}\in\I^n}M_{\underline{i}},$ where
$M_{\underline{i}}$ is the simultaneous generalized eigenspace for
the commuting operators $x_1^2,\ldots,x_n^2$ corresponding to the
eigenvalues $q(i_1),\ldots,q(i_n)$. We call $\underline{i}$ a weight
of $M$ if $M_{\underline{i}}\neq 0$. By definition, a finite
dimensional $\mhcn$-module is completely splittable if the
polynomial generators $x_1,\ldots,x_n$ act semisimply.

Our work is based on several equivalent characterizations (cf.
Proposition~\ref{prop:equiv.cond.} for precise statements) of
irreducible completely splittable $\mhcn$-modules. In particular, an
irreducible $\mhcn$-module is completely splittable if and only if
its restriction to the subalgebra
$\mathfrak{H}^{\mathfrak{c}}_{(r,1^{n-r})}$ (cf.
subsection~\ref{basicofAHCa} for notations) is semisimple for any
$1\leq r\leq n$. It follows that any irreducible completely
splittable $\mhcn$-module is semisimple on restriction to the
subalgebra of $\mhcn$ generated by $s_k, c_k,c_{k+1},x_k,x_{k+1}$
(cf. subsection~\ref{basicofAHCa} for notations) which is isomorphic
to $\mathfrak{H}_2^{\mathfrak{c}}$ for fixed $1\leq k\leq n-1$. By
exploring irreducible $\mathfrak{H}_2^{\mathfrak{c}}$-modules, we
obtain an explicit description of the action of the simple
transpositions $s_k$ on irreducible completely splittable
$\mhcn$-modules and identify all possible weights of irreducible
completely splittable $\mhcn$-modules.  This leads to the
construction of a family of irreducible completely splittable
$\mhcn$-modules. It turns out that these modules exhaust the
non-isomorphic irreducible completely splittable $\mhcn$-modules. We
further show that these representations are parameterized by skew
shifted Young diagrams with precise constraints depending on $p$ and
give a dimension formula in terms of the associated standard Young
tableaux. We remark that in the special case when $p=0$, our result
confirms a conjecture of Wang and it has been independently obtained
by Hill, Kujawa, and Sussan~\cite{HKS}.

Denote by $\mathcal{Y}_n$ the finite Hecke-Clifford algebra
$\mathcal{Y}_n=\mathcal{C}_n\rtimes\F S_n$, where $\mathcal{C}_n$ is
the Clifford algebra over $\F$ generated by $c_1,\ldots, c_n$
subject to the relations $c_k^2=1, c_kc_l=-c_lc_k$ for $1\leq k\neq
l\leq n$. A $\mathcal{Y}_n$-module is called completely splittable
if the Jucys-Murphy elements $L_1,\ldots, L_n$ (cf. (\ref{JM}) for
notations) act semisimply. There exists a surjective homomorphism
(cf. \cite{Na2}) from $\mhcn$ to $\mathcal{Y}_n$ which maps $x_k$ to
the Jucys-Murphy elements $L_k$ for $1\leq k\leq n$. By applying the
results established for $\mhcn$ to $\mathcal{Y}_n$, we classify
irreducible completely splittable $\mathcal{Y}_n$-modules and obtain
a dimension formula for these modules. We understand that an
unpublished work of Kleshchev and Ruff independently gave the
classification of irreducible completely splittable
$\mathcal{Y}_n$-modules. In~\cite{BK}, irreducible representations
of $\mathcal{Y}_n$ over $\F$ are shown to be parameterized by
$p$-restricted $p$-strict partitions of $n$. In this paper, we
identify the subset $\Gamma$ of $p$-restricted $p$-strict partitions
of $n$ which parameterizes irreducible completely splittable
$\mathcal{Y}_n$-modules. This together with a well-known Morita
super-equivalence between the spin symmetric group algebra $\F
S_n^-$ and $\mathcal{Y}_n$ leads to an interesting family of
irreducible $\F S_n^-$-modules parameterized by $\Gamma$ for which
dimensions and characters can be explicitly described. In the
special case when $p=0$, we recover the main result of~\cite{Na1} on
the seminormal construction of all simple representations of $\F
S_n^{-}$.

We observe that the $L_k^2, 1\leq k\leq n, $ act semsimply on the
basic spin $\mathcal{Y}_n$-module $I(n)$ (cf. \cite[(9.11)]{BK})
which is not completely splittable.  On the other hand,
Wang~\cite{W} introduced the degenerate spin affine Hecke-Clifford
algebras $\mathfrak{H}^-$ and established an isomorphism between
$\mhcn$ and $\mathcal{C}_n\otimes \mathfrak{H}^{-}$ which sends
$x_k^2$ to $2b_k^2$ (cf. Section~\ref{aLc} for notations). As the
generators $b_1,\ldots, b_n$ are anti-commutative, it is reasonable
to study the $\mathfrak{H}^{-}$-modules on which the commuting
operators $b_1^2,\ldots,b_n^2$ act semisimply. This is equivalent to
studying $\mhcn$-modules on which $x_k^2, 1\leq k\leq n, $ act
semisimply by using the isomorphism between $\mhcn$ and
$\mathcal{C}_n\otimes \mathfrak{H}^{-}$. Motivated by these
observations, we study and obtain a necessary condition in terms of
weights for the classification of irreducible $\mhcn$-modules on
which $x_k^2$, $1\leq k\leq n$, act semisimply; moreover, this
condition is conjectured to be sufficient, and the conjecture is
verified when $n=2, 3$.

The paper is organized as follows. In Section~\ref{AHCa}, we recall
some basics about superalgebra and also the affine Hecke-Clifford
algebras $\mhcn$. In Section~\ref{wofCS}, we analyze the structure
of completely splittable $\mhcn$-modules by studying their weights
and a classification of irreducible completely splittable
$\mhcn$-modules is obtained in Section~\ref{classification}.  In
Section~\ref{combinatorics}, we give a reinterpretation for weights
of irreducible completely splittable $\mhcn$-modules in terms of
shifted Young diagrams. In Section~\ref{fHCa}, we classify the
irreducible completely splittable representations of finite
Hecke-Clifford algebras. Finally, in Section~\ref{aLc} we introduce
a larger category consisting of $\mhcn$-modules on which $x_k^2$ act
semisimply and state a conjecture for classification of modules in
this larger category.

{\bf Acknowledgments.}  I thank A. Kleshchev and especially my
advisor W. Wang for many helpful suggestions and discussions. I
would also like to thank the referees for their useful comments.
This research is partly supported by Wang's NSF grant.

\section{Affine Hecke-Clifford algebras $\mhcn$}\label{AHCa}

Recall that $\F$ is an algebraically closed field of characteristic
$p$ with $p\neq 2$. Denote by $\mathbb{Z}_+$ the set of nonnegative
integers and let
\begin{align}
\mathbb{I}=\left\{
\begin{array}{ll}
\mathbb{Z}_+, & \text{ if } p=0, \\
\{0,1,\ldots,\frac{p-1}{2}\}, &\text{ if } p\geq 3.
\end{array}
\right.\label{defn:I}
\end{align}
%In this section, we shall recall basic notions about superalgebras
%and affine Hecke-Clifford algebras $\mhcn$, referring the reader to
%~\cite{K2} and~\cite{Na}.
\subsection{Some basics about superalgebras}
%By a
%superspace we mean a $\mathbb{Z}_2$-graded vector space
%$V=V_{\bar{0}}\oplus V_{\bar{1}}$ over $\F$. Elements of
%$V_{\bar{0}}$ are called even and elements of $V_{\bar{1}}$ are
%called odd. A subspace $W$ of $V$ is called a subsuperspace if it
%is homogenous, that is $W=W\cap V_{\bar{0}}\oplus W\cap
%V_{\bar{1}}$.
We shall recall some basic notions of superalgebras, referring the
reader to~\cite[\S 2-b]{BK}. Let us denote by
$\bar{v}\in\mathbb{Z}_2$ the parity of a homogeneous vector $v$ of a
vector superspace. By a superalgebra, we mean a
$\mathbb{Z}_2$-graded associative algebra. Let $\mathcal{A}$ be a
superalgebra. A $\mathcal{A}$-module means a $\mathbb{Z}_2$-graded
left $\mathcal{A}$-module. A homomorphism $f:V\rightarrow W$ of
$\mathcal{A}$-modules $V$ and $W$ means a linear map such that $
f(av)=(-1)^{\bar{f}\bar{a}}af(v).$  Note that this and other such
expressions only make sense for homogeneous $a, f$ and the meaning
for arbitrary elements is to be obtained by extending linearly from
the homogeneous case.  Let $V$ be a finite dimensional
$\mathcal{A}$-module. Let $\Pi
 V$ be the same underlying vector space but with the opposite
 $\mathbb{Z}_2$-grading. The new action of $a\in\mathcal{A}$ on $v\in\Pi
 V$ is defined in terms of the old action by $a\cdot
 v:=(-1)^{\bar{a}}av$. Note that the identity map on $V$ defines
 an isomorphism from $V$ to $\Pi V$.

A superalgebra analog of Schur's Lemma states that the endomorphism
algebra of a finite dimensional irreducible module over a
superalgebra is either one dimensional or two dimensional. In the
former case, we call the module of {\em type }\texttt{M} while in
the latter case the module is called of {\em type }\texttt{Q}.

%By forgetting the grading we may consider
%any superalgebra $\mathcal{A}$ as a usual algebra-this algebra
%will be denoted by $|\mathcal{A}|$.

Given two superalgebras $\mathcal{A}$ and $\mathcal{B}$, we view
the tensor product of superspaces $\mathcal{A}\otimes\mathcal{B}$
as a superalgebra with multiplication defined by
$$
(a\otimes b)(a'\otimes b')=(-1)^{\bar{b}\bar{a'}}(aa')\otimes (bb')
\qquad (a,a'\in\mathcal{A}, b,b'\in\mathcal{B}).
$$
Suppose $V$ is an $\mathcal{A}$-module and $W$ is a
$\mathcal{B}$-module. Then $V\otimes W$ affords $A\otimes B$-module
denoted by $V\boxtimes W$ via
$$
(a\otimes b)(v\otimes w)=(-1)^{\bar{b}\bar{v}}av\otimes bw,~a\in A,
b\in B, v\in V, w\in W.
$$
If $V$ is an irreducible $\mathcal{A}$-module and $W$ is an
irreducible $\mathcal{B}$-module, $V\boxtimes W$ may not be
irreducible. Indeed, we have the following standard lemma (cf.
\cite[Lemma 12.2.13]{K1}).
\begin{lem}\label{tensorsmod}
Let $V$ be an irreducible $\mathcal{A}$-module and $W$ be an
irreducible $\mathcal{B}$-module.
\begin{enumerate}
\item If both $V$ and $W$ are of type $\texttt{M}$, then
$V\boxtimes W$ is an irreducible
$\mathcal{A}\otimes\mathcal{B}$-module of type $\texttt{M}$.

\item If one of $V$ or $W$ is of type $\texttt{M}$ and the other
is of type $\texttt{Q}$, then $V\boxtimes W$ is an irreducible
$\mathcal{A}\otimes\mathcal{B}$-module of type $\texttt{Q}$.

\item If both $V$ and $W$ are of type $\texttt{Q}$, then
$V\boxtimes W\cong X\oplus \Pi X$ for a type $\texttt{M}$
irreducible $\mathcal{A}\otimes\mathcal{B}$-module $X$.
\end{enumerate}
Moreover, all irreducible $\mathcal{A}\otimes\mathcal{B}$-modules
arise as constituents of $V\boxtimes W$ for some choice of
irreducibles $V,W$.
\end{lem}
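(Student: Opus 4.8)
The plan is to reduce everything to the structure theory of endomorphism algebras and the classification of simple modules over $\mathcal{A}$, $\mathcal{B}$, and $\mathcal{A}\otimes\mathcal{B}$. The key tool is the computation of $\operatorname{End}_{\mathcal{A}\otimes\mathcal{B}}(V\boxtimes W)$ together with the superalgebra version of Schur's Lemma quoted above: a finite-dimensional simple supermodule has endomorphism superalgebra either $\F$ (type $\texttt{M}$) or the rank-two Clifford superalgebra $\F[\theta]/(\theta^2-1)$ with $\theta$ odd (type $\texttt{Q}$). First I would record the standard fact that $V\boxtimes W$ is always semisimple: this follows because $\mathcal{A}$ and $\mathcal{B}$ act through finite-dimensional matrix (super)algebras on $V$ and $W$, so the image of $\mathcal{A}\otimes\mathcal{B}$ in $\operatorname{End}_\F(V\otimes W)$ is a quotient of a tensor product of semisimple superalgebras, hence semisimple. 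Thus it suffices to pin down the endomorphism superalgebra and the number of simple summands.

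Next I would compute $\operatorname{End}_{\mathcal{A}\otimes\mathcal{B}}(V\boxtimes W)$. Using that $V\boxtimes W = V\otimes W$ as a superspace and that the $\mathcal{A}$- and $\mathcal{B}$-actions essentially commute up to sign, one gets a natural super-isomorphism
\[
\operatorname{End}_{\mathcal{A}\otimes\mathcal{B}}(V\boxtimes W)\;\cong\;\operatorname{End}_{\mathcal{A}}(V)\otimes\operatorname{End}_{\mathcal{B}}(W),
\]
the tensor product taken in the category of superalgebras. Here the only subtlety is the sign rule in the multiplication on the right-hand side and the verification that a tensor of endomorphisms really is $\mathcal{A}\otimes\mathcal{B}$-linear in the graded sense; this is a routine but sign-sensitive check. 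Granting this, the three cases become a computation with the three possible values of $\operatorname{End}_\mathcal{A}(V)\otimes\operatorname{End}_\mathcal{B}(W)$: (1) $\F\otimes\F=\F$, so $V\boxtimes W$ has one-dimensional endomorphism algebra, hence is simple of type $\texttt{M}$; (2) $\F\otimes(\F\oplus\F\theta)=\F\oplus\F\theta$ with $\theta$ odd and $\theta^2=1$, a rank-two endomorphism superalgebra, so $V\boxtimes W$ is simple of type $\texttt{Q}$ — one must note that a semisimple module with endomorphism superalgebra of dimension $2$ is necessarily simple, since a decomposable semisimple module has endomorphism superalgebra of dimension at least $2$ already in even degree; (3) $(\F\oplus\F\theta_1)\otimes(\F\oplus\F\theta_2)$ with $\theta_1,\theta_2$ odd, $\theta_i^2=1$, $\theta_1\theta_2=-\theta_2\theta_1$: this is a four-dimensional superalgebra isomorphic to $\operatorname{Mat}_{1|1}(\F)$, the endomorphism superalgebra of $X\oplus\Pi X$ for a type $\texttt{M}$ simple $X$, which forces $V\boxtimes W\cong X\oplus\Pi X$. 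The element $\frac{1}{2}(1+\sqrt{-1}\,\theta_1\theta_2)$ (an idempotent, using $p\neq 2$ and $\sqrt{-1}\in\F$) explicitly produces the summand $X$.

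For the final "moreover" clause, I would argue that every simple $\mathcal{A}\otimes\mathcal{B}$-module $Z$ appears: pick any simple $\mathcal{A}$-submodule $V$ of the restriction $Z|_\mathcal{A}$, and dually a simple $\mathcal{B}$-submodule $W$; then the multiplication map gives a nonzero $\mathcal{A}\otimes\mathcal{B}$-homomorphism $V\boxtimes W\to Z$ (or one uses adjunction/induction), and since $Z$ is simple and $V\boxtimes W$ is semisimple, $Z$ is a constituent of $V\boxtimes W$. The main obstacle is nothing deep but purely bookkeeping: getting all the Koszul signs right in the isomorphism $\operatorname{End}_{\mathcal{A}\otimes\mathcal{B}}(V\boxtimes W)\cong\operatorname{End}_{\mathcal{A}}(V)\otimes\operatorname{End}_{\mathcal{B}}(W)$ and in identifying the four-dimensional superalgebra in case (3) with $\operatorname{Mat}_{1|1}(\F)$ rather than with $\operatorname{Mat}_2(\F)^{\mathrm{triv}}$ — the distinction being exactly what separates "$V\boxtimes W$ splits as $X\oplus\Pi X$" from "$V\boxtimes W$ is simple". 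Since this is the standard lemma cited as \cite[Lemma 12.2.13]{K1}, I would ultimately just invoke that reference, sketching the endomorphism-algebra computation as above.
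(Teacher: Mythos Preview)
Your proposal is correct, and in fact you supply considerably more than the paper does: the paper gives no proof at all, treating this as a standard result and simply citing \cite[Lemma 12.2.13]{K1} (the reference should really be to Kleshchev's book \cite{K2}, not the article \cite{K1}, but that is a typo in the paper). Your endomorphism-algebra computation is the standard argument behind that reference, and the sign bookkeeping and the identification of $C_1\otimes C_1$ with $\mathrm{Mat}_{1|1}(\F)$ in case (3) are exactly the points that require care; your sketch handles them correctly, so invoking the citation after this outline is entirely appropriate.
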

If $V$ is an irreducible $\mathcal{A}$-module and $W$ is an
irreducible $\mathcal{B}$-module, denote by $V\circledast W$ an
irreducible component of $V\boxtimes W$. Thus,
$$
V\boxtimes W=\left\{
\begin{array}{ll}
V\circledast W\oplus \Pi (V\circledast W), & \text{ if both } V \text{ and } W
 \text{ are of type }\texttt{Q}, \\
V\circledast W, &\text{ otherwise }.
\end{array}
\right.
$$
\subsection{Affine Hecke-Clifford algebras $\mhcn$}\label{basicofAHCa}
Now we proceed to define the superalgebra we will be interested in.
For $n\in\mathbb{Z}_+$, the affine Hecke-Clifford algebra $\mhcn$ is
the superalgebra generated by even generators
$s_1,\ldots,s_{n-1},x_1,\ldots,x_n$ and odd generators
$c_1,\ldots,c_n$ subject to the following relations
\begin{align}
s_i^2=1,\quad s_is_j =s_js_i, \quad
s_is_{i+1}s_i&=s_{i+1}s_is_{i+1}, \quad|i-j|>1,\label{braid}\\
x_ix_j&=x_jx_i, \quad 1\leq i,j\leq n, \label{poly}\\
c_i^2=1,c_ic_j&=-c_jc_i, \quad 1\leq i\neq j\leq n, \label{clifford}\\
s_ix_i&=x_{i+1}s_i-(1+c_ic_{i+1}),\label{px1}\\
s_ix_j&=x_js_i, \quad j\neq i, i+1, \label{px2}\\
s_ic_i=c_{i+1}s_i, s_ic_{i+1}&=c_is_i,s_ic_j=c_js_i,\quad j\neq i, i+1, \label{pc}\\
x_ic_i=-c_ix_i, x_ic_j&=c_jx_i,\quad 1\leq i\neq j\leq n.
\label{xc}
\end{align}
%%%%%%%%%%%%%%%%%%%
\begin{rem} The affine Hecke-Clifford algebra $\mhcn$ was introduced by
Nazarov~\cite{Na2}(called affine Sergeev algebra) to study the
representations of $\mathbb{C}S_n^-$.  The quantized version of the
$\mhcn$ introduced later by Jones-Nazarov~\cite{JN} to study the
$q$-analogues of Young symmetrizers for projective representations
of the symmetric group $S_n$ is often also called affine
Hecke-Clifford algebras.
\end{rem}
%%%%%%%%%%%%%%%%%%%%%%%%%%%%%%%%%%%%%%%

For $\alpha=(\alpha_1,\ldots,\alpha_n)\in\mathbb{Z}_+^n$ and
$\beta=(\beta_1,\ldots,\beta_n)\in\mathbb{Z}_2^n$, set
$x^{\alpha}=x_1^{\alpha_1}\cdots x_n^{\alpha}$ and
$c^{\beta}=c_1^{\beta_1}\cdots c_n^{\beta_n}$. Then we have the
following.
\begin{lem}\cite[Theorem 2.2]{BK}\label{lem:PBW}
The set $\{x^{\alpha}c^{\beta}w~|~ \alpha\in\mathbb{Z}_+^n,
\beta\in\mathbb{Z}_2^n, w\in S_n\}$ forms a basis of $\mhcn$.
\end{lem}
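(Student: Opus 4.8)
\medskip
\noindent\textbf{Proof plan.}
This is a PBW-type assertion, so the plan is the classical two-step strategy: first show the proposed set spans $\mhcn$, then show it is linearly independent by producing a module on which the corresponding elements act as an evident basis.

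For spanning, I would order words in the generators first by total degree in $x_1,\dots,x_n$ and then by the number of factors $s_i$. Given an arbitrary word, relations (\ref{pc}) move each $c_j$ leftwards past the $s_i$'s (with an index relabelling), relations (\ref{xc}) move each $x_i$ leftwards past the $c_j$'s (at worst flipping a sign), and relations (\ref{px2}), (\ref{px1}) move each $x_i$ leftwards past the $s_j$'s. The only non-trivial commutations come from (\ref{px1}), which trades $s_ix_i$ for $x_{i+1}s_i$ plus the summand $-(1+c_ic_{i+1})$ of strictly lower $x$-degree, handled by the outer induction. What remains is a sum of terms $x^{\alpha}c^{\beta}u$ with $u$ a word in the $s_i$; relations (\ref{poly}) and (\ref{clifford}) put $x^{\alpha}$ and $c^{\beta}$ into the prescribed order, and (\ref{braid}) together with $s_i^2=1$ and a choice of reduced word rewrite $u$ as a single $w\in S_n$, modulo terms with fewer $s_i$'s treated by an inner induction. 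Hence $\{x^{\alpha}c^{\beta}w\}$ spans $\mhcn$.

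For linear independence I would first record the easy sub-case: let $\mathcal{P}_n$ be the superalgebra on even generators $x_1,\dots,x_n$ and odd generators $c_1,\dots,c_n$ subject to (\ref{poly}), (\ref{clifford}), (\ref{xc}) only; letting these act by left multiplication on the superspace $\F[x_1,\dots,x_n]\otimes\mathcal{C}_n$ shows at once that $\{x^{\alpha}c^{\beta}\}$ is a basis of $\mathcal{P}_n$. Next I would set $M=\mathcal{P}_n\otimes\F S_n$ and define an action of $\mhcn$ on $M$ by letting each $x_i$ and each $c_i$ act by left multiplication on the tensor factor $\mathcal{P}_n$, and
\[
s_i\cdot(p\otimes w)=\theta_i(p)\otimes s_iw-\nabla_i(p)\otimes w ,
\]
where $\theta_i$ is the automorphism of $\mathcal{P}_n$ interchanging $x_i\leftrightarrow x_{i+1}$ and $c_i\leftrightarrow c_{i+1}$, and $\nabla_i\colon\mathcal{P}_n\to\mathcal{P}_n$ is an $x$-degree-lowering operator — a divided-difference operator twisted by the Clifford elements $c_i,c_{i+1}$ — chosen precisely so that (\ref{px1}) holds. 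After checking that all the defining relations (\ref{braid})--(\ref{xc}) are preserved, I would note that $\nabla_i$ annihilates $1\in\mathcal{P}_n$, so $s_i\cdot(1\otimes w)=1\otimes s_iw$ and hence $w\cdot(1\otimes 1)=1\otimes w$ for every $w\in S_n$; consequently $x^{\alpha}c^{\beta}w\cdot(1\otimes 1)=x^{\alpha}c^{\beta}\otimes w$. Since $\{x^{\alpha}c^{\beta}\otimes w\}$ is a basis of $M$ (by the basis properties of $\mathcal{P}_n$ and of $\F S_n$), the elements $x^{\alpha}c^{\beta}w$ are linearly independent in $\mhcn$, and together with the spanning step this proves the lemma.

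The hard part will be the second step, namely producing $\nabla_i$ explicitly — equivalently, computing $s_ix^{\alpha}c^{\beta}$ in $\mhcn$ modulo lower $x$-degree — and then verifying the braid relations (\ref{braid}) and the mixed relations (\ref{px1}), (\ref{pc}) for the resulting $s_i$-action; this is a finite but delicate calculation in the spirit of Nazarov's seminormal construction. An alternative route that sidesteps the explicit module is Bergman's diamond lemma: orient (\ref{braid})--(\ref{xc}) as a rewriting system (moving $x$'s to the left, $c$'s to the middle, $s_i$'s to a reduced word) and check confluence of the finitely many overlap ambiguities — for instance those overlapping $s_ix_i$ with $x_ic_i$ or with $x_ix_j$, and those between the Coxeter relations and $s_ix_i$, $s_ic_i$ — each check being routine; confluence then yields spanning and independence at one stroke.
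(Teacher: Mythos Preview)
The paper does not give its own proof of this lemma: it is simply quoted as \cite[Theorem 2.2]{BK}, so there is nothing to compare against beyond a citation. Your two-step plan is the standard PBW strategy and is essentially what Brundan--Kleshchev carry out in the cited reference; the spanning argument is routine, and for independence they indeed build a faithful module on $\mathcal{P}_n\otimes\F S_n$ with $s_i$ acting by the automorphism $\theta_i$ plus an explicit Clifford-twisted divided-difference term, which is exactly the $\nabla_i$ you allude to. Either completing that verification or running the diamond-lemma alternative would give a correct proof.
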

Denote by $\mpcn$ the superalgebra generated by even generators
$x_1,\ldots,x_n$ and odd generators $c_1,\ldots,c_n$ subject to the
relations~(\ref{poly}),~(\ref{clifford}) and~(\ref{xc}). By
Lemma~\ref{lem:PBW}, $\mpcn$ can be identified with the subalgebra
of $\mhcn$ generated by $x_1,\ldots,x_n$ and $c_1,\ldots,c_n$. For a
composition $\mu=(\mu_1,\mu_2,\ldots,\mu_r)$ of $n$, we define
$\mathfrak{H}_{\mu}^{\mathfrak{c}}$ to be the subalgebra of $\mhcn$
generated by $\mpcn$ and $s_j\in S_{\mu}=S_{\mu_1}\times\cdots
\times S_{\mu_r}$. Note that
$\mpcn=\mathfrak{H}_{(1^n)}^{\mathfrak{c}}$. For each
$i\in\mathbb{I}$, set
\begin{align}
q(i)=i(i+1).\label{qi}
\end{align}
Let us denote by
$\operatorname{Rep}_{\I}\mathfrak{H}_{\mu}^{\mathfrak{c}}$ the
category of so-called \emph{integral} finite dimensional
$\mathfrak{H}_{\mu}^{\mathfrak{c}}$-modules on which the
$x^2_1,\ldots, x^2_n$ have eigenvalues of the form $q(i)$ for
$i\in\I$. For each $i\in\I$, denote by $L(i)$ the $2$-dimensional
$\mathcal{P}_1^{\mathfrak{c}}$-module with
$L(i)_{\bar{0}}=\mathbb{F}v_0$ and $L(i)_{\bar{1}}=\mathbb{F}v_1$
and
$$
x_1v_0=\sqrt{q(i)}v_0,\quad x_1v_1=-\sqrt{q(i)}v_1, \quad
c_1v_0=v_1,\quad c_1v_1=v_0.
$$
Note that $L(i)$ is irreducible of type $\texttt{M}$ if $i\neq 0$,
and irreducible of type $\texttt{Q}$ if $i=0$. Moreover $L(i),
i\in\I$ form a complete set of pairwise non-isomorphic irreducible
$\mathcal{P}_1^{\mathfrak{c}}$-module in the category
$\operatorname{Rep}_{\I}\mathcal{P}_1^{\mathfrak{c}}$. Observe that
$\mpcn\cong \mathcal{P}_1^{\mathfrak{c}}\otimes\cdots\otimes
\mathcal{P}_1^{\mathfrak{c}}$, and hence we have the following
result by Lemma~\ref{tensorsmod}.

\begin{lem} \cite[Lemma 4.8]{BK}\label{lem:irrepPn}
The $\mpcn$-modules
$$
\{L(\underline{i})=L(i_1)\circledast
L(i_2)\circledast\cdots\circledast
L(i_n)|~\underline{i}=(i_1,\ldots,i_n)\in\I^n\}
$$
forms a complete set of pairwise non-isomorphic irreducible
$\mpcn$-module in the category $\operatorname{Rep}_{\I}\mpcn$.
Moreover, denote by $\gamma_0$ the number of $1\leq j\leq n$ with
$i_j=0$. Then $L(\underline{i})$ is of type $\texttt{M}$ if
$\gamma_0$ is even and type $\texttt{Q}$ if $\gamma_0$ is odd.
Furthermore,
$\text{dim}~L(\underline{i})=2^{n-\lfloor\frac{\gamma_0}{2}\rfloor}$,
where $\lfloor\frac{\gamma_0}{2}\rfloor$ denotes the greatest
integer less than or equal to $\frac{\gamma_0}{2}$ .
\end{lem}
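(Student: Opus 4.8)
The plan is to induct on $n$, using the superalgebra isomorphism $\mpcn\cong\mathcal{P}_1^{\mathfrak{c}}\otimes\mathcal{P}_{n-1}^{\mathfrak{c}}$ (the iteration of which gives $\mpcn\cong\mathcal{P}_1^{\mathfrak{c}}\otimes\cdots\otimes\mathcal{P}_1^{\mathfrak{c}}$) together with Lemma~\ref{tensorsmod}. The base case $n=1$ is precisely the classification of the $\mathcal{P}_1^{\mathfrak{c}}$-modules $L(i)$, $i\in\I$, recalled just above, including the assertions on their type. Note first that by repeated application of Lemma~\ref{tensorsmod} the external product $L(i_1)\boxtimes\cdots\boxtimes L(i_n)$ has a unique irreducible constituent up to isomorphism and parity shift, so the iterated product $L(\underline{i})=L(i_1)\circledast\cdots\circledast L(i_n)$ is well defined up to $\Pi$; throughout, "pairwise non-isomorphic" and "complete set" are understood modulo this parity shift, following~\cite{BK}.

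For the inductive step, assume the $L(\underline{j})$, $\underline{j}\in\I^{n-1}$, exhaust the irreducibles in $\operatorname{Rep}_{\I}\mathcal{P}_{n-1}^{\mathfrak{c}}$. Given $\underline{i}=(i_1,\ldots,i_n)\in\I^n$, apply Lemma~\ref{tensorsmod} to $V=L(i_1)$ and $W=L(i_2)\circledast\cdots\circledast L(i_n)$: the product $V\boxtimes W$ is either irreducible or isomorphic to $X\oplus\Pi X$ with $X$ irreducible, and in either case the component $V\circledast W=L(\underline{i})$ is an irreducible $\mpcn$-module. It lies in $\operatorname{Rep}_{\I}\mpcn$ since $x_k$ acts on $V\boxtimes W$ through the $k$-th tensor slot, so $x_k^2$ acts as the scalar $q(i_k)$ on $L(\underline{i})$. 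Conversely, the final clause of Lemma~\ref{tensorsmod} says every irreducible of $\mpcn=\mathcal{P}_1^{\mathfrak{c}}\otimes\mathcal{P}_{n-1}^{\mathfrak{c}}$ is a constituent of some $V'\boxtimes W'$ with $V',W'$ irreducible; combined with the induction hypothesis this shows the $L(\underline{i})$, $\underline{i}\in\I^n$, exhaust the irreducibles in $\operatorname{Rep}_{\I}\mpcn$.

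To see that distinct weights give non-isomorphic modules, note that $x_1^2,\ldots,x_n^2$ act on $L(\underline{i})$ as the scalars $q(i_1),\ldots,q(i_n)$, so an isomorphism $L(\underline{i})\cong L(\underline{j})$ forces $q(i_k)=q(j_k)$ for all $k$. It therefore suffices that $q\colon\I\to\F$, $i\mapsto i(i+1)$, be injective: for $p=0$ this is clear on $\mathbb{Z}_+$, while for $p\geq 3$ and $i\neq j$ in $\{0,\ldots,\tfrac{p-1}{2}\}$ we have $q(i)-q(j)=(i-j)(i+j+1)$ with $0<|i-j|<p$ and $0<i+j+1<p$ (the value $i+j+1=p$ would force $i=j$), so the difference is nonzero in $\F$.

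Finally, the type and dimension statements follow by carrying the three cases of Lemma~\ref{tensorsmod} through the induction. Writing $\gamma_0^{(k)}$ for the number of $0$'s among $i_1,\ldots,i_k$, one proves simultaneously that $L(i_1)\circledast\cdots\circledast L(i_k)$ is of type $\texttt{Q}$ iff $\gamma_0^{(k)}$ is odd and has dimension $2^{k-\lfloor\gamma_0^{(k)}/2\rfloor}$. Passing from $k-1$ to $k$: if $i_k\neq 0$ (so $L(i_k)$ is of type $\texttt{M}$), parts (1)--(2) show the product stays irreducible, its type is unchanged if the previous product is of type $\texttt{M}$ and is of type $\texttt{Q}$ otherwise, and the dimension doubles, consistent with $\gamma_0^{(k)}=\gamma_0^{(k-1)}$; if $i_k=0$ and the previous product is of type $\texttt{M}$, part (2) gives a new product that is irreducible of type $\texttt{Q}$ with the dimension doubled; and if $i_k=0$ and the previous product is of type $\texttt{Q}$, part (3) gives a type $\texttt{M}$ product with $\dim X=\tfrac12\dim V\dim W$, so the dimension is unchanged. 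Since $\lfloor\gamma_0^{(k)}/2\rfloor$ exceeds $\lfloor\gamma_0^{(k-1)}/2\rfloor$ by $1$ exactly in this last case and equals it otherwise, the claimed formula propagates; taking $k=n$ finishes the proof. The only delicate points are this parity-and-dimension bookkeeping in the type-$\texttt{Q}$--type-$\texttt{Q}$ case and fixing the convention for $\circledast$ up to $\Pi$, and neither is a genuine obstacle.
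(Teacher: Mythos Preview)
Your proof is correct and is precisely a fleshed-out version of the paper's approach: the paper does not give a detailed proof but simply cites \cite[Lemma 4.8]{BK} after observing that $\mpcn\cong\mathcal{P}_1^{\mathfrak{c}}\otimes\cdots\otimes\mathcal{P}_1^{\mathfrak{c}}$ and invoking Lemma~\ref{tensorsmod}. Your inductive argument via Lemma~\ref{tensorsmod}, the injectivity of $q$ on $\I$, and the case-by-case bookkeeping for type and dimension are exactly what this sketch unpacks to.
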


\begin{rem}\label{rem:Ltau}
Note that each permutation $\tau\in S_n$ defines a superalgebra
isomorphism $\tau:\mpcn\rightarrow \mpcn$ by mapping $x_k$ to
$x_{\tau(k)}$ and $c_k$ to  $c_{\tau(k)}$, for $1\leq k\leq n$. For
$\underline{i}\in\I^n$, the twist of the action of
$\mathcal{P}_n^{\mathfrak{c}}$ on $L(\underline{i})$ with
$\tau^{-1}$ leads to a new $\mpcn$-module denoted by
$L(\underline{i})^{\tau}$ with
$$
L(\underline{i})^{\tau}=\{z^{\tau}~|~z\in L(\underline{i})\} ,\quad
fz^{\tau}=(\tau^{-1}(f)z)^{\tau}, \text{ for any }f\in
\mathcal{P}_n^{\mathfrak{c}}, z\in L(\underline{i}).
$$
So in particular we have $(x_kz)^{\tau}=x_{\tau(k)}z^{\tau}$ and
$(c_kz)^{\tau}=c_{\tau(k)}z^{\tau}$. It is easy to see that $
L(\underline{i})^{\tau}\cong L(\tau\cdot \underline{i})$, where
$\tau\cdot \underline{i}=(i_{\tau^{-1}(1)},\ldots,i_{\tau^{-1}(n)})$
for $\underline{i}=(i_1,\ldots,i_n)\in\I^n$ and $\tau\in S_n$.
\end{rem}
\subsection{Intertwining elements for $\mhcn$.}
Following~\cite{Na2}, we define the intertwining elements as

\begin{align}
\Phi_k:=s_k(x_k^2-x^2_{k+1})+(x_k+x_{k+1})+c_kc_{k+1}(x_k-x_{k+1}),\quad
1\leq k<n.\label{intertw}
\end{align}
It is known that
\begin{align}
\Phi_k^2=2(x_k^2+x^2_{k+1})-(x_k^2-x^2_{k+1})^2\label{sqinter},\\
\Phi_kx_k=x_{k+1}\Phi_k, \Phi_kx_{k+1}=x_k\Phi_k,
\Phi_kx_l=x_l\Phi_k\label{xinter},\\
\Phi_kc_k=c_{k+1}\Phi_k, \Phi_kc_{k+1}=c_k\Phi_k,
\Phi_kc_l=c_l\Phi_k\label{cinter},\\
\Phi_j\Phi_k=\Phi_k\Phi_j,
\Phi_k\Phi_{k+1}\Phi_k=\Phi_{k+1}\Phi_k\Phi_{k+1}\label{braidinter}
\end{align}
for all admissible $j,k,l$ with $l\neq k, k+1$ and $|j-k|>1$.
\section{Weights of completely splittable
$\mhcn$-modules}\label{wofCS} In this section, we shall describe the
weights of completely splittable $\mhcn$-modules.
\subsection{Structure of completely splittable $\mhcn$-modules.}
For $M\in\operatorname{Rep}_{\mathbb{I}}\mhcn$ and
$\underline{i}=(i_1,\ldots,i_n)\in\I^n$, set
$$
M_{\underline{i}}=\{z\in M~|~(x_k^2-q(i_k))^Nz=0 \text{ for }N\gg 0,
1\leq k\leq n\}.
$$
If $M_{\underline{i}}\neq 0$, then $\underline{i}$ is called a {\em
weight} of $M$ and $M_{\underline{i}}$ is called a weight space.
%Observe that $M_{\underline{i}}$ is the
%largest $\mpcn$-submodule of $M$ all of whose composition factors
%are isomorphic to $L(i_1)\circledast\cdots\circledast L(i_n)$.
Since the polynomial generators $x_1,\ldots,x_n$ commute, we have
\begin{align}
M=\bigoplus_{\underline{i}\in\I^n}M_{\underline{i}}.\label{decomp.1}
\end{align}
For $i\in\I$ and $1\leq m\leq n$, set
$$
\Theta_{i^m}M=\{z\in M~|~ (x_j^2-q(i))^Nz=0, \text{ for }N\gg 0,
n-m+1\leq j\leq n\}.
$$
%Note that
%$$
%\Theta_{i^m}M=\bigoplus M_{\underline{i}},
%$$
%where the summation is over all $\underline{i}\in\I^n$ satisfying
%$i_k=i$ for all $n-m+1\leq k\leq n$.
One can show using~(\ref{px1}) that
\begin{align}
x_k^2s_k&=s_kx_{k+1}^2-\big(x_k(1-c_kc_{k+1})+(1-c_kc_{k+1})x_{k+1}\big)\label{reln:x2s1}\\
x_{k+1}^2s_k&=s_kx_k^2+\big(x_{k+1}(1+c_kc_{k+1})+(1+c_kc_{k+1})x_k\big)\label{reln:x2s2}.
\end{align}
Hence $\Theta_{i^m}$ defines an exact functor
$$
\Theta_{i^m}: \RImhcn\longrightarrow
\operatorname{Rep}_{\I}\mathfrak{H}_{n-m,m}^{\mathfrak{c}}.
$$
Moreover as $\mathfrak{H}^{\mathfrak{c}}_{n-1,1}$-modules, we have
\begin{align}
{\rm
res}^{\mhcn}_{\mathfrak{H}^{\mathfrak{c}}_{n-1,1}}M=\oplus_{i\in\I}\Theta_iM.\label{decomp.2}
\end{align}
For $i\in\I$ and $M\in\RImhcn$, define
$$
\var_i(M)=\max\{m\geq 0~|~\Theta_{i^m}M\neq 0\}.
$$
\begin{lem}\cite[Lemma 5.4]{BK}\label{lem:K1-1}
Suppose that $M\in\RImhcn$ is irreducible. Let $i\in\I$ and
$m=\varepsilon_i(M)$. Then $\Theta_{i^m}M$ is isomorphic to
$L\circledast {\rm
ind}^{\mathfrak{H}_m^{\mathfrak{c}}}_{\mathcal{P}_m^{\mathfrak{c}}}L(i^m)$
for some irreducible $L
\in\operatorname{Rep}_{\I}\mathfrak{H}_{n-m}^{\mathfrak{c}}$ with
$\var_i(L)=0$.
\end{lem}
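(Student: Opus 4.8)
The plan is to use the intertwining elements $\Phi_k$ to identify, inside an irreducible $M\in\RImhcn$, the structure of $\Theta_{i^m}M$ as an induced module, following the Kleshchev-style "mixed tensor" argument. First I would observe that $\Theta_{i^m}M$ is a module over $\mathfrak{H}_{n-m,m}^{\mathfrak c}$ by the functoriality already established, and that by construction $x_j^2$ acts on $\Theta_{i^m}M$ with the single generalized eigenvalue $q(i)$ for $n-m+1\le j\le n$. The first real step is to show that on $\Theta_{i^m}M$ the operators $x_{n-m+1},\dots,x_n$ act \emph{semisimply}; equivalently, that $(x_j^2-q(i))$ already kills $\Theta_{i^m}M$ rather than merely some power. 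This is where I expect the maximality $m=\varepsilon_i(M)$ to be used: if $x_j$ failed to be semisimple, one could apply a suitable intertwining element $\Phi_{j}$ (using $\Phi_j^2 = 2(x_j^2+x_{j+1}^2)-(x_j^2-x_{j+1}^2)^2$, which equals $4q(i)$ when both eigenvalues are $q(i)$) to produce a nonzero vector in $\Theta_{i^{m+1}}M$ inside $M$, contradicting the definition of $\varepsilon_i$. I would need to set this up carefully because a nilpotent part in $x_j$ does not by itself manufacture a new coordinate equal to $i$; the contradiction really comes from combining the Jordan block structure with the relation (\ref{reln:x2s2}) relating $x_{j}^2 s_j$ and $x_{j+1}^2$.

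Once $x_{n-m+1},\dots,x_n$ act semisimply with $x_j^2=q(i)$ on $\Theta_{i^m}M$, the subalgebra $\mathcal{P}_m^{\mathfrak c}$ (on the last $m$ strands) acts through its quotient by the relations $x_j^2=q(i)$, and by Lemma~\ref{lem:irrepPn} every irreducible constituent of $\Theta_{i^m}M$ restricted to $\mathcal{P}_m^{\mathfrak c}$ is isomorphic to $L(i^m)$. The second step is to promote this to the statement that $\Theta_{i^m}M$, as an $\mathfrak{H}_m^{\mathfrak c}$-module on the last $m$ strands (commuting with the $\mathfrak{H}_{n-m}^{\mathfrak c}$ on the first $n-m$ strands), is a sum of copies of $\operatorname{ind}_{\mathcal{P}_m^{\mathfrak c}}^{\mathfrak{H}_m^{\mathfrak c}}L(i^m)$: this is the Hecke-Clifford analogue of the standard fact that a module for an affine Hecke algebra on which the polynomial part acts through a single "regular" character of the relevant parabolic is a direct sum of induced modules. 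Here I would invoke the exactness of $\Theta_{i^m}$ together with a dimension/Frobenius-reciprocity count, or cite the corresponding computation in \cite{BK}; the Clifford-algebra bookkeeping (types $\texttt{M}$ vs $\texttt{Q}$, and the $\circledast$ versus $\boxtimes$ subtlety from Lemma~\ref{tensorsmod}) is the part that needs attention but is not conceptually hard.

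The third and final step is to peel off the $\mathfrak{H}_{n-m}^{\mathfrak c}$-factor. Since the two subalgebras $\mathfrak{H}_{n-m}^{\mathfrak c}$ and $\mathfrak{H}_m^{\mathfrak c}$ supercommute inside $\mathfrak{H}_{n-m,m}^{\mathfrak c}$ and $\mathfrak{H}_{n-m,m}^{\mathfrak c}\cong \mathfrak{H}_{n-m}^{\mathfrak c}\otimes_{\mathcal C}\mathfrak{H}_m^{\mathfrak c}$ in the appropriate super sense, an $\mathfrak{H}_{n-m,m}^{\mathfrak c}$-module that is isotypic of type $\operatorname{ind}_{\mathcal{P}_m^{\mathfrak c}}^{\mathfrak{H}_m^{\mathfrak c}}L(i^m)$ over the second factor must be of the form $L\circledast \operatorname{ind}_{\mathcal{P}_m^{\mathfrak c}}^{\mathfrak{H}_m^{\mathfrak c}}L(i^m)$ for a uniquely determined $\mathfrak{H}_{n-m}^{\mathfrak c}$-module $L$ (the multiplicity space), again reading off the type from Lemma~\ref{tensorsmod}. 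That $L$ is irreducible follows because $M$ is irreducible and $\Theta_{i^m}$ is exact: a proper submodule of $L$ would generate, after applying $\operatorname{ind}$ and the natural map back into $M$, a proper nonzero $\mhcn$-submodule of $M$. Finally $\varepsilon_i(L)=0$: if $\Theta_i L\ne 0$ then one more application of the $\Phi$-intertwiner argument would give $\Theta_{i^{m+1}}M\ne 0$, contradicting $m=\varepsilon_i(M)$. I expect the genuine obstacle to be the very first step — forcing semisimplicity of the $x_j$ on $\Theta_{i^m}M$ from maximality of $m$ — since everything afterward is the standard Mackey/Clifford-theory machinery; but as this is quoted from \cite[Lemma 5.4]{BK}, in the paper I would most economically just cite that reference and sketch the $\Phi_k$-argument as the mechanism.
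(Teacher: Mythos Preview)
The paper does not prove this lemma at all; it is stated with a citation to \cite[Lemma~5.4]{BK} and used as a black box. So there is no argument in the paper to compare against, but your sketch contains a genuine error worth flagging.

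Your proposed first step is false: on $\Theta_{i^m}M$ the operators $x_{n-m+1},\dots,x_n$ need not act semisimply, nor do their squares. The lemma itself asserts $\Theta_{i^m}M\cong L\circledast \operatorname{ind}^{\mathfrak{H}_m^{\mathfrak c}}_{\mathcal{P}_m^{\mathfrak c}}L(i^m)$, and the paper later shows explicitly that on the Kato module $\operatorname{ind}^{\mathfrak{H}_m^{\mathfrak c}}_{\mathcal{P}_m^{\mathfrak c}}L(i^m)$ the $x_j^2$ are \emph{not} semisimple: Lemma~\ref{lem:n=2} gives this for $m=2$ with $i\neq 0$, and the proof of Proposition~\ref{prop:n=3} records (via \cite[Lemma~4.15]{BK}) that for $i=0$, $m=3$ the Jordan blocks of $x_1$ already have size $3$. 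Thus the claim ``$(x_j^2-q(i))$ kills $\Theta_{i^m}M$'' contradicts the very conclusion of the lemma, and no $\Phi_k$-argument from the maximality $m=\varepsilon_i(M)$ can establish it: $\varepsilon_i(M)$ bounds how many of the last coordinates can simultaneously carry the generalized eigenvalue $q(i)$, not the nilpotency degree of $x_j^2-q(i)$ on those coordinates. The actual route in \cite{BK} bypasses this entirely: one first proves that the Kato module $\operatorname{ind}^{\mathfrak{H}_m^{\mathfrak c}}_{\mathcal{P}_m^{\mathfrak c}}L(i^m)$ is irreducible, then uses Frobenius reciprocity to embed copies of it into $\Theta_{i^m}M$ over the last $m$ strands, and reads off $L$ as the $\mathfrak{H}_{n-m}^{\mathfrak c}$-multiplicity space. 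Your Steps~3 and~4 (irreducibility of $L$ from irreducibility of $M$, and $\varepsilon_i(L)=0$ from maximality of $m$) are essentially correct; only the semisimplicity reduction at the start must be discarded and replaced by the Kato irreducibility input.
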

\begin{defn}\label{defn:CS}
A representation of $\mhcn$ is called {\em completely splittable} if
$x_1,\ldots,x_n$ act semisimply.
\end{defn}
\begin{rem}\label{rem:genera.}
Observe that if $M\in\RImhcn$ is completely splittable, then for
 $\underline{i}\in\I^n$,
$$
M_{\underline{i}}=\{z\in M~|~x_k^2z=q(i_k)z, 1\leq k\leq n\}.
$$
%More precisely, we have $(x_k^2-q(i))z=0$ whenever $z\in M$
%satisfies $(x_k^2-q(i))^mz=0$ for some $1\leq k\leq n, i\in\I$ and
%$m\in\mathbb{Z}_+$.
\end{rem}
\begin{lem}\label{lem:separate weig.} Suppose that $M\in\RImhcn$ is
 completely splittable and that
$M_{\underline{i}}\neq 0$ for some $\underline{i}\in\I^n$. Then
$i_k\neq i_{k+1}$ for all $1\leq k\leq n-1$.
\end{lem}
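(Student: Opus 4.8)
The plan is to argue by contradiction: assuming $i_k=i_{k+1}$ I would produce a vector witnessing that $x_{k+1}$ fails to act semisimply. So suppose $i_k=i_{k+1}=:i$ for some $1\le k\le n-1$ and set $a:=q(i)$. Since $M$ is completely splittable, Remark~\ref{rem:genera.} shows that each $x_j^2$ acts on $M_{\underline i}$ as the scalar $q(i_j)$; in particular $x_k^2=x_{k+1}^2=a$ on $M_{\underline i}$. The operators $x_k,x_{k+1}$ commute by~(\ref{poly}), act semisimply on $M$, and preserve $M_{\underline i}$ (they commute with every $x_j^2$), hence are simultaneously diagonalizable on $M_{\underline i}$ with eigenvalues among $\pm\sqrt a$ for a fixed square root $\sqrt a$ of $a$. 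First I would pick a nonzero common eigenvector $v\in M_{\underline i}$, say $x_kv=\epsilon\sqrt a\,v$ and $x_{k+1}v=\delta\sqrt a\,v$ with $\epsilon,\delta\in\{\pm1\}$.

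The crux is to feed $v$ into relation~(\ref{px1}). Put $e:=c_kc_{k+1}$ and $v':=s_kv$. Evaluating~(\ref{px1}) on $v$ gives $\epsilon\sqrt a\,v'=x_{k+1}v'-(1+e)v$, that is, $(x_{k+1}-\epsilon\sqrt a)v'=(1+e)v$. By~(\ref{xc}) and~(\ref{clifford}), $e$ anticommutes with both $x_k$ and $x_{k+1}$ and $e^2=-1$; in particular $e$ is invertible and preserves $M_{\underline i}$. Multiplying the identity on the left by $x_{k+1}+\epsilon\sqrt a$ and using $x_{k+1}(ev)=-e\,x_{k+1}v=-\delta\sqrt a\,ev$, one obtains
\[
(x_{k+1}^2-a)\,v'=(\delta+\epsilon)\sqrt a\,v+(\epsilon-\delta)\sqrt a\,ev ,
\]
whose right-hand side lies in $M_{\underline i}$ and, when $a\ne 0$, is nonzero (it equals $2\epsilon\sqrt a\,v$ if $\delta=\epsilon$ and $2\epsilon\sqrt a\,ev$ if $\delta=-\epsilon$; recall $v,ev\ne 0$ and $p\ne 2$). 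On the other hand $x_{k+1}^2$ acts semisimply on the finite-dimensional module $M$, so $M=\ker(x_{k+1}^2-a)\oplus\operatorname{im}(x_{k+1}^2-a)$; since $M_{\underline i}\subseteq\ker(x_{k+1}^2-a)$ while $(x_{k+1}^2-a)v'\in\operatorname{im}(x_{k+1}^2-a)$, the displayed vector must vanish. Hence $a=q(i)=0$.

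It then remains to exclude $a=0$. In that case $\sqrt a=0$, so $x_kv=x_{k+1}v=0$; relation~(\ref{px1}) collapses to $x_{k+1}v'=(1+e)v$ and the displayed identity collapses to $x_{k+1}^2v'=0$. Since $x_{k+1}$ is semisimple, $\ker x_{k+1}^2=\ker x_{k+1}$, so $x_{k+1}v'=0$, whence $(1+e)v=0$, i.e.\ $ev=-v$. Applying $e$ gives $e^2v=-ev=v$; but $e^2=-1$, so $-v=v$, i.e.\ $2v=0$, contradicting $v\ne 0$ since $p\ne 2$. This completes the argument.

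The only non-mechanical point — and the step I expect to be the main obstacle — is the middle one: one must notice that after multiplying $(x_{k+1}-\epsilon\sqrt a)v'=(1+e)v$ by $x_{k+1}+\epsilon\sqrt a$, the Clifford correction $(1+e)v$ stays inside the $a$-eigenspace of $x_{k+1}^2$, which is exactly the subspace on which complete splittability (semisimplicity of $x_{k+1}$) produces the contradiction; and that the degenerate case $q(i)=0$, where this computation is vacuous, requires a separate, purely Clifford-theoretic argument. A more hands-on alternative would be to first check, using~(\ref{reln:x2s1}) and~(\ref{reln:x2s2}), that $s_k$ preserves $M_{\underline i}$ whenever $i_k=i_{k+1}$, reducing everything to linear algebra inside the finite-dimensional space $M_{\underline i}$ (equivalently, to the case $n=2$ via restriction to the subalgebra generated by $s_k,c_k,c_{k+1},x_k,x_{k+1}$), but the route above bypasses that reduction.
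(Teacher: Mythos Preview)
Your argument is correct. The overall strategy matches the paper's—assume $i_k=i_{k+1}$, use the commutation relation~(\ref{px1}) together with semisimplicity to force $q(i_k)=0$, then derive the Clifford contradiction $(1+c_kc_{k+1})v=0$—but the technical execution differs. The paper takes precisely the ``hands-on alternative'' you describe at the end: it first shows $s_kz\in M_{\underline i}$ via~(\ref{reln:x2s1}), deduces from $(x_k^2-q(i_k))s_kz=0$ that $\big(x_k(1-c_kc_{k+1})+(1-c_kc_{k+1})x_{k+1}\big)z=0$, and squares this identity to obtain $2(x_k^2+x_{k+1}^2)z=0$, whence $q(i_k)=0$. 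Your route instead fixes a joint $(x_k,x_{k+1})$-eigenvector, works directly with~(\ref{px1}), and replaces the step ``$s_kz\in M_{\underline i}$'' by the $\ker\oplus\operatorname{im}$ decomposition for the semisimple operator $x_{k+1}^2-a$. Both paths reach $q(i_k)=0$ and then finish identically. Your version is a touch more economical (no squaring of a four-term Clifford expression); the paper's is eigenvector-free and applies uniformly to every $z\in M_{\underline i}$.
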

\begin{proof} Suppose $i_k=i_{k+1}$ for some $1\leq k\leq n-1$.
Let $0\neq z\in M_{\underline{i}}$. Since $M$ is completely
splittable, $(x_k^2-q(i_k))z=0=(x^2_{k+1}-q(i_{k+1}))z$. This
together with~(\ref{reln:x2s1}) shows that
$$
(x_k^2-q(i_k))s_kz=(x_k^2-q(i_{k+1}))s_kz=-\big(x_k(1-c_kc_{k+1})+(1-c_kc_{k+1})x_{k+1}\big)z.
$$
and hence
$$
(x_k^2-q(i_k))^2s_kz=
-\big(x_k(1-c_kc_{k+1})+(1-c_kc_{k+1})x_{k+1}\big)(x_k^2-q(i_k))z=0.
$$
Similarly, we see that
$$
(x_{k+1}^2-q(i_{k+1}))^2s_kz=0.
$$
Hence $s_kz\in M_{\underline{i}}$. By Remark~\ref{rem:genera.}, we
deduce that $(x_k^2-q(i_k))s_kz=0$ and therefore
$$
\big(x_k(1-c_kc_{k+1})+(1-c_kc_{k+1})x_{k+1}\big)z=0.
$$
This implies
\begin{align}
2(x_k^2+x^2_{k+1})z=\big(x_k(1-c_kc_{k+1})+(1-c_kc_{k+1})x_{k+1}\big)^2z=0.\notag
\end{align}
This means
 $q(i_{k+1})=-q(i_k)$ and hence $q(i_k)=q(i_{k+1})=0$ since
$i_k=i_{k+1}$. Therefore $x_k^2=0=x^2_{k+1}$ on $
M_{\underline{i}}$. Since $x_k,x_{k+1}$ act semisimply on
$M_{\underline{i}}$, $x_k=0=x_{k+1}$ on $ M_{\underline{i}}$. This
implies $x_{k+1}s_kz=0$ since $s_kz\in M_{\underline{i}}$ as shown
above. Then
$$(1+c_kc_{k+1})z=x_{k+1}s_kz-s_kx_kz=0.
$$ This means
$2z=(1-c_kc_{k+1})(1+c_kc_{k+1})z=0$. Hence $z=0$ since $p\neq 2$.
This contradicts the assumption that $z\neq 0$.
\end{proof}
\begin{cor}\label{cor:variM} Suppose that $M\in\RImhcn$ is completely splittable. Then
$\var_i(M)\leq 1$ for any $i\in\I$.
\end{cor}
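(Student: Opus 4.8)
The plan is to deduce the statement directly from Lemma~\ref{lem:separate weig.}; essentially all of the content already resides there. First I would record what complete splittability gives us for the functors $\Theta_{i^m}$. By Remark~\ref{rem:genera.}, each $x_k^2$ acts semisimply on $M$ with eigenvalues among the $q(i)$, $i\in\I$, and $M_{\underline{i}}=\{z\in M\mid x_k^2z=q(i_k)z,\ 1\le k\le n\}$. Hence the generalized-eigenspace condition defining $\Theta_{i^m}M$ collapses to an honest eigenspace condition, and combining this with the weight decomposition~(\ref{decomp.1}) yields
\[
\Theta_{i^m}M=\bigoplus_{\substack{\underline{j}\in\I^n\\ j_{n-m+1}=\cdots=j_n=i}}M_{\underline{j}}.
\]

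Next I would exploit the obvious nesting: the condition defining $\Theta_{i^{m+1}}M$ (involving $x_j^2$ for $n-m\le j\le n$) is stronger than the one defining $\Theta_{i^m}M$, so $\Theta_{i^{m+1}}M\subseteq\Theta_{i^m}M$. It therefore suffices to prove $\Theta_{i^2}M=0$, since this forces $\Theta_{i^m}M=0$ for all $m\ge 2$ and hence $\var_i(M)\le 1$.

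Finally, suppose for contradiction that $\Theta_{i^2}M\neq 0$. By the displayed identity there is a weight $\underline{j}\in\I^n$ with $M_{\underline{j}}\neq 0$ and $j_{n-1}=j_n=i$; in particular $j_{n-1}=j_n$. But Lemma~\ref{lem:separate weig.}, applied with $k=n-1$, asserts $j_{n-1}\neq j_n$ for every weight of a completely splittable module. This contradiction completes the proof.

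I do not expect any genuine obstacle: the only step relying on anything beyond bookkeeping is the passage from $\Theta_{i^2}M\neq 0$ to the existence of a weight with prescribed last two entries, and that is exactly where complete splittability (via Remark~\ref{rem:genera.}) is used — without it, $\Theta_{i^2}M$ would only be assembled from generalized eigenspaces rather than from weight spaces of the simple form $\{z\mid x_k^2z=q(i_k)z\}$, and the reduction to Lemma~\ref{lem:separate weig.} would not be immediate.
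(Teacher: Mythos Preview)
Your argument is correct and is exactly the intended one: the paper states the corollary without proof, immediately after Lemma~\ref{lem:separate weig.}, precisely because it follows by the reasoning you give --- if $\var_i(M)\ge 2$ then some weight $\underline{j}$ of $M$ has $j_{n-1}=j_n$, contradicting that lemma. Your explicit identification $\Theta_{i^m}M=\bigoplus_{j_{n-m+1}=\cdots=j_n=i}M_{\underline{j}}$ via Remark~\ref{rem:genera.} just spells out what the paper leaves implicit.
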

\begin{prop}\label{prop:equiv.cond.} Let  $M\in\RImhcn$ be irreducible.
The following are equivalent.
\begin{enumerate}
\item $M$ is completely splittable.

\item For any $\underline{i}\in\I^n$ with $M_{\underline{i}}\neq
0$, we have $i_k\neq i_{k+1}$ for all $1\leq k\leq n-1$.

\item The restriction
${\rm res}^{\mhcn}_{\mathfrak{H}^{\mathfrak{c}}_{(r,1^{n-r})}}M$ is
semisimple for any $1\leq r\leq n$.

\item For any $\underline{i}\in\I^n$ with $M_{\underline{i}}\neq
0$, we have $M_{\underline{i}}\cong L(\underline{i})$ as
$\mpcn$-modules.
\end{enumerate}
\end{prop}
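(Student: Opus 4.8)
The plan is to prove the equivalences via the cycle $(1)\Rightarrow(4)\Rightarrow(3)\Rightarrow(2)\Rightarrow(1)$, together with the already-available implication $(1)\Rightarrow(2)$ from Lemma~\ref{lem:separate weig.}, so in fact a convenient route is $(1)\Rightarrow(2)$ (done), then $(2)\Rightarrow(4)\Rightarrow(3)\Rightarrow(1)$.

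\medskip\noindent\emph{Step 1: $(1)\Rightarrow(2)$.} This is exactly Lemma~\ref{lem:separate weig.}, which says a nonzero weight $\underline{i}$ of a completely splittable module must satisfy $i_k\ne i_{k+1}$ for all $k$. Nothing further is needed.

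\medskip\noindent\emph{Step 2: $(2)\Rightarrow(4)$.} Fix $\underline{i}$ with $M_{\underline{i}}\ne 0$. The point is to show $x_1,\dots,x_n$ already act semisimply on $M_{\underline{i}}$, so that $M_{\underline{i}}$ is an honest $\mpcn$-module in $\operatorname{Rep}_{\I}\mpcn$ with all $x_k^2$ acting by the single scalar $q(i_k)$; by Lemma~\ref{lem:irrepPn} any such module is a direct sum of copies of $L(\underline{i})$, and I then argue the multiplicity is one using irreducibility of $M$ as an $\mhcn$-module (a standard argument: the $\mhcn$-submodule generated by $M_{\underline{i}}$ is all of $M$, and one compares graded dimensions/weight multiplicities, or invokes that $M$ restricted to $\mpcn$ has each $L(\underline{j})$ with multiplicity equal to $\dim\operatorname{Hom}$, forced to be $1$ by a branching argument). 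For the semisimplicity of $x_k$ on $M_{\underline{i}}$: by Corollary~\ref{cor:variM} we have $\var_i(M)\le 1$, and more to the point I want to rerun the mechanism of Lemma~\ref{lem:separate weig.}. Concretely, take the generalized eigenspace decomposition of $x_k$ on $M_{\underline{i}}$; if $x_k$ were not semisimple there, using relation~(\ref{px1}) (or the intertwiner $\Phi_k$ of~(\ref{intertw})) one produces a vector in a weight space $\underline{i}'$ obtained from $\underline{i}$ by the transposition $s_k$, and since $i_k\ne i_{k+1}$ one checks $\Phi_k$ acts invertibly there by~(\ref{sqinter}) (its square is $2(x_k^2+x_{k+1}^2)-(x_k^2-x_{k+1}^2)^2=2(q(i_k)+q(i_{k+1}))-(q(i_k)-q(i_{k+1}))^2$, which is nonzero precisely when $i_k\ne i_{k+1}$ — this is the key numerical computation), forcing a contradiction with the Jordan structure. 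This is the technical heart, and it is essentially the "seminormal" argument: $\Phi_k$ intertwines $x_k\leftrightarrow x_{k+1}$ by~(\ref{xinter}), so it swaps the two generalized eigenspaces within $M_{\underline{i}}\oplus M_{s_k\underline{i}}$, and being invertible it forces each $x_j$ to be semisimple.

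\medskip\noindent\emph{Step 3: $(4)\Rightarrow(3)$ and $(3)\Rightarrow(1)$.} Assume (4). Each weight space $M_{\underline{i}}\cong L(\underline{i})$; since $\mathfrak{H}^{\mathfrak{c}}_{(r,1^{n-r})}$ contains $\mpcn$, and iterating the functors $\Theta_{i^m}$ and Lemma~\ref{lem:K1-1} controls the restriction, I decompose ${\rm res}^{\mhcn}_{\mathfrak{H}^{\mathfrak{c}}_{(r,1^{n-r})}}M$ along the commuting semisimple operators $x_{r+1},\dots,x_n$ (whose common eigenspaces are visible from~(\ref{decomp.1})) and observe each summand is a module on which the extra polynomial generators act by scalars, hence the restriction is a sum of modules that are semisimple over $\mathfrak{H}^{\mathfrak{c}}_{(r,1^{n-r})}$ — more carefully, one uses that $x_{r+1},\dots,x_n$ are central in $\mathfrak{H}^{\mathfrak{c}}_{(r,1^{n-r})}$ (they commute with $S_r$ and with $\mpcn$) and act semisimply, so $M$ breaks into blocks on which those generators are scalars, and on each block we induct on $n-r$ or invoke Lemma~\ref{lem:K1-1}. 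For $(3)\Rightarrow(1)$: take $r=n$; but more efficiently take the restriction to the copy of $\mathfrak{H}^{\mathfrak{c}}_2$ generated by $s_k,c_k,c_{k+1},x_k,x_{k+1}$ inside $\mathfrak{H}^{\mathfrak{c}}_{(k+1,1^{n-k-1})}$ (or directly, semisimplicity of the restriction to $\mathfrak{H}^{\mathfrak{c}}_{(1^n)}=\mpcn$, the $r$-small end after reindexing) and use that $x_k$ lies in that subalgebra; a semisimple module over a superalgebra has each element of a commutative semisimple subalgebra acting semisimply, so $x_1,\dots,x_n$ act semisimply on $M$, which is (1).

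\medskip\noindent The main obstacle is Step 2, specifically verifying that the non-semisimplicity of some $x_k$ on a weight space forces a genuine contradiction: one must produce the companion weight $s_k\cdot\underline{i}$, check via~(\ref{sqinter}) that $\Phi_k^2$ is a nonzero scalar there (this is where $i_k\ne i_{k+1}$, i.e.\ hypothesis (2), is used essentially, through $2(q(i_k)+q(i_{k+1}))\ne(q(i_k)-q(i_{k+1}))^2$ whenever $i_k\ne i_{k+1}$ in $\I$), and then use the intertwining relations~(\ref{xinter}) to transport Jordan blocks back and forth and derive nilpotence of an invertible map. The remaining implications are comparatively formal once one has the dictionary between weight-space decomposition~(\ref{decomp.1}), the functors $\Theta_{i^m}$, and Lemmas~\ref{lem:irrepPn} and~\ref{lem:K1-1}.
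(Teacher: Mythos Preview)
Your Step~2, the implication $(2)\Rightarrow(4)$, has a genuine gap. The key numerical claim is false: the scalar $2(q(i_k)+q(i_{k+1}))-(q(i_k)-q(i_{k+1}))^2$ vanishes not only when $i_k=i_{k+1}$ but also when $i_k=i_{k+1}\pm 1$ (this is exactly the admissibility condition used later in the paper). So for adjacent contents $\Phi_k^2$ is nilpotent on $M_{\underline{i}}$, not invertible, and your intertwiner argument collapses precisely in that case. Even where $\Phi_k^2$ is invertible, your sketch does not explain why invertibility of an operator that swaps generalized eigenspaces forces the individual $x_k$ to be semisimple on $M_{\underline{i}}$: the relations~(\ref{xinter}) transport Jordan structure from $x_k$ on $M_{\underline{i}}$ to $x_{k+1}$ on $M_{s_k\cdot\underline{i}}$, but that just moves a potential nilpotent around rather than killing it. Finally, the multiplicity-one part of (4) is asserted without argument; irreducibility of $M$ as an $\mhcn$-module does not by itself bound $\dim\operatorname{Hom}_{\mpcn}(L(\underline{i}),M)$.

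The paper avoids all of this by not attempting $(2)\Rightarrow(4)$ directly. It proves $(2)\Rightarrow(3)$ using Lemma~\ref{lem:K1-1}: condition (2) forces $\varepsilon_i(M)\le 1$ for every $i$, so $\Theta_iM$ is either zero or isomorphic to $N\circledast L(i)$ with $N$ irreducible; since (2) is inherited by $N$, one iterates to get semisimplicity of each restriction to $\mathfrak{H}^{\mathfrak{c}}_{(r,1^{n-r})}$. Then $(3)\Rightarrow(1)$ is immediate from the case $r=1$, and $(1)\Rightarrow(4)$ is a separate induction, again via Lemma~\ref{lem:K1-1} (peel off $\Theta_{i_n}M\cong N\circledast L(i_n)$ and apply induction to $N$), which is also what supplies the multiplicity-one statement. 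The structural input from Lemma~\ref{lem:K1-1} is doing the real work that your intertwiner argument was meant to replace.
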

\begin{proof} By Lemma~\ref{lem:separate weig.}, $(1)$
implies $(2)$. Suppose $(2)$ holds, then by Lemma~\ref{lem:K1-1} and
Corollary~\ref{cor:variM} we have $\Theta_iM$ is either zero or
irreducible for each $i\in\I$ and hence by~(\ref{decomp.2}) ${\rm
res}^{\mhcn}_{\mathfrak{H}^{\mathfrak{c}}_{(n-1,1)}}M$ is
semisimple. Observe that if $\Theta_iM\cong N\circledast L(i)$ for
some irreducible
$N\in\operatorname{Rep}_{\I}\mathfrak{H}^{\mathfrak{c}}_{n-1}$, then
$(2)$ also holds for $N$. This implies ${\rm
res}^{\mathfrak{H}^{\mathfrak{c}}_{n-1}}_{\mathfrak{H}^{\mathfrak{c}}_{(n-2,1)}}N$
is semisimple. Therefore ${\rm
res}^{\mathfrak{H}^{\mathfrak{c}}_{n}}_{\mathfrak{H}^{\mathfrak{c}}_{(n-2,1,1)}}M$
is semisimple by~(\ref{decomp.2}). Continuing in this way we see
that the restriction ${\rm
res}^{\mhcn}_{\mathfrak{H}^{\mathfrak{c}}_{(r,1^{n-r})}}M$ is
semisimple for any $1\leq r\leq n$, whence $(3)$.

 Now assume $(3)$ holds.  In particular
${\rm res}^{\mhcn}_{\mathfrak{H}^{\mathfrak{c}}_{(1^{n})}}M$ is
semisimple, that is, $M$ is  isomorphic to a direct sum of
$L(\underline{i})$ as $\mpcn$-modules. It is clear that
$x_1,\ldots,x_n$ act semisimply on $L(\underline{i})$ for each
$\underline{i}\in\I^n$, whence $(1)$.

Clearly $(1)$ holds if $(4)$ is true. Now suppose $(1)$ holds and we
shall prove $(4)$ by induction on $n$. Suppose
$M_{\underline{i}}\neq 0$ for some $\underline{i}\in\I^n$. Observe
that $M_{\underline{i}}\subseteq\Theta_{i_n}M\neq 0$. By
Lemma~\ref{lem:K1-1} and Corollary~\ref{cor:variM},
$\Theta_{i_n}M\cong N\circledast L(i_n)$ for some irreducible
$N\in\operatorname{Rep}_{\I}\mathfrak{H}^{\mathfrak{c}}_{n-1}$. This
means $M_{\underline{i}}\cong N_{\underline{i}^{\prime}}\circledast
L(i)$, where $\underline{i}^{\prime}=(i_1,\ldots,i_{n-1})$. Note
that $N$ is completely splittable and hence by induction
$N_{\underline{i}^{'}}\cong L(i_1)\circledast\cdots\circledast
L(i_{n-1})$. Therefore $M_{\underline{i}}\cong
L(i_1)\circledast\cdots\circledast L(i_n)$.
\end{proof}
\begin{rem} \label{rem:twist}Note that $\mhcn$ possesses an automorphism $\sigma_n$
which sends $s_k$ to $-s_{n-k}$, $x_l$ to $x_{n+1-l} $ and $c_l$ to
$c_{n+1-l}$ for $1\leq k\leq n-1$ and $1\leq l\leq n$. Moreover
$\sigma_n$ induces an algebra isomorphism for each composition
$\mu=(\mu_1,\ldots,\mu_m)$ of $n$
$$\sigma_{\mu}:
\mathfrak{H}^{\mathfrak{c}}_{\mu}\longrightarrow
\mathfrak{H}^{\mathfrak{c}}_{\mu^t},
$$
where $\mu^t=(\mu_m,\ldots,\mu_1)$. Given
$M\in\mathfrak{H}^{\mathfrak{c}}_{\mu^t}$, we can twist with
$\sigma_{\mu}$ to get a $\mathfrak{H}^{\mathfrak{c}}_{\mu}$-module
$M^{\sigma_{\mu}}$. Observe that for $\mhcn$-module $M$, we have
$$
\big({\rm
res}^{\mhcn}_{\mathfrak{H}^{\mathfrak{c}}_{(r,1^{n-r})}}M^{\sigma_n}\big)^{\sigma_{(1^{n-r},r)}}
\cong{\rm res}^{\mhcn}_{\mathfrak{H}^{\mathfrak{c}}_{(1^{n-r},r)}}M.
$$
Hence $M\in\RImhcn$ is irreducible completely splittable  if and
only if ${\rm
res}^{\mhcn}_{\mathfrak{H}^{\mathfrak{c}}_{(1^{n-r},r)}}M$ is
semisimple for any $1\leq r\leq n$ by
Proposition~\ref{prop:equiv.cond.}.
\end{rem}
\begin{cor}\label{cor:restr.to 2}
Let  $M\in\RImhcn$ be irreducible completely splittable. Then the
restriction ${\rm
res}^{\mhcn}_{\mathfrak{H}^{\mathfrak{c}}_{(1^{k-1},2,1^{n-k-1})}}M$
is semisimple for any $1\leq k\leq n-1$. Hence $M$ is semisimple on
restriction to the subalgebra generated by
$s_k,x_k,x_{k+1},c_k,c_{k+1}$ which is isomorphic to
$\mathfrak{H}_2^{\mathfrak{c}}$ for fixed $1\leq k\leq n-1$.
\end{cor}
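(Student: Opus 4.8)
The plan is to realize the restriction to $\mathfrak{H}^{\mathfrak{c}}_{(1^{k-1},2,1^{n-k-1})}$ as a composite of two restrictions, applying Proposition~\ref{prop:equiv.cond.}(3) for the first and Remark~\ref{rem:twist} for the second, and to pass through the relevant super tensor product decompositions by means of Lemma~\ref{tensorsmod}.

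First I would restrict $M$ from $\mhcn$ to $\mathfrak{H}^{\mathfrak{c}}_{(k+1,1^{n-k-1})}$. By Proposition~\ref{prop:equiv.cond.}(3) with $r=k+1$ this restriction is semisimple, hence a finite direct sum of irreducible $\mathfrak{H}^{\mathfrak{c}}_{(k+1,1^{n-k-1})}$-modules $N$. By Lemma~\ref{lem:PBW}, $\mathfrak{H}^{\mathfrak{c}}_{(k+1,1^{n-k-1})}$ is the super tensor product of its subalgebra $\mathfrak{H}_{k+1}^{\mathfrak{c}}$, generated by $x_1,\dots,x_{k+1},c_1,\dots,c_{k+1},s_1,\dots,s_{k}$, and the subalgebra $\mathcal{B}\cong\mathcal{P}_{n-k-1}^{\mathfrak{c}}$ generated by $x_{k+2},\dots,x_n,c_{k+2},\dots,c_n$; so by Lemma~\ref{tensorsmod} each such $N$ is a direct summand of $V\boxtimes W$ for some irreducible $\mathfrak{H}_{k+1}^{\mathfrak{c}}$-module $V$ and some irreducible $\mathcal{B}$-module $W$. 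Since the $x_1,\dots,x_{k+1}$ are even, they act on $V\boxtimes W$ as $x_j|_V\otimes\mathrm{id}_W$; they act semisimply on $M$, hence on $N$, hence on all of $V\boxtimes W$ (which is either $N$ or $N\oplus\Pi N$, and $x_j$ acts identically on $N$ and $\Pi N$); as $W\neq 0$ this forces $x_1,\dots,x_{k+1}$ to act semisimply on $V$, so $V\in\operatorname{Rep}_{\I}\mathfrak{H}_{k+1}^{\mathfrak{c}}$ is irreducible completely splittable.

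Next I would apply Remark~\ref{rem:twist} to the algebra $\mathfrak{H}_{k+1}^{\mathfrak{c}}$ with $r=2$: since $V$ is irreducible completely splittable, ${\rm res}^{\mathfrak{H}_{k+1}^{\mathfrak{c}}}_{\mathfrak{H}^{\mathfrak{c}}_{(1^{k-1},2)}}V$ is semisimple. By Lemma~\ref{lem:PBW} once more, $\mathfrak{H}^{\mathfrak{c}}_{(1^{k-1},2,1^{n-k-1})}$ is the super tensor product of $\mathfrak{H}^{\mathfrak{c}}_{(1^{k-1},2)}\subseteq\mathfrak{H}_{k+1}^{\mathfrak{c}}$ and the same $\mathcal{B}$, so restricting $V\boxtimes W$ to $\mathfrak{H}^{\mathfrak{c}}_{(1^{k-1},2,1^{n-k-1})}$ yields $\big({\rm res}\,V\big)\boxtimes W$, a direct sum of modules of the form $U\boxtimes W$ with $U$ irreducible; by Lemma~\ref{tensorsmod} this is semisimple, and therefore so is its direct summand ${\rm res}\,N$. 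Summing over the finitely many $N$'s gives that ${\rm res}^{\mhcn}_{\mathfrak{H}^{\mathfrak{c}}_{(1^{k-1},2,1^{n-k-1})}}M$ is semisimple. Finally, the subalgebra generated by $s_k,x_k,x_{k+1},c_k,c_{k+1}$ obeys exactly the defining relations of $\mathfrak{H}_2^{\mathfrak{c}}$ and is spanned by the images of the corresponding PBW monomials (Lemma~\ref{lem:PBW}), hence is isomorphic to $\mathfrak{H}_2^{\mathfrak{c}}$; moreover $\mathfrak{H}^{\mathfrak{c}}_{(1^{k-1},2,1^{n-k-1})}$ is the super tensor product of this copy of $\mathfrak{H}_2^{\mathfrak{c}}$ with the Clifford--polynomial algebra on the remaining $x_j,c_j$, and restriction of a semisimple module over a super tensor product to one tensor factor is again semisimple by the same $\boxtimes$-argument, which finishes the proof.

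The only real difficulty is organizational: choosing the compositions so that the part $2$ lands in slot $k$ (i.e.\ $r=k+1$ in Proposition~\ref{prop:equiv.cond.} and then $r=2$ in Remark~\ref{rem:twist} applied to $\mathfrak{H}_{k+1}^{\mathfrak{c}}$), matching up the various super tensor product decompositions via Lemma~\ref{lem:PBW}, and verifying that complete splittability descends to the tensor factor $V$ — which works because the $x_j$ are even, so $\Pi$ does not interfere, and because $A\otimes\mathrm{id}$ acts semisimply iff $A$ does. The degenerate cases $k=1$ and $k=n-1$ need no separate treatment once the compositions are read literally.
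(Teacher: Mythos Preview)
Your proof is correct and follows the same two-step route as the paper: first restrict to $\mathfrak{H}^{\mathfrak{c}}_{(k+1,1^{n-k-1})}$ via Proposition~\ref{prop:equiv.cond.}(3), then pass down to $\mathfrak{H}^{\mathfrak{c}}_{(1^{k-1},2,1^{n-k-1})}$ via Remark~\ref{rem:twist}. The paper's proof compresses the second step into the single phrase ``is semisimple by Remark~\ref{rem:twist},'' whereas you have unpacked the implicit super tensor product decomposition $\mathfrak{H}^{\mathfrak{c}}_{(k+1,1^{n-k-1})}\cong\mathfrak{H}^{\mathfrak{c}}_{k+1}\otimes\mathcal{P}^{\mathfrak{c}}_{n-k-1}$, verified that complete splittability descends to the $\mathfrak{H}^{\mathfrak{c}}_{k+1}$-factor, and then applied Remark~\ref{rem:twist} to that factor --- exactly the details the paper leaves to the reader.
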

\begin{proof}By Proposition~\ref{prop:equiv.cond.},
${\rm res}^{\mhcn}_{\mathfrak{H}^{\mathfrak{c}}_{(k+1,1^{n-k-1})}}M$
is semisimple. Hence
$${\rm res}^{\mhcn}_{\mathfrak{H}^{\mathfrak{c}}_{(1^{k-1},2,1^{n-k-1})}}M={\rm res}^{\mathfrak{H}^{\mathfrak{c}}_{(k+1,1^{n-k-1})}}_{
\mathfrak{H}^{\mathfrak{c}}_{(1^{k-1},2,1^{n-k-1})}} \big({\rm
res}^{\mhcn}_{\mathfrak{H}^{\mathfrak{c}}_{(k+1,1^{n-k-1})}}M\big)$$
is semisimple by Remark~\ref{rem:twist}.
\end{proof}
\subsection{The weight constraints.}
Suppose that $M\in\RImhcn$ is completely splittable and that
$M_{\underline{i}}\neq 0$ for some $\underline{i}\in\I^n$. By
Lemma~\ref{lem:separate weig.}, $ i_k\neq i_{k+1}$ for $1\leq k\leq
n-1$. It follows from Remark~\ref{rem:genera.} that
$x_k^2-x_{k+1}^2$ acts as the nonzero scalar $q(i_k)-q(i_{k+1})$ on
$M_{\underline{i}}$ for each $1\leq k\leq n-1$. So we define linear
operators $\Xi_k$ and $\Omega_k$ on $M_{\underline{i}}$ such that
for any $z\in M_{\underline{i}}$,
\begin{align}
\Xi_kz&:=-\Big(\frac{x_k+x_{k+1}}{x_k^2-x_{k+1}^2}+c_kc_{k+1}\frac{x_k-x_{k+1}}{x_k^2-x_{k+1}^2}\Big)z,\label{Deltak}\\
\Omega_kz&:=\Bigg(\sqrt{1-\frac{2(x_k^2+x_{k+1}^2)}{(x_k^2-x_{k+1}^2)^2}}\Bigg)z
=\Bigg(\sqrt{1-\frac{2(q(i_k)+q(i_{k+1}))}{(q(i_k)-q(i_{k+1}))^2}}\Bigg)z.\label{omegak}
\end{align}
Both $\Xi_k$ and $\Omega_k$ make sense as linear operators on
$L(\underline{i})$ for $\underline{i}\in\I^n$ whenever $i_k\neq
i_{k+1}$ for $1\leq k\leq n$.
\begin{prop}\label{prop:rank 2}
The following holds for $i,j\in\I$.
\begin{enumerate}

\item If $i=j\pm 1$, then the irreducible
$\mathcal{P}^{\mathfrak{c}}_2$-module $L(i)\circledast L(j)$ affords
an irreducible $\mathfrak{H}_2^{\mathfrak{c}}$-module denoted by
$V(i,j)$ with the action $\ds s_1z=\Xi_1z$ for any $z\in
L(i)\circledast L(j)$. The $\mathfrak{H}_2^{\mathfrak{c}}$-module
$V(i,j)$ has the same type as the
$\mathcal{P}^{\mathfrak{c}}_2$-module $L(i)\circledast L(j)$.
Moreover, it is always completely splittable.

\item If $i\neq j\pm 1$, the
$\mathfrak{H}_2^{\mathfrak{c}}$-module $V(i,j):={\rm
ind}^{\mathfrak{H}_2^{\mathfrak{c}}}_{\mathcal{P}^{\mathfrak{c}}_2}L(i)\circledast
L(j)$ is irreducible and has the same type as the
$\mathcal{P}^{\mathfrak{c}}_2$-module $L(i)\circledast L(j)$. It is
completely splittable if and only if $i\neq j$ (and recall $i\neq
j\pm1$).

\item Every irreducible module in the category
$\operatorname{Rep}_{\I}\mathfrak{H}_2^{\mathfrak{c}}$ is isomorphic
to some $V(i,j)$.
\end{enumerate}
\end{prop}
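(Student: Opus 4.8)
The plan is to reduce everything to two inputs: $L(i)\circledast L(j)$ is an irreducible $\mathcal{P}_2^{\mathfrak{c}}$-module (Lemma~\ref{lem:irrepPn}) on which $x_1^2,x_2^2$ act by the scalars $q(i),q(j)$, and the numerical identity $2\big(q(i)+q(j)\big)-\big(q(i)-q(j)\big)^2=(i+j)(i+j+2)(1-i+j)(1+i-j)$, checked by substituting $q(i)=i(i+1)$. For (1): since $i=j\pm1$ forces $i\neq j$ and $q$ is injective on $\I$, we have $q(i)\neq q(j)$, so $\Xi_1$ is a well-defined even operator on $L(i)\circledast L(j)$. I would then verify the defining relations of $\mathfrak{H}_2^{\mathfrak{c}}$ for $s_1\mapsto\Xi_1$: the relations $\Xi_1c_1=c_2\Xi_1$, $\Xi_1c_2=c_1\Xi_1$ and $\Xi_1x_1=x_2\Xi_1-(1+c_1c_2)$ are short manipulations using (\ref{clifford}), (\ref{xc}) and the fact that $c_1c_2$ anticommutes with $x_1,x_2$; the only relation with content is $\Xi_1^2=1$, which reduces to $2(q(i)+q(j))=(q(i)-q(j))^2$, i.e.\ (via the identity and (\ref{sqinter})) to $\Phi_1^2=0$ on $L(i)\circledast L(j)$, and this holds because $i-j=\pm1$ kills one of the last two factors above. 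Irreducibility of the resulting module $V(i,j)$ is then immediate (a submodule is a $\mathcal{P}_2^{\mathfrak{c}}$-submodule of $L(i)\circledast L(j)$); it is completely splittable since $x_1,x_2$ already act semisimply there; and since $\Xi_1$ is the action of an element of $\mathcal{P}_2^{\mathfrak{c}}$, every $\mathcal{P}_2^{\mathfrak{c}}$-endomorphism automatically commutes with $s_1$, so $\operatorname{End}_{\mathfrak{H}_2^{\mathfrak{c}}}(V(i,j))=\operatorname{End}_{\mathcal{P}_2^{\mathfrak{c}}}(L(i)\circledast L(j))$, which gives the assertion on types.

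For (2) put $N=L(i)\circledast L(j)$ and $V=\operatorname{ind}^{\mathfrak{H}_2^{\mathfrak{c}}}_{\mathcal{P}_2^{\mathfrak{c}}}N=(1\otimes N)\oplus(s_1\otimes N)$. A direct computation of $x_k^2$ on $s_1\otimes N$ using (\ref{px1}) shows that $x_1^2+x_2^2$ and $(x_1^2-x_2^2)^2$ are central in $\mathfrak{H}_2^{\mathfrak{c}}$, hence act on $V$ by the scalars $q(i)+q(j)$ and $(q(i)-q(j))^2$; by (\ref{sqinter}), $\Phi_1^2$ then acts by the scalar $\kappa:=2(q(i)+q(j))-(q(i)-q(j))^2$, and by the identity, with $i\neq j\pm1$, one has $\kappa=0$ exactly when $i=j=0$. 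If $i\neq j$, then $\Phi_1(1\otimes n)=(q(i)-q(j))(s_1\otimes n)+(\text{an element of }1\otimes N)$ gives $V=(1\otimes N)\oplus\Phi_1(1\otimes N)$, with the two summands the weight spaces of $V$ for $(i,j)$ and $(j,i)$, each an irreducible $\mathcal{P}_2^{\mathfrak{c}}$-module ($L(i)\circledast L(j)$ resp.\ $L(j)\circledast L(i)$, the latter via the intertwining (\ref{xinter}), (\ref{cinter})); a nonzero submodule meets one of them, hence contains it, hence (apply $\Phi_1$, using $\Phi_1^2=\kappa\neq0$) contains both, so equals $V$. For $i\neq j$ we also get complete splittability, since $\operatorname{res}V$ is then $\mathcal{P}_2^{\mathfrak{c}}$-semisimple; and for the type, the induction adjunction gives $\operatorname{End}_{\mathfrak{H}_2^{\mathfrak{c}}}(V)=\operatorname{Hom}_{\mathcal{P}_2^{\mathfrak{c}}}(N,\operatorname{res}V)$, and every such map lands in the simultaneous $(q(i),q(j))$-eigenspace, which is exactly $1\otimes N$, so this equals $\operatorname{End}_{\mathcal{P}_2^{\mathfrak{c}}}(N)$.

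The case $i=j$ of (2) is where I expect the real work, since now all of $V$ has the single weight $(i,i)$ and the argument above collapses. Instead I would use $P:=x_1^2-q(i)$, which by the computation above satisfies $P^2=0$, kills $1\otimes N$, and maps $s_1\otimes N$ \emph{injectively} into $1\otimes N$ when $i\neq0$ (because $Pm=0$ forces $x_1^2m=-x_2^2m$ on $N$, impossible unless $m=0$), whence $\operatorname{im}P=\ker P=1\otimes N$; when $i=0$ one uses $x_1$ itself in place of $P$, again with image and kernel $1\otimes N$. Given a nonzero submodule $U$: if $PU\neq0$ then $U\cap(1\otimes N)\neq0$, so $U\supseteq1\otimes N$ and $U=\mathfrak{H}_2^{\mathfrak{c}}(1\otimes N)=V$; if $PU=0$ then $U\subseteq\ker P=1\otimes N$, forcing $U=1\otimes N$, which is impossible because $1\otimes N$ is not stable under $s_1$. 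Hence $V$ is irreducible; it is not completely splittable because $x_1^2$ (resp.\ $x_1$, when $i=0$) has a nontrivial Jordan block; and it is of type \texttt{M}, because any $f\in\operatorname{End}_{\mathfrak{H}_2^{\mathfrak{c}}}(V)$ commutes with $P$ (or $x_1$), hence preserves $1\otimes N$, so restricts to an element of $\operatorname{End}_{\mathcal{P}_2^{\mathfrak{c}}}(N)=\F$, and an odd such $f$ would restrict to $0$ and therefore vanish.

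For (3): an irreducible $M$ has some weight $\underline i=(i_1,i_2)\in\I^2$; the weight space $M_{\underline i}$ is stable under $\mathcal{P}_2^{\mathfrak{c}}$ (as $x_k,c_k$ commute with $x_k^2$), hence contains an irreducible $\mathcal{P}_2^{\mathfrak{c}}$-submodule, necessarily $L(i_1)\circledast L(i_2)$, so by the induction adjunction $M$ is a quotient of $\operatorname{ind}^{\mathfrak{H}_2^{\mathfrak{c}}}_{\mathcal{P}_2^{\mathfrak{c}}}(L(i_1)\circledast L(i_2))$. If $i_1\neq i_2\pm1$ this induced module is the irreducible $V(i_1,i_2)$, so $M\cong V(i_1,i_2)$. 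If $i_1=i_2\pm1$, then $\kappa=0$, so $\Phi_1^2=0$ on the induced module and $\Phi_1(1\otimes N)=\ker\Phi_1$ is an $\mathfrak{H}_2^{\mathfrak{c}}$-submodule (stable under $x_k,c_k$ by (\ref{xinter}), (\ref{cinter}), and under $s_1$ upon expressing $s_1$ through $\Phi_1$ and $\mathcal{P}_2^{\mathfrak{c}}$); both $\ker\Phi_1$ and the quotient restrict to irreducible $\mathcal{P}_2^{\mathfrak{c}}$-modules of weights $(i_2,i_1)$ and $(i_1,i_2)$, hence are irreducible, and on each $\Phi_1$ must act by $0$, which pins down the $\mathfrak{H}_2^{\mathfrak{c}}$-structure to be $V(i_2,i_1)$ resp.\ $V(i_1,i_2)$. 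Thus the composition factors of the induced module are $V(i_1,i_2)$ and $V(i_2,i_1)$, so $M$ is one of these; in all cases $M\cong V(i,j)$ for suitable $i,j\in\I$.
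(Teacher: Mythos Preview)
Your proof is correct. Parts (1) and (3) follow essentially the same route as the paper: for (3), your submodule $\ker\Phi_1=\Phi_1(1\otimes N)$ is, up to the nonzero scalar $q(i)-q(j)$, exactly the paper's explicit span $\{s_1\otimes u-1\otimes\Xi_1 u\}$.

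In part (2) your argument genuinely differs from the paper's and is in one respect more complete. The paper treats irreducibility for all $i\neq j\pm1$ uniformly: it picks $v=(1\pm s_1)\otimes u$ in a hypothetical proper submodule, computes $(x_1^2-q(j))v\in 1\otimes N$, and derives $2\big(q(i)+q(j)\big)=\big(q(i)-q(j)\big)^2$, declaring this a contradiction to $i\neq j\pm1$. But your factorisation $(i+j)(i+j+2)(1-i+j)(1+i-j)$ shows this equation also holds when $i=j=0$, so the paper's contradiction is vacuous there. Your separate treatment of $i=j$ via the nilpotent operator $P=x_1^2-q(i)$ (or $x_1$ itself when $i=0$), establishing $\operatorname{im}P=\ker P=1\otimes N$, handles this case cleanly. (Your parenthetical justification for the injectivity of $P$ on $s_1\otimes N$ is slightly elliptic: the missing step is that $P(s_1\otimes n)=0$ forces $\big(x_1(1-c_1c_2)+(1-c_1c_2)x_2\big)n=0$, and squaring this operator gives $2(x_1^2+x_2^2)$, whence $4q(i)n=0$.)

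For the type assertions the two proofs also diverge: the paper uses Frobenius reciprocity together with, for $i=j$, the observation that the socle of $\operatorname{res}V(i,i)$ is simple; you instead read the endomorphism ring off the Jordan filtration by $P$ (or $x_1$). Both approaches are valid; the paper's is more uniform, yours is more hands-on and independent of the semisimplicity analysis.
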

\begin{proof}
$(1)$. It is routine to check $s_1x_1=x_2s_1-(1+c_1c_2)$ and
$s_1c_1=c_2s_1$, hence it remains to prove $s_1^2=1$ on $V(i,j)$.
Indeed, for $z\in L(i)\circledast L(j)$, we have
$$
s_1^2z=\Xi_1^2z=\frac{2(x_1^2+x_2^2)}{(x_1^2-x_2^2)^2}v=\frac{2(q(i)+q(j))}{(q(i)-q(j))^2}z=z,
$$
where the last identity follows from the definition of $q(i)$ and
the assumption $i=j\pm 1$. It is clear that ${\rm
End}_{\mathcal{P}^{\mathfrak{c}}_2}(L(i)\circledast L(j))\cong{\rm
End}_{\mathfrak{H}_2^{\mathfrak{c}}}(V(i,j))$. Hence $V(i,j)$ has
the same type as the $\mathcal{P}^{\mathfrak{c}}_2$-module
$L(i)\circledast L(j)$. Since $x_1,x_2$ act semisimply on
$L(i)\circledast L(j)$, $V(i,j)$ is completely splittable.

 $(2)$. Assume that $i\neq j\pm1$ and that $M$
is a nonzero proper submodule of $V(i,j)={\rm
ind}^{\mathfrak{H}_2^{\mathfrak{c}}}_{\mathcal{P}^{\mathfrak{c}}_2}L(i)\circledast
L(j)$. Observe that $V(i,j)=1\otimes(L(i)\circledast L(j))\oplus
s_1\otimes(L(i)\circledast L(j))$ as vector spaces. Without loss of
generality, we can assume $M$ contains a nonzero vector $v$ of the
form $v=1\otimes u+s_1\otimes u$ or $v=1\otimes u-s_1\otimes u$ for
some $0\neq u\in L(i)\circledast L(j)$. Otherwise, we can replace
$v$ by $v+s_1v$ or $v-s_1v$ since either of them is nonzero.
By~(\ref{reln:x2s1}),
\begin{align}
x_1^2v&=1\otimes x_1^2u\pm s_1\otimes
x_2^2u\mp1\otimes\big(x_1(1-c_1c_2)+(1-c_1c_2)x_2\big)u\notag\\
&=1\otimes q(i)u\pm q(j)s_1\otimes
u\mp1\otimes\big(x_1(1-c_1c_2)+(1-c_1c_2)x_2\big)u\notag.
\end{align}
This together with $(x_1^2-q(j))v\in M$ shows that
$$
1\otimes\Big((q(i)-q(j))u\pm\big(x_1(1-c_1c_2)+(1-c_1c_2)x_2\big)u\Big)
\in M.
$$
Since
$1\otimes\Big((q(i)-q(j))u\pm\big(x_1(1-c_1c_2)+(1-c_1c_2)x_2\big)u\Big)\in
L(i)\circledast L(j)$ and $M$ is a proper
$\mathfrak{H}_2^{\mathfrak{c}}$-submodule of $V(i,j)$, we have
$$
(q(i)-q(j))u\pm[x_1(1-c_1c_2)+(1-c_1c_2)x_2]u=0
$$
and therefore
$$
(q(i)-q(j))^2u=(x_1(1-c_1c_2)+(1-c_1c_2)x_2)^2u.
$$
This together with
 $(x_1(1-c_1c_2)+(1-c_1c_2)x_2)^2u=2(x_1^2+x_2^2)u$ shows that
$$
2(q(i)+q(j))=(q(i)-q(j))^2.
$$ This contradicts the assumption
$i\neq j\pm 1$ and hence $V(i,j)$ is irreducible.

Note that if $i\neq j$, then $V(i,j)$ has two weights, that is,
$(i,j)$ and $(j,i)$. By Proposition~\ref{prop:equiv.cond.}, we see
that ${\rm
res}_{\mathcal{P}^{\mathfrak{c}}_2}^{\mathfrak{H}_2^{\mathfrak{c}}}V(i,j)$
is semisimple and is isomorphic to the direct sum of
$L(i)\circledast L(j)$ and $L(j)\circledast L(i)$. This means
$${\rm Hom}_{\mathcal{P}^{\mathfrak{c}}_2}(L(i)\circledast
L(j),{\rm
res}^{\mathfrak{H}^{\mathfrak{c}}_2}_{\mathcal{P}_2^{\mathfrak{c}}}V(i,j))
\cong {\rm End}_{\mathcal{P}^{\mathfrak{c}}_2}(L(i)\circledast
L(j)).$$ By Frobenius reciprocity we obtain
$$
{\rm End}_{\mathfrak{H}_2^{\mathfrak{c}}}(V(i,j))\cong{\rm
Hom}_{\mathcal{P}_2^{\mathfrak{c}}}(L(i)\circledast L(j),{\rm
res}^{\mathfrak{H}_2^{\mathfrak{c}}}_{\mathcal{P}^{\mathfrak{c}}_2}V(i,j))
\cong {\rm End}_{\mathcal{P}^{\mathfrak{c}}_2}(L(i)\circledast
L(j)).$$ Hence $V(i,j)$ has the same type as the
$\mathcal{P}^{\mathfrak{c}}_2$-module $L(i)\circledast L(j)$.

%then $\Xi_1$ is well-defined on $L(i)\circledast L(j)$. It is easy
%to check that the vector space $N$ spanned by  $\{s_1\otimes
%z-\Xi_1z~|~z\in L(i)\circledast L(j)\}$ is a
%$\mathcal{P}^{\mathfrak{c}}_2$-submodule of
%$\text{res}_{\mathcal{P}^{\mathfrak{c}}_2}^{\mathfrak{H}_2^{\mathfrak{c}}}V(i,j)$.
% and isomorphic to $L(j)\circledast L(i)$ by
 %using~(\ref{xinter})and~(\ref{cinter}). It is clear that
%$\text{res}_{\mathcal{P}^{\mathfrak{c}}_2}^{\mathfrak{H}_2^{\mathfrak{c}}}V(i,j)$
%is a summation of $L(i)\circledast L(j)$ and $N$. This means the
%$\mathcal{P}_2^{\mathfrak{c}}$-module
%$\text{res}_{\mathcal{P}^{\mathfrak{c}}_2}^{\mathfrak{H}_2^{\mathfrak{c}}}V(i,j)$
%is semisimple and isomorphic to the direct sum of $L(i)\circledast
%L(j)$ and $L(j)\circledast L(i)$. Hence $x_1$ and $x_2$ act
%semisimply on $V(i,j) $ and moreover
%$$\text{Hom}_{\mathcal{P}^{\mathfrak{c}}_2}(L(i)\circledast
%L(j),\text{res}^{\mathfrak{H}^{\mathfrak{c}}_2}_{\mathcal{P}_2^{\mathfrak{c}}}V(i,j))
%\cong \text{End}_{\mathcal{P}^{\mathfrak{c}}_2}(L(i)\circledast
%L(j)).$$ By Frobenius reciprocity we obtain
%$$
%\text{End}_{\mathfrak{H}_2^{\mathfrak{c}}}(V(i,j))\cong\text{Hom}_{\mathfrak{P}_2^{\mathfrak{c}}}(L(i)\circledast
%L(j),\text{res}^{\mathfrak{H}_2^{\mathfrak{c}}}_{\mathcal{P}^{\mathfrak{c}}_2}V(i,j))
%\cong \text{End}_{\mathcal{P}^{\mathfrak{c}}_2}(L(i)\circledast
%L(j)).$$ Hence $V(i,j)$ has the same type as the
%$P^{\mathfrak{c}}_2$-module $L(i)\circledast L(j)$.

Now suppose $i=j$. This implies that $(i,i)$ is a weight of $V(i,i)$
and hence $V(i,i)$ is not completely splittable by
Lemma~\ref{lem:separate weig.}. By
Proposition~\ref{prop:equiv.cond.}, ${\rm
res}_{\mathcal{P}_2^{\mathfrak{c}}}^{\mathfrak{H}_2^{\mathfrak{c}}}V(i,i)$
is not semisimple. Note that ${\rm
res}_{\mathcal{P}_2^{\mathfrak{c}}}^{\mathfrak{H}_2^{\mathfrak{c}}}V(i,i)$
has two composition factors and both of them are isomorphic to
$L(i)\circledast L(i)$. Therefore the socle of ${\rm
res}_{\mathcal{P}_2^{\mathfrak{c}}}^{\mathfrak{H}_2^{\mathfrak{c}}}V(i,i)$
is simple and isomorphic to $L(i)\circledast L(i)$. Hence ${\rm
Hom}_{\mathcal{P}_2^{\mathfrak{c}}}(L(i)\circledast L(i),{\rm
res}^{\mathfrak{H}_2^{\mathfrak{c}}}_{\mathcal{P}_2^{\mathfrak{c}}}V(i,i))
\cong {\rm End}_{\mathcal{P}_2^{\mathfrak{c}}}(L(i)\circledast
L(i))$. By Frobenius reciprocity we obtain
$$
{\rm
End}_{\mathfrak{H}_2^{\mathfrak{c}}}(V(i,i))\cong\text{Hom}_{\mathcal{P}_2^{\mathfrak{c}}}(L(i)\circledast
L(i),{\rm
res}^{\mathfrak{H}_2^{\mathfrak{c}}}_{\mathcal{P}_2^{\mathfrak{c}}}V(i,i))
\cong {\rm End}_{\mathcal{P}_2^{\mathfrak{c}}}(L(i)\circledast
L(i)).$$ Hence $V(i,i)$ has the same type as the
$\mathcal{P}^{\mathfrak{c}}_2$-module $L(i)\circledast L(i)$.

$(3)$. Suppose
$M\in\operatorname{Rep}_{\I}\mathfrak{H}_2^{\mathfrak{c}}$ is
irreducible, then there exist $i,j\in\I$ such that $L(i)\circledast
L(j)\subseteq {\rm
res}^{\mathfrak{H}_2^{\mathfrak{c}}}_{\mathcal{P}_2^{\mathfrak{c}}}M$.
By Frobenius reciprocity $M$ is an irreducible quotient of the
induced module ${\rm
ind}^{\mathfrak{H}_2^{\mathfrak{c}}}_{\mathcal{P}^{\mathfrak{c}}_2}L(i)\circledast
L(j)$. If $i\neq j\pm 1$, then $M\cong {\rm
ind}^{\mathfrak{H}_2^{\mathfrak{c}}}_{\mathcal{P}^{\mathfrak{c}}_2}L(i)\circledast
L(j)$ since ${\rm
ind}^{\mathfrak{H}_2^{\mathfrak{c}}}_{\mathcal{P}^{\mathfrak{c}}_2}L(i)\circledast
L(j)$ is irreducible by $(2)$; otherwise using the fact that
$\Xi_1^2=1$ on $L(i)\circledast L(j)$ one can show that the vector
space
$$
L:={\rm span}\big\{s_1\otimes u- 1\otimes\Xi_1u~|~u\in
L(i)\circledast L(j)\big\}
$$
is a $\mathfrak{H}_2^{\mathfrak{c}}$-submodule of ${\rm
ind}^{\mathfrak{H}_2^{\mathfrak{c}}}_{\mathcal{P}^{\mathfrak{c}}_2}L(i)\circledast
L(j)$ and it is isomorphic to $V(j,i)$. It is easy to check the
quotient ${\rm
ind}^{\mathfrak{H}_2^{\mathfrak{c}}}_{\mathcal{P}^{\mathfrak{c}}_2}L(i)\circledast
L(j)/L$ is isomorphic to $V(i,j)$. Hence $M\cong V(i,j)$.
\end{proof}
Observe from the proof above that if $i\neq j, j\pm1$ then the
completely splittable $\mathfrak{H}_2^{\mathfrak{c}}$-module
$V(i,j)$ has two weights $(i,j)$ and $(j,i)$ and moreover
$s_1-\Xi_1$ gives a bijection between the associated weight spaces.
This together with Corollary~\ref{cor:restr.to 2} and
Proposition~\ref{prop:rank 2} leads to the following.
\begin{cor}\label{cor:per. action} Let  $M\in\RImhcn$ be irreducible completely
splittable. Suppose $0\neq v\in M_{\underline{i}}$ for some
$\underline{i}=(i_1,\ldots,i_n)\in\I^n$. The following holds for
$1\leq k\leq n-1$.
\begin{enumerate}
\item If $i_k=i_{k+1}\pm 1$, then $\ds s_kv=\Xi_kv$.

\item If $i_k\neq i_{k+1}\pm 1$, then $\ds 0\neq
(s_k-\Xi_k)v\in M_{s_k\cdot\underline{i}}$ and hence
$s_k\cdot\underline{i}$ is a weight of $M$.
\end{enumerate}
\end{cor}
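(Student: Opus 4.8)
The plan is to reduce the statement to the rank‑two situation handled by Proposition~\ref{prop:rank 2} via the restriction result in Corollary~\ref{cor:restr.to 2}. Fix $1\leq k\leq n-1$ and let $\mathfrak{A}_k$ denote the subalgebra of $\mhcn$ generated by $s_k,x_k,x_{k+1},c_k,c_{k+1}$, which is isomorphic to $\mathfrak{H}_2^{\mathfrak{c}}$. By Corollary~\ref{cor:restr.to 2}, $M$ is semisimple on restriction to $\mathfrak{A}_k$, so $\mathfrak{A}_kv$ sits inside a semisimple $\mathfrak{A}_k$‑submodule of $M$; decomposing it, $v$ maps to a sum of components lying in irreducible $\mathfrak{A}_k$‑submodules. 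Since $v\in M_{\underline{i}}$, i.e.\ $x_k^2v=q(i_k)v$ and $x_{k+1}^2v=q(i_{k+1})v$ (using Remark~\ref{rem:genera.}), and $\mathfrak{H}_2^{\mathfrak{c}}$‑modules in $\operatorname{Rep}_{\I}\mathfrak{H}_2^{\mathfrak{c}}$ are classified by Proposition~\ref{prop:rank 2}(3) as the $V(i,j)$, the relevant irreducible $\mathfrak{A}_k$‑constituents containing a vector of weight $(i_k,i_{k+1})$ must be of the form $V(i_k,i_{k+1})$ (up to the parity shift $\Pi$).

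First I would treat case (1), $i_k=i_{k+1}\pm 1$. Here Proposition~\ref{prop:rank 2}(1) says $V(i_k,i_{k+1})$ is the module $L(i_k)\circledast L(i_{k+1})$ with $s_k$ acting as $\Xi_k$; in particular it has the single weight $(i_k,i_{k+1})$, so the whole $(i_k,i_{k+1})$‑weight space of $M$ (intersected with any such constituent) is acted on by $s_k$ exactly as $\Xi_k$. Since $v$ lies entirely in weight $(i_k,i_{k+1})$ and all $\mathfrak{A}_k$‑constituents through $v$ are of this type, $s_kv=\Xi_kv$. One should note here that $\Xi_k$ is well defined on $M_{\underline{i}}$ because $i_k\neq i_{k+1}$ by Lemma~\ref{lem:separate weig.}, so $x_k^2-x_{k+1}^2$ is invertible there.

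For case (2), $i_k\neq i_{k+1}\pm 1$: since $M$ is completely splittable, Lemma~\ref{lem:separate weig.} forces $i_k\neq i_{k+1}$, so we are in the subcase of Proposition~\ref{prop:rank 2}(2) where $V(i_k,i_{k+1})$ is completely splittable with the two distinct weights $(i_k,i_{k+1})$ and $(i_{k+1},i_k)$, and—per the remark immediately following that proposition—$s_k-\Xi_k$ restricts to a bijection between the two weight spaces. Applying this inside each irreducible $\mathfrak{A}_k$‑constituent through $v$, we get $(s_k-\Xi_k)v\neq 0$ and $(s_k-\Xi_k)v$ lies in weight $(i_{k+1},i_k)$ for the pair $(k,k+1)$ while being unchanged in all other coordinates; that is, $(s_k-\Xi_k)v\in M_{s_k\cdot\underline{i}}$, whence $s_k\cdot\underline{i}$ is a weight of $M$. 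The main point requiring care is the bookkeeping that a priori $v$ could spread across several $\mathfrak{A}_k$‑isotypic pieces and over both $V(i_k,i_{k+1})$ and $\Pi V(i_k,i_{k+1})$; but the formula for $s_k$ in terms of $\Xi_k$ is identical on $V$ and $\Pi V$ (the parity twist only changes the $\Z_2$‑grading, not the action of the even element $s_k$ or of $x_k,x_{k+1},c_kc_{k+1}$), so the conclusion $s_kv=\Xi_kv$ resp.\ $(s_k-\Xi_k)v\in M_{s_k\cdot\underline{i}}$ holds uniformly and hence on all of $M_{\underline{i}}$. This is the only real obstacle, and it is routine once the classification in Proposition~\ref{prop:rank 2} is in hand.
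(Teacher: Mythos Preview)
Your proposal is correct and follows essentially the same approach as the paper: the paper derives the corollary directly from Corollary~\ref{cor:restr.to 2} (semisimplicity of the restriction to the rank-two subalgebra) together with Proposition~\ref{prop:rank 2} and the observation, stated just before the corollary, that $s_1-\Xi_1$ is a bijection between the two weight spaces of $V(i,j)$ when $i\neq j,j\pm1$. Your write-up is simply a more explicit unpacking of this argument, including the routine bookkeeping about multiple isotypic pieces and parity shifts that the paper leaves implicit.
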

\begin{defn} Let $\underline{i}\in\I^n$. For $1\leq k\leq n-1$,
the simple transposition $s_k$ is called {\em admissible} with
respect to $\underline{i}$ if $i_k\neq i_{k+1}\pm1$.
\end{defn}

Let $W(\mhcn)$ be the set of weights $\underline{i}\in\I^n$ of
irreducible completely splittable $\mhcn$-modules. By
Corollary~\ref{cor:per. action}, if $\underline{i}\in W(\mhcn)$ and
$s_k$ is admissible with respect to $\underline{i}$, then
$s_k\cdot\underline{i}\in W(\mhcn)$; moreover $\underline{i}$ and
$s_k\cdot\underline{i}$ must occur as weights in an irreducible
completely splittable $\mhcn$-module simultaneously.

\begin{lem}\label{lem:restr.1} Let $\underline{i}\in W(\mhcn)$.
 Suppose that
$i_k=i_{k+2}$ for some $1\leq k\leq n-2$.
\begin{enumerate}
\item If $p=0$, then $i_k=i_{k+2}=0, i_{k+1}=1$.

\item If $p\geq 3$, then either $i_k=i_{k+2}=0, i_{k+1}=1$ or
$i_k=i_{k+2}=\frac{p-3}{2}, i_{k+1}=\frac{p-1}{2}$.
\end{enumerate}
\end{lem}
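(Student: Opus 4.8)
The plan is to localize the question to the three consecutive entries $i_k,i_{k+1},i_{k+2}$ and then run through the possibilities for the pair $(i_k,i_{k+1})$. Fix an irreducible completely splittable $\mhcn$-module $M$ with $M_{\underline{i}}\neq 0$ (possible since $\underline{i}\in W(\mhcn)$) and a nonzero $v\in M_{\underline{i}}$. By Lemma~\ref{lem:separate weig.} we have $i_k\neq i_{k+1}$ and $i_{k+1}\neq i_{k+2}$, so, writing $a:=i_k=i_{k+2}$ and $b:=i_{k+1}$, we have $a\neq b$. The first step is to show that $a=b\pm 1$, i.e.\ that $s_k$ is not admissible with respect to $\underline{i}$: if instead $i_k\neq i_{k+1}\pm 1$, then Corollary~\ref{cor:per. action}(2) gives $0\neq(s_k-\Xi_k)v\in M_{s_k\cdot\underline{i}}$, but $s_k\cdot\underline{i}=(i_1,\dots,i_{k-1},i_{k+1},i_k,i_{k+2},\dots)$ has equal entries in positions $k+1$ and $k+2$ (both equal to $a$), contradicting Lemma~\ref{lem:separate weig.} applied to the completely splittable module $M$.

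So $a=b\pm 1$; then also $i_{k+1}=i_{k+2}\pm1$, so neither $s_k$ nor $s_{k+1}$ is admissible with respect to $\underline{i}$, and by Corollary~\ref{cor:per. action}(1) they act on $M_{\underline{i}}$ as the operators $\Xi_k$ and $\Xi_{k+1}$. Since $\Xi_k,\Xi_{k+1}$ and $x_k,x_{k+1},x_{k+2},c_k,c_{k+1},c_{k+2}$ all preserve $M_{\underline{i}}$, this space becomes a module over the subalgebra generated by $s_k,s_{k+1},x_k,x_{k+1},x_{k+2},c_k,c_{k+1},c_{k+2}$, which (as in Corollary~\ref{cor:restr.to 2}, using Lemma~\ref{lem:PBW}) is isomorphic to $\mathfrak{H}_3^{\mathfrak{c}}$; hence the braid relation $s_ks_{k+1}s_k=s_{k+1}s_ks_{k+1}$ forces $\Xi_k\Xi_{k+1}\Xi_k=\Xi_{k+1}\Xi_k\Xi_{k+1}$ on $M_{\underline{i}}$. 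By Proposition~\ref{prop:equiv.cond.}(4), $M_{\underline{i}}\cong L(\underline{i})$ as $\mpcn$-modules; since $\Xi_k,\Xi_{k+1}$ depend only on $x_k,x_{k+1},x_{k+2},c_k,c_{k+1},c_{k+2}$, and since an identity among the even operators $\Xi_k,\Xi_{k+1}$ is unaffected by the parity shift $\Pi$, the braid identity must hold on $L(a)\boxtimes L(b)\boxtimes L(a)$, regarded (via $L(\underline{i})=\cdots\circledast L(i_k)\circledast L(i_{k+1})\circledast L(i_{k+2})\circledast\cdots$) as the $\mathcal{P}_3^{\mathfrak{c}}$-module carried by positions $k,k+1,k+2$.

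It then remains to determine, for $a,b\in\I$ with $a=b\pm 1$, when $\Xi_k\Xi_{k+1}\Xi_k=\Xi_{k+1}\Xi_k\Xi_{k+1}$ holds on $L(a)\boxtimes L(b)\boxtimes L(a)$. On this module $x_k^2,x_{k+1}^2,x_{k+2}^2$ act by the scalars $q(a),q(b),q(a)$, and moreover $x_k=x_{k+2}=0$ when $a=0$; thus $\Xi_k,\Xi_{k+1}$ become explicit operators in $x_k,x_{k+1},x_{k+2},c_k,c_{k+1},c_{k+2}$. I would expand both triple products using $x_lc_l=-c_lx_l$ together with $(c_kc_{k+1})^2=(c_{k+1}c_{k+2})^2=(c_kc_{k+2})^2=-1$ and $(c_kc_{k+1})(c_{k+1}c_{k+2})=c_kc_{k+2}$, reducing the required equality to a scalar relation between $q(a)$ and $q(b)$. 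Carrying this out shows: if $b=a-1$ (hence $a\ge 1$), the two sides never agree, so this case cannot occur; if $b=a+1$, the two sides agree exactly when $a=0$, or $p\ge 3$ and $a=\frac{p-3}{2}$. Translating back, the only possible configurations $(i_k,i_{k+1},i_{k+2})$ are $(0,1,0)$ and, when $p\ge 3$, $(\frac{p-3}{2},\frac{p-1}{2},\frac{p-3}{2})$, which is precisely the assertion.

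The hard part is this final computation. When $a\ge 1$ one is on an $8$-dimensional type-$\texttt{M}$ module on which all of $x_k,x_{k+1},x_{k+2}$ are invertible, while for $a=0$ (so $b=1$) one is on a $4$-dimensional type-$\texttt{Q}$ module with $x_k=x_{k+2}=0$; these require separate bookkeeping. The asymmetry between $i_{k+1}=i_k+1$ and $i_{k+1}=i_k-1$ -- i.e.\ whether the Clifford-degenerate factor $L(0)$ can sit at the two ends of $L(\underline{i})$ or in the middle -- is exactly what leaves only the ``$+1$'' configurations. Everything preceding this step is purely formal, relying only on Lemma~\ref{lem:separate weig.}, Corollary~\ref{cor:per. action}, Corollary~\ref{cor:restr.to 2}, and Proposition~\ref{prop:equiv.cond.}(4).
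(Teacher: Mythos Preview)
Your approach is essentially identical to the paper's: both first force $i_k=i_{k+1}\pm1$ by admissibility (else $s_k\cdot\underline{i}$ would have equal consecutive entries, contradicting Lemma~\ref{lem:separate weig.}), then use Corollary~\ref{cor:per. action}(1) to replace $s_k,s_{k+1}$ by $\Xi_k,\Xi_{k+1}$ on $M_{\underline{i}}$, and finally extract a scalar constraint from the braid relation. The only practical difference is in the ``hard part'': rather than working on $L(a)\boxtimes L(b)\boxtimes L(a)$ with separate bookkeeping for $a=0$ and $a\ge1$, the paper computes $\Xi_k\Xi_{k+1}\Xi_k-\Xi_{k+1}\Xi_k\Xi_{k+1}$ in closed form directly on $M_{\underline{i}}$ and then evaluates on the joint $(x_k,x_{k+2})$-eigenspace decomposition $M_{\underline{i}}=N_1\oplus N_2$ to obtain the single scalar equation $2\sqrt{q(i_k)}\bigl(6q(i_{k+1})+2q(i_k)\bigr)=0$, which handles all cases uniformly (the factor $\sqrt{q(i_k)}$ already vanishes when $a=0$) and immediately yields the stated list of solutions.
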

\begin{proof}
Suppose $\underline{i}$ occurs in the irreducible completely
splittable $\mhcn$-module $M$ and $i_k=i_{k+2}$ for some $1\leq
k\leq n-2$. If $i_k\neq i_{k+1}\pm1 $, then $s_k\cdot\underline{i}$
is a weight of $M$ with the form $(\cdots, u,u,\cdots)$  by
Corollary~\ref{cor:per. action}. This contradicts
Lemma~\ref{lem:separate weig.}. Hence $i_k=i_{k+1}\pm 1$. This
together with Corollary~\ref{cor:per. action} shows that $s_k=\Xi_k$
and $s_{k+1}=\Xi_{k+1}$ on $M_{\underline{i}}$ and by~(\ref{xc}) we
have
\begin{align}s_ks_{k+1}s_k-s_{k+1}s_ks_{k+1}
=&\frac{1}{(a-b)(b-a)(a-b)}(x_k+x_{k+2})(6x_{k+1}^2+2x_kx_{k+2})\notag\\
&+\frac{1}{(a-b)(b-a)(a-b)}c_kc_{k+2}(x_k-x_{k+2})(6x^2_{k+1}-2x_kx_{k+2})
\end{align}
on $M_{\underline{i}}$, where $a=q(i_k)=q(i_{k+2})$ and $
b=q(i_{k+1})$. This implies that for $z\in M_{\underline{i}}$,
\begin{align}(x_k+x_{k+2})(6x_{k+1}^2+2x_kx_{k+2})z
+c_kc_{k+2}(x_k-x_{k+2})(6x^2_{k+1}-2x_kx_{k+2})z=0.\label{diffbraid}
\end{align}
On $M_{\underline{i}}$,  $x_k,x_{k+2}$ act semisimply and $x_k^2,
x_{k+2}^2$ act as scalars $q(i_k), q(i_{k+2})$. Hence
 $M_{\underline{i}}$ admits a decomposition
$M_{\underline{i}}=N_1\oplus N_2$, where $N_1=\{z\in
M_{\underline{i}}~|~x_kz=x_{k+2}z=\pm\sqrt{q(i_k)}z\}$ and
$N_2=\{z\in M_{\underline{i}}~|~x_kz=-x_{k+2}z=\pm\sqrt{q(i_k)}z\}$.
Applying the identity~(\ref{diffbraid}) to $N_1$ and $N_2$, we
obtain
\begin{align}
2\sqrt{q(i_k)}\big(6q(i_{k+1})+2q(i_k)\big)=0.\label{eq.ik}
\end{align}
By the fact that $i_{k+1}=i_{k}\pm 1$, and the definition of
$q(i_k)$ and $q(i_{k+1})$, one can check that (\ref{eq.ik}) is
equivalent to the following
\begin{align}
i_{k+1}=i_k-1,\quad &\sqrt{i_k(i_k+1)}(4i_k-2)i_k=0\label{eq.ik1}\\
&or\notag\\
i_{k+1}=i_k+1, \quad
&\sqrt{i_k(i_k+1)}(4i_k+6)(i_k+1)=0.\label{eq.ik2}
\end{align}
(1). If $p=0$, since $i_k, i_{k+1}$ are nonnegative there is no
solution for the equation~(\ref{eq.ik1}) and the solution of
(\ref{eq.ik2}) is $i_k=0, i_{k+1}=1$.

\noindent (2). If $p\geq 3$, since $1\leq i_k,
i_{k+1}\leq\frac{p-3}{2}$ there is no solution for the
equation~(\ref{eq.ik1}) and the solutions of (\ref{eq.ik2}) are
$i_k=0, i_{k+1}=1$ or $i_k=\frac{p-3}{2}, i_{k+1}=\frac{p-1}{2}$.
\end{proof}
\begin{lem}\label{lem:restr.2}Let $\underline{i}\in W(\mhcn)$.
Suppose $i_k=i_l$ for some $1\leq k<l\leq n$.  Then
$i_k+1\in\{i_{k+1},\ldots,i_{l-1}\}$.
\end{lem}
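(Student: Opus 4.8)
The plan is to induct on the gap $l-k$, using the admissible-transposition mechanism of Corollary~\ref{cor:per. action} to migrate the two equal entries toward each other and so shrink the gap. Note first that $l-k\ge 2$ is automatic: since $\underline i$ is a weight of a completely splittable module, Lemma~\ref{lem:separate weig.} forbids $i_k=i_{k+1}$, so $l\ne k+1$. For the base case $l-k=2$ the hypothesis is exactly $i_k=i_{k+2}$, so Lemma~\ref{lem:restr.1} applies; inspecting its conclusions ($p=0$, and the two possibilities when $p\ge 3$) one sees that in every case $i_{k+1}=i_k+1$, which is precisely the assertion since $\{i_{k+1},\ldots,i_{l-1}\}=\{i_{k+1}\}$.

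For the inductive step I would assume $l-k\ge 3$, set $a:=i_k=i_l$, and argue by contradiction, assuming $a+1\notin\{i_{k+1},\ldots,i_{l-1}\}$. Then Lemma~\ref{lem:separate weig.} gives $i_{k+1}\ne a\ne i_{l-1}$, and the contradiction hypothesis gives $i_{k+1}\ne a+1\ne i_{l-1}$. If moreover $i_{k+1}\ne a-1$, then $i_{k+1}\ne i_k\pm1$, so $s_k$ is admissible with respect to $\underline i$ and hence $s_k\cdot\underline i\in W(\mhcn)$ by Corollary~\ref{cor:per. action}; in this new weight the two equal entries $a$ sit at positions $k+1$ and $l$ with the entries in between unchanged, so the inductive hypothesis (the gap is now $l-k-1<l-k$) forces $a+1\in\{i_{k+2},\ldots,i_{l-1}\}\subseteq\{i_{k+1},\ldots,i_{l-1}\}$, a contradiction. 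The case $i_{l-1}\ne a-1$ is symmetric via $s_{l-1}$.

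This leaves the case $i_{k+1}=i_{l-1}=a-1$, which I expect to be the main obstacle, since here no single swap reduces the gap. I would resolve it by playing the ``outer'' pair $(k,l)$ of value $a$ against the ``inner'' pair $(k+1,l-1)$ of value $a-1$. First, $l=k+3$ is impossible (it would force $i_{k+1}=i_{k+2}$, against Lemma~\ref{lem:separate weig.}), so $l\ge k+4$ and the inner pair has gap $l-k-2$ with $2\le l-k-2<l-k$. Second, $a$ cannot occur among $i_{k+2},\ldots,i_{l-2}$: if $i_m=a$ for some $k+2\le m\le l-2$, the inductive hypothesis for the pair $(k,m)$ (gap $m-k<l-k$) would give $a+1\in\{i_{k+1},\ldots,i_{m-1}\}\subseteq\{i_{k+1},\ldots,i_{l-1}\}$, contradicting the standing assumption. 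Third, the inductive hypothesis applied to $(k+1,l-1)$ yields $(a-1)+1=a\in\{i_{k+2},\ldots,i_{l-2}\}$, contradicting the previous point. Hence the assumption fails and $a+1\in\{i_{k+1},\ldots,i_{l-1}\}$.

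In summary, the routine cases are the admissible swaps, and the delicate point is the configuration $i_{k+1}=i_{l-1}=a-1$; the trick there is that each of the two nested equal pairs, through the inductive hypothesis, bans a distinct value ($a+1$ for the outer pair, $a$ for the inner one) from the block strictly between its endpoints, and these two prohibitions are jointly contradictory. I would take care to check the small boundary gap $l=k+3$ by hand, so that the inner pair genuinely has gap $\ge 2$ and the induction is well-founded.
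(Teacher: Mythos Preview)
Your proof is correct and is organized differently from the paper's. Both proofs rest on the same three tools—Lemma~\ref{lem:separate weig.}, Lemma~\ref{lem:restr.1}, and the fact that admissible transpositions preserve $W(\mhcn)$—but the paper runs a \emph{descent on the value} $u=i_k$, while you run an \emph{induction on the gap} $l-k$. Concretely, after reducing to the case where $u$ does not occur strictly between positions $k$ and $l$, the paper argues that if $u+1$ is absent then $u-1$ must occur at least twice in that block (once gives a forbidden pattern $(\cdots,u,u-1,u,\cdots)$ via Lemma~\ref{lem:restr.1}, none lets one slide the two $u$'s together), so one finds a strictly nested pair of equal entries of value $u-1$, then $u-2$, and so on down to $0$, where the contradiction with Lemma~\ref{lem:separate weig.} is immediate. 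Your argument instead reduces the gap by a single admissible swap whenever one boundary neighbour is not $a-1$, and in the residual configuration $i_{k+1}=i_{l-1}=a-1$ plays the outer pair $(k,l)$ of value $a$ against the inner pair $(k+1,l-1)$ of value $a-1$ via two appeals to the inductive hypothesis. What your approach buys is that you never need the informal ``apply admissible transpositions to obtain $(\cdots,u,u,\cdots)$'' step; each move is a single swap, and the induction absorbs the iteration. What the paper's approach buys is a concrete structural picture (a chain of nested pairs with values $u,u-1,\ldots,0$) that it reuses verbatim later, e.g.\ in the proof of Proposition~\ref{prop:restr.2}(5).
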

\begin{proof} Suppose $i_k=i_l=u$ for some $1\leq k<l\leq n$.
Without loss of generality, we can assume
$u\notin\{i_{k+1},\ldots,i_{l-1}\}$. If $u=0$, then
$1\in\{i_{k+1},\ldots,i_{l-1}\}$; otherwise we can apply admissible
transpositions to $\underline{i}$ to obtain an element in $W(\mhcn)$
of the form $(\cdots,0,0,\cdots)$, which contradicts
Lemma~\ref{lem:separate weig.}.

Now assume $u\geq 1$ and $u+1\notin\{i_{k+1},\ldots,i_{l-1}\}$. If
$u-1$ does not appear between $i_{k+1}$ and $i_{l-1}$ in
$\underline{i}$, then we can apply admissible transpositions to
$\underline{i}$ to obtain an element in $W(\mhcn)$ of the form
$(\cdots,u,u,\cdots)$, which contradicts Lemma~\ref{lem:separate
weig.}. If $u-1$ appears only once between $i_{k+1}$ and $i_{l-1}$
in $\underline{i}$, then we can apply admissible transpositions to
$\underline{i}$ to obtain an element in $W(\mhcn)$ of the form
$(\cdots,u,u-1,u,\cdots)$, which contradicts
Lemma~\ref{lem:restr.1}. Hence $u-1$ appears at least twice between
$i_{k+1}$ and $i_{l-1}$ in $\underline{i}$. This implies that there
exist $k<k_1<l_1<l$ such that
$$
i_{k_1}=i_{l_1}=u-1,
\{u,u-1\}\cap\{i_{k_1+1},\ldots,i_{l_1-1}\}=\emptyset.
$$
An identical argument shows that there exist $k_1<k_2<l_2<l_1$ such
that
$$
i_{k_2}=i_{l_2}=u-2,
\{u,u-1,u-2\}\cap\{i_{k_2+1},\ldots,i_{l_2-1}\}=\emptyset.
$$
Continuing in this way, we obtain $k<s<t<l$ such that
$$
i_{s}=i_{t}=0,
\{u,u-1,\ldots,1,0\}\cap\{i_{s+1},\ldots,i_{t-1}\}=\emptyset,
$$
which is impossible as shown at the beginning.
\end{proof}
\begin{prop}\label{prop:restr.2} Let
$\underline{i}\in W(\mhcn)$. Then
\begin{enumerate}
\item $i_k\neq i_{k+1}$ for all $1\leq k\leq n-1$.

\item If $p\geq 3$, then $\frac{p-1}{2}$ appears at most once in
$\underline{i}$.

\item If $i_k=i_l=0$ for some $1\leq k<l\leq n$, then
$1\in\{i_{k+1},\ldots, i_{l-1}\}$.

 \item If $p=0$ and $
i_k=i_l\geq 1$ for some $1\leq k<l\leq n$, then
$\{i_k-1,i_k+1\}\subseteq\{i_{k+1},\ldots, i_{l-1}\}$.

\item If $p\geq 3$ and
 $i_k=i_l\geq 1$ for
some $1\leq k<l\leq n$, then either of the following holds:
\begin{enumerate}
\item $\{i_k-1,i_k+1\}\subseteq\{i_{k+1},\ldots, i_{l-1}\}$,

\item there exists a sequence of integers
$k\leq r_0< r_1<\cdots< r_{\frac{p-3}{2}-i_k}< q<
t_{\frac{p-3}{2}-i_k}<\cdots< t_1<t_0\leq l$ such that
$i_q=\frac{p-1}{2}, i_{r_j}=i_{t_j}=i_k+j$ and $i_k+j$ does not
appear between $i_{r_j}$ and $i_{t_j}$ in $\underline{i}$ for each
$0\leq j\leq \frac{p-3}{2}-i_k$.
\end{enumerate}
\end{enumerate}
\end{prop}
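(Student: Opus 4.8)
The plan is to prove each of the five assertions by building on Lemmas~\ref{lem:separate weig.}, \ref{lem:restr.1}, and~\ref{lem:restr.2}, together with the key mechanism from Corollary~\ref{cor:per. action}: if $\underline{i}\in W(\mhcn)$ and $s_k$ is admissible (i.e.\ $i_k\neq i_{k+1}\pm1$), then $s_k\cdot\underline{i}\in W(\mhcn)$. Statement~(1) is immediate from Lemma~\ref{lem:separate weig.}. For~(2), suppose $\frac{p-1}{2}$ occurs twice in $\underline{i}$, say $i_k=i_l=\frac{p-1}{2}$ with $k<l$. By Lemma~\ref{lem:restr.2} applied with $u=\frac{p-1}{2}$ we would need $\frac{p-1}{2}+1=\frac{p+1}{2}$ to appear among $i_{k+1},\ldots,i_{l-1}$, but $\frac{p+1}{2}\notin\I$, a contradiction. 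Statement~(3) is precisely the $u=0$ case of Lemma~\ref{lem:restr.2}.

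For~(4), assume $p=0$ and $i_k=i_l=u\geq1$. Lemma~\ref{lem:restr.2} already gives $u+1\in\{i_{k+1},\ldots,i_{l-1}\}$; it remains to show $u-1$ also appears in that window. I would argue by contradiction following the structure in the proof of Lemma~\ref{lem:restr.2}: if $u-1$ does not appear strictly between positions $k$ and $l$, one can apply admissible transpositions to move the two copies of $u$ together, producing a weight of the form $(\cdots,u,u,\cdots)$ and contradicting Lemma~\ref{lem:separate weig.}; here one must check carefully that all the transpositions used are admissible, which holds because the entries being swapped past $u$ are never equal to $u\pm1$ (the value $u+1$ present in the window can be moved out of the way first, or the two copies of $u$ are moved toward each other past entries differing from $u$ by at least $2$). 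If $u-1$ appears exactly once in the window, the same maneuvering yields a weight of the form $(\cdots,u,u-1,u,\cdots)$, contradicting Lemma~\ref{lem:restr.1}(1) since when $p=0$ that lemma forces $u=0$. Hence $u-1$ appears at least twice, but then iterating the descent exactly as in the proof of Lemma~\ref{lem:restr.2} eventually produces two copies of $0$ with no $1$ between them, contradicting~(3). So $u-1\in\{i_{k+1},\ldots,i_{l-1}\}$ as well.

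For~(5), assume $p\geq3$ and $i_k=i_l=u\geq1$. Again $u+1$ lies in the window by Lemma~\ref{lem:restr.2}. Now examine whether $u-1$ lies in the window. If it does, then combined with $u+1$ we land in case~(a). If $u-1$ does \emph{not} appear strictly between positions $k$ and $l$: by the argument above, $u-1$ cannot appear $0$ times (move the copies of $u$ together, contradicting Lemma~\ref{lem:separate weig.}) and cannot appear exactly once unless the resulting pattern $(\cdots,u,u-1,u,\cdots)$ is permitted by Lemma~\ref{lem:restr.1}(2), i.e.\ unless $u=\frac{p-3}{2}$ and $u-1=\frac{p-5}{2}$ — but we assumed $u-1$ is \emph{absent} from the window, so this single-occurrence subcase does not arise either. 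Thus $u-1$ appears at least twice, giving $k<k_1<l_1<l$ with $i_{k_1}=i_{l_1}=u-1$ and $\{u,u-1\}\cap\{i_{k_1+1},\ldots,i_{l_1-1}\}=\emptyset$. Descending as in Lemma~\ref{lem:restr.2}, we obtain the nested sequence of indices $k\leq r_0<r_1<\cdots<r_j<\cdots$ and correspondingly $\cdots<t_j<\cdots<t_1<t_0\leq l$ with $i_{r_j}=i_{t_j}=u+j$ and $u+j$ not appearing between $r_j$ and $t_j$. This descent must terminate, and by Lemma~\ref{lem:restr.1}(2) the only way it can terminate before reaching $0$ is when we arrive at a pair with value $\frac{p-3}{2}$ whose internal window is forced to contain $\frac{p-1}{2}$; setting the index of that occurrence of $\frac{p-1}{2}$ to be $q$ yields exactly the configuration in case~(b), with the sequence running $j=0,1,\ldots,\frac{p-3}{2}-i_k$. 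If instead the descent reached a pair of $0$'s with no $1$ between, that contradicts~(3); so case~(b) is the only alternative to case~(a).

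The main obstacle is the bookkeeping in parts~(4) and~(5): making precise that the maneuvering-together-of-two-copies-of-$u$ can always be carried out through admissible transpositions, and tracking exactly which value obstructs the descent and forces termination. The delicate point is that moving the copies of $u$ toward each other may be blocked not by $u-1$ directly but by some entry that becomes adjacent to $u$ only after earlier swaps; one must verify inductively that every such entry differs from $u$ by at least $2$, or else is itself $u-1$ and triggers the Lemma~\ref{lem:restr.1} dichotomy. Once that is set up cleanly, everything reduces to the descent already carried out in Lemma~\ref{lem:restr.2}, now refined to record the positions $r_j,t_j,q$ and to read off which endpoint ($0$ or $\frac{p-1}{2}$) the descent hits.
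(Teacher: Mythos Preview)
Your arguments for (1), (2), and (3) are correct and match the paper. The trouble is in (4) and (5), where you have the direction of the iteration backwards and the logic becomes incoherent.

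In (4) you assume $u-1$ is absent from the window and then claim the two copies of $u$ can be brought adjacent by admissible swaps. They cannot: by Lemma~\ref{lem:restr.2}, $u+1$ \emph{is} present in the window, and $u+1$ cannot be swapped past either endpoint $u$, so it remains strictly between them forever. Your parenthetical ``move $u+1$ out of the way first'' fails for exactly this reason. Worse, your subsequent case split on whether $u-1$ appears once or at least twice is taking place inside the assumption that $u-1$ is absent, so those cases are vacuous; and even if they were not, deriving a contradiction in \emph{every} case (zero, one, or more occurrences of $u-1$) would prove that no such $\underline{i}$ exists, not that $u-1$ lies in the window.

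The paper's argument runs upward instead. Assuming $u-1$ is absent (and, without loss of generality, that $u$ is absent strictly between positions $k$ and $l$), one shows $u+1$ must occur at least twice there: if it occurred only once, admissible swaps would yield $(\ldots,u,u+1,u,\ldots)$, contradicting Lemma~\ref{lem:restr.1}. Choosing $k<k_1<l_1<l$ with $i_{k_1}=i_{l_1}=u+1$ and $u+1$ absent between them, and noting that $(u+1)-1=u$ is also absent there, one iterates to force $u+2$, then $u+3$, and so on. For $p=0$ this produces arbitrarily large values in a finite window, a contradiction; for $p\geq 3$ the ascent terminates at $\tfrac{p-3}{2}$, and one more application of Lemma~\ref{lem:restr.2} supplies the index $q$ with $i_q=\tfrac{p-1}{2}$, giving precisely the chain $r_0<\cdots<r_{\frac{p-3}{2}-u}<q<t_{\frac{p-3}{2}-u}<\cdots<t_0$ of (5)(b). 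Your own final formula $i_{r_j}=i_{t_j}=u+j$ is in fact this ascending chain, but the surrounding sentences speak of pairs with value $u-1$ and of ``descending'', which contradicts both the formula and the standing hypothesis that $u-1$ is absent. Once you reverse the direction, the bookkeeping you flag as the main obstacle disappears: at stage $j$ both $u+j$ and $u+j-1$ are absent from the current inner window by construction, so the reduction to $(\ldots,u+j,\,u+j+1,\,u+j,\ldots)$ is unobstructed.
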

\begin{proof}
(1). It follows from Lemma~\ref{lem:separate weig.}.

(2). If $\frac{p-1}{2}$ appears more than once in $\underline{i}$,
then it follows from Lemma~\ref{lem:restr.2} that $\frac{p+1}{2}$
appears in $\underline{i}$ which is impossible since
$\frac{p+1}{2}\notin\I$.

(3). It follows from Lemma~\ref{lem:restr.2}.

(4). Now suppose $p=0$ and $i_k=i_l=u\geq 1$ for some $1\leq k<l\leq
n$. Without loss of generality, we can assume
$u\notin\{i_{k+1},\ldots,i_{l-1}\}$. By Lemma~\ref{lem:restr.2} we
have $u+1\in\{i_{k+1},\ldots,i_{l-1}\}$ and hence it suffices to
show $u-1\in\{i_{k+1},\ldots,i_{l-1}\}$. Now assume
$u-1\notin\{i_{k+1},\ldots,i_{l-1}\}$. Then $u+1$ must appear in the
subsequence $(i_{k+1},\ldots,i_{l-1})$ at least twice, otherwise we
can apply admissible transpositions to $\underline{i}$ to obtain an
element in $W(\mhcn)$ of the form $(\cdots,u,u+1,u\cdots)$ which
contradicts Lemma~\ref{lem:restr.1}. Hence there exist $k<k_1<l_1<l$
such that
$$
i_{k_1}=i_{l_1}=u+1,\quad  u+1 \text{ does not appear between
}i_{k_1}\text{ and }i_{l_1} \text{ in }\underline{i}.
$$
Since
$u\notin\{i_{k+1},\ldots,i_{l-1}\}\supseteq\{i_{k_1+1},\ldots,i_{l_1-1}\}$,
a similar argument gives $k_2,l_2$ with $k_1<k_2<l_2<l_1$ such
that
$$
i_{k_2}=i_{l_2}=u+2, \quad u+2 \text{ does not appear between
}i_{k_2}\text{ and }i_{l_2} \text{ in }\underline{i}.
$$
Continuing in this way we see that any integer greater than $u$ will
appear in the subsequence $(i_{k+1},\ldots,i_{l-1})$ which is
impossible. Hence $u-1\in\{i_{k+1},\ldots, i_{l-1}\}$.

(5). Suppose $p\geq 3$ and $1\leq i_k=i_l=u\leq\frac{p-3}{2}$ for
some $1\leq k<l\leq n$ and $u-1\notin\{i_{k+1},\ldots,i_{l-1}\}$.
Clearly there exist $k\leq r_0<t_0\leq l$ such that
$$
i_{r_0}=i_{t_0}=u, u\notin\{i_{r_0+1},\ldots,i_{t_0-1}\}.
$$
An identical argument used for proving $(2)$ shows that there exists
a sequence of integers
$$
k\leq r_0< r_1<\cdots< r_{\frac{p-3}{2}-u}<
t_{\frac{p-3}{2}-u}<\cdots< t_1<t_0\leq l
$$
such that
$$
r_j=t_j=u+j,\quad
\{u,u+1,\ldots,u+j\}\cap\{i_{r_j+1},\quad\ldots,i_{t_j-1}\}=\emptyset
$$
for each $0\leq j\leq \frac{p-3}{2}-u$. Since
$i_{r_{\frac{p-3}{2}-u}}=i_{t_{\frac{p-3}{2}-u}}=\frac{p-3}{2}$, by
Lemma~\ref{lem:restr.2} there exists $r_{\frac{p-3}{2}-u}< q<
t_{\frac{p-3}{2}-u}$ such that $i_q=\frac{p-1}{2}$.
\end{proof}
\section{Classification of irreducible completely splittable $\mhcn$-modules} \label{classification}
In this section, we shall give an explicit construction and a
classification of irreducible completely splittable $\mhcn$-modules.

Recall that for $\underline{i}\in\I^n$ and $1\leq k\leq n-1$,  the
simple transposition $s_k$ is said to be admissible with respect
to $\underline{i}$ if $i_k\neq i_{k+1}\pm1$. Define an equivalence
relation $\sim$ on $\I^n$ by declaring that
$\underline{i}\sim\underline{j}$ if there exist $s_{k_1},\ldots,
s_{k_t}$ for some $t\in\mathbb{Z}_+$ such that
$\underline{j}=(s_{k_t}\cdots s_{k_1})\cdot\underline{i}$ and
$s_{k_{l}}$ is admissible with respect to $(s_{k_{l-1}}\cdots
s_{k_1})\cdot\underline{i}$ for $1\leq l\leq t$.

Denote by $W'(\mhcn)$ the set of $\underline{i}\in\I^n$ satisfying
the properties (3), (4) and (5) in Proposition~\ref{prop:restr.2}.
Observe that if $\underline{i}\in W^{\prime}(\mhcn)$ and $s_k$ is
admissible with respect to $\underline{i}$,  then the properties in
Proposition~\ref{prop:restr.2} hold for $s_k\cdot\underline{i}$ and
hence $s_k\cdot\underline{i}\in W'(\mhcn)$. This means there is an
equivalence relation denoted by $\sim$ on $W'(\mhcn)$ inherited from
the equivalence relation $\sim$ on $\I^n$. For each
$\underline{i}\in W^{\prime}(\mhcn)$, set
\begin{align}
P_{\underline{i}}=\{\tau=s_{k_t}\cdots s_{k_1}|~s_{k_l}\text{ is
admissible with respect to } s_{k_{l-1}}\cdots
s_{k_1}\cdot\underline{i}, 1\leq l\leq t,
t\in\mathbb{Z}_+\}.\label{Punderi}
\end{align}
\begin{lem}\label{lem:lambdaP}
 Let $\Lambda\in W^{\prime}(\mhcn)/\sim$ and
$\underline{i}\in\Lambda$. Then the map
$$
\varphi: P_{\underline{i}}\rightarrow \Lambda, \tau\mapsto
\tau\cdot\underline{i}
$$
is bijective.
\end{lem}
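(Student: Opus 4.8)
The plan is to show $\varphi$ is surjective and injective separately, with injectivity being the substantive part. Surjectivity is essentially the definition: by definition of the equivalence relation $\sim$ on $W'(\mhcn)$, any $\underline{j}\in\Lambda$ is of the form $(s_{k_t}\cdots s_{k_1})\cdot\underline{i}$ where each $s_{k_l}$ is admissible with respect to the partially permuted weight $(s_{k_{l-1}}\cdots s_{k_1})\cdot\underline{i}$; the word $\tau=s_{k_t}\cdots s_{k_1}$ then lies in $P_{\underline{i}}$ by~(\ref{Punderi}) and satisfies $\varphi(\tau)=\underline{j}$.

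For injectivity, suppose $\tau,\tau'\in P_{\underline{i}}$ with $\tau\cdot\underline{i}=\tau'\cdot\underline{i}=:\underline{j}$. First I would reduce to showing that $\tau\cdot\underline{i}=\underline{i}$ forces $\tau=e$: indeed, whenever $s_k$ is admissible with respect to some $\underline{k}\in W'(\mhcn)$, it is also admissible with respect to $s_k\cdot\underline{k}$ (since the admissibility condition $i_k\neq i_{k+1}\pm 1$ is symmetric in the two entries), so the reversed word $s_{k_1}\cdots s_{k_t}$ built from $\tau$ is again a legal word starting from $\underline{j}$ and carrying it back to $\underline{i}$; composing it with $\tau'$ gives an element of $P_{\underline{i}}$ fixing $\underline{i}$, and one checks $P_{\underline{i}}$ is closed under this operation. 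So it suffices to prove: if $\sigma=s_{k_t}\cdots s_{k_1}\in P_{\underline{i}}$ and $\sigma\cdot\underline{i}=\underline{i}$, then $\sigma=e$.

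The heart of the argument is a length/inversion bookkeeping argument showing that every admissible step strictly \emph{increases} a suitable statistic, so no nontrivial legal word can return to its starting point. Concretely, for a legal word the entries being swapped at each admissible step satisfy $i_k\neq i_{k+1}$ (by property (1) of Proposition~\ref{prop:restr.2}, which holds throughout $W'(\mhcn)$), so each admissible transposition changes the number of inversions of the sequence by exactly $\pm 1$; but in fact I expect to show admissible moves can always be oriented so as to be monotone — or more robustly, that the set of positions $\{(a,b) : a<b,\ i_a = i_b\}$ of \emph{equal} pairs is an invariant, and within each equal-value class the relative order is forced. Since by Lemma~\ref{lem:separate weig.} / property (1) no two adjacent entries of any weight in $W'(\mhcn)$ are equal, an admissible swap at position $k$ never swaps two equal values, hence preserves, for every value $u\in\I$, the left-to-right \emph{ordering of the positions holding copies of $u$}. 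A permutation of $\{1,\dots,n\}$ that preserves the relative order of each such fiber is determined on each fiber, hence is the identity on the underlying multiset arrangement; combined with the fact that $\sigma\cdot\underline{i}=\underline{i}$ pins down where each value goes, this forces $\sigma$ to act trivially, i.e. $\sigma=e$ as a permutation.

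The main obstacle I anticipate is making the last step fully rigorous: $\sigma$ is an element of $S_n$, and knowing that $\sigma\cdot\underline{i}=\underline{i}$ only says $\sigma$ permutes within each value-fiber; I need the additional input that a legal word \emph{realizing} $\sigma$ cannot permute within a fiber at all. That is exactly where the "no adjacent equal entries" property is used: an elementary induction on word length shows that if at some stage two positions holding the same value $u$ were about to be brought adjacent, the preceding weight would already violate property (1) of Proposition~\ref{prop:restr.2}, contradicting $\underline{i}\in W'(\mhcn)$ (which is preserved under admissible moves, as noted before the lemma). Hence the relative order of equal entries is an honest invariant of the whole word, the fiber-permutation induced by $\sigma$ is forced to be trivial, and $\varphi$ is injective. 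I would close by remarking that this also shows $P_{\underline{i}}$ is a genuine "reduced-word" set, which is the form in which the bijection gets used later (e.g. for the dimension formula).
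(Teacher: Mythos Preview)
Your approach is correct and essentially the same as the paper's: the paper packages your ``relative order of equal-value tokens'' invariant as a table $\Gamma(\underline{j})$ whose $a$th column lists in increasing order the positions $k$ with $j_k=a$, observes that an admissible $s_k$ acts on $\Gamma$ by swapping the entries $k$ and $k+1$ (well-defined exactly because $j_k\neq j_{k+1}$, so $k$ and $k+1$ sit in different columns and the columns stay increasing), and concludes $\tau\cdot\Gamma(\underline{i})=\Gamma(\tau\cdot\underline{i})=\Gamma(\underline{i})$, forcing $\tau=1$ since every $1,\ldots,n$ appears once in the table. Your fiber/relative-order argument is precisely this, stated without the table notation.
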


\begin{proof}
By the definitions of $P_{\underline{i}}$ and the equivalence
relation $\sim$ on $W'(\mhcn)$, one can check that $\varphi$ is
surjective. Note that if $\tau, \sigma\in P_{\underline{i}}$ then
$\sigma^{-1}\tau\in P_{\underline{i}}$. Therefore, to check the
injectivity of $\varphi$, it suffices to show that for $\tau\in
P_{\underline{i}}$ if $\tau\cdot\underline{i}=\underline{i}$ then
$\tau=1$. Associated to each $\underline{j}\in W'(\mhcn)$, there
exists a unique table $\Gamma(\underline{j})$ whose $a$th column
consists of all numbers $k$ with $j_k=a$ and is increasing for each
$a\in\I$. Since $\underline{j}\in W'(\mhcn)$, $j_k\neq j_{k+1}$ and
hence $k$ and $k+1$ are in different columns in
$\Gamma(\underline{j})$ for each $1\leq k\leq n-1$. This means each
simple transposition $s_k$ can naturally act on the table
$\Gamma(\underline{j})$ by switching $k$ and $k+1$ to obtain a new
table denoted by $s_k\cdot\Gamma(\underline{j})$. It is clear that
\begin{align}
s_k\cdot\Gamma(\underline{j})=\Gamma(s_k\cdot\underline{j}), \quad
1\leq k\leq n-1.\label{table}
\end{align}
Since $\tau\in P_{\underline{i}}$, we can write
$\tau=s_{k_t}s_{k_{t-1}}\cdots s_{k_1}$ so that $s_{k_l}$ is
admissible with respect to $s_{k_{l-1}}\cdots
s_{k_1}\cdot\underline{i}$ for each $1\leq l\leq t$. Observe that
$s_{k_{l-1}}\cdots s_{k_1}\cdot\underline{i}\in W'(\mhcn)$ and
hence there exists a table $\Gamma(s_{k_{l-1}}\cdots
s_{k_1}\cdot\underline{i})$ as defined above for $1\leq l\leq t$.
By (\ref{table}) we have
$$
s_{k_l}\cdot\Gamma(s_{k_{l-1}}\cdots s_{k_1}\cdot\underline{i})
=\Gamma(s_{k_l}s_{k_{l-1}}\cdots s_{k_1}\cdot\underline{i})
$$
for $1\leq l\leq t$. This implies
$$
\tau\cdot\Gamma(\underline{i})=s_{k_t}\cdots
s_{k_1}\cdot\Gamma(\underline{i})=\Gamma(s_{k_t}\cdots
s_{k_1}\cdot\underline{i})=\Gamma(\underline{i}).
$$
Therefore $\tau=1$.

\end{proof}

Before stating the main theorem of this section, we need the
following two lemmas. Let  $M\in\RImhcn$ be irreducible completely
splittable and suppose $M_{\underline{i}}\neq0$ for some
$\underline{i}=(i_1,\ldots,i_n)\in\I^n$. Recall the linear operators
$\Xi_k$ and $\Omega_k$ on $M_{\underline{i}}$
from~(\ref{Deltak})~and~(\ref{omegak}). If $s_k$ is admissible with
respect to $\underline{i}$, then $i_k\neq i_{k+1}\pm1$ and hence
$2(q(i_k)+q(i_{k+1}))\neq (q(i_k)-q(i_{k+1}))^2$. This implies that
on $M_{\underline{i}}$ the linear operator $\Omega_k$ acts as a
nonzero scalar and hence is invertible. Therefore we can define the
linear map $\widehat{\Phi}_k$ as follows:
\begin{align}
\widehat{\Phi}_k: M_{\underline{i}}&\longrightarrow M,\notag\\
z\mapsto&(s_k-\Xi_k)\frac{1}{\Omega_k} z.\notag
\end{align}

\begin{lem}\label{lem:newoperator} Let $M\in\RImhcn$ be irreducible
completely splittable. Assume that $M_{\underline{i}}\neq0$ and that
$s_k$ is admissible with respect to $\underline{i}$  for some
$\underline{i}=(i_1,\ldots,i_n)\in\I^n$ and $1\leq k\leq n-1$. Then,
\begin{enumerate}
\item $\widehat{\Phi}_k$ satisfies
\begin{align}
\widehat{\Phi}_kx_k=x_{k+1}\widehat{\Phi}_k,~\widehat{\Phi}_kx_{k+1}&=x_k\widehat{\Phi}_k,~
\widehat{\Phi}_kx_l=x_l\widehat{\Phi}_k,\label{xinterprime}\\
\widehat{\Phi}_kc_k=c_{k+1}\widehat{\Phi}_k,~\widehat{\Phi}_kc_{k+1}&=c_k\widehat{\Phi}_k,~
\widehat{\Phi}_kc_l=c_l\widehat{\Phi}_k,\label{cinterprime}
\end{align}
for $1\leq l\leq n$ with $|k-l|>1$. Hence for each $z\in
M_{\underline{i}}$, $\widehat{\Phi}_k(z)\in
M_{s_k\cdot\underline{i}}$.

\item $\widehat{\Phi}_k^2=1$, and hence $\widehat{\Phi}_k:
M_{\underline{i}}\rightarrow M_{s_k\cdot\underline{i}}$ is a
bijection.
%Hence $M_{j}\neq 0$ if
%$\underline{j}\sim\underline{i}$.

%\item
%$\widehat{\Phi}_k^2=1$.

%\item $\widehat{\Phi}_k$ is a bijection from $M_{\underline{i}}$
%to $M_{s_k\cdot\underline{i}}$. Hence $M_{j}\neq 0$ if
%$\underline{j}\sim\underline{i}$.

\item
\begin{align}
\widehat{\Phi}_j\widehat{\Phi}_l&=\widehat{\Phi}_l\widehat{\Phi}_j~
\text{ if } |j-l|>1\label{braidprime1},\\
\widehat{\Phi}_j\widehat{\Phi}_{j+1}\widehat{\Phi}_{j}&=
\widehat{\Phi}_{j+1}\widehat{\Phi}_j\widehat{\Phi}_{j+1}\label{braidprime2}.
\end{align}
whenever both sides are well-defined.
\end{enumerate}
\end{lem}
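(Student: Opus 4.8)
The plan is to deduce everything about $\widehat{\Phi}_k$ from the corresponding properties of Nazarov's intertwining element $\Phi_k$, which are collected in~(\ref{sqinter})--(\ref{braidinter}). The key observation is that on the weight space $M_{\underline{i}}$ with $s_k$ admissible, the operators $x_k^2, x_{k+1}^2$ act as the distinct scalars $q(i_k), q(i_{k+1})$, so $x_k^2 - x_{k+1}^2$ acts invertibly; consequently $\Phi_k$ acting on $M_{\underline{i}}$ can be rewritten, using~(\ref{intertw}), as $\Phi_k z = (x_k^2-x_{k+1}^2)(s_k - \Xi_k)z$ for $z \in M_{\underline{i}}$, where $\Xi_k$ is exactly the operator in~(\ref{Deltak}). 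Thus $\widehat{\Phi}_k$ differs from $\Phi_k$ only by the invertible scalar operator $(x_k^2-x_{k+1}^2)\Omega_k$, and since $\Omega_k^2 = 1 - \tfrac{2(x_k^2+x_{k+1}^2)}{(x_k^2-x_{k+1}^2)^2}$ we get the clean identity $\widehat{\Phi}_k z = \bigl((x_k^2-x_{k+1}^2)\Omega_k\bigr)^{-1}\Phi_k z$, with $\bigl((x_k^2-x_{k+1}^2)\Omega_k\bigr)^2 = (x_k^2-x_{k+1}^2)^2 - 2(x_k^2+x_{k+1}^2) = \Phi_k^2$ by~(\ref{sqinter}).

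For part~(1): the commutation relations~(\ref{xinterprime}) and~(\ref{cinterprime}) follow from~(\ref{xinter}) and~(\ref{cinter}) for $\Phi_k$ together with the fact that the scalar operator $(x_k^2-x_{k+1}^2)\Omega_k$, being a function of $x_k^2+x_{k+1}^2$ and $x_k^2-x_{k+1}^2$, commutes with $x_l$ for $l\neq k,k+1$ and with all $c_l$ (recall~(\ref{xc})), while conjugating it by the "swap" built into $\widehat\Phi_k$ sends $x_k^2-x_{k+1}^2 \mapsto x_{k+1}^2-x_k^2$ and $\Omega_k \mapsto \Omega_k$ — exactly matching the sign bookkeeping already present for $\Phi_k$. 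The conclusion $\widehat{\Phi}_k(M_{\underline{i}}) \subseteq M_{s_k\cdot\underline{i}}$ is then immediate from $\widehat{\Phi}_k x_j^2 = x_{s_k(j)}^2 \widehat{\Phi}_k$. For part~(2): compute $\widehat{\Phi}_k^2 z = \bigl((x_k^2-x_{k+1}^2)\Omega_k\bigr)^{-1}\Phi_k \bigl((x_k^2-x_{k+1}^2)\Omega_k\bigr)^{-1}\Phi_k z$; push the second scalar operator through $\Phi_k$ using part~(1)-type commutation (it becomes $\bigl((x_{k+1}^2-x_k^2)\Omega_k\bigr)^{-1}$ after the swap), and combine with $\Phi_k^2 = (x_k^2-x_{k+1}^2)^2 - 2(x_k^2+x_{k+1}^2)$ to get $\widehat\Phi_k^2 z = z$; bijectivity follows since $\widehat\Phi_k$ is its own inverse as a map $M_{\underline{i}} \leftrightarrow M_{s_k\cdot\underline{i}}$.

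For part~(3): the commuting relation~(\ref{braidprime1}) is routine from~(\ref{braidinter}) and the disjointness of the index sets, since the two scalar normalizations involve disjoint variables. The braid relation~(\ref{braidprime2}) is where the real work lies — this is the step I expect to be the main obstacle. One applies $\widehat\Phi_j\widehat\Phi_{j+1}\widehat\Phi_j$ to $z \in M_{\underline{i}}$, tracks the three scalar normalizations as they get permuted by the intervening $\Phi$'s through the chain of weight spaces $M_{\underline{i}} \to M_{s_j\cdot\underline{i}} \to M_{s_{j+1}s_j\cdot\underline{i}} \to M_{s_js_{j+1}s_j\cdot\underline{i}}$, and does the same for the other side (which lands in the same weight space since $s_js_{j+1}s_j = s_{j+1}s_js_{j+1}$). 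Using $\Phi_j\Phi_{j+1}\Phi_j = \Phi_{j+1}\Phi_j\Phi_{j+1}$ from~(\ref{braidinter}), it suffices to check that the accumulated scalar prefactor — a product of three terms of the form $\bigl((x_a^2-x_b^2)\sqrt{1 - 2(x_a^2+x_b^2)/(x_a^2-x_b^2)^2}\bigr)^{-1}$ with $\{a,b\}$ ranging over consecutive pairs among $j,j+1,j+2$ — is the same on both sides. Since this prefactor is symmetric in the three relevant squared eigenvalues in the appropriate sense (both orderings $s_js_{j+1}s_j$ and $s_{j+1}s_js_{j+1}$ realize the same permutation of $\{x_j^2,x_{j+1}^2,x_{j+2}^2\}$ and the product of the three pairwise factors depends only on the multiset of eigenvalues, not the path), the two sides agree; one must be careful that each intermediate $s_k$ is genuinely admissible so every $\Omega$ in sight is an honest nonzero scalar, which is guaranteed because $\underline i$ and all its images lie in $W(\mhcn)$ and the hypothesis "whenever both sides are well-defined" supplies admissibility of the remaining transpositions.
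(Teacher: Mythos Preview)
Your proposal is correct and follows essentially the same route as the paper: the paper also records the identity $\widehat{\Phi}_k=\Phi_k\,\dfrac{1}{x_k^2-x_{k+1}^2}\,\dfrac{1}{\Omega_k}$ on $M_{\underline{i}}$ (your formula with the scalar written on the other side), then reads off~(\ref{xinterprime})--(\ref{cinterprime}) from~(\ref{xinter})--(\ref{cinter}), gets $\widehat{\Phi}_k^2=1$ from~(\ref{sqinter}), and for~(\ref{braidprime2}) pushes the scalar normalizations through to exhibit both sides as the \emph{same} explicit scalar $C$ times $\Phi_k\Phi_{k+1}\Phi_k=\Phi_{k+1}\Phi_k\Phi_{k+1}$. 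Your symmetry argument for that scalar (the three pairwise factors in $\{q(i_k),q(i_{k+1}),q(i_{k+2})\}$ coincide along both paths) is exactly what the paper's explicit formula for $C$ encodes.
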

\begin{proof} (1)  Recalling the intertwining element $\Phi_k$
from~(\ref{intertw}), we see that
\begin{align}
\widehat{\Phi}_k=\Phi_k\frac{1}{x_k^2-x^2_{k+1}}\frac{1}{\Omega_k}.\label{sDelta}
\end{align}
This together with~(\ref{xinter})~and~(\ref{cinter})
implies~(\ref{xinterprime})~and~(\ref{cinterprime}).
By~(\ref{xinterprime}), we have for any $z\in M_{\underline{i}}$,
\begin{align}
(x^2_k-q(i_{k+1}))\widehat{\Phi}_kz=0,~
(x^2_{k+1}-q(i_k))\widehat{\Phi}_kz=0,~
(x^2_l-q(i_{l}))\widehat{\Phi}_kz=0,\text{ for all }l\neq
k,k+1.\notag
\end{align}
This means $\widehat{\Phi}_kz\in M_{s_k\cdot\underline{i}}$.

(2) By (\ref{sqinter})~and~(\ref{sDelta}), one can check that for
$z\in M_{\underline{i}}$,
$$
\widehat{\Phi}_k^2z=\Phi_k^2\frac{1}{(x_k^2-x^2_{k+1})(x^2_{k+1}-x_k^2)}\frac{1}{\Omega_k^2}z
=\Big(1-\frac{2(x_k^2+x^2_{k+1})}{(x_k^2-x^2_{k+1})^2}\Big)\frac{1}{\Omega_k^2}z=z.
$$
Hence $\widehat{\Phi}_k^2=1$ and so $\widehat{\Phi}_k$ is bijective.

(3). If $|j-l|>1$ and both $\widehat{\Phi}_j\widehat{\Phi}_l$ and
$\widehat{\Phi}_l\widehat{\Phi}_j$ are well-defined on
$M_{\underline{i}}$, then by~(\ref{xinter})~and~(\ref{sDelta}) we
see that
\begin{align}
\widehat{\Phi}_j\widehat{\Phi}_l&=\Phi_j\Phi_l
\frac{1}{\Omega_j\Omega_l(x_j^2-x^2_{j+1})(x_l^2-x^2_{l+1})}, \notag\\
\widehat{\Phi}_l\widehat{\Phi}_j&=\Phi_l\Phi_j
\frac{1}{\Omega_l\Omega_j(x_l^2-x^2_{l+1})(x_j^2-x^2_{j+1})}\notag.
\end{align}
This together with~(\ref{braidinter}) implies~(\ref{braidprime1}).
By~(\ref{sDelta}), one can check that if both
$\widehat{\Phi}_k\widehat{\Phi}_{k+1}\widehat{\Phi}_{k}$ and
$\widehat{\Phi}_{k+1}\widehat{\Phi}_k\widehat{\Phi}_{k+1}$ are
well-defined on $M_{\underline{i}}$ then
\begin{align}
\widehat{\Phi}_k\widehat{\Phi}_{k+1}\widehat{\Phi}_{k}&=C\Phi_k\Phi_{k+1}\Phi_k\notag,\\
\widehat{\Phi}_{k+1}\widehat{\Phi}_k\widehat{\Phi}_{k+1}&=C\Phi_{k+1}\Phi_k\Phi_{k+1}\notag,
\end{align}
where $C$ is the scalar
$$
C=\frac{1}{(a-b)(a-c)(b-c)}\sqrt{1-\frac{2(a+b)}{(a-b)^2}}
\sqrt{1-\frac{2(a+c)}{(a-c)^2}} \sqrt{1-\frac{2(b+c)}{(b-c)^2}}
$$
%\begin{align}
%C=&\frac{1}{(q(i_k)-q(i_{k+1}))(q(i_k)-q(i_{k+2}))(q(i_{k+1})-q(i_{k+2}))}\cdot\notag\\
%&\sqrt{1-\frac{2(q(i_k)+q(i_{k+1}))}{(q(i_k)-q(i_{k+1}))^2}}
%\sqrt{1-\frac{2(q(i_k)+q(i_{k+2}))}{(q(i_k)-q(i_{k+2}))^2}}
%\sqrt{1-\frac{2(q(i_{k+1})+q(i_{k+2}))}{(q(i_{k+1})-q(i_{k+2}))^2}}.\notag
%\end{align}
with $a=q(i_k), b=q(i_{k+1}), c=q(i_{k+2})$.
Hence~(\ref{braidprime2}) follows from~(\ref{braidinter}).
\end{proof}
\begin{rem}\label{rem:admissible}
Suppose that $M\in\text{Rep}_{\I}\mhcn$ is completely splittable. By
Lemma~\ref{lem:newoperator}, if $M_{\underline{i}}\neq 0$ and
$\underline{j}\sim\underline{i}$, then $M_{\underline{j}}\neq 0$.
\end{rem}
\begin{lem} \label{lem:newbij.}Let $M\in\operatorname{Rep}_{\I}{\mhcn}$ be irreducible
completely splittable. Suppose that $M_{\underline{i}}\neq 0$ for
some $\underline{i}\in\I^n$ and $\tau\in P_{\underline{i}}$. Write
$\tau=s_{k_t}\cdots s_{k_1}$ so that $s_{k_l}$ is admissible with
respect to $s_{k_{l-1}}\cdots s_{k_1}\cdot \underline{i}$ for $1\leq
l\leq t$. Then
$$
\widehat{\Phi}_{\tau}:
=\widehat{\Phi}_{k_t}\cdots\widehat{\Phi}_{k_1}:
M_{\underline{i}}\longrightarrow M_{\tau\cdot\underline{i}}
$$
is a bijection satisfying
$x_k\widehat{\Phi}_{\tau}=\widehat{\Phi}_{\tau}x_{\tau(k)}$ and
$c_k\widehat{\Phi}_{\tau}=\widehat{\Phi}_{\tau}c_{\tau(k)}$ for
$1\leq k\leq n$. Moreover $\widehat{\Phi}_{\tau}$ does not depend on
the choice of the expression $s_{k_t}\cdots s_{k_1}$ for $\tau$.
\end{lem}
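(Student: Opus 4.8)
The plan is to build $\widehat{\Phi}_{\tau}$ as a composition of the one-step operators of Lemma~\ref{lem:newoperator} and then to show that the result depends only on the group element $\tau$, not on the admissible expression chosen for it. For the first part, set $\underline{j}_0=\underline{i}$ and $\underline{j}_l=(s_{k_l}\cdots s_{k_1})\cdot\underline{i}$ for $1\le l\le t$, so that $s_{k_l}$ is admissible with respect to $\underline{j}_{l-1}$; since $M$ is completely splittable and $\underline{j}_{l-1}\sim\underline{i}$ we have $M_{\underline{j}_{l-1}}\ne 0$ by Remark~\ref{rem:admissible}, so Lemma~\ref{lem:newoperator}(1)--(2) applies at every step and $\widehat{\Phi}_{k_l}$ restricts to a bijection $M_{\underline{j}_{l-1}}\xrightarrow{\ \sim\ }M_{\underline{j}_l}$. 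Composing these yields the bijection $\widehat{\Phi}_{\tau}=\widehat{\Phi}_{k_t}\cdots\widehat{\Phi}_{k_1}\colon M_{\underline{i}}\xrightarrow{\ \sim\ }M_{\tau\cdot\underline{i}}$, and the intertwining relations with $x_k$ and $c_k$ then follow by induction on $t$, pushing $x_k$ (resp. $c_k$) past the factors one at a time using the identities $\widehat{\Phi}_{k}x_l=x_{s_k(l)}\widehat{\Phi}_{k}$ and $\widehat{\Phi}_{k}c_l=c_{s_k(l)}\widehat{\Phi}_{k}$ of Lemma~\ref{lem:newoperator}(1). This part is routine.

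The substantive point is independence of the admissible expression, and I would reduce it to the solution of the word problem in $S_n$: any two reduced words for a fixed element are connected by braid moves ($s_as_{a+1}s_a\leftrightarrow s_{a+1}s_as_{a+1}$ and $s_as_b\leftrightarrow s_bs_a$ for $|a-b|>1$), and any word can be brought to a reduced word using only braid moves and cancellations $s_as_a\to\emptyset$. The key elementary observation is that the condition ``$s_a$ is admissible with respect to $\underline{w}$'', namely $w_a\ne w_{a+1}\pm1$, is symmetric in $w_a$ and $w_{a+1}$; from this one checks directly that the admissibility of a two- or three-letter segment, viewed as a sequence of applications starting from a given weight, is a symmetric condition on the two or three relevant entries of that weight, so that the two sides of a braid relation (resp.\ the empty word and $s_as_a$) impose the same such condition and end at the same weight. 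Consequently every elementary move, performed inside an admissible expression, again yields an admissible expression with the same prefix, the same suffix and the same weight at which the suffix begins, all intervening weight spaces being nonzero (they are of the form $\sigma\cdot\underline{i}$ with $\sigma$ a product of admissible transpositions, hence $\sim\underline{i}$, hence nonzero by Remark~\ref{rem:admissible}); and each move leaves the composite operator on $M_{\underline{i}}$ unchanged --- a braid move by (\ref{braidprime1})--(\ref{braidprime2}) of Lemma~\ref{lem:newoperator}(3), a cancellation by $\widehat{\Phi}_a^2=1$ of Lemma~\ref{lem:newoperator}(2).

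Given two admissible expressions for the same $\tau$, I would then first reduce each --- through admissible expressions, without changing the composite operator --- to a reduced admissible expression for $\tau$, and then link the two reduced words by braid moves, again through admissible expressions without changing the operator; hence $\widehat{\Phi}_{\tau}$ is independent of the chosen expression. I expect the main obstacle to be precisely this last part: not the algebraic identities, which are supplied by Lemma~\ref{lem:newoperator}(2)--(3), but the bookkeeping needed to confirm that every elementary move used in passing between words for $\tau$ keeps us among admissible expressions (so that each operator appearing is defined) and genuinely preserves the composite. The symmetry of the admissibility condition is exactly what makes this go through, and it would be worth recording as a short remark before the proof.
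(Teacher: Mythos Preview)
Your proposal is correct and follows essentially the same route as the paper: build $\widehat{\Phi}_{\tau}$ as the composite of the one-step bijections from Lemma~\ref{lem:newoperator}, derive the intertwining relations from (\ref{xinterprime})--(\ref{cinterprime}), and obtain independence of the expression from the braid-type identities (\ref{braidprime1})--(\ref{braidprime2}). The paper's proof is terse and simply cites these identities; your write-up is more careful in invoking the word problem in $S_n$, checking that admissibility is preserved under each elementary move, and explicitly using $\widehat{\Phi}_a^2=1$ from Lemma~\ref{lem:newoperator}(2) to handle non-reduced admissible expressions---a point the paper leaves implicit.
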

\begin{proof}
Since $s_{k_l}$ is admissible with respect to $s_{k_{l-1}}\cdots
s_{k_1}\cdot \underline{i}$ for $1\leq l\leq t$, each
$\widehat{\Phi}_{k_l}$ is a well-defined bijection from
$M_{s_{k_{l-1}}\cdot s_{k_1}\cdot\underline{i}}$ to
$M_{s_{k_{l}}\cdot s_{k_1}\cdot\underline{i}}$ by
Lemma~\ref{lem:newoperator} and hence $\widehat{\Phi}_{\tau}$ is
bijective. By~(\ref{braidprime1}) and~(\ref{braidprime2}),
$\widehat{\Phi}_{\tau}$ does not depend on the choice of the
expression $s_{k_t}\cdots s_{k_1}$ for $\tau$.
Using~(\ref{xinterprime}) and~(\ref{cinterprime}), we obtain
$x_k\widehat{\Phi}_{\tau}=\widehat{\Phi}_{\tau}x_{\tau(k)}$ and
$c_k\widehat{\Phi}_{\tau}=\widehat{\Phi}_{\tau}c_{\tau(k)}$ for
$1\leq k\leq n$.
\end{proof}
%
%Recall that on the irreducible $\mpcn$-modules $L(\underline{i})$
%for $\underline{i}\in\I^n$ with $i_k\neq i_{k+1}$, we have the
%operators:
%\begin{align}
%\Delta_k&=-\Big(\frac{x_k+x_{k+1}}{x_k^2-x_{k+1}^2}+c_kc_{k+1}\frac{x_k-x_{k+1}}{x_k^2-x_{k+1}^2}\Big),\notag\\
%\Omega_k&=\sqrt{1-\frac{2(x_k^2+x_{k+1}^2)}{(x_k^2-x_{k+1}^2)^2}}
%=\sqrt{1-\frac{2(q(j_k)+q(j_{k+1}))}{(q(j_k)-q(j_{k+1}))^2}}.\notag
%\end{align}
Suppose $\underline{i}\in W^{\prime}(\mhcn)$. Recall the definition
of $L(\underline{i})^{\tau}$ from Remark~\ref{rem:twist} for
$\tau\in P_{\underline{i}}$. Denote by $D^{\underline{i}}$ the
$P_n^{\mathfrak{c}}$-module defined by \begin{align}\ds
D^{\underline{i}}=\oplus_{\tau\in P_{\underline{i}}}
L(\underline{i})^{\tau}.\label{Dunderi}
\end{align}
The following theorem is the main result of this paper.
\begin{thm}\label{thm:Classficiation}
 Suppose $\underline{i}, \underline{j}\in W'(\mhcn)$. Then,
 \begin{enumerate}
 \item $D^{\underline{i}}$ affords an irreducible $\mhcn$-module via
\begin{align}
s_kz^{\tau}= \left \{
 \begin{array}{ll}
 \Xi_kz^{\tau}
 +\Omega_kz^{s_k\tau},
 & \text{ if } s_k \text{ is admissible with respect to } \tau\cdot\underline{i}, \\
 \Xi_kz^{\tau}
 , & \text{ otherwise },
 \end{array}
 \right.\label{actionformula}
\end{align}
 for  $1\leq k\leq n-1, z\in L(\underline{i})$ and $\tau\in
P_{\underline{i}}$.  It has the same type as the irreducible
$P_n^{\mathfrak{c}}$-module $L(\underline{i})$.

\item
$D^{\underline{i}}\cong D^{\underline{j}}$ if and only if
$\underline{i}\sim\underline{j}$.

\item Every irreducible completely splittable $\mhcn$-module in $\operatorname{Rep}_{\I}\mhcn$
is isomorphic to $D^{\underline{i}}$ for some $\underline{i}\in
W'(\mhcn)$. Hence the equivalence classes $W^{\prime}(\mhcn)/\sim$
parametrize irreducible completely splittable $\mhcn$-modules in the
category $\operatorname{Rep}_{\I}\mhcn$.

\end{enumerate}
\end{thm}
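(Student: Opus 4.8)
The plan for (1) is first to check that the operators $x_l,c_l$ on $D^{\underline i}=\bigoplus_{\tau\in P_{\underline i}}L(\underline i)^\tau$ coming from the $\mpcn$-module structure on each summand, together with the operators $s_k$ of~(\ref{actionformula}), satisfy the defining relations~(\ref{braid})--(\ref{xc}) of $\mhcn$. Relations~(\ref{poly}), (\ref{clifford}) and~(\ref{xc}) hold because $D^{\underline i}$ is a $\mpcn$-module. For $s_k^2=1$ and the index-$k$ cases of~(\ref{px1}), (\ref{px2}), (\ref{pc}), write $i^\tau_k$ for the $k$-th component of $\tau\cdot\underline i$; since each $\tau\cdot\underline i$ lies in $W'(\mhcn)$ we have $i^\tau_k\neq i^\tau_{k+1}$, and restricting to the subalgebra generated by $s_k,x_k,x_{k+1},c_k,c_{k+1}$ (isomorphic to $\mathfrak{H}_2^{\mathfrak{c}}$) one sees that on each $\langle s_k\rangle$-orbit of weight spaces the operators agree exactly with those of the $\mathfrak{H}_2^{\mathfrak{c}}$-module $V(i^\tau_k,i^\tau_{k+1})$ of Proposition~\ref{prop:rank 2} --- a singleton $L(\underline i)^\tau$ with $s_k$ acting by $\Xi_k$ (and $\Xi_k^2=1$) when $i^\tau_k=i^\tau_{k+1}\pm1$, and a pair $L(\underline i)^\tau\oplus L(\underline i)^{s_k\tau}$ with $s_k$ acting by~(\ref{actionformula}) otherwise --- so these $\mathfrak{H}_2^{\mathfrak{c}}$-relations, being identities among those operators, hold on $D^{\underline i}$. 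The relation $s_js_k=s_ks_j$ for $|j-k|>1$ is immediate since $\Xi_j,\Omega_j$ and $\Xi_k,\Omega_k$ involve disjoint generators and $s_j,s_k$ commute in $S_n$. The remaining braid relation $s_ks_{k+1}s_k=s_{k+1}s_ks_{k+1}$ is the main obstacle: I would verify it weight space by weight space, using that $s_k,s_{k+1}$ permute but do not change the three entries of $\tau\cdot\underline i$ in positions $k,k+1,k+2$, so the admissibility pattern is constant on each $\langle s_k,s_{k+1}\rangle$-orbit. When no two of $i^\tau_k,i^\tau_{k+1},i^\tau_{k+2}$ differ by one, the maps $\widehat\Phi_k\colon z^\sigma\mapsto z^{s_k\sigma}$ and $\widehat\Phi_{k+1}$ are everywhere defined on the orbit and $s_k=\Xi_k+\Omega_k\widehat\Phi_k$, and the identity follows from $s_ks_{k+1}s_k=s_{k+1}s_ks_{k+1}$ in $S_n$ together with the symmetry of the scalar $C$ of Lemma~\ref{lem:newoperator}(3); equivalently it reduces to~(\ref{braidinter}) for the intertwiners $\Phi_k$ of~(\ref{intertw}), whose companions~(\ref{sqinter})--(\ref{cinter}) hold on $D^{\underline i}$ as consequences of the relations already verified ($\Phi_k$ being $0$ on the weight spaces where $s_k$ is inadmissible and $\Omega_k(x_k^2-x_{k+1}^2)\widehat\Phi_k$ elsewhere). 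The finitely many remaining configurations of $(i^\tau_k,i^\tau_{k+1},i^\tau_{k+2})$, in which some consecutive pair differs by one and a $\widehat\Phi$ is undefined, are handled by a direct rank-three computation on $L(i^\tau_k)\circledast L(i^\tau_{k+1})\circledast L(i^\tau_{k+2})$ using~(\ref{px1})--(\ref{xc}); this case analysis is the technical heart of the theorem.

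\textbf{Part (1): irreducibility and type.} By construction $x_1,\dots,x_n$ act semisimply on $D^{\underline i}$, with weight space $D^{\underline i}_{\tau\cdot\underline i}=L(\underline i)^\tau$; by Lemma~\ref{lem:lambdaP} these weights are pairwise distinct, so each weight space is $\mpcn$-irreducible of multiplicity one and any nonzero $\mhcn$-submodule $N$ is a sum of weight spaces. Then $N$ contains some $L(\underline i)^\tau$, and applying the admissible $s_m$'s --- whose action on $L(\underline i)^\tau$ contains the nonzero term $\Omega_m z^{s_m\tau}$ --- and running through the words defining $P_{\underline i}$, $N$ reaches every $L(\underline i)^{\sigma\tau}$, so $N=D^{\underline i}$; hence $D^{\underline i}$ is irreducible. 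Any $\mhcn$-endomorphism commutes with the $x_l$'s, hence preserves weight spaces and restricts to a $\mpcn$-endomorphism of $L(\underline i)^1=L(\underline i)$; this restriction is injective (since $L(\underline i)^1$ generates $D^{\underline i}$) and surjective (an endomorphism of $L(\underline i)$ extends along the $\widehat\Phi_\tau$'s), so $\operatorname{End}_{\mhcn}(D^{\underline i})\cong\operatorname{End}_{\mpcn}(L(\underline i))$ and $D^{\underline i}$ has the same type as $L(\underline i)$.

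\textbf{Part (3).} Let $M\in\RImhcn$ be irreducible completely splittable with a weight $\underline i$. Then $\underline i\in W(\mhcn)\subseteq W'(\mhcn)$ by Proposition~\ref{prop:restr.2}, and $M_{\underline j}\cong L(\underline j)$ as $\mpcn$-modules for every weight $\underline j$ by Proposition~\ref{prop:equiv.cond.}. Let $\Lambda$ be the $\sim$-class of $\underline i$. Every $\underline j\in\Lambda$ is a weight of $M$ by Remark~\ref{rem:admissible}; conversely, by Corollary~\ref{cor:per. action} the subspace $\bigoplus_{\underline j\in\Lambda}M_{\underline j}$ is $s_k$-stable for all $k$ (it is $\Xi_k$-stable on weight spaces where $s_k$ is inadmissible, and $s_kM_{\underline j}\subseteq M_{\underline j}+M_{s_k\cdot\underline j}$ with $s_k\cdot\underline j\in\Lambda$ otherwise), hence an $\mhcn$-submodule and so equals $M$; thus the weights of $M$ are exactly $\Lambda$. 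Now fix a $\mpcn$-isomorphism $g\colon L(\underline i)\to M_{\underline i}$ and, using the well-defined bijections $\widehat\Phi_\tau\colon M_{\underline i}\to M_{\tau\cdot\underline i}$ of Lemma~\ref{lem:newbij.}, set $\psi(z^\tau)=\widehat\Phi_\tau g(z)$. Since $\widehat\Phi_\tau$ intertwines the $x$- and $c$-actions compatibly with the twist of Remark~\ref{rem:Ltau}, since $\widehat\Phi_{s_k\tau}=\widehat\Phi_k\widehat\Phi_\tau$ whenever $s_k$ is admissible with respect to $\tau\cdot\underline i$, and since $s_k$ acts on $M$ by $\Xi_k+\Omega_k\widehat\Phi_k$ (resp.\ by $\Xi_k$) by Corollary~\ref{cor:per. action}, a direct check on generators shows that $\psi\colon D^{\underline i}\to M$ is an $\mhcn$-isomorphism, so $M\cong D^{\underline i}$.

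\textbf{Part (2) and conclusion.} If $\underline i\sim\underline j$, then $D^{\underline j}$ is irreducible completely splittable by Part (1) and, its set of weights being $\Lambda$, has $\underline i$ among its weights, so applying the argument of Part (3) to $M=D^{\underline j}$ yields $D^{\underline i}\cong D^{\underline j}$. Conversely, if $D^{\underline i}\cong D^{\underline j}$ then they have the same set of weights, so $\underline i$ is a weight of $D^{\underline j}$, whence $\underline i\sim\underline j$. Combining Parts (1)--(3), the assignment $\underline i\mapsto D^{\underline i}$ descends to a bijection from $W'(\mhcn)/\!\sim$ onto the set of isomorphism classes of irreducible completely splittable $\mhcn$-modules in $\RImhcn$. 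The expected main obstacle is the length-three braid relation in Part (1), more precisely the rank-three case analysis for the local configurations in which some consecutive pair of weight entries differs by one.
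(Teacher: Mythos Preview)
Your treatment of irreducibility, type, and parts (2) and (3) matches the paper closely. The substantive difference is in how you verify the length-three braid relation in part (1), and here your sketch has a gap and is less efficient than the paper's argument.

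First, the gap. In your ``easy'' case (no two of $j_k,j_{k+1},j_{k+2}$ differ by one) you assert that $s_ks_{k+1}s_k=s_{k+1}s_ks_{k+1}$ ``follows from $s_ks_{k+1}s_k=s_{k+1}s_ks_{k+1}$ in $S_n$ together with the symmetry of the scalar $C$''. But expanding $s_k=\Xi_k+\Omega_k\widehat\Phi_k$ gives eight terms on each side; the $S_n$-braid and the symmetry of $C$ match only the single top term $\Omega\widehat\Phi\,\Omega\widehat\Phi\,\Omega\widehat\Phi$. The mixed terms involving $\Xi$'s must also be matched, and this is nontrivial. Your alternative phrasing ``it reduces to~(\ref{braidinter})'' hides the same issue: you must explain \emph{how} it reduces, and (\ref{braidinter}) is not a consequence of the relations already verified on $D^{\underline i}$ --- modulo an invertible polynomial it is equivalent to the braid relation itself.

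The paper resolves this as follows. Set $\widehat{s}_k=s_k-\Xi_k$; then $\widehat{s}_k\widehat{s}_{k+1}\widehat{s}_k=\widehat{s}_{k+1}\widehat{s}_k\widehat{s}_{k+1}$ on each weight space is immediate (both sides vanish unless all three moves are admissible, in which case both equal the same scalar times $z^{s_ks_{k+1}s_k\tau}$). The key step is a direct calculation, using only~(\ref{px1}), (\ref{px2}), (\ref{pc}) and the $\mpcn$-relations already checked, that as operators on $D^{\underline i}$
\[
\Phi_k\Phi_{k+1}\Phi_k-\Phi_{k+1}\Phi_k\Phi_{k+1}
=(s_ks_{k+1}s_k-s_{k+1}s_ks_{k+1})(x_k^2-x_{k+1}^2)(x_k^2-x_{k+2}^2)(x_{k+1}^2-x_{k+2}^2).
\]
Since $\widehat{s}_k=\Phi_k(x_k^2-x_{k+1}^2)^{-1}$, the $\widehat{s}$-braid identity yields $\Phi_k\Phi_{k+1}\Phi_k=\Phi_{k+1}\Phi_k\Phi_{k+1}$ on any weight space with $j_k\neq j_{k+2}$, and then the displayed identity gives the $s$-braid relation there because the polynomial factor acts invertibly. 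Thus the paper's ``easy'' case is \emph{all} weight spaces with $j_k\neq j_{k+2}$ --- much broader than yours, and it uniformly absorbs configurations such as $(a,a\pm1,b)$ that you leave to a direct computation.

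Second, this makes the paper's ``hard'' case narrower: only $j_k=j_{k+2}$ remains. Here the $W'$-hypothesis is essential: properties~(3)--(5) of Proposition~\ref{prop:restr.2} (equivalently Lemma~\ref{lem:restr.1}) force $(j_k,j_{k+1},j_{k+2})=(0,1,0)$ or, when $p\ge3$, $(\tfrac{p-3}{2},\tfrac{p-1}{2},\tfrac{p-3}{2})$. In these two configurations $s_k=\Xi_k$ and $s_{k+1}=\Xi_{k+1}$, and the required identity is exactly the computation carried out in the proof of Lemma~\ref{lem:restr.1}. Your plan of a ``direct rank-three computation'' does not isolate this point: for a generic triple $(u,v,u)$ the braid relation \emph{fails} on the would-be module, and it is precisely the $W'$-constraints that exclude the bad triples. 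You should make that dependence explicit.
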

\begin{proof}
(1). To show the formula~(\ref{actionformula}) defines a
$\mhcn$-module structure on $D^{\underline{i}}$, we need to check
the defining relations~(\ref{braid}),~(\ref{px1}),~(\ref{px2})
and~(\ref{pc}) on $L(\underline{i})^{\tau}$ for each $\tau\in
P_{\underline{i}}$. One can show using~(\ref{xc}) that
\begin{align}
\Xi_kx_k-x_{k+1}\Xi_k=-(1+c_kc_{k+1}).\label{Deltax}
\end{align}
For $1\leq k\leq n-1$,
$(x_{\tau^{-1}(k)}z)^{s_k\tau}=x_{k+1}z^{s_k\tau}$ by
 Remark~\ref{rem:Ltau} and hence if
$s_k$ is admissible with respect to $ \tau\cdot\underline{i}$,
then
\begin{align}
s_kx_kz^{\tau}=s_k(x_{\tau^{-1}(k)}z)^{\tau}&=
\Xi_k(x_{\tau^{-1}(k)}z)^{\tau}+\Omega_k(x_{\tau^{-1}(k)}z)^{s_k\tau}\notag\\
&=\Xi_kx_kz^{\tau}+x_{k+1}\Omega_kz^{s_k\tau}\notag\\
&=(\Xi_kx_k-x_{k+1}\Xi_k)z^{\tau}+x_{k+1}(\Xi_kz^{\tau}+\Omega_kz^{s_k\tau})\notag\\
&=-(1+c_kc_{k+1})z^{\tau}+x_{k+1}s_kz^{\tau}\notag \quad\text{ by
}(\ref{Deltax}).
\end{align}
Otherwise we have
\begin{align}
s_kx_kz^{\tau}=s_k(x_{\tau^{-1}(k)}z)^{\tau}
&=\Xi_k(x_kz^{\tau})\notag\\
&=(\Xi_kx_k-x_{k+1}\Xi_k)z^{\tau}+x_{k+1}\Xi_kz^{\tau}\notag\\
&=-(1+c_kc_{k+1})z^{\tau}+x_{k+1}s_kz^{\tau}\notag \quad \text{ by
}(\ref{Deltax}).
\end{align}
Therefore~(\ref{px1}) holds. It is routine to check (\ref{px2})
and~(\ref{pc}).

It remains to prove~(\ref{braid}). It is clear by~(\ref{px2}) that
$s_ks_l=s_ls_k$ if $|l-k|>1$, so it suffices to prove $s_k^2=1$
and $s_ks_{k+1}s_k=s_{k+1}s_ks_{k+1}$. For the remaining of the
proof, let us fix $\tau\in P_{\underline{i}}$ and set
$\underline{j}=\tau\cdot\underline{i}$. One can check
using~(\ref{xc}) and ~(\ref{actionformula}) that
\begin{eqnarray*}
s_k^2z^{\tau}= \left \{
 \begin{array}{ll}
 (\Xi_k^2
 +\Omega_k^2)z^{\tau},
 & \text{ if } s_k \text{ is admissible with respect to } \underline{j}=\tau\cdot\underline{i} \\
\Xi_k^2z^{\tau}
 , & \text{ otherwise }.
 \end{array}
 \right.
\end{eqnarray*}
%Furthermore, it can be checked that
%$$
%\Delta_k^2z^{\tau}=\Big(\frac{2(x_k^2+x_{k+1}^2)}{(x_k^2-x_{k+1}^2)^2}\Big)z^{\tau},
%\quad \text{ for any }\tau\in P_{\underline{i}}, z\in
%L(\underline{i}).
%$$
Hence if $s_k$ is admissible with respect to
$\underline{j}=\tau\cdot\underline{i}$, then
$$
s_k^2z^{\tau}=\Xi_k^2z^{\tau}
 +\Omega_k^2z^{\tau}=\Big(\frac{2(x_k^2+x_{k+1}^2)}{(x_k^2-x_{k+1}^2)^2}\Big)z^{\tau}
 +\Big(1-\frac{2(x_k^2+x_{k+1}^2)}{(x_k^2-x_{k+1}^2)^2}\Big)z^{\tau}=z^{\tau}.
$$
Otherwise we have $j_{k}=j_{k+1}\pm1$. This implies $
2(q(j_{k})+q(j_{k+1}))=(q(j_{k})-q(j_{k+1}))^2 $ and hence
$$
s_k^2z^{\tau}=\Xi_k^2z^{\tau}=\frac{2(q(j_{k})+q(j_{k+1}))}
{(q(j_{k})-q(j_{k+1}))^2}z^{\tau}=z^{\tau}.
$$
Therefore $s_k^2=1$ on $D^{\underline{i}}$ for $1\leq k\leq n-1$ .
Next we shall prove $s_ks_{k+1}s_k=s_{k+1}s_ks_{k+1}$ for $1\leq
k\leq n-2$. Set $\widehat{s}_k=s_k-\Xi_{k}$ for $1\leq k\leq n-1$.
It is clear by~(\ref{actionformula}) that
\begin{eqnarray*}
\widehat{s}_kz^{\tau}= \left \{
 \begin{array}{ll}
 \Omega_kz^{s_k\tau},
 & \text{ if } s_k \text{ is admissible with respect to } \underline{j}=\tau\cdot\underline{i}, \\
 0, & \text{ otherwise }.
 \end{array}
 \right.
\end{eqnarray*}
If $j_k-j_{k+1}=\pm1$,  $j_{k+1}-j_{k+2}=\pm1$ or
$j_{k}-j_{k+2}=\pm1$, then
$\widehat{s}_k\widehat{s}_{k+1}\widehat{s}_k
=0=\widehat{s}_{k+1}\widehat{s}_k\widehat{s}_{k+1}$ on
$L(\underline{i})^{\tau}$; otherwise, one can show
using~(\ref{omegak}) that
$$\widehat{s}_k\widehat{s}_{k+1}\widehat{s}_kz^{\tau}
=\Big(\sqrt{1-\frac{2(a+b)}{(a-b)^2}}\sqrt{1-\frac{2(b+c)}{(b-c)^2}}
\sqrt{1-\frac{2(a+c)}{(a-c)^2}}\Big)z^{\tau}
=\widehat{s}_{k+1}\widehat{s}_k\widehat{s}_{k+1}z^{\tau}, $$  for
any  $z\in L(\underline{i})$, where $a=q(j_k), b=q(j_{k+1}),
c=q(j_{k+2})$. Hence
\begin{align}
\widehat{s}_k\widehat{s}_{k+1}\widehat{s}_kz^{\tau}
=\widehat{s}_{k+1}\widehat{s}_k\widehat{s}_{k+1}z^{\tau}, \text{
for any }z\in L(\underline{i}), 1\leq k\leq n-2.\label{braid'}
\end{align}
%
%This means
%$$
%\Phi_k\frac{1}{x_k^2-x_{k+1}^2}\Phi_{k+1}\frac{1}{x_{k+1}^2-x_{k+2}^2}\Phi_k\frac{1}{x_k^2-x_{k+1}^2}
%=\Phi_{k+1}\frac{1}{x_{k+1}^2-x_{k+2}^2}\Phi_k\frac{1}{x_k^2-x_{k+1}^2}\Phi_{k+1}\frac{1}{x_{k+1}^2-x_{k+2}^2}.
%$$
Fix $1\leq k\leq n-2$. If $j_{k}\neq j_{k+2}$, then
$\ds\frac{1}{(x_k^2-x_{k+1}^2)(x_k^2-x_{k+2}^2)(x_{k+1}^2-x_{k+2}^2)}$
acts as the nonzero scalar $\frac{1}{(a-b)(a-c)(b-c)}$ on
$L(\underline{i})^{\tau}$. Recalling the intertwining elements
$\Phi_k$ from~(\ref{intertw}), we see that
\begin{align}
\widehat{s}_k=\Phi_k\frac{1}{x_k^2-x^2_{k+1}}.\notag
\end{align}
This together with~(\ref{braidinter}) shows that for any $z\in
L(\underline{i})$,
$$
\widehat{s}_k\widehat{s}_{k+1}\widehat{s}_kz^{\tau}
=\Phi_k\Phi_{k+1}\Phi_k\frac{1}{(x_k^2-x_{k+1}^2)(x_k^2-x_{k+2}^2)(x_{k+1}^2-x_{k+2}^2)}z^{\tau},
$$
and
$$
\widehat{s}_{k+1}\widehat{s}_k\widehat{s}_{k+1}z^{\tau}
=\Phi_{k+1}\Phi_k\Phi_{k+1}\frac{1}{(x_k^2-x_{k+1}^2)(x_k^2-x_{k+2}^2)(x_{k+1}^2-x_{k+2}^2)}z^{\tau}.
$$
Hence by~(\ref{braid'}) we see that for any $z\in
L(\underline{i})$,
$$(\Phi_k\Phi_{k+1}\Phi_k-\Phi_{k+1}\Phi_k\Phi_{k+1})
\frac{1}{(x_k^2-x_{k+1}^2)(x_k^2-x_{k+2}^2)(x_{k+1}^2-x_{k+2}^2)}z^{\tau}=0,
$$
A tedious calculation shows that
$$
\Phi_k\Phi_{k+1}\Phi_k-\Phi_{k+1}\Phi_k\Phi_{k+1}=
(s_ks_{k+1}s_k-s_{k+1}s_ks_{k+1})(x_k^2-x_{k+1}^2)(x_k^2-x_{k+2}^2)(x_{k+1}^2-x_{k+2}^2).
$$
Therefore we obtain that if $j_{k}\neq j_{k+2}$ then
$$
s_ks_{k+1}s_kz^{\tau}=s_{k+1}s_ks_{k+1}z^{\tau},\quad \text{ for
any }z\in L(\underline{i}).
$$
Now assume $j_k=j_{k+2}$, then by Lemma~\ref{lem:restr.1} we have
either $j_{k}=j_{k+2}=0,j_{k+1}=1$ or $j_{k}=j_{k+2}=\frac{p-3}{2},
j_{k+1}=\frac{p-1}{2}$. Hence $s_{k}=\Xi_k$ and $s_{k+1}=\Xi_{k+1}$
on $L(\underline{i})^{\tau}$. We see from the proof of
Lemma~\ref{lem:restr.1} that $s_ks_{k+1}s_k=s_{k+1}s_ks_{k+1}$.
Therefore $D^{\underline{i}}$ affords a $\mhcn$-module by the
formula~(\ref{actionformula}).

Suppose $N$ is a nonzero irreducible submodule of
$D^{\underline{i}}$, then $N_{\underline{j}}\neq 0$ for some
$\underline{j}\in\I^n$. This implies
$(D^{\underline{i}})_{\underline{j}}\neq 0$ and hence
$\underline{j}\sim\underline{i}$. Since
$\tau\cdot\underline{i}\sim\underline{i}\sim\underline{j}$, by
Remark~\ref{rem:admissible} we see that
$N_{\tau\cdot\underline{i}}\neq 0$ for all $\tau\in
P_{\underline{i}}$. Observe that
$(D^{\underline{i}})_{\tau\cdot\underline{i}}\cong
L(\tau\cdot\underline{i})$ is irreducible as a $\mpcn$-module for
$\tau\in P_{\underline{i}}$. Therefore
$N_{\tau\cdot\underline{i}}=(D^{\underline{i}})_{\tau\cdot\underline{i}}$
for $\tau\in P_{\underline{i}}$ and hence $N=D^{\underline{i}}$.
This means $D^{\underline{i}}$ is irreducible.

We shall show that $D^{\underline{i}}$ has the same type as
$L(\underline{i})$. Suppose
$\Psi\in\text{End}_{\mhcn}(D^{\underline{i}})$.  Note that for
each $\tau\in P_{\underline{i}}$ and $1\leq k\leq n-1$,  if $s_k$
is admissible with respect to $\tau\cdot\underline{i}$, then for
any $z\in L(\underline{i})$,
\begin{align}
\Omega_k\Psi(z^{s_k\tau})=\Psi(\Omega_kz^{s_k\tau})=\Psi(s_kz^{\tau}-\Xi_kz^{\tau})
=s_k\Psi(z^{\tau})-\Xi_k\Psi(z^{\tau}).\label{Psisk}
\end{align}
Since $s_k$ is admissible with respect to
$\underline{j}:=\tau\cdot\underline{i}$, $j_k\neq j_{k+1}\pm1$ and
hence $\Omega_k$ acts as a nonzero scalar on
$L(\underline{i})^{s_k\tau}$. By~(\ref{Psisk}) we see that
$\Psi(z^{s_k\tau})$ is uniquely determined by $\Psi(z^{\tau})$ for
any $\tau\in P_{\underline{i}}$. Since each $\tau$ can be written as
$\tau=s_{k_t}\cdots s_{k_1}$ so that $s_{k_l}$ is admissible with
respect to $s_{k_{l-1}}\cdots s_{k_1}\cdot\underline{i}$, we deduce
$\Psi(z^{\tau})$ is uniquely determined by $\Psi(z)$ for any $z\in
L(\underline{i})$. Therefore $\Psi$ is uniquely determined by its
restriction to the $\mpcn$-submodule $L(\underline{i})$. Clearly the
image of restriction of $\Psi$ to $L(\underline{i})$ is contained in
$L(\underline{i})$ by Lemma~\ref{lem:lambdaP}. This implies
\begin{align}
\dim_{\F}{\rm End}_{\mhcn}(D^{\underline{i}})\leq \dim_{\F}{\rm
End}_{\mpcn}(L(\underline{i})).\label{dimless}
\end{align}
One the other hand, it is routine to check that each
$\mathcal{P}_n^{\mathfrak{c}}$-endomorphism
$\rho:L(\underline{i})\rightarrow L(\underline{i})$ induces a
$\mhcn$-endomorphism $\oplus_{\tau\in
P_{\underline{i}}}\rho^{\tau}:D^{\underline{i}}\rightarrow
D^{\underline{i}}$, where $\rho^{\tau}(z^{\tau})=(\rho(z))^{\tau}$.
Therefore
\begin{align}
\dim_{\F}{\rm End}_{\mhcn}(D^{\underline{i}})\geq \dim_{\F}{\rm
End}_{\mpcn}(L(\underline{i})).\notag
\end{align}
This together with ~(\ref{dimless}) shows $\dim_{\F}{\rm
End}_{\mhcn}(D^{\underline{i}})= \dim_{\F}{\rm
End}_{\mpcn}(L(\underline{i}))$  and hence $D^{\underline{i}}$ has
the same type as $\mpcn$-module $L(\underline{i})$.

(2). If $D^{\underline{i}}\cong D^{\underline{j}}$, then
$(D^{\underline{i}})_{\underline{j}}\neq 0$ and hence
$\underline{i}\sim\underline{j}$. Conversely, by
Lemma~\ref{lem:lambdaP}, there exists $\sigma\in P_{\underline{i}}$
such that $\underline{j}=\sigma\cdot\underline{i}$. By
Remark~\ref{rem:Ltau}, we have $L(\underline{j})\cong
L(\underline{i})^{\sigma}$ and hence there exists a linear map
$\phi:L(\underline{j})\rightarrow L(\underline{i})$ such that the
map $L(\underline{j})\rightarrow L(\underline{i})^{\sigma},~
u\mapsto (\phi(u))^{\sigma}$ is a
$\mathcal{P}_n^{\mathfrak{c}}$-isomorphism. For each $\pi\in
P_{\underline{j}}$, set
\begin{align}
\phi^{\pi}: L(\underline{j})^{\pi}&\longrightarrow
L(\underline{i})^{\pi\sigma}\notag\\
u^{\pi}&\mapsto (\phi(u))^{\pi\sigma}.\notag
\end{align}
It is routine to check that
$$
\oplus_{\pi\in P_{\underline{j}}}\phi^{\pi}:D^{\underline{j}}
\longrightarrow D^{\underline{i}}
$$
is a nonzero $\mhcn$-homomorphism. This means
$D^{\underline{i}}\cong D^{\underline{j}}$  since both of them are
irreducible.

(3). Suppose $M\in\operatorname{Rep}_{\I}\mhcn$ is irreducible
completely splittable with $M_{\underline{i}}\neq 0$ for some
$\underline{i}\in\I^n$.  By Proposition~\ref{prop:equiv.cond.},
there exists a $\mpcn$-isomorphism $\psi:
M_{\underline{i}}\rightarrow L(\underline{i})$. By
Lemma~\ref{lem:newbij.}, for each $\tau\in P_{\underline{i}}$, there
exists a bijection $\widehat{\Phi}_{\tau}:
M_{\underline{i}}\rightarrow M_{\tau\cdot\underline{i}}$.  Now for
$\tau\in P_{\underline{i}}$, define
$$
\psi^{\tau}: L(\underline{i})^{\tau}\longrightarrow
M_{\tau\cdot\underline{i}},\quad z^{\tau}\mapsto
\widehat{\Phi}_{\tau}(\psi(z)).
$$
By Lemma~\ref{lem:newbij.}, the bijection $\widehat{\Phi}_{\tau}$
satisfies $
\widehat{\Phi}_{\tau}x_k=x_{\tau(k)}\widehat{\Phi}_{\tau},
\widehat{\Phi}_{\tau}c_k=c_{\tau(k)}\widehat{\Phi}_{\tau}$ for
$1\leq k\leq n$. Hence for $z\in L(\underline{i}), \tau\in
P_{\underline{i}}$ and $1\leq k\leq n$,
\begin{align}\psi^{\tau}(x_kz^{\tau})&=\psi^{\tau}((x_{\tau^{-1}(k)}z)^{\tau})
=\widehat{\Phi}_{\tau}(\psi(x_{\tau^{-1}(k)}z))\notag\\
&=\widehat{\Phi}_{\tau}(x_{\tau^{-1}(k)})\psi(z)=x_k\widehat{\Phi}_{\tau}(\psi(z))
=x_k\psi^{\tau}(z^{\tau})\notag.
\end{align}
Similarly one can show that
$\psi^{\tau}(c_kz^{\tau})=c_k\psi^{\tau}(z^{\tau})$. Therefore
$\psi^{\tau}$ is a $\mpcn$-homomorphism. By
Proposition~\ref{prop:restr.2} we have $W(\mhcn)\subseteq
W^{\prime}(\mhcn)$ and hence $\underline{i}\in W'(\mhcn)$. By the
fact that $\psi^{\tau}$ is a $\mpcn$-module homomorphism for each
$\tau\in P_{\underline{i}}$, one can easily check that
$$
\oplus_{\tau\in P_{\underline{i}}}\psi^{\tau}:
D^{\underline{i}}\longrightarrow M
$$
is a $\mhcn$-module isomorphism.
\begin{rem} Observe that Theorem~\ref{thm:Classficiation} confirms a
slightly modified version of~\cite[Conjecture 52]{Le}. Leclerc
defined a completely splittable representation to be one on which
the $x_k^2, 1\leq k\leq n$ act semisimply.
\end{rem}
%This together with (2) implies
%that $\{D^{\Lambda}|~\Lambda\in W^{\prime}(\mhcn)/\sim\}$ forms a
%complete set of non-isomorphic classes of irreducible completely
%splittable $\mhcn$-modules in the category $\text{Rep}_{\I}\mhcn$.
%Indeed, for $z\in L(\underline{i}),
%\tau\in P_{\underline{i}}$ and $1\leq k\leq n-1$, if $s_k$ is
%admissible with respect to $\tau\cdot\underline{i}$, then
%\begin{align}\oplus_{\sigma\in P_{\underline{i}}}\psi^{\sigma}(s_kz^{\tau})
%&=\oplus_{\sigma\in
%P_{\underline{i}}}\psi^{\sigma}(\Delta_kz^{\tau}+\Omega_kz^{s_k\tau})\notag\\
%&=\psi^{\tau}(\Delta_kz^{\tau})+\psi^{s_k\tau}(\Omega_kz^{s_k\tau})
%=\widehat{\Phi}_{\tau}(\psi(z))
%\end{align}
%
\end{proof}
By Proposition~\ref{prop:restr.2} we have $W(\mhcn)\subseteq
W^{\prime}(\mhcn)$. By Theorem~\ref{thm:Classficiation} we obtain
the following.
\begin{cor} \label{cor:weigdes}We have
$ W(\mhcn)=W^{\prime}(\mhcn). $
\end{cor}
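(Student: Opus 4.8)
The plan is to deduce the equality $W(\mhcn)=W'(\mhcn)$ as a direct consequence of the main classification theorem together with the earlier weight analysis, without any new computation. The inclusion $W(\mhcn)\subseteq W'(\mhcn)$ is already recorded inside the proof of Theorem~\ref{thm:Classficiation}(3): by Proposition~\ref{prop:restr.2}, every weight of an irreducible completely splittable $\mhcn$-module satisfies properties (3), (4), and (5), which is exactly the defining condition for membership in $W'(\mhcn)$. So the only thing that remains is the reverse inclusion $W'(\mhcn)\subseteq W(\mhcn)$.

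For the reverse inclusion, I would argue as follows. Fix $\underline{i}\in W'(\mhcn)$. By Theorem~\ref{thm:Classficiation}(1), the $\mpcn$-module $D^{\underline{i}}=\oplus_{\tau\in P_{\underline{i}}}L(\underline{i})^{\tau}$ affords an irreducible $\mhcn$-module via the action formula~(\ref{actionformula}), and since $x_1,\ldots,x_n$ act semisimply on each summand $L(\underline{i})^{\tau}\cong L(\tau\cdot\underline{i})$, this module $D^{\underline{i}}$ is completely splittable in the sense of Definition~\ref{defn:CS}. Now the identity $\tau=1\in P_{\underline{i}}$ contributes the summand $L(\underline{i})^{1}=L(\underline{i})$, so $(D^{\underline{i}})_{\underline{i}}\neq 0$; that is, $\underline{i}$ is a weight of the irreducible completely splittable $\mhcn$-module $D^{\underline{i}}$. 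By the definition of $W(\mhcn)$ this gives $\underline{i}\in W(\mhcn)$, as desired.

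Combining the two inclusions yields $W(\mhcn)=W'(\mhcn)$. I do not anticipate any real obstacle here: both directions are essentially bookkeeping, the substantive content having already been established in Proposition~\ref{prop:restr.2} (for $\subseteq$) and in the construction of $D^{\underline{i}}$ in Theorem~\ref{thm:Classficiation} (for $\supseteq$). The only point requiring a moment's care is to confirm that $D^{\underline{i}}$ genuinely lies in $\operatorname{Rep}_{\I}\mhcn$ and is completely splittable, but this is immediate from the construction since the eigenvalues of $x_k^2$ on $L(\tau\cdot\underline{i})$ are of the form $q(i_j)$ with $i_j\in\I$, and $x_k$ acts semisimply on each $L(i_j)$ by the defining action of the modules $L(i)$.
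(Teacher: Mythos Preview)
Your argument is correct and follows exactly the route the paper takes: the inclusion $W(\mhcn)\subseteq W'(\mhcn)$ comes from Proposition~\ref{prop:restr.2}, and the reverse inclusion comes from Theorem~\ref{thm:Classficiation}(1) by observing that $D^{\underline{i}}$ is an irreducible completely splittable module having $\underline{i}$ as a weight. The paper states this tersely, but your expanded justification is precisely the intended reasoning.
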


\section{A diagrammatic classification}\label{combinatorics}
In this section, we shall give a reinterpretation of irreducible
completely splittable $\mhcn$-modules in terms of Young diagrams.

%\subsection{Placed skew shifted Young diagrams.}
Let $\lambda=(\lambda_1,\ldots, \lambda_l)$ be a partition of the
integer $|\lambda|=\lambda_1+\cdots+\lambda_l$, where $\lambda_1\geq
\dots \geq \lambda_l \geq 1$. Denote by $l(\lambda) $ the number of
nonzero parts in $\lambda$. It is known that the partition $\lambda$
can be drawn as Young diagrams.

A strict partition $\lambda$ (i.e. with distinct parts) can  be
identified with the shifted Young diagram which is obtained from the
ordinary Young diagram by shifting the $k$th row to the right by
$k-1$ squares, for all $k
> 1$. For example, let $\lambda=(4,2,1)$, the corresponding shifted Young
diagram is
$$
\young(\,\,\,\,,:\,\,,::\,)
$$

From now on, we shall always identify strict partitions with their
shifted Young diagrams. If $\lambda$ and $\mu$ are strict
partitions such that $\mu_k\leq \lambda_k$ for all $k$, we write
$\mu\subseteq\lambda$. A \textit{skew shifted Young diagram}
$\lambda/\mu$ is defined to be the diagram obtained by removing
 the shifted Young diagram $\mu$ from $\lambda$ for some strict
partitions $\mu\subseteq\lambda$ (see examples below). Note that any
skew shifted Young diagram is a union of connected components.
Moreover, different pairs of strict partitions may give an identical
skew shifted Young diagram.

A \textit{ placed skew shifted Young diagram } $(c,\lambda/\mu)$
consists of a skew shifted Young diagram $\lambda/\mu$ and a
\textit{ content function } $c:\{\text{ boxes of
}\lambda/\mu\}\longrightarrow \mathbb{Z}_+$ which is increasing
from southwest to northeast in each connected component of
$\lambda/\mu$ and satisfies the following:
\begin{enumerate}
\item $c(A)=c(B)$, \text{ if and only if  }$A$
\text{ and  }$B$ \text{ are on the same diagonal},\notag\\
\item $c(A)=c(B)+1$, \text{ if and only if  }$A
$\text{ and  }$B$ \text{ are on the adjacent diagonals},\notag\\
\item $c(A)=0$,  \text{ if the box }$A$ \text{ is located in
}$\lambda/\mu$\text{ as } $\young(A\,,:\,)$ \text{ and there is no
box below }$A$.\notag
\end{enumerate}
A standard tableau of the shape $\lambda/\mu$ is a labeling of the
skew shifted Young diagram $\lambda/\mu$ with the numbers $1,
2,\ldots, |\lambda|-|\mu|$ such that the numbers strictly increase
from left to right along each row and down each column. If $T$ is a
tableau of the shape $\lambda/\mu$, denote by $T(k)$ the box of
$\lambda/\mu$ labeled by $k$ in $T$ for  $1\leq k\leq
|\lambda|-|\mu|$.

\begin{example} Let $\lambda=(9,8,5,2,1)$ and $\mu=(7,5,4)$. The
skew shifted Young diagram $\lambda/\mu$ is as follows:
$$
\young(:::::::\,\,,::::::\,\,\,,::::::\,,:::\,\,,::::\,)
$$

A standard tableau $T$ of shape $\lambda/\mu$:
\begin{align}
\young(:::::::17,::::::258,::::::4,:::36,::::9)\notag
\end{align}
A placed skew shifted Young diagram $(c,\lambda/\mu)$:
\begin{align}
&\young(:::::::78,::::::567,::::::4,:::01,::::0)\notag
\end{align}
satisfying $(c(T(1)),\ldots, c(T(9)))=(7,5,0,4,6,1,8,7,0)$.
\end{example}
\begin{rem}\label{rem:shiftedY} For each shifted Young diagram $\lambda$,
there exists one and only one content function $c_{\lambda}$ defined
by setting the contents of boxes on the first diagonal to be $0$.
Moreover, each placed skew shifted Young diagram
 can be obtained by removing a shifted
Young diagram $\mu$ associated with $c_{\mu}$ from the shifted Young
diagram $\lambda$ associated with $c_{\lambda}$ for some strict
partitions $\mu\subseteq\lambda$.

If we modify the definition of placed skew shifted Young diagram by
allowing non-integer contents and by adding that the difference
between contents of two boxes is an integer if and only if they
belong to the same connected component, then placed skew shifted
Young diagrams may be used for the study of ``non-integral"
$\mhcn$-modules.
\end{rem}
%
%\subsection{Combinatorics for weights in the case $p=0$.}
For each $n\in\mathbb{Z}_+$, denote by $\mathcal{PS}(n)$ the set
of placed skew shifted Young diagrams with $n$ boxes and set
$$
\Delta(n):=\{((c,\lambda/\mu),T)~|~(c,\lambda/\mu)\in\mathcal{PS}(n),
T \text{ is a standard tableau of shape }\lambda/\mu\}.
$$
For each $((c,\lambda/\mu),T)\in\Delta(n)$, define
\begin{align}
\mathcal{F}((c,\lambda/\mu),T):=(c(T(1)),\ldots,c(T(n))).\label{mathcalF}
\end{align}

A vector $\underline{i}\in\mathbb{Z}_+^n$ is said to be {\em
splittable} if it satisfies that if $i_k=i_l=u$ for some $1\leq
k<l\leq n$ then $u=0$ implies $1\in\{i_{k+1},\ldots,i_{l-1}\}$ and
$u\geq 1$ implies
$\{i_k-1,i_k+1\}\subseteq\{i_{k+1},\ldots,i_{l-1}\}$. Denote by
$\nabla(n)$ the subset of $\I^n$ consisting of splittable vectors.
\begin{lem}\label{lem:plYcont} The map $\mathcal{F}$
in~(\ref{mathcalF}) sends $\Delta(n)$ to $\nabla(n)$.
\end{lem}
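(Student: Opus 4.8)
The goal is to show that for any $((c,\lambda/\mu),T)\in\Delta(n)$, the vector $\underline{i}=\mathcal{F}((c,\lambda/\mu),T)=(c(T(1)),\ldots,c(T(n)))$ lies in $\nabla(n)$, i.e. it takes values in $\I$ and it is splittable. The first, routine part is to check $\underline{i}\in\I^n$: the content function takes values in $\mathbb{Z}_+$ by definition, and when $p\geq 3$ one must observe that contents cannot exceed $\frac{p-1}{2}$ — this is built into the definition of $\mathcal{PS}(n)$ via the constraints in Proposition~\ref{prop:restr.2} (more precisely, it should be part of the definition of placed skew shifted Young diagram in this characteristic-$p$ setting, and I would invoke whatever normalization makes $c$ land in $\I$). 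The substantive part is the splittability condition, and this is where I would concentrate the argument.

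So suppose $i_k=i_l=u$ with $1\leq k<l\leq n$, meaning the boxes $A:=T(k)$ and $B:=T(l)$ have the same content $u$; by property (1) of the content function, $A$ and $B$ lie on the same diagonal. Since $T$ is a standard tableau and $k<l$, the box $B$ comes after $A$ in the tableau order, so $B$ is strictly below-and-to-the-left... actually on the same diagonal the only way to have two distinct boxes is for them to be in different connected components or different rows; the standardness forces $B$ to be weakly southwest of $A$ in reading order. The key geometric fact I would extract: because $c$ is increasing from southwest to northeast within each connected component, and $A,B$ share a content, the straight ``diagonal strip'' of boxes between $A$ and $B$ must pass through boxes on the two adjacent diagonals (contents $u-1$ and $u+1$). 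Concretely, moving from $A$ down to $B$ inside the skew shifted diagram, one traverses a staircase path; the box immediately below $A$ (if $u\geq 1$) has content $u-1$ by property (2), and is labeled by some index strictly between $k$ and $l$ in $T$ by column-standardness, giving $u-1\in\{i_{k+1},\ldots,i_{l-1}\}$; similarly a box on the diagonal of content $u+1$ lying in the relevant region is labeled between $k$ and $l$, giving $u+1\in\{i_{k+1},\ldots,i_{l-1}\}$. When $u=0$, property (3) and the shape of the corner box $\young(A\,,:\,)$ forces the box to the right of $A$ (content $1$) to appear, and again standardness of $T$ places its label strictly between $k$ and $l$.

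I would organize this by first reducing to the case where $A$ and $B$ are the ``extremal'' pair on their diagonal with no box of content $u$ strictly between them in tableau order (so that the region between $k$ and $l$ in $T$ is forced to be a single connected staircase), then doing the box-chase for $u-1$, $u+1$ (or for $1$ when $u=0$) separately, in each case using: (a) property (2) or (3) of the content function to identify a box of the required content adjacent to the relevant corner, and (b) the row/column monotonicity of the standard tableau $T$ to show that box's label is strictly between $k$ and $l$. The main obstacle — and the step deserving the most care — is (b): one must verify that the geometric neighbour box actually lies, in the tableau $T$, at a position whose label is squeezed strictly between $k$ and $l$, which requires a careful case analysis of whether $B$ is directly below $A$, directly left, or in a separate row reached by a longer staircase, and using that $T$ increases along rows and down columns. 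The characteristic-$p$ constraint only enters to guarantee $\underline{i}\in\I^n$; the splittability condition itself is purely combinatorial and characteristic-independent, which is consistent with how $\nabla(n)$ is defined.
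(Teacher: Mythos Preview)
Your approach is essentially the paper's, and the plan is sound. Two clarifications will tighten it.

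First, after your WLOG reduction (no box of content $u$ with label strictly between $k$ and $l$), the geometry is completely pinned down: $T(l)$ must sit immediately southeast of $T(k)$ on the diagonal, i.e.\ if $T(k)$ is at $(r,c)$ then $T(l)$ is at $(r+1,c+1)$. There is no ``longer staircase'' case to analyze, so your worry about ``a separate row reached by a longer staircase'' evaporates. The box at $(r,c+1)$, which has content $u+1$, is then forced to lie in $\lambda/\mu$ (since $(r+1,c+1)\in\lambda$ gives $(r,c+1)\in\lambda$, and $(r,c)\notin\mu$ gives $(r,c+1)\notin\mu$), and its label $j$ satisfies $k<j$ by row-standardness and $j<l$ by column-standardness. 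When $u\geq 1$, the box at $(r+1,c)$ (content $u-1$) likewise lies in $\lambda/\mu$ --- here one uses that $\mu$ is a \emph{strict} partition to get $(r+1,c)\notin\mu$ --- with label strictly between $k$ and $l$ by column- and row-standardness. When $u=0$, there is no box at $(r+1,c)$ because contents are nonnegative. This is exactly the paper's argument, just with the single configuration made explicit.

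Second, on the $\I^n$ issue: you are right to flag it, but your assertion that the bound is ``built into the definition of $\mathcal{PS}(n)$'' is not correct --- the paper's content function takes values in $\mathbb{Z}_+$ with no upper bound, so for $p\geq 3$ the map $\mathcal{F}$ as literally stated need not land in $\I^n$. The paper's own proof checks only splittability and does not address this; the content-bound restriction is imposed later, via the subsets $\mathcal{PS}_{\frac{p-1}{2}}(n)$ and $\mathcal{PS}^*_u(m)$ in Section~\ref{combinatoricsp}. So treat the $\I^n$ membership as a separate bookkeeping matter handled downstream, and take the lemma's real content to be the splittability claim, which your argument handles correctly.
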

\begin{proof} Suppose
$((c,\lambda/\mu),T)\in\Delta(n)$, we need to show that
$(c(T(1)),\ldots,c(T(n)))$ is splittable.  Suppose
$c(T(k))=c(T(l))=u$  for some $1\leq k<l\leq n$. Without loss of
generality, we can assume that
$u\notin\{c(T(k+1)),\ldots,c(T(l-1))\}$. This means that there is a
configuration in $T$ of the form
$$
\young(k,:l).
$$
Since $(c,\lambda/\mu)$ is a placed skew shifted Young diagram and
$T$ is standard, there exists a box labeled by $j$ located in $T$ as
in the configuration
$$
\young(kj,:l)
$$
for some $k<j<l$ and moreover $c(T(j))=u+1$. If $u=0$, then there is
no box below the box labeled by $k$ and $c(T(j))=1$. This implies
$1\in\{c(T(k+1)),\ldots,c(T(l-1))\}$. If $u\geq 1$, then there is a
box labeled by $t$ below the box labeled by $k$ and $c(T(t))=u-1$,
that is, $T$ contains the following configuration
$$
\young(ks,tl)
$$
for some $k<s\neq t<l$. This implies that
$\{u-1,u+1\}\subseteq\{c(T(k+1)),\ldots,c(T(l-1))\}$. Hence
$(c(T(1)),\ldots,c(T(n)))\in\nabla(n)$.
\end{proof}

Given $\underline{i}\in \nabla(n)$, by induction on $n$ we can
produce a pair
$\mathcal{G}(\underline{i})=((c,\lambda/\mu),T)\in\Delta(n)$
satisfying
 $c(T(k))=i_k$ for $1\leq k\leq n$ .
 If $n=1$, let $\mathcal{G}(\underline{i})$ be a box labeled by $1$
 with content $i_1$.  Assume inductively
that $\mathcal{G}(\underline{i}^{\prime})=((c',\lambda'/\mu'),T')\in
\Delta(n-1)$ is already defined, where
$\underline{i}^{\prime}=(i_1,\ldots,i_{n-1})\in \nabla(n-1)$. Set
$u=i_n$.

\noindent\textit{Case 1}:$(c',\lambda'/\mu')$ contains neither a box
with content $u-1$ nor a box with content $u+1$. Adding a new
component consisting of one box labeled by $n$ with content $u$ to
$T'$, we obtain a new placed skew shifted Young diagram
$(c,\lambda/\mu)$ and a standard tableau $T$ of shape $\lambda/\mu$.
Set $\mathcal{G}(\underline{i})=((c,\lambda/\mu), T)$.

\noindent\textit{Case 2}:  $(c',\lambda'/\mu')$ contains boxes with
content $u-1$ but no box with content $u+1$. This implies
$u+1\notin\{i_1,\ldots,i_n\}$. Since $(i_1,\ldots,i_n)$ is
splittable,  $u$ does not appear in $\underline{i}'$ and hence $u-1$
appears only once in $\underline{i}'$ by Lemma~\ref{lem:restr.2}.
Therefore there is no box of content $u$ and only one box denoted by
$A$ with content $u-1$ in $((c',\lambda'/\mu'),T')$. So we can add a
new box labeled by $n$ with content $u$ to the right of $A$ to
obtain a new tableau $T$ of shape $(c,\lambda/\mu)$. Set
$\mathcal{G}(\underline{i})=((c,\lambda/\mu), T)$ . Observe that
there is no box above $A$ in the column containing $A$ since there
is no box of content $u$ in $((c',\lambda'/\mu'),T')$. Hence
$\mathcal{G}(\underline{i})\in\Delta(n)$.

\noindent\textit{Case 3}: $(c',\lambda'/\mu')$ contains boxes with
content $u+1$ but no box with content $u-1$. This implies
$u-1\notin\{i_1,\ldots,i_n\}$. Since $(i_1,\ldots,i_n)$ is
splittable,  $u$ does not appear in $\underline{i}'$ and hence $u+1$
appears only once in $\underline{i}'$ by Lemma~\ref{lem:restr.2}.
Therefore $((c',\lambda'/\mu'),T')$ contains only one box denoted by
$B$ with content $u+1$ and no box with content $u$. This means there
is no box below $B$ in $((c',\lambda'/\mu'),T')$. Adding a new box
labeled by $n$ with content $u$ below $B$, we obtain a new tableau
$T$ of shape $(c,\lambda/\mu)$. Set
$\mathcal{G}(\underline{i})=((c,\lambda/\mu), T)$. Clearly
$\mathcal{G}(\underline{i})\in\Delta(n)$.

\noindent\textit{Case 4}: $(c',\lambda'/\mu')$ contains boxes with
contents $u-1$ and $u+1$. Let $C$ and $D$ be the last boxes on the
diagonals with content $u-1$ and $u+1$, respectively. Suppose $C$ is
labeled by $s$ and $D$ is labeled by $t$. Then $i_s=u-1, i_t=u+1$
and moreover $u-1\notin\{i_{t+1},\ldots,i_{n-1}\},
u+1\notin\{i_{s+1},\ldots,i_{n-1}\}$. Since $i_n=u$, by
Lemma~\ref{lem:restr.2} we see that
$u\notin\{i_{t+1},\ldots,i_{n-1}\}$ and
$u\notin\{i_{s+1},\ldots,i_{n-1}\}$.  This implies that there is no
box below $C$ and no box to the right of $D$ in
$((c',\lambda'/\mu'),T')$. Moreover $C$ and $D$ must be of the
following shape
$$
\young(:C,D).
$$
Add a new box labeled by $n$ to the right of $D$ and below $C$ to
obtain a new tableau $T$ of shape $(c,\lambda/\mu)$. Set
$\mathcal{G}(\underline{i})=((c,\lambda/\mu), T)$. It is clear that
$\mathcal{G}(\underline{i})\in\Delta(n)$.

Therefore we obtain a map
\begin{align}
\mathcal{G}: \nabla(n)\longrightarrow \Delta(n)\label{mathcalG}
\end{align}
satisfying $\underline{i}=(c(T(1)),\ldots, c(T(n)))$ if
$\mathcal{G}(\underline{i})=((c,\lambda/\mu), T)$. In this case,
we will say that $\mathcal{G}(\underline{i})$ affords the placed
skew shifted Young diagram $(c,\lambda/\mu)$.

\begin{example}\label{example1} Suppose $n=5$. The map $\mathcal{G}$ maps
the splittable vector $\underline{i}=(1,2,0,1,0)\in\nabla(5)$ to the
pair $((c,\lambda/\mu),T)\in\Delta(5)$ with
\begin{align}
(c,\lambda/\mu)=\young(12,01,:0),\qquad T=\young(12,34,:5).\notag
\end{align}

\end{example}
%\begin{prop}\label{prop:p0comb.des1} Each $\underline{i}\in SV(n)$
%uniquely determines a pair
%$\mathcal{G}(\underline{i})=((c,\lambda/\mu),T)$ of a placed skew
%shifted Young diagram $(c,\lambda/\mu)$ with $n$ boxes and a
%standard tableau $T$ of shape $\lambda/\mu$. Moreover,
%$(c(T(1)),\ldots,c(T(n)))=(i_1,\ldots,i_n)$.
%\end{prop}
%
%\begin{proof}
%\end{proof}
%
\begin{prop}\label{onto}
The map $\mathcal{G}$ in~(\ref{mathcalG}) is a bijection from
$\nabla(n)$ to $\Delta(n)$ with inverse $\mathcal{F}$.
\end{prop}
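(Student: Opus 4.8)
The plan is to check that $\mathcal{F}$ and $\mathcal{G}$ are two-sided inverses of each other, i.e. $\mathcal{F}\circ\mathcal{G}=\mathrm{id}_{\nabla(n)}$ and $\mathcal{G}\circ\mathcal{F}=\mathrm{id}_{\Delta(n)}$; both composites make sense because $\mathcal{F}$ maps $\Delta(n)$ into $\nabla(n)$ by Lemma~\ref{lem:plYcont}, while the construction of $\mathcal{G}$ in Cases~1--4 produces an element of $\Delta(n)$. The identity $\mathcal{F}\circ\mathcal{G}=\mathrm{id}_{\nabla(n)}$ is built into the definition of $\mathcal{G}$: by construction $\mathcal{G}(\underline{i})=((c,\lambda/\mu),T)$ has $c(T(k))=i_k$ for all $k$, which says exactly that $\mathcal{F}(\mathcal{G}(\underline{i}))=\underline{i}$. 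So the work is to prove $\mathcal{G}(\mathcal{F}(x))=x$ for all $x\in\Delta(n)$, which I would do by induction on $n$; the base case $n=1$ is trivial.

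Fix $x=((c,\lambda/\mu),T)\in\Delta(n)$ and put $\underline{i}=\mathcal{F}(x)$. Since $T$ is standard with largest entry $n$, the box $A:=T(n)$ has no box of $\lambda/\mu$ immediately to its right or immediately below it. First I would show that $A$ is a removable corner of $\lambda$: it must be the rightmost box of its row of $\lambda$ (otherwise the box to its right, not being in $\lambda/\mu$, would lie in $\mu$, forcing $A\in\mu$), and it has no box of $\lambda$ directly below it (otherwise, by strictness of $\lambda$, that box would be the rightmost box of the following row of $\lambda$ and would lie in $\mu$, which by strictness of $\mu$ forces the row of $\lambda/\mu$ containing $A$ to be empty---impossible). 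Hence $\lambda':=\lambda\setminus\{A\}$ is a strict partition with $\mu\subseteq\lambda'$; the restriction $T'$ of $T$ to $\{1,\dots,n-1\}$ is a standard tableau of shape $\lambda'/\mu$; and the restriction $c'$ of $c$ still satisfies the defining conditions~(1)--(3) of a content function, since deleting $A$ creates no new occurrence of the configuration in condition~(3). Thus $((c',\lambda'/\mu),T')\in\Delta(n-1)$, and since its image under $\mathcal{F}$ is $\underline{i}':=(i_1,\dots,i_{n-1})$, the inductive hypothesis gives $((c',\lambda'/\mu),T')=\mathcal{G}(\underline{i}')$.

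The crucial step is then to show that, given the placed skew shifted Young diagram and tableau $\mathcal{G}(\underline{i}')=((c',\lambda'/\mu),T')$, there is exactly one way to attach a new box, labelled $n$ and of content $u:=i_n$, so as to obtain an element of $\Delta(n)$, and that this forced attachment is the one prescribed in the corresponding Case of the construction of $\mathcal{G}$---so that $\mathcal{G}(\underline{i})=x$, completing the induction. The inputs are: the diagonal conditions~(1),(2) determine the diagonal that any box of content $u$ occupies; standardness forces the new box to be the last box of $\lambda/\mu$ on that diagonal, with nothing of $\lambda/\mu$ to its right or below it; and Lemma~\ref{lem:restr.2}, applied to $\underline{i}\in\nabla(n)$, controls how many boxes of contents $u-1$, $u$ and $u+1$ are already present. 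One then runs through the four mutually exclusive possibilities for $(c',\lambda'/\mu)$ relative to $u$---no box of content $u-1$ and none of content $u+1$; a box of content $u-1$ but none of content $u+1$; a box of content $u+1$ but none of content $u-1$; boxes of both contents $u-1$ and $u+1$---and checks that these are precisely Cases~1, 2, 3, 4, in each of which there is a unique legal position for the new box. It follows that $\mathcal{G}\circ\mathcal{F}=\mathrm{id}_{\Delta(n)}$, and combined with the first paragraph this shows $\mathcal{G}$ is a bijection with inverse $\mathcal{F}$.

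I expect the main obstacle to be this uniqueness-of-attachment claim, and within it Case~4: one has to show that the last boxes $C$ (content $u-1$) and $D$ (content $u+1$) are necessarily in the relative position $\young(:C,D)$ and that no box of content $u$ is yet present, so that the new box is forced. This is the geometric heart of the argument, where the content conditions~(1)--(3) and Lemma~\ref{lem:restr.2} must be used together carefully; the deletion step in the second paragraph and the verification that each Case of $\mathcal{G}$ truly lands in $\Delta(n)$ are routine by comparison.
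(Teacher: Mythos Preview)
Your proposal is correct and follows essentially the same approach as the paper: show $\mathcal{F}\circ\mathcal{G}=\mathrm{id}$ directly from the construction, then prove $\mathcal{G}\circ\mathcal{F}=\mathrm{id}$ by induction on $n$, removing the box labelled $n$ and checking case by case that the procedure defining $\mathcal{G}$ reattaches it in the same position. The paper's own proof is in fact much terser than yours---it simply asserts that removing $A$ yields an element of $\Delta(n-1)$ and that ``one can check case by case that $B$ coincides with $A$''---so your outline of the removability argument and of the uniqueness-of-attachment claim (including the role of Lemma~\ref{lem:restr.2} in Case~4) supplies detail the paper omits; note, incidentally, that once $A$ is rightmost in its row of $\lambda$, strictness of $\lambda$ already forces there to be no box of $\lambda$ directly below it, so your separate argument for that point is unnecessary.
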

%Suppose $\underline{i}\in
%SV(n)$ and $((c,\lambda/\mu),T)$ is a pair of placed skew shifted
%Young diagram $(c,\lambda/\mu)$ with $n$ boxes and standard
%tableau $T$ of shape $\lambda/\mu$. Then
%$\mathcal{F}\circ\mathcal{G}(\underline{i})=\underline{i}$ and
%$\mathcal{G}\circ\mathcal{F}((c,\lambda/\mu))=(c,\lambda/\mu)$.
%\end{prop}
%
\begin{proof} It is clear that
$\mathcal{F}\circ\mathcal{G}(\underline{i})=\underline{i}$ for any
$\underline{i}\in \nabla(n)$ by~(\ref{mathcalG}). It remains to
prove that
$\mathcal{G}\circ\mathcal{F}((c,\lambda/\mu),T)=((c,\lambda/\mu),T)$
for any $((c,\lambda),T)\in\Delta(n)$. We shall proceed by induction
on $n$. Denote by $A$ the box labeled by $n$ in $T$. Removing $A$
from $(c,\lambda/\mu)$ and $T$, we obtain a new pair
$((c',\lambda'/\mu'),T')\in \Delta(n-1)$. By induction we see that
$$\mathcal{G}\circ\mathcal{F}(((c',\lambda'/\mu'),T'))
=((c',\lambda'/\mu'),T').$$ This means
$\mathcal{G}((c(T(1)),\ldots,c(T(n-1))))=((c',\lambda'/\mu'),T')$.
By adding a box denoted by $B$ labeled by $n$ with content $c(T(n))$
to $((c',\lambda'/\mu'),T')$ by the procedure for defining
$\mathcal{G}$, we obtain $\mathcal{G}((c(T(1)),\ldots,c(T(n))))$.
One can check case by case that $B$ coincides with $A$ and hence
$\mathcal{G}((c(T(1)),\ldots,c(T(n))))=((c,\lambda/\mu),T)$. This
means $\mathcal{G}\circ\mathcal{F}((c,\lambda/\mu),T)
=\mathcal{G}((c(T(1)),\ldots,c(T(n))))=((c,\lambda/\mu),T)$.
\end{proof}
\begin{lem}\label{lem:pocomb.des2}
Suppose $\underline{i},\underline{j}\in \nabla(n)$. Then
$\underline{i}\sim\underline{j}$ if and only if
$\mathcal{G}(\underline{i})$ and $\mathcal{G}(\underline{j})$ afford
the same placed skew shifted Young diagram.
\end{lem}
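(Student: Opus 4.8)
The plan is to prove the two implications separately, using the recursive construction of $\mathcal{G}$ together with the local geometry of shifted skew diagrams. First I would record that $\nabla(n)$ is stable under admissible transpositions: if $\underline{i}\in\nabla(n)$ and $s_k$ is admissible with respect to $\underline{i}$, then $i_k\neq i_{k+1}$ (an equality would already violate splittability) and $|i_k-i_{k+1}|\geq 2$, and in checking the defining condition of $\nabla(n)$ for $s_k\cdot\underline{i}$ one only has to examine pairs of equal entries one of whose positions is $k$ or $k+1$; in each such case the interval of intervening entries for $s_k\cdot\underline{i}$ differs from the one for $\underline{i}$ only by inserting or deleting the value $i_k$ or $i_{k+1}$, which cannot spoil the required membership since $|i_k-i_{k+1}|\geq 2$. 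Hence $\sim$ restricts to $\nabla(n)$, every $\sim$-chain joining elements of $\nabla(n)$ stays inside $\nabla(n)$, and $\mathcal{G}$ is defined all along it.

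For $\Rightarrow$ it suffices to treat a single step $\underline{j}=s_k\cdot\underline{i}$ with $s_k$ admissible. The recursive constructions of $\mathcal{G}(\underline{i})$ and $\mathcal{G}(\underline{j})$ coincide through step $k-1$, and at steps $k,k+1$ they insert, in the two opposite orders, a box of content $i_k$ and a box of content $i_{k+1}$. The cell into which Cases 1--4 put the content-$i_{k+1}$ box is determined solely by the set and positions of boxes of content $i_{k+1}\pm 1$; since $i_k\notin\{i_{k+1}-1,i_{k+1}+1\}$, inserting the content-$i_k$ box changes neither, so that cell is the same whether or not the content-$i_k$ box is already present, and symmetrically for the content-$i_k$ box. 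Therefore after steps $k,k+1$ the two partial placed skew shifted Young diagrams agree (only the labels $k$ and $k+1$ of the two new cells are interchanged), and steps $k+2,\dots,n$ run identically. Thus $\mathcal{G}(\underline{i})$ and $\mathcal{G}(\underline{j})$ afford the same placed skew shifted Young diagram, and chaining gives the general case.

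For $\Leftarrow$ I would induct on $n$, the case $n=1$ being trivial. Write $\mathcal{G}(\underline{i})=((c,\lambda/\mu),S)$ and $\mathcal{G}(\underline{j})=((c,\lambda/\mu),T)$, and let $B=T^{-1}(n)$, a removable corner of $\lambda/\mu$. The crux is a promotion claim: there is $\underline{i}''\sim\underline{i}$ whose tableau $\mathcal{G}(\underline{i}'')=((c,\lambda/\mu),S'')$ satisfies $(S'')^{-1}(n)=B$. To prove it, if $B$ currently carries the label $m<n$ and $D$ is the box carrying $m+1$, then $D$ is not directly south or east of $B$ (those cells are empty, $B$ being a removable corner), so exchanging the labels $m$ and $m+1$ is a legal tableau move yielding a standard tableau of the same $(c,\lambda/\mu)$; on content vectors it is the transposition $s_m$, applied to a vector still in $\nabla(n)$, and $s_m$ is admissible as soon as $c(D)\neq c(B)$ and $c(D)\neq c(B)\pm 1$. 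The first inequality holds because the content vector is splittable (Lemma~\ref{lem:plYcont}), hence has distinct consecutive entries. The second is the geometric core: if $D$ lies in the same component as $B$, every cell of that component on a diagonal of content $c(B)\pm 1$ is either directly adjacent to $B$ (south or east, impossible since empty; north or west, forcing a label $<m$), or strictly north-west of $B$ (again a label $<m$), the would-be cells to the south-east on those diagonals being absent next to a removable corner; while if $D$ lies in a different component, one uses that distinct components of a shifted skew diagram have all their contents at distance $\geq 2$ — this follows by examining a pair of consecutive rows of $\lambda/\mu$ lying in different components and noting that the relevant $\lambda$- and $\mu$-parts force a content gap of at least $2$ between what lies above and what lies below. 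Relabelling does not change the shape, so $B$ remains a removable corner and the promotion iterates until $B$ carries $n$.

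Granting the claim, $S''$ and $T$ both carry $n$ at $B$; deleting that box gives two standard tableaux of the common placed skew shifted Young diagram obtained from $(c,\lambda/\mu)$ by removing $B$, with content vectors the length-$(n-1)$ prefixes of $\underline{i}''=\mathcal{F}((c,\lambda/\mu),S'')$ and of $\underline{j}$ and with equal last entry $c(B)$ (here I use that $\mathcal{F}$ and $\mathcal{G}$ are inverse bijections, Proposition~\ref{onto}). By the induction hypothesis these prefixes are $\sim$-equivalent through admissible transpositions among the first $n-1$ positions, which remain admissible for the length-$n$ vectors and fix the last coordinate; hence $\underline{i}''\sim\underline{j}$, and therefore $\underline{i}\sim\underline{i}''\sim\underline{j}$. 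I expect the geometric input inside the promotion claim — the two facts that near a removable corner no same-component box of adjacent content can be the immediate successor, and that distinct components of a shifted skew diagram have contents at distance $\geq 2$ — to be the only real difficulty; the remainder is bookkeeping with the inductive definition of $\mathcal{G}$ and with Proposition~\ref{onto}.
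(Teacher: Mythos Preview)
Your argument is correct and follows essentially the same strategy as the paper's proof: the forward implication is handled identically (reduce to a single admissible $s_k$ and observe that the two insertion orders in the recursive construction of $\mathcal{G}$ yield the same shape), and the backward implication is proved by the same induction-plus-promotion scheme. The only organizational difference is that the paper routes both $\underline{i}$ and $\underline{j}$ through a fixed canonical tableau $T_0$ (the row-reading tableau) and promotes the last box of the last row, whereas you promote directly toward the removable corner $B=T^{-1}(n)$; this makes your promotion step require the extra observation that no box on a diagonal adjacent to $B$ lies in a row below $B$ (automatic for the paper's choice, since $A$ sits in the last row), but you correctly derive this from removability. Your explicit checks that $\nabla(n)$ is $\sim$-stable and that distinct components have contents at distance $\geq 2$ are details the paper leaves implicit or asserts as ``clear''; the route is the same.
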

\begin{proof} Suppose that
$\mathcal{G}(\underline{i})$ and $\mathcal{G}(\underline{j})$ afford
the same placed skew shifted Young diagram $(c,\lambda/\mu)$. This
means that there exist standard tableaux $T$ and $S$ of shape
$\lambda/\mu$ such that $(i_1,\ldots,i_n)=(c(T(1)),\ldots,c(T(n)))$
and $(j_1,\ldots,j_n)=(c(S(1)),\ldots,c(S(n)))$. We shall prove
$\underline{i}\sim\underline{j}$ by induction on $n$. Let $T_0$ be
the tableau of shape $\lambda/\mu$ obtained by filling in the
numbers $1,\ldots,n$ from left to right along the rows, starting
from the first row and going down. Clearly $T_0$ is standard and
hence we have $(c(T_0(1)),\ldots,c(T_0(n)))\in \nabla(n)$ by
Lemma~\ref{lem:plYcont}. Let $A$ be the last box of the last row of
$\lambda/\mu$. Then in $T_0$, $A$ is occupied by $n$. Suppose in
$T$, $A$ is occupied by the number $k$. Clearly $k+1$ and $k$ do not
lie on adjacent diagonals in $T$, hence the transposition $s_k$ is
admissible with respect to $\underline{i}$. So we can apply $s_k$ to
swap $k$ and $k+1$, then to swap $k+1$ and $k+2$, and finally we
obtain a new standard tableau $T_1$ in which $A$ is occupied by $n$
and moreover $\underline{i}\sim(c(T_1(1)),\ldots,c(T_1(n)))$.
Observe that $A$ is occupied by $n$ in both $T_1$ and $T_0$. Hence
both $\mathcal{G}((c(T_1(1)),\ldots,c(T_1(n-1))))$ and
$\mathcal{G}((c(T_0(1)),\ldots,c(T_0(n-1))))$ contains the placed
skew shifted Young diagram obtained by removing $A$ from
$(c,\lambda/\mu)$. By induction we have
$(c(T_1(1)),\ldots,c(T_1(n-1)))$ is equivalent to
$(c(T_0(1)),\ldots,c(T_0(n-1)))$ and then
$(c(T_1(1)),\ldots,c(T_1(n)))\sim(c(T_0(1)),\ldots,c(T_0(n)))$.
Therefore we obtain $\underline{i}\sim(c(T_0(1)),\ldots,c(T_0(n)))$.
Similarly, we can apply the above argument to $P$ to obtain
$\underline{j}\sim(c(T_0(1)),\ldots,c(T_0(n)))$. Hence
$\underline{i}\sim\underline{j}$.

%To show that if $\underline{i}\sim\underline{j}$ then
%$\mathcal{G}(\underline{i})$ and $\mathcal{G}(\underline{j})$ have
%the same placed skew shifted Young diagram,
Conversely, it suffices to check the case when
$\underline{j}=s_k\cdot\underline{i}$, where $s_k$ is admissible
with respect to $\underline{i}$ for some $1\leq k\leq n-1$. This is
reduced to show that $\mathcal{G}((i_1,\ldots,i_{k-1},i_k,i_{k+1}))$
and $\mathcal{G}((i_1,\ldots,i_{k-1},i_{k+1},i_k))$ afford the same
placed skew shifted Young diagram. Suppose
$\mathcal{G}((i_1,\ldots,i_{k-1}))$ affords the placed skew shifted
Young diagram $(c,\lambda/\mu)$.  Since $s_k$ is admissible with
respect to $\underline{i}$, we have $i_k\neq i_{k+1}\pm1$ and hence
the resulting placed skew shifted Young diagram obtained by adding
two boxes with contents $i_k, i_{k+1}$ in two different orders to
$(c,\lambda/\mu)$ via the procedure for defining $\mathcal{G}$ are
identical.

\end{proof}

\subsection{A diagrammatic classification for
$p=0$}\label{combinatorics0}

In this subsection, we assume that $p=0$.
By Proposition~\ref{prop:restr.2},  $W'(\mhcn)$ consists of all
splittable vectors in $\mathbb{Z}_+^n$ and hence
$W'(\mhcn)=\nabla(n)$.  Recall the definition of $\mhcn$-module
$D^{\underline{i}}$ from Theorem~\ref{thm:Classficiation} for
$\underline{i}\in W'(\mhcn)$. Suppose
$(c,\lambda/\mu)\in\mathcal{PS}(n)$, by Proposition~\ref{onto} there
exists $\underline{i}\in W'(\mhcn)$ such that
$\mathcal{G}(\underline{i})$ affords $(c,\lambda/\mu)$. Let
\begin{align}
D(c,\lambda/\mu)=D^{\underline{i}}.\label{D(c,lambda)}
\end{align}
Note that if $\underline{j}\in W'(\mhcn)$ satisfies that
$\mathcal{G}(\underline{j})$ also affords $(c,\lambda/\mu)$, then
$\underline{i}\sim\underline{j}$ by Lemma~\ref{lem:pocomb.des2} and
hence the $\mhcn$-module $D(c,\lambda/\mu)$ is unique (up to
isomorphism) by Theorem~\ref{thm:Classficiation}(2).

For $(c,\lambda/\mu)\in\mathcal{PS}(n)$, denote
$\gamma_0(c,\lambda/\mu)$ by the number of boxes with content zero
in $(c,\lambda/\mu)$ and let $f^{\lambda/\mu}$ be the number of
standard tableaux of shape $\lambda/\mu$.

The following is a Young diagrammatic reformulation of
Theorem~\ref{thm:Classficiation} for $p=0$. Note that it confirms
\begin{thm}\label{thm:dimandparam.} Suppose that
$(c,\lambda/\mu)\in\mathcal{PS}(n)$ and write
$\gamma_0=\gamma_0(c,\lambda/\mu)$.

(1) $D(c,\lambda/\mu)$ is type
 $\texttt{M}$ if $\gamma_0$ is even and is type
$\texttt{Q}$ if $\gamma_0$ is odd. Moreover, $\dim
D(c,\lambda/\mu)=2^{n-\lfloor\frac{\gamma_0}{2}\rfloor}f^{\lambda/\mu}$.

(2) The $\mhcn$-modules $D(c,\lambda/\mu)$ for
$(c,\lambda/\mu)\in\mathcal{PS}(n)$ form a complete set of pairwise
non-isomorphic irreducible completely splittable $\mhcn$-modules in
$\operatorname{Rep}_{\I}\mhcn$.
\end{thm}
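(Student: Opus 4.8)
The plan is to derive Theorem~\ref{thm:dimandparam.} by assembling Theorem~\ref{thm:Classficiation} with the combinatorial dictionary built in this section, so essentially no new computation is required. Recall from the opening of subsection~\ref{combinatorics0} that when $p=0$ we have $W'(\mhcn)=\nabla(n)$, so $D^{\underline i}$ is defined for every splittable vector and, via Proposition~\ref{onto}, $D(c,\lambda/\mu)$ is defined by choosing any $\underline i\in\nabla(n)$ with $\mathcal G(\underline i)$ affording $(c,\lambda/\mu)$; by Lemma~\ref{lem:pocomb.des2} any two such choices are $\sim$-equivalent, so by Theorem~\ref{thm:Classficiation}(2) the module $D(c,\lambda/\mu)$ is well defined up to isomorphism.

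For part (2) I would observe that $\mathcal G\colon\nabla(n)\to\Delta(n)$ is a bijection (Proposition~\ref{onto}) and that Lemma~\ref{lem:pocomb.des2} says it intertwines $\sim$ on $\nabla(n)$ with the relation ``affords the same placed skew shifted Young diagram'' on $\Delta(n)$. Hence the assignment sending $\underline i$ to the placed diagram afforded by $\mathcal G(\underline i)$ descends to a bijection $W'(\mhcn)/\!\sim\ \longrightarrow\ \mathcal{PS}(n)$. Combining this with Theorem~\ref{thm:Classficiation}(3) (every irreducible completely splittable module in $\operatorname{Rep}_{\I}\mhcn$ is some $D^{\underline i}$, $\underline i\in W'(\mhcn)=\nabla(n)$, hence equals $D(c,\lambda/\mu)$ for the placed diagram afforded by $\mathcal G(\underline i)$) and Theorem~\ref{thm:Classficiation}(2) (two such modules are isomorphic exactly when the underlying weights are $\sim$-equivalent, i.e.\ the placed diagrams coincide) shows that $\{D(c,\lambda/\mu)\mid(c,\lambda/\mu)\in\mathcal{PS}(n)\}$ is a complete irredundant list.

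For part (1), fix $(c,\lambda/\mu)$ and $\underline i\in\nabla(n)$ with $\mathcal G(\underline i)=((c,\lambda/\mu),T)$ for a standard tableau $T$; then $i_k=c(T(k))$ for all $k$, so the number of $k$ with $i_k=0$ equals $\gamma_0(c,\lambda/\mu)=:\gamma_0$. By Theorem~\ref{thm:Classficiation}(1), $D^{\underline i}$ has the same type as the $\mpcn$-module $L(\underline i)$, which by Lemma~\ref{lem:irrepPn} is of type $\texttt{M}$ when $\gamma_0$ is even and of type $\texttt{Q}$ when $\gamma_0$ is odd. For the dimension, write $D^{\underline i}=\bigoplus_{\tau\in P_{\underline i}}L(\underline i)^\tau$ with each summand $\cong L(\tau\cdot\underline i)$; since permuting coordinates does not change the number of zero entries, every $\tau\cdot\underline i$ has the same $\gamma_0$, so by Lemma~\ref{lem:irrepPn} each summand has dimension $2^{\,n-\lfloor\gamma_0/2\rfloor}$, whence $\dim D^{\underline i}=|P_{\underline i}|\cdot 2^{\,n-\lfloor\gamma_0/2\rfloor}$. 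By Lemma~\ref{lem:lambdaP}, $|P_{\underline i}|$ equals the size of the $\sim$-class $\Lambda$ of $\underline i$, and since $\mathcal G$ is injective and (by Lemma~\ref{lem:pocomb.des2}) restricts to a bijection from $\Lambda$ onto $\{((c,\lambda/\mu),S)\mid S\text{ a standard tableau of shape }\lambda/\mu\}$, we get $|\Lambda|=f^{\lambda/\mu}$. Therefore $\dim D(c,\lambda/\mu)=2^{\,n-\lfloor\gamma_0/2\rfloor}f^{\lambda/\mu}$.

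I do not anticipate a genuine obstacle: the only point demanding care is checking that the identification $W'(\mhcn)/\!\sim\ \cong\ \mathcal{PS}(n)$ is clean — specifically that ``affords the same placed skew shifted Young diagram'' cuts out exactly the fibers of $\mathcal G$ over a fixed placed diagram, and that $\gamma_0$ is computed consistently on the weight side and the diagram side and is invariant under the $S_n$-action on coordinates. Everything else is a direct appeal to Theorem~\ref{thm:Classficiation}, Lemma~\ref{lem:irrepPn}, Lemma~\ref{lem:lambdaP}, Lemma~\ref{lem:pocomb.des2}, and Proposition~\ref{onto}.
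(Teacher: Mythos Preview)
Your proposal is correct and follows essentially the same route as the paper: both derive part~(1) by matching the number of zero entries of $\underline{i}$ with $\gamma_0$ via $i_k=c(T(k))$, then invoking Lemma~\ref{lem:irrepPn} and Theorem~\ref{thm:Classficiation}(1) for the type and combining Lemma~\ref{lem:lambdaP} with Lemma~\ref{lem:pocomb.des2} to get $|P_{\underline i}|=f^{\lambda/\mu}$ for the dimension; and both derive part~(2) by passing the bijection of Proposition~\ref{onto} through Lemma~\ref{lem:pocomb.des2} to identify $W'(\mhcn)/\!\sim$ with $\mathcal{PS}(n)$ and then applying Theorem~\ref{thm:Classficiation}(2),(3). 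Your write-up is slightly more explicit (e.g.\ noting that permuting coordinates preserves $\gamma_0$), but the argument is the same.
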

\begin{proof}
(1) Suppose  $(c,\lambda/\mu)\in\mathcal{PS}(n)$ and
$\mathcal{G}(\underline{i})$ affords $(c,\lambda/\mu)$ for some
$\underline{i}\in W(\mhcn)$.  By Proposition~\ref{onto}, we have
$\underline{i}=(c(T(1)),\ldots, c(T(n)))$ and hence the number of
$1\leq k\leq n$ with $i_k=0$ is equal to $\gamma_0$. This together
with Lemma~\ref{lem:irrepPn} and Theorem~\ref{thm:Classficiation}(1)
shows that $D^{\underline{i}}$ is type  $\texttt{M}$ if $\gamma_0$
is even and is type $\texttt{Q}$ if $\gamma_0$ is odd. Denote by
$|P_{\underline{i}}|$ the number of elements contained in
$P_{\underline{i}}$. By Lemma~\ref{lem:pocomb.des2}, there exists a
one-to-one correspondence between the set of weights in $W(\mhcn)$
equivalent to $\underline{i}$ and the set of standard tableaux of
shape $\lambda/\mu$. Hence $f^{\lambda/\mu}=|P_{\underline{i}}|$ by
Lemma~\ref{lem:lambdaP}. This together with Lemma~\ref{lem:irrepPn}
and Theorem~\ref{thm:Classficiation} shows that
$$\dim D(c,\lambda/\mu)=\dim D^{\underline{i}}=
2^{n-\lfloor\frac{\gamma_0}{2}\rfloor}|P_{\underline{i}}|=2^{n-\lfloor\frac{\gamma_0}{2}\rfloor}f^{\lambda/\mu}.$$

(2) It follows from Proposition~\ref{onto},
 Lemma~\ref{lem:pocomb.des2} and Theorem~\ref{thm:Classficiation}.

\end{proof}
\subsection{A diagrammatic classification for $p\geq
3$}\label{combinatoricsp}

In this subsection, we assume $p\geq 3$.
Set
\begin{align}
\ds W_1(\mhcn)=\Big\{\underline{i}\in
W(\mhcn)~|~&i_k-1\in\{i_{k+1},\ldots,i_{l-1}\}
\text{ whenever }\notag\\
& 1\leq i_k=i_l\leq \frac{p-3}{2}\text{ with }1\leq k<l\leq n\Big\}\notag,\\
\ds W_2(\mhcn)=\Big\{\underline{i}\in W(\mhcn)~|~&\text{there
exist }1\leq
k<l\leq n \text{ such that }1\leq i_k=i_l\leq\frac{p-3}{2}\notag,\\
&i_k-1\notin\{i_{k+1},\ldots,i_{l-1}\}\Big\}\notag.
\end{align}
Observe that $W(\mhcn)$ is the disjoint union of $W_1(\mhcn)$ and
$W_2(\mhcn)$. Moreover if $\underline{i}\in W_k(\mhcn)$ and
$\underline{j}\sim\underline{i}$, then $\underline{j}\in
W_k(\mhcn)$ for $k=1, 2$.  For each $u\in\mathbb{Z}_+$ and $m\geq
1$, let $\mathcal{PS}_u(m)$ be the set of placed skew shifted
Young diagrams $(c,\lambda/\mu)$ with $m$ boxes such that the
contents of boxes of $\lambda/\mu$ are smaller than or equal to
$u$. For $n\in\mathbb{Z}_+$, set
\begin{align}
\Delta_1(n)=\{((c,\lambda/\mu),T)~|&~(c,\lambda/\mu)\in\mathcal{PS}_{\frac{p-1}{2}}(n),
T \text{ is a standard tableau of shape  } \lambda/\mu \}.\notag
\end{align}

By Lemma~\ref{lem:restr.2}, we see that
$W_1(\mhcn)\subseteq\nabla(n)$.
\begin{prop}\label{prop:ppcomb0}
The restriction of the  map $\mathcal{G}$ in~(\ref{mathcalG}) to
$W_1(\mhcn)$ gives a bijection
$\mathcal{G}_1:W_1(\mhcn)\rightarrow\Delta_1(n)$. Moreover,
$\underline{i}\sim\underline{j}\in W_1(\mhcn)$ if and only if
$\mathcal{G}_1(\underline{i})$ and $\mathcal{G}_1(\underline{j})$
afford the same placed skew shifted Young diagram.
\end{prop}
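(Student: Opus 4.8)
The plan is to reduce the statement to Proposition~\ref{onto} and Lemma~\ref{lem:pocomb.des2} by first identifying $W_1(\mhcn)$ explicitly: I claim it is precisely the set of splittable vectors all of whose entries lie in $\I$. One inclusion $W_1(\mhcn)\subseteq\nabla(n)$ has already been recorded, so I would only need the reverse. Let $\underline{i}\in\I^n$ be splittable. By Corollary~\ref{cor:weigdes}, $W(\mhcn)=W'(\mhcn)$, and for $p\geq 3$ the set $W'(\mhcn)$ is cut out by conditions~(3) and~(5) of Proposition~\ref{prop:restr.2} (condition~(4) being vacuous for $p\geq 3$). But condition~(3) is literally the $u=0$ clause in the definition of splittability, while the $u\geq 1$ clause yields $\{i_k-1,i_k+1\}\subseteq\{i_{k+1},\ldots,i_{l-1}\}$ whenever $i_k=i_l\geq 1$, which is alternative~(a) of condition~(5); hence $\underline{i}\in W'(\mhcn)=W(\mhcn)$. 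The same $u\geq 1$ clause also gives the defining inequality of $W_1(\mhcn)$ (namely $i_k-1\in\{i_{k+1},\ldots,i_{l-1}\}$ when $1\leq i_k=i_l\leq\frac{p-3}{2}$), so $\underline{i}\in W_1(\mhcn)$, proving the claim.

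Granting this, the bijectivity of $\mathcal{G}_1$ follows in three quick steps. First, $\mathcal{G}_1$ lands in $\Delta_1(n)$: for $\underline{i}\in W_1(\mhcn)$ with $\mathcal{G}(\underline{i})=((c,\lambda/\mu),T)$, every box of $\lambda/\mu$ equals $T(k)$ for some $k$ and has content $c(T(k))=i_k\leq\frac{p-1}{2}$, so $(c,\lambda/\mu)\in\mathcal{PS}_{\frac{p-1}{2}}(n)$. Second, $\mathcal{G}_1$ is injective, being a restriction of the injection $\mathcal{G}\colon\nabla(n)\to\Delta(n)$ of Proposition~\ref{onto}. Third, $\mathcal{G}_1$ is surjective onto $\Delta_1(n)$: given $((c,\lambda/\mu),T)\in\Delta_1(n)\subseteq\Delta(n)$, put $\underline{i}=\mathcal{F}((c,\lambda/\mu),T)\in\nabla(n)$; it is splittable by Lemma~\ref{lem:plYcont}, and each of its entries is a content of $(c,\lambda/\mu)$, hence $\leq\frac{p-1}{2}$, so $\underline{i}\in\I^n$; by the first paragraph $\underline{i}\in W_1(\mhcn)$, and $\mathcal{G}_1(\underline{i})=\mathcal{G}(\underline{i})=((c,\lambda/\mu),T)$ by Proposition~\ref{onto}.

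For the ``moreover'' part I would just restrict Lemma~\ref{lem:pocomb.des2}. Since $W_1(\mhcn)\subseteq\nabla(n)$ and the $\sim$-class of any element of $W_1(\mhcn)$ stays inside $W_1(\mhcn)$ (already noted after the definition of $W_1,W_2$), for $\underline{i},\underline{j}\in W_1(\mhcn)$ the relation $\underline{i}\sim\underline{j}$ is equivalent to $\mathcal{G}(\underline{i})$ and $\mathcal{G}(\underline{j})$ affording the same placed skew shifted Young diagram, which is the assertion since $\mathcal{G}_1$ is the restriction of $\mathcal{G}$. The only substantive point in all of this is the explicit description of $W_1(\mhcn)$ in the first paragraph --- in particular checking that splittability forces membership in $W'(\mhcn)$ for $p\geq 3$ --- and the mild care needed because condition~(5) of Proposition~\ref{prop:restr.2} is a disjunction whose first alternative is the one supplied by splittability; everything else is bookkeeping with already-established bijections.
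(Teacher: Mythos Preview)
Your proof is correct and follows essentially the same approach as the paper: both hinge on the identification of $W_1(\mhcn)$ with the set of splittable vectors in $\I^n$, after which bijectivity and the equivalence statement reduce immediately to Proposition~\ref{onto} and Lemma~\ref{lem:pocomb.des2}. The paper simply asserts this identification in one line, whereas you spell out the verification that splittability in $\I^n$ forces conditions~(3) and~(5a) of Proposition~\ref{prop:restr.2} and the defining condition of $W_1(\mhcn)$; this extra detail is fine and accurate.
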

\begin{proof}
Observe that $W_1(\mhcn)$ can be identified with the subset of
$\nabla(n)$ consisting of splittable vectors whose parts are less
than or equal to $\frac{p-1}{2}$.  Hence by Proposition~\ref{onto},
the restriction $\mathcal{G}_1$ of the map $\mathcal{G}$ establishes
a bijection between $W_1(\mhcn)$ and $\Delta_1(n)$. Now the rest of
the Proposition follows from Lemma~\ref{lem:pocomb.des2}.
\end{proof}
%
%\begin{proof}
%Since $\{i_k-1,i_k+1\}\subseteq\{i_{k+1},\ldots,i_{l-1}\}$
%whenever $1\leq i_k=i_l\leq \frac{p-3}{2}$ with $1\leq k<l\leq n$,
%$\underline{i}\in\I^n\subseteq\mathbb{Z}_+^n$ is a splittable
%vector. Hence Proposition~\ref{prop:p0comb.des1} says Each
%$\underline{i}\in W_1(\mhcn)$ gives placed skew shifted Young
%diagram $(c,\lambda/\mu)$ and a standard tableau $T$ of shape
%$\lambda/\mu$with $c(T(1))=i_k$ for $1\leq k\leq n$. Since
%$c(T(k))=i_k\leq\frac{p-1}{2}$, we obtain
%$(c,\lambda/\mu)\in\mathcal{PS}_{\frac{p-1}{2}}(n)$.
%\end{proof}
%
For each $\underline{i}\in W_2(\mhcn)$, denote by $1\leq
u_{\underline{i}}\leq\frac{p-3}{2}$ the minimal integer such that
there exist $1\leq k<l\leq n$ satisfying $i_k=i_l=u_{\underline{i}}$
and $u_{\underline{i}}-1\notin\{i_{k+1},\ldots,i_{l-1}\}$. By the
definition of $W_2(\mhcn)$, we see that $u_{\underline{i}}$ always
exists.

\begin{lem}\label{lem:ppcom1}
Let $\underline{i}\in W_2(\mhcn)$ and write $u=u_{\underline{i}}$.
\begin{enumerate}

 \item There exists a unique sequence of integers
  $1\leq r_0<r_1<\ldots<r_{\frac{p-3}{2}-u}
<q<t_{\frac{p-3}{2}-u}<\ldots<t_1<t_0\leq n$ such that
\begin{enumerate}
\item $i_q=\frac{p-1}{2}, i_{r_j}=i_{t_j}=u+j$ for $0\leq
j\leq\frac{p-3}{2}$,

\item $i_a\neq u-1$ for all $r_0\leq a\leq t_0$,

\item $i_b\leq u-1$ for all $b\neq r_0, r_1, \ldots,
r_{\frac{p-3}{2}-u} ,q, t_{\frac{p-3}{2}-u}, \ldots, t_1, t_0$.

\end{enumerate}

\item $\underline{i}\sim (\underline{i}',
u,u+1,\ldots,\frac{p-3}{2},\frac{p-1}{2},\frac{p-3}{2},\ldots,
u+1,u,u-1,\ldots, u-m)$ for some $\underline{i}'\in\nabla(n-p+2u-m)$
whose parts are less than $u$ and some $0\leq m\leq u$.
\end{enumerate}
\end{lem}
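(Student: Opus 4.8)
The plan is to read off both statements from Proposition~\ref{prop:restr.2}(5) and Lemma~\ref{lem:restr.2}, applied to the word $\underline i\in W(\mhcn)$, which satisfies all the constraints of Proposition~\ref{prop:restr.2}. For (1) I would first reduce to a \emph{consecutive} bad pair: pick $1\le k<l\le n$ with $i_k=i_l=u$ and $u-1\notin\{i_{k+1},\dots,i_{l-1}\}$; if $u$ occurs at some $m$ strictly between $k$ and $l$ then $(k,m)$ is again bad, so iterating we may assume $u\notin\{i_{k+1},\dots,i_{l-1}\}$. Then $i_k-1=u-1$ is absent between $k$ and $l$, so alternative (a) of Proposition~\ref{prop:restr.2}(5) is impossible and (b) applies, furnishing $k\le r_0<r_1<\cdots<r_{\frac{p-3}{2}-u}<q<t_{\frac{p-3}{2}-u}<\cdots<t_0\le l$ with $i_q=\frac{p-1}{2}$, $i_{r_j}=i_{t_j}=u+j$, and $u+j$ absent strictly between $r_j$ and $t_j$. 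Since $u$ occurs in $[k,l]$ only at $k$ and $l$ while $i_{r_0}=i_{t_0}=u$, we get $r_0=k$ and $t_0=l$, and then (b) of the Lemma is exactly the defining property of the pair. For (c) and for uniqueness I would prove by downward induction on $j$ that $u+j$ occurs in $\underline i$ exactly twice, at $r_j$ and $t_j$ (and $\frac{p-1}{2}$ exactly once, at $q$): the top is Proposition~\ref{prop:restr.2}(2); for the step, an extra occurrence $a\notin\{r_j,t_j\}$ of $u+j$ must satisfy $a<r_j$ or $a>t_j$ (it cannot lie between them), and Lemma~\ref{lem:restr.2} then forces an occurrence of $u+j+1$ strictly between $a$ and $r_j$ (resp.\ between $t_j$ and $a$), contradicting that $u+j+1$ occurs only at $r_{j+1},t_{j+1}$, both strictly between $r_j$ and $t_j$. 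Hence every index $b$ with $i_b\ge u$ is one of the $r_j,q,t_j$, which is (c), and the whole sequence is then forced, giving uniqueness.

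For (2): by (1) every entry of $\underline i$ outside the chain has value $\le u-1$, and the non-chain entries strictly between $r_0$ and $t_0$ even have value $\le u-2$ (by (1)(b)). First I would collapse, one gap at a time, the spaces between consecutive chain positions: moving a chain entry (value $\ge u$) past such a value-$(\le u-2)$ entry swaps values differing by at least $2$, hence is an admissible transposition. This brings the chain into a single contiguous block $B=(u,u+1,\dots,\frac{p-3}{2},\frac{p-1}{2},\frac{p-3}{2},\dots,u+1,u)$ of length $p-2u$, every other entry having value $\le u-1$. Next I would slide $B$ to the right. By Lemma~\ref{lem:restr.2} the value $u-1$ occurs at most once to the right of $B$ (two occurrences there would force an occurrence of $u$ strictly between them, hence to the right of $B$, whereas both occurrences of $u$ sit inside $B$); so $B$ slides past all entries of value $\le u-2$ and either reaches the end (take $m=0$) or stops just left of the unique entry $\beta_1$ of value $u-1$. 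Repeating with the rigid block $B\beta_1$, whose values are $\{u-1,u,\dots,\frac{p-1}{2}\}$, the same counting shows at most one entry of value $u-2$ remains to its right, and inductively one obtains a rigid block $B\beta_1\cdots\beta_m$ with $\beta_j$ of value $u-j$, followed only by entries of value $<u-m$; since values are nonnegative, $0\le m\le u$. The resulting word is $(\underline i',u,u+1,\dots,\frac{p-1}{2},\dots,u+1,u,u-1,\dots,u-m)$ with $\underline i'$ the left prefix, whose parts are all $\le u-1<u$, and it is $\sim\underline i$.

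It remains to check $\underline i'\in\nabla(n-p+2u-m)$: its parts lie in $\I$ and are $\le\frac{p-3}{2}$, so only splittability must be verified. If $i'_a=i'_b=v$ with $a<b$, regard $a,b$ as indices of $\underline j:=(\underline i',B,u-1,\dots,u-m)$, which is $\sim\underline i$ and hence in $W(\mhcn)$. For $v\ge1$, Proposition~\ref{prop:restr.2}(5) offers either $\{v-1,v+1\}$ strictly between $a$ and $b$ or the chain alternative; the latter would put an occurrence of $\frac{p-1}{2}$ strictly between $a$ and $b$, but in $\underline j$ the value $\frac{p-1}{2}$ lies only inside $B$, which comes after $\underline i'$ — impossible. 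For $v=0$, Proposition~\ref{prop:restr.2}(3) puts $1$ strictly between $a$ and $b$. Either way the needed entries lie in the prefix $\underline i'$, so $\underline i'$ is splittable. The step I expect to be the main obstacle is the bookkeeping in Part (2): verifying that every transposition used to consolidate and slide the chain is admissible, and making precise the inductive ``at most one occurrence to the right'' argument that pins down the descending tail $u-1,\dots,u-m$; everything else is a direct application of the results of Section~\ref{wofCS}.
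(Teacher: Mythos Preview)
Your proposal is correct and follows essentially the same strategy as the paper's proof: both obtain the chain from Proposition~\ref{prop:restr.2}(5)(b) and establish (1)(c) by repeatedly applying Lemma~\ref{lem:restr.2} to climb toward $\tfrac{p-1}{2}$ and contradict Proposition~\ref{prop:restr.2}(2) (you organize this as a clean downward induction on $j$, the paper as an upward climb from a hypothetical stray entry), and both prove (2) by pushing the chain to the right via admissible transpositions and picking up the descending tail $u-1,\dots,u-m$ one step at a time using Lemma~\ref{lem:restr.2} to bound the number of occurrences. Your write-up is in fact more complete in one respect: you verify that the prefix $\underline{i}'$ actually lies in $\nabla(n-p+2u-m)$, whereas the paper's proof of (2) asserts this in the statement but does not check splittability explicitly.
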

\begin{proof}
(1)  By Proposition~\ref{prop:restr.2}, there exists a sequence of
integers $r_0< r_1<\cdots< r_{\frac{p-3}{2}-u}< q<
t_{\frac{p-3}{2}-u}<\cdots< t_1<t_0$ such that $i_q=\frac{p-1}{2},
i_{r_j}=i_{t_j}=u+j$,  and $u+j$ does not appear between $i_{r_j}$
and $i_{t_j}$ in $\underline{i}$ for each $0\leq j\leq
\frac{p-3}{2}-u$. Hence it suffices to prove $(1)(c)$. Assume that
$i_b=u+k$ for some $0\leq k\leq \frac{p-3}{2}-u$ and some
$b\notin\{r_0,\ldots,r_{\frac{p-3}{2}-u},q,t_{\frac{p-3}{2}-u},\ldots,t_0\}$.
Since $u+k$ does not appear between $i_{r_k}$ and $i_{t_k}$,  we see
that either $b<r_k$ or $b>t_k$. Now assume $b<r_k$. Since
$i_b=u+k=i_{r_k}$, by Lemma~\ref{lem:restr.2} there exists $b_1$
with $b<b_1<r_k$ and $i_{b_1}=u+k+1$. Again since
$i_{b_1}=u+k+1=i_{r_{k+1}}$, using Lemma~\ref{lem:restr.2} there
exists $b_2$ with $b_1<b_2<r_{k+1}$ such that $i_{b_2}=u+k+2$.
Continuing in this way, we finally obtain an integer $f$ satisfying
$f<r_{\frac{p-3}{2}-u-1}$ and $i_f=\frac{p-3}{2}$. By
Lemma~\ref{lem:restr.2}, $\frac{p-1}{2}$  appears between $i_f$ and
$i_{r_{\frac{p-3}{2}-u}}$. So  $\frac{p-1}{2}$ appears at least
twice in $\underline{i}$ since $i_q=\frac{p-3}{2}$ and
$q>r_{\frac{p-3}{2}-u}$. This contradicts
Proposition~\ref{prop:restr.2}. An identical argument holds for the
case when $b>t_k$. Therefore $i_b\leq u-1$ for all
$b\notin\{r_0,\ldots,r_{\frac{p-3}{2}-u},q,t_{\frac{p-3}{2}-u},\ldots,t_0\}$.

(2) As shown in (1), there exists a sequence of integers
$r_0<r_1<\ldots<r_{\frac{p-3}{2}-u}
<q<t_{\frac{p-3}{2}-u}<\ldots<t_1<t_0$ such that $i_q=\frac{p-1}{2},
i_{r_j}=i_{t_j}=u+j$ for $0\leq j\leq\frac{p-3}{2}$. If $u-1$ does
not appear after $i_{t_0}$ in $\underline{i}$, then $i_a\leq u-2$
for all $a\in\{r_0, r_{0}+1,\ldots, n\}\setminus\{
r_0,r_1,\ldots,r_{\frac{p-3}{2}-u}, q, t_{\frac{p-3}{2}-u},
\ldots,t_0\}$ by $(1)(c)$. By applying admissible transpositions we
can swap $i_k$ with $i_l$ in $\underline{i}$ for all $k\in\{r_0,
r_{0}+1,\ldots, n\}\setminus\{ r_0,r_1,\ldots,r_{\frac{p-3}{2}-u},
q, t_{\frac{p-3}{2}-u}, \ldots,t_0\} $ and $l\in\{
r_0,r_1,\ldots,r_{\frac{p-3}{2}-u}, q, t_{\frac{p-3}{2}-u},
\ldots,t_0\}$. Finally we obtain an element of the form $(\ldots,
u,u+1,\ldots,\frac{p-3}{2},\frac{p-1}{2},\frac{p-3}{2},\ldots,
u+1,u)$.

Now assume $u-1$ appears after $i_{t_0}$ in $\underline{i}$. Since
$i_b\neq u$ for all $b>t_0$ by $(1)(c)$, we see that $u-1$ appears
at most once after $i_{t_0}$ in $\underline{i}$ by
Lemma~\ref{lem:restr.2}. Therefore there exists a unique $l_1>t_0$
such that $i_{l_1}=u-1$. If $u-2$ does not appear after $i_{l_1}$,
then $i_a\leq u-3$ for all $a\in\{r_0, r_{0}+1,\ldots,
n\}\setminus\{ r_0,r_1,\ldots,r_{\frac{p-3}{2}-u}, q,
t_{\frac{p-3}{2}-u}, \ldots,t_0, l_1\} $ by $(1)(c)$. Hence we can
apply admissible transpositions to $\underline{i}$ to obtain an
element of the form $(\ldots,
u,u+1,\ldots,\frac{p-3}{2},\frac{p-1}{2},\frac{p-3}{2},\ldots,
u+1,u, u-1)$.

Now assume  $u-2$ appears after $i_{l_1}$. Since $u-1$ appears
only once with $i_{l_1}=u-1$ after $i_{t_0}$ in $\underline{i}$,
we see that $i_b\neq u-1$ for all $b>l_1$ and hence $u-2$ appears
at most once after $i_{l_1}$ in $\underline{i}$ by
Lemma~\ref{lem:restr.2}. This means there exists a unique
$l_2>l_1$ such that $i_{l_2}=u-2$. By repeating the above process,
we arrive at the claim in (2).
%we will stop at some step and
%obtain an element of the form $(\underline{i}',
%u,u+1,\ldots,\frac{p-3}{2},\frac{p-1}{2},\frac{p-3}{2},\ldots,
%u+1,u,u-1,\ldots, u-m)$ for some $0\leq m\leq u$ and splittable
%vector $\underline{i}'$ whose parts are less than $u$. Hence
%$\underline{i}\sim(\underline{i}',
%u,u+1,\ldots,\frac{p-3}{2},\frac{p-1}{2},\frac{p-3}{2},\ldots,
%u+1,u,u-1,\ldots, u-m)$.
\end{proof}

By Lemma~\ref{lem:ppcom1}, for $\underline{i}\in W_2(\mhcn)$,
%there exist unique
%$r_0<r_1<\ldots<r_{\frac{p-3}{2}-u_{\underline{i}}}
%<q<t_{\frac{p-3}{2}-u_{\underline{i}}}<\ldots<t_1<t_0$ such that
%$i_q=\frac{p-1}{2}, i_{r_j}=i_{t_j}=u_{\underline{i}}+j$ for
%$0\leq j\leq\frac{p-3}{2}$.
there exists a unique vector $\widehat{\underline{i}}$ as follows
\begin{align}
\widehat{\underline{i}}=(i_1,\ldots,i_{r_0},\ldots,
\widehat{i}_{r_1},\ldots,\widehat{i}_q,\ldots,\widehat{i}_{t_1},
\ldots,\widehat{i}_{t_0},\ldots,i_n),\label{ihat}
\end{align}
which is obtained by removing
$i_{r_1},\ldots,i_{r_{\frac{p-3}{2}-u_{\underline{i}}}},i_q,i_{t_{\frac{p-3}{2}-u_{\underline{i}}}},
\ldots, i_{t_1}, i_{t_0}$ from $\underline{i}$.
%By
%Lemma~\ref{lem:ppcom1}, $\widehat{\underline{i}}$ is unique
%determined by $\underline{i}$  for each $\underline{i}\in
%W_2(\mhcn)$.

\begin{lem}\label{lem:ppcom2}
The following holds for $\underline{i}, \underline{j}\in
W_2(\mhcn)$.
\begin{enumerate}

\item $\widehat{\underline{i}}$ has a unique part equal to
$u_{\underline{i}}$ and all other parts are less than
$u_{\underline{i}}$.

\item $\widehat{\underline{i}}$ is splittable.

\item $\widehat{\underline{i}}\sim\widehat{\underline{j}}$ if
$\underline{i}\sim\underline{j}$.
\end{enumerate}
\end{lem}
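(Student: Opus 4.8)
The plan is to work with the integer $u=u_{\underline i}$ and the sequence $r_0<\dots<r_{\frac{p-3}{2}-u}<q<t_{\frac{p-3}{2}-u}<\dots<t_0$ together with properties (a)--(c) of Lemma~\ref{lem:ppcom1}(1), calling $r_0,\dots,r_{\frac{p-3}{2}-u},q,t_{\frac{p-3}{2}-u},\dots,t_0$ the \emph{special} positions of $\underline i$. By~(c) these are exactly the positions of $\underline i$ carrying a value $\geq u$, the value there is $u$ only at $r_0$ and $t_0$, and all of them lie in $[r_0,t_0]$; by~(\ref{ihat}), $\widehat{\underline i}$ is obtained by deleting every special position except $r_0$. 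The first part is then immediate: $\widehat{\underline i}$ retains $i_{r_0}=u$ and deletes $i_{t_0}=u$ and every position of value $>u$, so $u$ occurs once in $\widehat{\underline i}$ and all other entries are $\le u-1$.

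For the second part I would show that the witnesses demanded by splittability survive the deletion. Given $\hat i_k=\hat i_l=v$ with $k<l$, the first part forces $v\le u-1$; let $a<b$ be the positions of $\underline i$ underlying $k,l$, so $i_a=i_b=v$, and observe that the entries of $\widehat{\underline i}$ strictly between $k$ and $l$ are precisely the non-deleted entries of $\underline i$ strictly between $a$ and $b$, so since every value $<u$ is non-deleted it suffices to exhibit the required witnesses, all of value $<u$, between $a$ and $b$ in $\underline i$. For the lower requirement: if $v=0$ then $1$ appears between $a$ and $b$ by Proposition~\ref{prop:restr.2}(3); if $v\ge 1$ then $v<u_{\underline i}$, so by minimality of $u_{\underline i}$ the pair $(a,b)$ is not ``bad'' and $v-1$ appears between $a$ and $b$. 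For the upper requirement $v+1$ when $v\le u-2$: Proposition~\ref{prop:restr.2}(5) applied to $(a,b)$ gives $v+1$ between them in case (5)(a), and in case (5)(b) the associated chain (which reaches $v+1$ since $v<\tfrac{p-3}{2}$) does the same. The remaining case $v=u-1$ (with $u\ge 2$; for $u=1$ this is the $v=0$ case) needs a copy of $u$, necessarily $r_0$; here the point is that $a<r_0<t_0<b$: by~(b) neither $a$ nor $b$ lies in $[r_0,t_0]$, and if both lay on the same side of $[r_0,t_0]$ then Proposition~\ref{prop:restr.2}(5) would force a copy of $u$ or of $\frac{p-1}{2}$ between them on that side, contradicting respectively that $u$ occurs only at $r_0,t_0$ and Proposition~\ref{prop:restr.2}(2). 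So $r_0$ lies strictly between $a$ and $b$ and is retained, while $u-2$ is the lower witness above; thus $\widehat{\underline i}$ is splittable.

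For the third part it suffices, since $\sim$ is generated by admissible transpositions and $W_2(\mhcn)$ is closed under them, to treat $\underline j=s_k\cdot\underline i$ with $s_k$ admissible with respect to $\underline i$. First I would check $u_{\underline j}=u_{\underline i}=:u$: a value $w$ is ``bad'' for a vector exactly when, in the subsequence of entries with value in $\{w-1,w\}$, some $w$ is immediately followed by another $w$; since $|i_k-i_{k+1}|\ne 1$ the transposition never interchanges two entries with values in $\{w-1,w\}$, so that subsequence is unchanged for every $w$ and the minimal bad value is preserved. Next, because consecutive entries of the sorted special sequence carry values differing by exactly $1$, an admissible $s_k$ never transposes two special positions. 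If $\{k,k+1\}$ is disjoint from the special set, then (checking (a)--(c) and invoking the uniqueness in Lemma~\ref{lem:ppcom1}(1)) $\underline j$ has the same special sequence, hence the same deleted set, and $\widehat{\underline j}$ is $\widehat{\underline i}$ with the two adjacent non-deleted entries $i_k,i_{k+1}$ transposed, an admissible transposition on $\widehat{\underline i}$. If exactly one of $k,k+1$ is special, the special value at it moves to its non-special neighbour, whose value I would check (using~(b) and admissibility) is $\le u-2$; distinguishing whether that special position is $r_0$, is $t_0$, or is interior, one sees that $\widehat{\underline j}$ either equals $\widehat{\underline i}$ (when the moved position is deleted from both) or is $\widehat{\underline i}$ modified by one adjacent admissible transposition (when the moved position is $r_0$, which is retained). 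In every case $\widehat{\underline i}\sim\widehat{\underline j}$, and chaining along a sequence of admissible transpositions gives the statement.

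I expect the main obstacle to be the bookkeeping in the third part: one must confirm that an admissible transposition moves at most one special position and by only one step, and that properties (a)--(c)---above all the absence of $u-1$ from the core interval $[r_0,t_0]$---persist, so that $\widehat{\phantom{I}}$ is genuinely computed from the analogous data for $\underline j$ rather than some different sequence. The edge case $v=u-1$ in the second part is the other delicate point, resolved via Proposition~\ref{prop:restr.2}(2) and~(5).
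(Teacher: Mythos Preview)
Your proposal is correct and follows essentially the same approach as the paper's proof: part~(1) is immediate from the construction of $\widehat{\underline i}$, part~(2) is proved by locating the required witnesses $v\pm 1$ inside $\underline i$ at non-deleted positions (with the case $v=u-1$ handled separately via the positions $r_0,t_0$), and part~(3) is reduced to a single admissible transposition and split into the cases where $\{k,k+1\}$ meets the deleted set or not. Your write-up is in fact more careful than the paper's: you explicitly verify $u_{\underline j}=u_{\underline i}$ (which the paper omits) and spell out why the non-special neighbour has value $\le u-2$ via property~(b) together with admissibility, both of which are genuine points the paper glosses over.
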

\begin{proof}
(1). It follows from the definition of $\widehat{\underline{i}}$.

(2). Suppose $\underline{i}\in W_2(\mhcn)$. By
Lemma~\ref{lem:ppcom1}, there exists a unique sequence of integers
$r_0<r_1<\ldots<r_{\frac{p-3}{2}-u}
<q<t_{\frac{p-3}{2}-u}<\ldots<t_1<t_0$ such that $i_q=\frac{p-1}{2},
i_{r_j}=i_{t_j}=u+j$ for $0\leq j\leq\frac{p-3}{2}$ and $i_a\neq
u-1, i_b\leq u-1$ for all $r_0\leq a\leq t_0$ and $b\neq r_0, r_1,
\ldots, r_{\frac{p-3}{2}-u} ,q, t_{\frac{p-3}{2}-u}, \ldots, t_1,
t_0$. Assume $i_k=i_l=v$ for some  $k<l\notin\{
r_1,\ldots,r_{\frac{p-3}{2}-u}, q, t_{\frac{p-3}{2}-u},
\ldots,t_0\}$. To show $\widehat{\underline{i}}$ is splittable, we
need to show that if $v=0$ then $1$ appears between $i_k$ and $i_l$
in $\widehat{\underline{i}}$ and if $1\leq v\leq u-1$ then $v-1$ and
$v+1$ appear between $i_k$ and $i_l$ in $\widehat{\underline{i}}$.
One can easily check the case when $v=0$. Now assume $v\geq1$. If
$1\leq v<u-1$, by the choice of $u$ there exist $k<s, t<l$ such that
$i_s=v-1, i_t=v+1$. Observe that $v-1<u-2,v+1\leq u-1$. Hence $s,
t\neq r_1,\ldots,r_{\frac{p-3}{2}-u}, q, t_{\frac{p-3}{2}-u},
\ldots,t_0$. This means $v-1$ and $v+1$ appear between $i_k$ and
$i_l$ in $\widehat{\underline{i}}$.  Now assume $v=u-1$. Since there
are no parts equal to $u$ before $i_{r_0}$ and after $i_{t_0}$ in
$\underline{i}$, it follows from Lemma~\ref{lem:restr.2} that $u-1$
appears at most once before $i_{r_0}$ and after $i_{t_0}$ in
$\underline{i}$, respectively. This together with the fact that
$i_a\neq u-1$ for $r_0<a<t_0$ shows  $a<r_0$ and $b>t_0$ . By the
choice of $u$, there exists $a<c<b$ such that $i_c=u-2$. This
together with $i_{r_0}=u$ shows that $v-1=u-2$ and $v+1=u$ appear
between $i_k$ and $i_l$ in $\widehat{\underline{i}}$.

(2). It suffices to check the case when
$\underline{j}=s_k\cdot\underline{i}\in W_2(\mhcn)$, where $s_k$
is admissible with respect to $\underline{i}$.  If $\{k,
k+1\}\cap\{r_1,\ldots,r_{\frac{p-3}{2}-u}, q, t_{\frac{p-3}{2}-u},
\ldots, t_1, t_0\}=\emptyset $, then
$\widehat{\underline{j}}=s_k\cdot \widehat{\underline{i}}$ and
hence $\widehat{\underline{j}}\sim\widehat{\underline{i}}$.
Otherwise we see that
$\widehat{\underline{j}}=\widehat{\underline{i}}$.
\end{proof}
 Suppose $1\leq
u\leq\frac{p-3}{2}$ and $(c,\lambda/\mu)\in\mathcal{PS}_{u}(m)$ for
some $m\in\mathbb{Z}_+$. Observe that there exists at most one box
with content $u$ in $(c,\lambda/\mu)$. Let us denote by
$\mathcal{PS}^*_{u}(m)\subseteq\mathcal{PS}_{u}(m)$ the subset
consisting of placed skew shifted Young diagrams which contain a
unique box of content $u$. Suppose
$(c,\lambda/\mu)\in\mathcal{PS}^*_{u}(m)$ and let
$A_{(c,\lambda/\mu)}$ be the unique box of content $u$. Add $p-2u-1$
boxes to the right of $A_{(c,\lambda/\mu)}$ in the row containing
$A_{(c,\lambda/\mu)}$ to obtain a skew shifted Young diagram denoted
by $\overline{\lambda/\mu}$. A standard tableau of shape
$\overline{\lambda/\mu}$ is said to be {\em $p$-standard} if it
satisfies that if there exists a box below $A_{(c,\lambda/\mu)}$
then it is labeled by a number greater than the one in the last box
in the row containing $A_{(c,\lambda/\mu)}$. For $n\in\mathbb{Z}_+$,
set
\begin{align}
\Delta_2(n)=\Big\{((c,\lambda/\mu),S)~|&~(c,\lambda/\mu)\in\mathcal{PS}^*_{u}(n-p+2u+1),
S \text{ is a }p\text{-standard }\notag\\
&\text{ tableau of shape }\overline{\lambda/\mu}, 1\leq
u\leq\frac{p-3}{2} \Big\}.\notag
\end{align}
Suppose $\underline{i}\in W_2(\mhcn)$ and set $u=u_{\underline{i}}$.
By Lemma~\ref{lem:ppcom1}, there exists a unique sequence of
integers $r_0<r_1<\ldots<r_{\frac{p-3}{2}-u}
<q<t_{\frac{p-3}{2}-u}<\ldots<t_1<t_0$ such that $i_q=\frac{p-1}{2},
i_{r_j}=i_{t_j}=u+j$ for $0\leq j\leq\frac{p-3}{2}$. Since the
vector $\widehat{\underline{i}}$ in~(\ref{ihat}) is splittable, by
Lemma~\ref{lem:ppcom2}(2) we apply the map $\mathcal{G}$
in~(\ref{mathcalG}) to $\widehat{\underline{i}}$ to get a placed
skew shifted Young diagram $(c,\lambda/\mu)$ and a standard tableau
$T$ of shape $\lambda/\mu$ whose boxes are labeled by
$\{1,\ldots,n\}\setminus\{r_1,\ldots,r_{\frac{p-3}{2}-u},
\ldots,t_1,t_0\}$. By Lemma~\ref{lem:ppcom2}(1), we have
$(c,\lambda/\mu)\in\mathcal{PS}^*_{u}(n-p+2u+1)$. Label the boxes in
$\overline{\lambda/\mu}$ on the right of $A_{(c,\lambda/\mu)}$ by
$r_1,\ldots,r_{\frac{p-3}{2}-u},q,t_{\frac{p-3}{2}-u},\ldots, t_1,
t_0$ consecutively and denote the resulting tableau by $S$. Observe
that $A_{(c,\lambda/\mu)}$ is labeled by $r_0$ and hence the row
containing $A_{(c,\lambda/\mu)}$ in $S$ is increasing.  If there
exists a box denoted by $B$ below $A_{(c,\lambda/\mu)}$, then $B$
has content $u-1$. Suppose $B$ is labeled by $e$, then $i_e=u-1$ and
$e>r_0$. Hence $e>t_0$ since $i_k\neq u-1$ for $r_0\leq k\leq t_0$.
Therefore $S$ is $p$-standard. Set
$\mathcal{G}_2(\underline{i}):=((c,\lambda/\mu),S)$. Hence we obtain
a map
\begin{align}
\mathcal{G}_2:
W_2(\mhcn)\longrightarrow\Delta_2(n).\label{mathcalGover}
\end{align}
If $\mathcal{G}_2(\underline{i})=((c,\lambda/\mu),S)$, we say
$\mathcal{G}_2(\underline{i})$ affords the placed skew shifted Young
diagram $(c,\lambda/\mu)$.

\begin{example} Suppose $p=7$ and $n=7$. Note that the vector
$\underline{i}=(1,2,0,3,2,1,0)$ belongs to
$W_2(\mathfrak{H}_7^{\mathfrak{c}})$ with $u_{\underline{i}}=2$ and
$\widehat{\underline{i}}=(1,2,0,1,0)$. By Example~\ref{example1}, we
see that the map $\mathcal{G}_2$ sends $\underline{i}$ to the pair
$((c,\lambda/\mu), S)$ with
\begin{align}
(c,\lambda/\mu)=\young(12,01,:0),\qquad
\overline{\lambda/\mu}=\young(\,\,\,\,,\,\,,:\,),\qquad
S=\young(1245,36,:7).\notag
\end{align}
\end{example}

 On the other hand, suppose
$(c,\lambda/\mu)\in\mathcal{PS}^*_{u}(n-p+2u+1)$ for $1\leq
u\leq\frac{p-3}{2}$ and  $S$ is $p$-standard tableau of shape
$\overline{\lambda/\mu}$. Assume that the boxes on the right of
$A_{(c,\lambda/\mu)}$ in $S$ are labeled by $r_1,\ldots,
r_{\frac{p-3}{2}-u},q,t_{\frac{p-3}{2}-u},\ldots,t_1, t_0$. Define
the contents of these additional boxes by setting
$c(S(q))=\frac{p-1}{2}, c(S(t_0))=u$ and $c(S(r_j))=u+j=c(S(t_j))$
for $1\leq j\leq\frac{p-3}{2}-u$. Set
\begin{align}
\mathcal{F}_2((c,\lambda/\mu),S):=(c(S(1)),\ldots,
c(S(n))).\label{mathcalFover}
\end{align}
\begin{lem} The map $\mathcal{F}_2$ in~(\ref{mathcalFover}) sends $\Delta_2(n)$ to
$W_2(\mhcn)$.
\end{lem}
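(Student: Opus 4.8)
The plan is to prove directly that, for $((c,\lambda/\mu),S)\in\Delta_2(n)$, the vector $\underline{i}:=\mathcal{F}_2((c,\lambda/\mu),S)$ lies in $W_2(\mhcn)$: first that $\underline{i}\in W(\mhcn)$, using $W(\mhcn)=W'(\mhcn)$ (Corollary~\ref{cor:weigdes}) and the combinatorial criterion of Proposition~\ref{prop:restr.2}, and then that $\underline{i}$ satisfies the extra condition defining $W_2(\mhcn)$. I would begin by fixing notation: let $r_0$ be the label of $A_{(c,\lambda/\mu)}$ in $S$ and let $r_1<\cdots<r_{\frac{p-3}{2}-u}<q<t_{\frac{p-3}{2}-u}<\cdots<t_1<t_0$ be the labels of the added boxes read from left to right along the row of $A$; since $S$ is standard and those boxes together with $A$ form a contiguous right-hand tail of a single row, $r_0<r_1$. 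By construction $i_{r_0}=i_{t_0}=u$, $i_{r_j}=i_{t_j}=u+j$ for $1\le j\le\frac{p-3}{2}-u$, and $i_q=\frac{p-1}{2}$, with $1\le u\le\frac{p-3}{2}$; in particular all entries of $\underline{i}$ lie in $\{0,\dots,\frac{p-1}{2}\}=\I$. Deleting the entries in positions $r_1,\dots,t_0$ produces $\widehat{\underline{i}}=\mathcal{F}\big((c,\lambda/\mu),T\big)$, where $T$ is the standardization of the restriction of $S$ to $\lambda/\mu$; since $(c,\lambda/\mu)\in\mathcal{PS}^*_u(n-p+2u+1)\subseteq\mathcal{PS}(n-p+2u+1)$, Lemma~\ref{lem:plYcont} shows $\widehat{\underline{i}}$ is splittable, and as all contents of $(c,\lambda/\mu)$ are $\le u$ with exactly one box of content $u$, the vector $\widehat{\underline{i}}$ has a single part equal to $u$ and all others $<u$.

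To check $\underline{i}\in W(\mhcn)=W'(\mhcn)$ it suffices to verify properties (3) and (5) of Proposition~\ref{prop:restr.2}, property (4) being vacuous for $p\ge 3$. The useful observation is that in $\underline{i}$ the value $u$ occurs exactly at $r_0,t_0$, each value $u+j$ ($1\le j\le\frac{p-3}{2}-u$) exactly at $r_j,t_j$, the value $\frac{p-1}{2}$ only at $q$, and every value $<u$ only among the positions of $\widehat{\underline{i}}$. For (3): if $i_k=i_l=0$ with $k<l$, then $k,l$ are positions of $\widehat{\underline{i}}$ (the block entries being $\ge u\ge 1$), and splittability of $\widehat{\underline{i}}$ gives a value $1$ strictly between them in $\widehat{\underline{i}}$, hence strictly between $k$ and $l$ in $\underline{i}$. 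For (5): if $i_k=i_l=v\ge 1$ with $k<l$, then either $v<u$, in which case $k,l$ are positions of $\widehat{\underline{i}}$ and splittability yields $\{v-1,v+1\}$ strictly between them, so (5)(a) holds; or $v=u+j_0$ for some $0\le j_0\le\frac{p-3}{2}-u$, which forces $\{k,l\}=\{r_{j_0},t_{j_0}\}$, and then the sequence $r_{j_0}<r_{j_0+1}<\cdots<r_{\frac{p-3}{2}-u}<q<t_{\frac{p-3}{2}-u}<\cdots<t_{j_0+1}<t_{j_0}$ together with $q$ realizes option (5)(b), because each value $u+a$ occurs in $\underline{i}$ only at $r_a$ and $t_a$ and so never strictly between them. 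Hence $\underline{i}\in W(\mhcn)$.

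It remains to show $\underline{i}\in W_2(\mhcn)$, i.e.\ to produce $1\le k<l\le n$ with $1\le i_k=i_l\le\frac{p-3}{2}$ and $i_k-1\notin\{i_{k+1},\dots,i_{l-1}\}$. I would take $k=r_0$, $l=t_0$, so $i_k=i_l=u$ with $1\le u\le\frac{p-3}{2}$. The block positions strictly between $r_0$ and $t_0$ are $r_1,\dots,r_{\frac{p-3}{2}-u},q,t_{\frac{p-3}{2}-u},\dots,t_1$, all carrying values $\ge u+1$, so it suffices to show that no box $B\ne A$ of $\lambda/\mu$ of content $u-1$ carries a label strictly between $r_0$ and $t_0$ in $S$. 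This is where the definition of $\mathcal{PS}^*_u$ and the $p$-standard hypothesis enter: with content equal to column minus row (as in Remark~\ref{rem:shiftedY}), all contents of $(c,\lambda/\mu)$ being $\le u$ and $A$ being the unique box of content $u$, the box $A$ is the rightmost box of the unique row of $\lambda$ of length $u+1$, and a short case check on rows shows that the only boxes of $\lambda/\mu$ of content $u-1$ which can occur are $A_L$, immediately to the left of $A$, and $A_D$, immediately below $A$. Since $S$ is standard, $A_L$ (if present) is labeled $<r_0$; since $S$ is $p$-standard, $A_D$ (if present) is labeled $>t_0$. Hence the value $u-1$ does not occur between positions $r_0$ and $t_0$ of $\underline{i}$, so $\underline{i}\in W_2(\mhcn)$. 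I expect this last step to be the main obstacle — identifying exactly which boxes of $\lambda/\mu$ can have content $u-1$ and seeing that the $p$-standard condition is precisely what keeps their labels out of the open range between $r_0$ and $t_0$; the verification of (3) and (5), though requiring care with the bookkeeping of where each value occurs, is routine once the decomposition of $\underline{i}$ into $\widehat{\underline{i}}$ and the inserted block is in place.
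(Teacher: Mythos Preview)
Your proof is correct and follows the same overall strategy as the paper's: verify the defining conditions of $W'(\mhcn)=W(\mhcn)$ by a case analysis on the repeated value $v$, treating $v<u$ via the splittable vector $\widehat{\underline{i}}$ (equivalently, via the placed diagram $(c,\lambda/\mu)$) and $v\ge u$ via the chain of added boxes realizing condition (5)(b). Where you go further than the paper is in the final step: the paper simply asserts $\mathcal{F}_2((c,\lambda/\mu),S)\in W_2(\mhcn)$ at the end without exhibiting the required pair $(k,l)$ or invoking the $p$-standard hypothesis, whereas you take $(k,l)=(r_0,t_0)$ and carefully argue that $u-1\notin\{i_{r_0+1},\dots,i_{t_0-1}\}$ by locating all content-$(u{-}1)$ boxes of $\lambda/\mu$ (only $A_L$ and $A_D$), then using row-standardness for $A_L$ and the $p$-standard condition for $A_D$. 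That extra care is exactly what is needed, and your ``short case check on rows'' showing no other box of content $u-1$ exists is valid (it uses that $\lambda$ is strict and that all contents in $\lambda/\mu$ are $\le u$).
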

\begin{proof} Suppose $((c,\lambda/\mu),S)\in \Delta_2(n)$ so that
$(c,\lambda/\mu)\in\mathcal{PS}^*_{u}(n-p+2u+1)$ for $1\leq
u\leq\frac{p-3}{2}$. Assume that $c(S(k))=c(S(l))=v$ for some $1\leq
k<l\leq n$.

If $0\leq v\leq u-1$, both  boxes $S(k)$ and $S(l)$ belong to
$(c,\lambda/\mu)$. Hence $1\in\{c(S(k+1)),\ldots, c(S(l-1))\}$ if
$v=0$, and $\{v-1, v+1\}\subseteq\{c(S(k+1)),\ldots, c(S(l-1))\}$ if
$1\leq v\leq u-1$. Now $v=u+m$ for some $0\leq
m\leq\frac{p-3}{2}-u$. Suppose the box $A_{(c,\lambda/\mu)}$ is
labeled by $r_0$ and the boxes on its right in $S$ are labeled by
$r_1,\ldots, r_{\frac{p-3}{2}-u},q,t_{\frac{p-3}{2}-u},\ldots,t_1,
t_0$. By the definition of $\mathcal{F}_2$, the boxes $S(k)$ and
$S(l)$ coincide with $S(r_m)$ and $S(t_m)$, respectively. Therefore
$(c(S(k+1)),\ldots, c(S(l-1)))$ contains the subsequence
$(v+1,\ldots,\frac{p-3}{2},\frac{p-1}{2},\frac{p-3}{2},\ldots,v+1)$,
and $\mathcal{F}_2((c,\lambda/\mu), S)\in W_2(\mhcn)$.

\end{proof}

\begin{prop}\label{prop:ppcomb2}
The map $\mathcal{G}_2:W_2(\mhcn)\rightarrow\Delta_2(n)$ is a bijection
 with inverse $\mathcal{F}_2$.
\end{prop}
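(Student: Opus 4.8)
The plan is to verify that the composites $\mathcal{F}_2\circ\mathcal{G}_2$ and $\mathcal{G}_2\circ\mathcal{F}_2$ are the identity maps of $W_2(\mhcn)$ and $\Delta_2(n)$ respectively. The identity $\mathcal{F}_2\circ\mathcal{G}_2=\mathrm{id}$ is the easy half: for $\underline{i}\in W_2(\mhcn)$ with $u=u_{\underline{i}}$ and canonical sequence $r_0<r_1<\cdots<q<\cdots<t_1<t_0$ from Lemma~\ref{lem:ppcom1}(1), the construction of $\mathcal{G}_2(\underline{i})$ reinstates exactly the contents $i_{r_0}=i_{t_0}=u$, $i_q=\tfrac{p-1}{2}$, $i_{r_j}=i_{t_j}=u+j$ on the adjoined boxes, while the remaining boxes carry the tableau $\mathcal{G}(\widehat{\underline{i}})$; since $\mathcal{F}\circ\mathcal{G}=\mathrm{id}$ on $\nabla(n-p+2u+1)$ by Proposition~\ref{onto}, reading contents off with $\mathcal{F}_2$ returns $\underline{i}$ entry by entry.

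For $\mathcal{G}_2\circ\mathcal{F}_2=\mathrm{id}$, fix $((c,\lambda/\mu),S)\in\Delta_2(n)$ with $(c,\lambda/\mu)\in\mathcal{PS}^*_u(n-p+2u+1)$, let $A=A_{(c,\lambda/\mu)}$ be the unique box of content $u$, and set $\underline{i}=\mathcal{F}_2((c,\lambda/\mu),S)$. The crucial point is the following claim: \emph{every box of $(c,\lambda/\mu)$ of content $u-1$ lies either immediately left of $A$ in the row of $A$ --- so its $S$-label is smaller than that of $A$ --- or immediately below $A$ --- so, by $p$-standardness of $S$, its $S$-label exceeds the labels of all boxes adjoined to the right of $A$.} Granting this, I deduce $u_{\underline{i}}=u$. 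First, $u$ is ``bad'' in $\underline{i}$: with $k$ the $S$-label of $A$ and $l$ that of the last adjoined box we have $i_k=i_l=u$, while every position strictly between $k$ and $l$ is either an adjoined box (content $\ge u$) or a box of $\lambda/\mu$, none of which, by the claim, has content $u-1$ with $S$-label in $(k,l)$; hence $u-1\notin\{i_{k+1},\dots,i_{l-1}\}$. Second, no $v$ with $1\le v<u$ is ``bad'': the restriction of $S$ to $\lambda/\mu$, relabelled order-preservingly, is a standard tableau of $(c,\lambda/\mu)$, whose contents are $\le u$, so $\mathcal{F}((c,\lambda/\mu),S|_{\lambda/\mu})$ is splittable by Lemma~\ref{lem:plYcont}; as every entry of $\underline{i}$ equal to $v$ or to $v-1$ comes from a box of $\lambda/\mu$ and restriction preserves the order of labels, splittability forces $v-1$ to occur between any two occurrences of $v$ in $\underline{i}$.

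Once $u_{\underline{i}}=u$ is established, I use the uniqueness in Lemma~\ref{lem:ppcom1}(1): it suffices to check that the sequence made of the $S$-label of $A$ followed by the $S$-labels of the adjoined boxes (read left to right) satisfies conditions (a)--(c) of that lemma. Condition (a) is immediate from the definition of $\mathcal{F}_2$; condition (c) holds because every index outside this sequence labels a box of $\lambda/\mu$ other than $A$, hence of content $\le u-1$; condition (b) is exactly the claim. It then follows that $\widehat{\underline{i}}=\mathcal{F}((c,\lambda/\mu),S|_{\lambda/\mu})$, hence $\mathcal{G}(\widehat{\underline{i}})=((c,\lambda/\mu),S|_{\lambda/\mu})$ by Proposition~\ref{onto}; adjoining back the boxes with their prescribed labels to the right of the unique content-$u$ box of $(c,\lambda/\mu)$, namely $A$, recovers $((c,\lambda/\mu),S)$, i.e. $\mathcal{G}_2(\underline{i})=((c,\lambda/\mu),S)$.

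The main obstacle is the boxed claim about content-$(u-1)$ boxes. Writing $A=(r,c_0)$, the hypotheses that $A$ is the unique box of content $u$ and the rightmost box of its row give $\lambda_r=u+1$, so every row $r+k$ ($k\ge0$) of $\lambda$ ends in a column $\le c_0$; this confines the content-$(u-1)$ boxes in rows $\ge r$ to the two positions immediately left of and immediately below $A$. For rows $i<r$, strictness of $\lambda$ yields $\lambda_i\ge c_0-i+1$, so $(i,c_0)$ belongs to the shifted shape of $\lambda$; since its content $c_0-i$ exceeds $u$ it cannot lie in $\lambda/\mu$, which forces $\mu_i\ge c_0-i+1>u-1$ and already removes the content-$(u-1)$ box of row $i$. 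The $p$-standardness hypothesis enters only to place the box directly below $A$ after all adjoined boxes, which is precisely what makes the pair (label of $A$, label of the last adjoined box) a legitimate witness for $u_{\underline{i}}=u$ and for condition (b) of Lemma~\ref{lem:ppcom1}(1).
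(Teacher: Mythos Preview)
Your proof is correct and follows the same overall strategy as the paper: verify $\mathcal{F}_2\circ\mathcal{G}_2=\mathrm{id}$ directly, and for $\mathcal{G}_2\circ\mathcal{F}_2=\mathrm{id}$ identify $\widehat{\underline{i}}$ with the content sequence of $S|_{\lambda/\mu}$ so that Proposition~\ref{onto} applies. The paper's own proof is considerably terser: it simply asserts ``by the definition of $\mathcal{F}_2$ we have $\widehat{\underline{i}}=\mathcal{F}((c,\lambda/\mu),T)$'' without explicitly checking that $u_{\underline{i}}=u$ or that the canonical sequence of Lemma~\ref{lem:ppcom1}(1) coincides with the labels of $A$ and the adjoined boxes. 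Your boxed claim about the location of content-$(u-1)$ boxes, together with your minimality argument via Lemma~\ref{lem:plYcont}, supplies exactly this justification, so your argument is a more rigorous version of the same proof rather than a different one. One small wording point: ``$A$ is the rightmost box of its row'' is not a hypothesis but a consequence of the facts that $A$ is the unique content-$u$ box and all contents in $\lambda/\mu$ are $\le u$; the derivation you give afterwards makes this clear, but the phrasing could be tightened.
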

\begin{proof}
It is clear that $\mathcal{F}_2\circ
\mathcal{G}_2(\underline{i})=\underline{i} $ for $\underline{i}\in
W_2(\mhcn)$. Conversely, suppose $(c,\lambda/\mu)\in
\mathcal{PS}^*_u(n-p+2u+1)$ and $S$ is a $p$-standard tableau of
shape $\overline{\lambda/\mu}$ for some $1\leq u\leq \frac{p-3}{2}$.
Set $\underline{i}=\mathcal{F}_2((c,\lambda/\mu),S)\in W_2(\mhcn)$.
Denote by $T$ the standard tableau of shape $\lambda/\mu$ obtained
by removing the $p-2u-1$ boxes on the right of $A_{(c,\lambda/\mu)}$
from $S$. Suppose the boxes of $T$ are labeled by
$l_1<l_2<\cdots<l_{n-p+2u+1}$. By the definition of $\mathcal{F}_2$
we have
$$\widehat{\underline{i}}=(c(T(l_1)),\ldots,c(T(l_{n-p+2u+1})))
=\mathcal{F}((c,\lambda/\mu),T).
$$
Therefore
$\mathcal{G}(\widehat{\underline{i}})=\mathcal{G}\circ\mathcal{F}((c,\lambda/\mu),T)
=((c,\lambda/\mu),T)$ by Proposition~\ref{onto}. Note that
$\mathcal{G}_2(\underline{i})$ is obtained by adding $p-2u-1$ boxes
labeled by $\{1,\ldots\}\setminus\{l_1,\ldots,l_{n-p+2u+1}\}$ to the
right of $A_{(c,\lambda/\mu)}$ in $T$. Since $T$ is obtained by
removing the $p-2u-1$ boxes on the right of $A_{(c,\lambda/\mu)}$
from $S$, $\mathcal{G}_2(\underline{i})=((c,\lambda/\mu),S)$ and
hence
$\mathcal{G}_2\circ\mathcal{F}_2((c,\lambda/\mu),S)=((c,\lambda/\mu),S)$.
\end{proof}
\begin{lem}\label{lem:ppcom3}
 $\underline{i}\sim\underline{j}\in W_2(\mhcn)$ if and
only if $\mathcal{G}_2(\underline{i})$ and
$\mathcal{G}_2(\underline{j})$ afford the same placed skew shifted
Young diagram in $\mathcal{PS}^*_{u}(n-p+2u+1)$ for some $1\leq
u\leq\frac{p-3}{2}$.
\end{lem}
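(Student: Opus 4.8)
The plan is to transfer the statement to the reduced vectors $\widehat{\underline{i}}$ and then invoke Lemma~\ref{lem:pocomb.des2}, using the structure supplied by Lemmas~\ref{lem:ppcom1} and~\ref{lem:ppcom2}. As a preliminary I would record that $u_{\underline{i}}$ is constant on $\sim$-classes: if $\underline{i}\sim\underline{j}$ then $\widehat{\underline{i}}\sim\widehat{\underline{j}}$ by Lemma~\ref{lem:ppcom2}(3), and since $\sim$-equivalent vectors have the same multiset of entries, the unique largest entry of $\widehat{\underline{i}}$ --- namely $u_{\underline{i}}$, by Lemma~\ref{lem:ppcom2}(1) --- must equal $u_{\underline{j}}$. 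Consequently $\mathcal{G}_2(\underline{i})$ and $\mathcal{G}_2(\underline{j})$ lie over placed diagrams in $\mathcal{PS}^*_u(n-p+2u+1)$ for one and the same $u$. Granting this, the forward implication is immediate: from $\underline{i}\sim\underline{j}$ we get $\widehat{\underline{i}}\sim\widehat{\underline{j}}$, both are splittable by Lemma~\ref{lem:ppcom2}(2), so Lemma~\ref{lem:pocomb.des2} shows that $\mathcal{G}(\widehat{\underline{i}})$ and $\mathcal{G}(\widehat{\underline{j}})$ afford a common placed skew shifted Young diagram $(c,\lambda/\mu)$; and by the construction of $\mathcal{G}_2$ this $(c,\lambda/\mu)$ is precisely the diagram afforded by $\mathcal{G}_2(\underline{i})$, and likewise by $\mathcal{G}_2(\underline{j})$, so these two agree.

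For the converse, suppose $\mathcal{G}_2(\underline{i})$ and $\mathcal{G}_2(\underline{j})$ afford the same $(c,\lambda/\mu)\in\mathcal{PS}^*_u(n-p+2u+1)$; then $u_{\underline{i}}=u=u_{\underline{j}}$. Deleting from the two underlying $p$-standard tableaux the $p-2u-1$ boxes appended to the right of $A_{(c,\lambda/\mu)}$ produces, exactly as in the proof of Proposition~\ref{prop:ppcomb2}, standard tableaux $T,T'$ of shape $\lambda/\mu$ with $\mathcal{G}(\widehat{\underline{i}})=((c,\lambda/\mu),T)$ and $\mathcal{G}(\widehat{\underline{j}})=((c,\lambda/\mu),T')$. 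In particular $\mathcal{G}(\widehat{\underline{i}})$ and $\mathcal{G}(\widehat{\underline{j}})$ afford the common diagram $(c,\lambda/\mu)$, so $\widehat{\underline{i}}\sim\widehat{\underline{j}}$ by Lemma~\ref{lem:pocomb.des2}.

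It remains to promote $\widehat{\underline{i}}\sim\widehat{\underline{j}}$ back to $\underline{i}\sim\underline{j}$, and this is the heart of the matter. I would use the normal forms of Lemma~\ref{lem:ppcom1}(2): $\underline{i}\sim(\underline{i}',\,u,u+1,\ldots,\tfrac{p-1}{2},\ldots,u+1,u,\,u-1,\ldots,u-m)$ with $\underline{i}'$ splittable and all of its entries $<u$, and similarly for $\underline{j}$ with data $(\underline{j}',m')$; passing to $\widehat{\phantom{x}}$ of these normal forms (which are $\sim$-equivalent to $\widehat{\underline{i}}$, resp.\ $\widehat{\underline{j}}$, by Lemma~\ref{lem:ppcom2}(3)) we may assume $\underline{i}$ and $\underline{j}$ are themselves in normal form, with reduced vectors $\widehat{\underline{i}}=(\underline{i}',u,u-1,\ldots,u-m)$ and $\widehat{\underline{j}}=(\underline{j}',u,u-1,\ldots,u-m')$. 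The key point is then that every admissible transposition occurring along a chain realizing $\widehat{\underline{i}}\sim\widehat{\underline{j}}$ lifts to a chain of admissible transpositions between the full normal forms: no entry that moves exceeds $u$, an entry of value $\le u-2$ can be commuted past the whole middle block $u+1,u+2,\ldots,\tfrac{p-1}{2},\ldots,u+1$ because each entry of that block differs from it by at least $3$, while an entry of value $u$ or $u-1$ is only ever swapped with entries of value $\le u-2$ during such a move. An induction on the length of the chain then lifts it and yields $\underline{i}\sim\underline{j}$. I expect this lifting step to be the main obstacle: one must verify case by case that each admissible move on the reduced vectors can be traced out on the full normal forms without ever producing a transposition of two entries differing by $1$, the delicate cases being exactly those in which an entry must be shuttled across the middle block, where the spread of that block is what makes the moves legitimate.
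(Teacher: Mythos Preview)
Your forward direction is exactly the paper's argument. For the converse you have all the right ingredients --- the normal forms from Lemma~\ref{lem:ppcom1}(2) and the reduction to $\widehat{\underline{i}}\sim\widehat{\underline{j}}$ --- but you then take a detour that the paper avoids. Two remarks.

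First, your description of the inserted middle block as $u+1,\ldots,\tfrac{p-1}{2},\ldots,u+1$ is inaccurate: by the definition of $\widehat{\underline{i}}$ in~(\ref{ihat}) the deleted entries are $i_{r_1},\ldots,i_{t_1},i_{t_0}$, and $i_{t_0}=u$, so the block ends with a second copy of $u$. Hence your ``differs by at least $3$'' claim is off (it is only $\ge 2$ for an entry of value $u-2$), and more seriously an entry of value $u-1$ cannot be slid past that terminal $u$. Your lifting can still be salvaged, but the case analysis you would need is more delicate than you indicate.

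Second, and more to the point, the paper bypasses the lifting entirely. Since the common diagram $(c,\lambda/\mu)$ determines the number $m$ of boxes below $A_{(c,\lambda/\mu)}$, the two normal forms have the \emph{same} tail $(u,u+1,\ldots,\tfrac{p-1}{2},\ldots,u+1,u,u-1,\ldots,u-m)$, so the claim reduces to showing $\underline{i}'\sim\underline{j}'$. For this the paper simply observes that $\mathcal{G}(\underline{i}')$ and $\mathcal{G}(\underline{j}')$ afford the same placed skew shifted Young diagram --- namely the diagram obtained from $(c,\lambda/\mu)$ by deleting $A_{(c,\lambda/\mu)}$ and the $m$ boxes below it --- and then invokes Lemma~\ref{lem:pocomb.des2} once more. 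Since all entries of $\underline{i}'$ are strictly below $u$, any admissible chain realizing $\underline{i}'\sim\underline{j}'$ acts only on the first $|\underline{i}'|$ positions of the full normal form and never touches the tail, so it lifts for free. This gives the conclusion without any shuttling across the middle block.
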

\begin{proof}
 By Lemma~\ref{lem:ppcom2}, if
$\underline{i}\sim\underline{j}$, then
$\widehat{\underline{i}}\sim\widehat{\underline{j}}$.  By
Lemma~\ref{lem:pocomb.des2}, $\mathcal{G}(\widehat{\underline{i}})$
and $\mathcal{G}(\widehat{\underline{j}})$ afford the same skew
shifted Young diagram. Hence  $\mathcal{G}_2(\underline{i})$ and
$\mathcal{G}_2(\underline{j})$ afford the same placed skew shifted
Young diagram $(c,\lambda/\mu)\in\mathcal{PS}^*_u(n-p+2u+1)$ for
some $1\leq u\leq\frac{p-3}{2}$.

Conversely, suppose $\mathcal{G}_2(\underline{i})$ and
$\mathcal{G}_2(\underline{j})$ afford the same placed skew shifted
Young diagram $(c,\lambda/\mu)\in\mathcal{PS}^*_u(n-p+2u+1)$ for
some $1\leq u\leq\frac{p-3}{2}$. Suppose there are $m$ boxes below
$A_{(c,\lambda/\mu)}$ in $(c,\lambda/\mu)$. By
Lemma~\ref{lem:ppcom1}, we see that
\begin{align}
\underline{i}&\sim
(\underline{i}',u,u+1,\cdots,\frac{p-3}{2},\frac{p-1}{2},\frac{p-3}{2},\cdots,
u+1, u, u-1, \cdots, u-m),\label{isim'}\\
\underline{j}&\sim
(\underline{j}',u,u+1,\cdots,\frac{p-3}{2},\frac{p-1}{2},\frac{p-3}{2},\cdots,
u+1, u, u-1, \cdots, u-m).\label{jsim'}
\end{align}
 for some $\underline{i}', \underline{j}'\in \nabla(n-p+2u-m)$. This
 together with~Lemma~\ref{lem:ppcom2} shows that
\begin{align}
\widehat{\underline{i}}&\sim
(\underline{i}', u, u-1, \cdots, u-m)\notag\\
\widehat{\underline{j}}&\sim (\underline{j}',u, u-1, \cdots,
u-m).\notag
\end{align}
Observe that $\mathcal{G}(\widehat{\underline{i}})$ and
$\mathcal{G}(\widehat{\underline{j}})$ afford the placed skew
shifted Young diagram $(c,\lambda/\mu)$. Therefore
$\mathcal{G}(\underline{i}')$ and $\mathcal{G}(\underline{j}')$
afford the same placed skew shifted Young diagram and hence
 $\underline{i}'\sim\underline{j}'$ by
Lemma~\ref{lem:pocomb.des2}. This together
with~(\ref{isim'})~and~(\ref{jsim'}) shows that
$\underline{i}\sim\underline{j}$.
\end{proof}

Suppose
$(c,\lambda/\mu)\in\mathcal{PS}_{\frac{p-1}{2}}(n)\cup\big(\cup_{1\leq
u\leq\frac{p-3}{2}}\mathcal{PS}^*_{u}(n-p+2u+1)\big)$.  By
Proposition~\ref{prop:ppcomb0} and Proposition~\ref{prop:ppcomb2},
there exists $\underline{i}\in W_k(\mhcn)$ such that
$\mathcal{G}_k(\underline{i})$ affords $(c,\lambda/\mu)$ for
$k=1,2$. Let
\begin{align}
D_p(c,\lambda/\mu):=D^{\underline{i}}.\label{D(c,lambda)p}
\end{align}
Note that if there exists $\underline{j}\in W'(\mhcn)$ satisfying
that $\mathcal{G}_k(\underline{j})$ also affords $(c,\lambda/\mu)$
for $k=1,2$, then  $\underline{i}\sim\underline{j}$ by
Proposition~\ref{prop:ppcomb0} and Lemma~\ref{lem:ppcom3} and hence
the $\mhcn$-module $D_p(c,\lambda/\mu)$ is unique (up to
isomorphism) by Theorem~\ref{thm:Classficiation}(2).

For $\ds(c,\lambda/\mu)\in
\mathcal{PS}_{\frac{p-1}{2}}(n)\cup\big(\cup_{1\leq
u\leq\frac{p-3}{2}}\mathcal{PS}^*_{u}(n-p+2u+1)\big)$, denote by
$\gamma_0(c,\lambda/\mu)$ the number of boxes with content zero in
$(c,\lambda/\mu)$. If $(c,\lambda/\mu)\in
\mathcal{PS}_{\frac{p-1}{2}}(n)$, set $f^{\lambda/\mu}$ to be the
number of standard tableaux of shape $\lambda/\mu$. If
$(c,\lambda/\mu)\in\cup_{1\leq
u\leq\frac{p-3}{2}}\mathcal{PS}^*_{u}(n-p+2u+1)$, let
$f_p^{\lambda/\mu}$  be the number of $p$-standard tableaux of shape
$\overline{\lambda/\mu}$.

The following is a Young diagrammatic reformulation of
Theorem~\ref{thm:Classficiation} for $p\geq 3$.

\begin{thm}\label{thm:dimpandparam.}
Suppose $(c,\lambda/\mu)\in\mathcal{PS}_{\frac{p-1}{2}}(n)\cup\big(\cup_{1\leq
u\leq\frac{p-3}{2}}\mathcal{PS}^*_{u}(n-p+2u+1)\big)$ and write
$\gamma_0=\gamma_0(c,\lambda/\mu)$. Then,

\text{(1)} $D_p(c,\lambda/\mu)$ is type
 $\texttt{M}$ if $\gamma_0$ is even and is type
$\texttt{Q}$ if $\gamma_0$ is odd. Moreover if
$(c,\lambda/\mu)\in\mathcal{PS}_{\frac{p-1}{2}}(n)$, then $\dim
D_p(c,\lambda/\mu)=2^{n-\lfloor\frac{\gamma_0}{2}\rfloor}f^{\lambda/\mu}$;
if $(c,\lambda/\mu)\in\big(\cup_{1\leq
u\leq\frac{p-3}{2}}\mathcal{PS}^*_{u}(n-p+2u+1)\big)$, then $\dim
D_p(c,\lambda/\mu)=2^{n-\lfloor\frac{\gamma_0}{2}\rfloor}f_p^{\lambda/\mu}$.

(2) The $\mhcn$-modules $D_p(c,\lambda/\mu)$ for
$(c,\lambda/\mu)\in\ds
\mathcal{PS}_{\frac{p-1}{2}}(n)\cup\big(\cup_{1\leq
u\leq\frac{p-3}{2}}\mathcal{PS}^*_{u}(n-p+2u+1)\big)$ form a set of
pairwise non-isomorphic irreducible completely splittable
$\mhcn$-modules in $\text{Rep}_{\I}\mhcn$.
\end{thm}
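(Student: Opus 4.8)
The plan is to read off both parts from Theorem~\ref{thm:Classficiation} through the bijections $\mathcal{G}_1,\mathcal{G}_2$, in complete parallel with the proof of Theorem~\ref{thm:dimandparam.}. First I fix $(c,\lambda/\mu)$ in the indexing set. If $(c,\lambda/\mu)\in\mathcal{PS}_{\frac{p-1}{2}}(n)$, Proposition~\ref{prop:ppcomb0} furnishes $\underline{i}\in W_1(\mhcn)$ with $\mathcal{G}_1(\underline{i})=((c,\lambda/\mu),T)$; if $(c,\lambda/\mu)\in\mathcal{PS}^*_u(n-p+2u+1)$ for some $1\leq u\leq\frac{p-3}{2}$, Proposition~\ref{prop:ppcomb2} furnishes $\underline{i}\in W_2(\mhcn)$ with $\mathcal{G}_2(\underline{i})=((c,\lambda/\mu),S)$. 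In either case $D_p(c,\lambda/\mu)=D^{\underline{i}}$, well-defined up to isomorphism by Theorem~\ref{thm:Classficiation}(2) together with Proposition~\ref{prop:ppcomb0} resp.\ Lemma~\ref{lem:ppcom3}.

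For part (1) I would argue as follows. By Theorem~\ref{thm:Classficiation}(1), $D^{\underline{i}}$ has the same type as the irreducible $\mpcn$-module $L(\underline{i})$, which by Lemma~\ref{lem:irrepPn} is of type $\texttt{M}$ or $\texttt{Q}$ according to the parity of the number of indices $k$ with $i_k=0$. The point is that this number equals $\gamma_0(c,\lambda/\mu)$: in the $W_1(\mhcn)$ case $i_k=c(T(k))$ for every $k$, and in the $W_2(\mhcn)$ case $\underline{i}=\mathcal{F}_2((c,\lambda/\mu),S)=(c(S(1)),\ldots,c(S(n)))$ where the $p-2u-1$ boxes of $\overline{\lambda/\mu}$ outside $\lambda/\mu$ all carry content in $\{u,u+1,\ldots,\frac{p-1}{2}\}$, hence none of them is $0$, so the zeros among the $c(S(k))$ are precisely the content-zero boxes of $(c,\lambda/\mu)$. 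Thus $\dim L(\underline{i})=2^{n-\lfloor\gamma_0/2\rfloor}$ by Lemma~\ref{lem:irrepPn}, and from $D^{\underline{i}}=\bigoplus_{\tau\in P_{\underline{i}}}L(\underline{i})^{\tau}$ I get $\dim D^{\underline{i}}=|P_{\underline{i}}|\,2^{n-\lfloor\gamma_0/2\rfloor}$. By Lemma~\ref{lem:lambdaP}, $|P_{\underline{i}}|$ is the cardinality of the $\sim$-class of $\underline{i}$; by Proposition~\ref{prop:ppcomb0} (in the $W_1$ case) this class corresponds bijectively to the standard tableaux of shape $\lambda/\mu$, so $|P_{\underline{i}}|=f^{\lambda/\mu}$, and by Proposition~\ref{prop:ppcomb2} with Lemma~\ref{lem:ppcom3} (in the $W_2$ case) to the $p$-standard tableaux of shape $\overline{\lambda/\mu}$, so $|P_{\underline{i}}|=f_p^{\lambda/\mu}$. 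This yields both dimension formulas.

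For part (2): irreducibility of each $D^{\underline{i}}$ is Theorem~\ref{thm:Classficiation}(1); complete splittability holds because as a $\mpcn$-module $D^{\underline{i}}\cong\bigoplus_{\tau}L(\tau\cdot\underline{i})$ and $x_1,\ldots,x_n$ act semisimply on each $L(\tau\cdot\underline{i})$ (the eigenvalues of $x_j$ being $\pm\sqrt{q((\tau\cdot\underline{i})_j)}$). For pairwise non-isomorphism, if $D_p(c,\lambda/\mu)=D^{\underline{i}}\cong D^{\underline{j}}=D_p(c',\lambda'/\mu')$ then $\underline{i}\sim\underline{j}$ by Theorem~\ref{thm:Classficiation}(2); since $W(\mhcn)=W_1(\mhcn)\sqcup W_2(\mhcn)$ with each $W_k(\mhcn)$ a union of $\sim$-classes, $\underline{i}$ and $\underline{j}$ lie in the same $W_k(\mhcn)$, and then Proposition~\ref{prop:ppcomb0} or Lemma~\ref{lem:ppcom3} gives that $\mathcal{G}_k(\underline{i})$ and $\mathcal{G}_k(\underline{j})$ afford the same placed skew shifted Young diagram, i.e.\ $(c,\lambda/\mu)=(c',\lambda'/\mu')$; one also notes the two families of indexing diagrams are disjoint, since a diagram in $\mathcal{PS}^*_u(n-p+2u+1)$ has fewer than $n$ boxes and, for distinct admissible $u$, the sets $\mathcal{PS}^*_u(\,\cdot\,)$ are disjoint ($u$ being the largest content occurring). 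I would also remark that the list is in fact complete: by Theorem~\ref{thm:Classficiation}(3) and Corollary~\ref{cor:weigdes} every irreducible completely splittable module in $\operatorname{Rep}_{\I}\mhcn$ is $D^{\underline{i}}$ for some $\underline{i}\in W(\mhcn)=W_1(\mhcn)\sqcup W_2(\mhcn)$, hence isomorphic to one of the $D_p(c,\lambda/\mu)$.

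The only step requiring genuine care, rather than quotation of earlier results, is the uniform treatment of the $W_1$ and $W_2$ cases in part (1): one must check that the $p-2u-1$ extra boxes adjoined in $\overline{\lambda/\mu}$ never carry content $0$ (so that $\gamma_0$ is computed from $(c,\lambda/\mu)$ exactly as stated) and correctly match the cardinality of the $\sim$-class to $f^{\lambda/\mu}$ versus $f_p^{\lambda/\mu}$; everything else is bookkeeping assembled from the cited results.
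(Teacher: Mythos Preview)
Your proposal is correct and follows essentially the same route as the paper: both arguments deduce the type and dimension of $D_p(c,\lambda/\mu)=D^{\underline{i}}$ from Theorem~\ref{thm:Classficiation} and Lemma~\ref{lem:irrepPn} by identifying the number of zero entries of $\underline{i}$ with $\gamma_0(c,\lambda/\mu)$ and the cardinality $|P_{\underline{i}}|$ with $f^{\lambda/\mu}$ (via Proposition~\ref{prop:ppcomb0} and Lemma~\ref{lem:lambdaP}) or $f_p^{\lambda/\mu}$ (via Proposition~\ref{prop:ppcomb2}, Lemma~\ref{lem:ppcom3} and Lemma~\ref{lem:lambdaP}), and both obtain part~(2) from Theorem~\ref{thm:Classficiation} together with Propositions~\ref{prop:ppcomb0}, \ref{prop:ppcomb2} and Lemma~\ref{lem:ppcom3}. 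Your write-up is in fact slightly more explicit than the paper's in justifying that the extra boxes in $\overline{\lambda/\mu}$ never carry content~$0$ and that the indexing sets are disjoint, but these are exactly the checks the paper's terse references presuppose.
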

\begin{proof}
(1) By Proposition~\ref{prop:ppcomb0} and
Proposition~\ref{prop:ppcomb2}, the number of $1\leq k\leq n$ with
$i_k=0$ equals to $\gamma_0$. Hence by Lemma~\ref{lem:irrepPn} and
Theorem~\ref{thm:Classficiation}, $D_p(c,\lambda/\mu)$ is type
 $\texttt{M}$ if $\gamma_0$ is even and is type
$\texttt{Q}$ if $\gamma_0$ is odd.

Set $|P_{\underline{i}}|$ to be the number of elements contained in
$P_{\underline{i}}$.  If
$(c,\lambda/\mu)\in\mathcal{PS}_{\frac{p-1}{2}}(n)$, then by
Proposition~\ref{prop:ppcomb0} there exists a one-to-one
correspondence between the set of weights in $W(\mhcn)$ equivalent
to $\underline{i}$ and the set of standard tableaux of shape
$\lambda/\mu$. This implies $|P_{\underline{i}}|=f^{\lambda/\mu}$ by
Lemma~\ref{lem:lambdaP}. If
$(c,\lambda/\mu)\in\mathcal{PS}^*_{u}(n-p+2u+1)$ for some $1\leq
u\leq\frac{p-3}{2}$. By Lemma~\ref{lem:ppcom3}, there exists a
one-to-one correspondence between the set of weights in $W(\mhcn)$
equivalent to $\underline{i}$ and the set of splittable standard
tableaux of shape $\overline{\lambda/\mu}$. This implies
$|P_{\underline{i}}|=f_p^{\lambda/\mu}$ by Lemma~\ref{lem:lambdaP}.
Now the Proposition follows from  Lemma~\ref{lem:irrepPn} and
Theorem~\ref{thm:Classficiation}.

(2) It follows from Proposition~\ref{prop:ppcomb0},
Proposition~\ref{prop:ppcomb2}, Lemma~\ref{lem:ppcom3} and
Theorem~\ref{thm:Classficiation}(3).

%, the map $\mathcal{G}_1$ induces a bijection from $W_1(\mhcn)/\sim$
%to $\mathcal{PS}_{\frac{p-1}{2}}(n)$. By
%Proposition~\ref{prop:ppcomb2} and Lemma~\ref{lem:ppcom3}, the map
%$\mathcal{G}_2$ induces a bijection from $W_2(\mhcn)/\sim$ to
%$\cup_{1\leq u\leq\frac{p-3}{2}}\mathcal{PS}^*_{u}(n-p+2u+1)$.
%Observe that $W(\mhcn)/\sim$ is the disjoint union of
%$W_1(\mhcn)/\sim$ and $W_2(\mhcn)/\sim$. Hence by
%Theorem~\ref{thm:Classficiation}(3), we deduce that the set $\ds
%\mathcal{PS}_{\frac{p-1}{2}}(n)\cup\big(\cup_{1\leq
%u\leq\frac{p-3}{2}}\mathcal{PS}^*_{u}(n-p+2u+1)\big)$ parameterizes
%the irreducible completely splittable $\mhcn$-modules in
%$\text{Rep}_{\I}\mhcn$.

\end{proof}

\begin{rem}\label{empty} Note that for fixed $p\geq 3$,
$\mathcal{PS}_{\frac{p-1}{2}}(n)\neq\emptyset$ if and only if
$n\leq\frac{(p+1)(p+3)}{8}$. Moreover if $n>\frac{(p+1)(p+3)}{8}$,
then $\mathcal{PS}^*_{u}(n-p+2u+1)=\emptyset$ for $1\leq
u\leq\frac{p-3}{2}$. Hence there is no irreducible completely
splittable supermodule in $\operatorname{Rep}_{\I}\mhcn$ if
$n>\frac{(p+1)(p+3)}{8}$ for fixed $p\geq 3$.
\end{rem}

\section{Completely splittable representations of finite Hecke-Clifford
algebras}\label{fHCa} Denote by $\mathcal{C}_n$ the subalgebra of
$\mhcn$ generated by $c_1,\ldots, c_n$, which is known as the
Clifford algebra. The finite Hecke-Clifford algebra
$\mathcal{Y}_n=\mathcal{C}_n\rtimes\F S_n$ is isomorphic to the
subalgebra of $\mhcn$ generated by $c_1,\ldots, c_n, s_1,\ldots,
s_{n-1}$.  The Jucys-Murphy elements $L_k(1\leq k\leq n)$ in
$\mathcal{Y}_n$ are defined as
\begin{align}
L_k=\sum_{1\leq j< k}(1+c_jc_k)(jk),\label{JM}
\end{align}
where $(jk)$ is the transposition exchanging $j$ and $k$ and keeping
all others fixed.

\begin{defn} A $\mathcal{Y}_n$-module is called
{\em completely splittable} if the Jucys-Murphy elements $L_k (1\leq
k\leq n)$ act semisimply.
\end{defn}

It is well known that there exists a surjective homomorphism
\begin{align}
\digamma: \mhcn&\rightarrow \mathcal{Y}_n\notag\\
c_k\mapsto c_k, s_l&\mapsto s_l, x_k\mapsto L_k, \quad (1\leq
k\leq n, 1\leq l\leq n-1)\notag
\end{align}
whose kernel coincides with the ideal of $\mhcn$ generated by $x_1$.
Hence the category of finite dimensional $\mathcal{Y}_n$-modules can
be identified as the category of finite dimensional $\mhcn$-modules
which are annihilated by $x_1$. By~\cite[Lemma 4.4]{BK} (cf.
\cite[Lemma 15.1.2]{K2}), a $\mhcn$-module $M$ belongs to the
category $\text{Rep}_{\I}\mhcn$ if all of eigenvalues of $x_j$ on
$M$ are of the form $q(i)$ for some $1\leq j\leq n$. Hence the
category of finite dimensional completely splittable
$\mathcal{Y}_n$-module can be identified with the subcategory of
$\text{Rep}_{\I}\mhcn$ consisting of completely splittable
$\mhcn$-modules on which $x_1=0$. By~(\ref{decomp.1}), we can
decompose any finite dimensional $\mathcal{Y}_n$-module $M$ as
$$
M=\oplus_{\underline{i}\in\I^n}M_{\underline{i}},
$$
where $M_{\underline{i}}=\{z\in M~|~(L_k^2-q(i_k))^Nz=0,\text{ for
}N\gg0, 1\leq k\leq n \}$. If $M_{\underline{i}}\neq 0$, then
$\underline{i}$ is called a {\em weight} of $M$.

\begin{defn} Define $W(\mathcal{Y}_n)$ to be the set of weights
$\underline{i}=(i_1,\ldots,i_n)\in W(\mhcn)$ satisfying the
following additional conditions:
\begin{align}
i_1=0, \quad\{i_k-1,i_k+1\}\cap\{i_1,\ldots,i_{k-1}\}\neq\emptyset
\text{ for  } 2\leq k\leq n. \label{weigofSerg}
\end{align}
\end{defn}

\begin{prop}\label{prop:wei.of sergeev}
$W(\mathcal{Y}_n)$ is the set of weights occurring in irreducible
completely splittable $\mathcal{Y}_n$-modules.
\end{prop}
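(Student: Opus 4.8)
\emph{Strategy.} The plan is to exploit the identification recalled just above --- finite dimensional completely splittable $\mathcal{Y}_n$-modules are exactly the completely splittable modules in $\operatorname{Rep}_{\I}\mhcn$ on which $x_1=0$ --- in order to reduce the proposition to the purely combinatorial identity
\[
W(\mathcal{Y}_n)=\{\underline{j}\in W(\mhcn)\mid k_1=0\ \text{for all}\ \underline{k}\sim\underline{j}\},
\]
which I would then verify by two short inductions on chains of admissible transpositions.

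\emph{The reduction.} If $M$ is an irreducible completely splittable $\mathcal{Y}_n$-module with $M_{\underline{j}}\neq0$, then, viewed as a $\mhcn$-module, $M$ is irreducible, completely splittable and annihilated by $x_1$; so by Theorem~\ref{thm:Classficiation} and Corollary~\ref{cor:weigdes} we have $\underline{j}\in W(\mhcn)$ and $M\cong D^{\underline{j}}$. Since $M$ is completely splittable, $x_1^2$ acts on each weight space $M_{\underline{k}}$ as the scalar $q(k_1)$ (Remark~\ref{rem:genera.}) while $x_1$ itself acts semisimply, so $x_1=0$ on $M$ exactly when $q(k_1)=0$ for every weight $\underline{k}$ of $D^{\underline{j}}$; as $q(i)=i(i+1)$ is nonzero in $\F$ for every nonzero $i\in\I$, this is equivalent to $k_1=0$ for all $\underline{k}\sim\underline{j}$. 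Conversely, when this holds, $x_1=0$ on $D^{\underline{j}}$, so $D^{\underline{j}}$ is an irreducible completely splittable $\mathcal{Y}_n$-module with weight $\underline{j}$. Hence the proposition is equivalent to the displayed identity.

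\emph{Inclusion $\supseteq$.} Let $\underline{j}$ lie in the right-hand set. Taking $\underline{k}=\underline{j}$ gives $j_1=0$. If condition~(\ref{weigofSerg}) failed, choose $k\ge2$ with $\{j_k-1,j_k+1\}\cap\{j_1,\dots,j_{k-1}\}=\emptyset$ and move $j_k$ leftwards past $j_{k-1},\dots,j_1$ one step at a time; each such swap is by an admissible transposition, because the entry immediately to the left of $j_k$ always lies in $\{j_1,\dots,j_{k-1}\}$ and so differs from $j_k\pm1$. This produces $\underline{k}=(j_k,j_1,\dots,j_{k-1},j_{k+1},\dots,j_n)\sim\underline{j}$, whence $k_1=j_k=0$ by hypothesis; but then $\underline{k}$ begins $0,0$, contradicting $\underline{k}\in W(\mhcn)=W'(\mhcn)$ by Proposition~\ref{prop:restr.2}(1). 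So (\ref{weigofSerg}) holds and $\underline{j}\in W(\mathcal{Y}_n)$.

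\emph{Inclusion $\subseteq$, and the main obstacle.} It suffices to show $W(\mathcal{Y}_n)$ is stable under admissible transpositions, for then every $\underline{k}\sim\underline{j}$ with $\underline{j}\in W(\mathcal{Y}_n)$ again lies in $W(\mathcal{Y}_n)$, hence has $k_1=0$. Given $\underline{i}\in W(\mathcal{Y}_n)$ and $s_m$ admissible with respect to $\underline{i}$: one first checks $m\neq1$ (condition~(\ref{weigofSerg}) at $k=2$ together with $i_1=0$ forces $i_2=1$, whereas admissibility of $s_1$ would require $i_1\neq i_2\pm1$, which is false), so $(s_m\cdot\underline{i})_1=i_1=0$; and condition~(\ref{weigofSerg}) for $s_m\cdot\underline{i}$ is then verified index by index --- it is inherited from $\underline{i}$ when $k\notin\{m,m+1\}$, follows from the $k=m$ case for $\underline{i}$ when $k=m+1$, and follows from the $k=m+1$ case for $\underline{i}$ together with admissibility $i_m\neq i_{m+1}\pm1$ when $k=m$ --- while $s_m\cdot\underline{i}\in W(\mhcn)$ since $\sim$ preserves $W(\mhcn)=W'(\mhcn)$. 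I expect the reduction step to be the only delicate point: one must be sure that "$x_1=0$ on an irreducible completely splittable $\mhcn$-module'' is genuinely equivalent to the vanishing of the first coordinate of every weight, which uses both the semisimplicity of $x_1$ and the injectivity of $q$ at $0$ over $\I$; everything after that is bookkeeping with the relation $\sim$ and the constraints of Proposition~\ref{prop:restr.2}.
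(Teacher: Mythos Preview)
Your proof is correct and follows essentially the same approach as the paper's: both directions hinge on the stability of $W(\mathcal{Y}_n)$ under admissible transpositions (which the paper asserts as ``easily checked'' and you verify in detail) and on moving a would-be violating entry $i_k$ leftwards to position $1$ by admissible transpositions. The only cosmetic differences are that you package the argument around the displayed combinatorial identity and handle the case $i_k=0$ uniformly via the contradiction $k_1=k_2=0$ against Proposition~\ref{prop:restr.2}(1), whereas the paper treats $i_k=0$ separately using Lemma~\ref{lem:restr.2}.
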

\begin{proof} Suppose $\underline{i}$ occurs in some irreducible
completely splittable representation $M$ of $\mathcal{Y}_n$, then
$i_1=0$ since $L_1=0$ on $M$. For $2\leq k\leq n$, if $i_k=0$, then
by Lemma~\ref{lem:restr.2} we have $1\in\{i_1,\ldots,i_{k-1}\}$ and
hence $\{i_k-1,i_k+1\}\cap\{i_1,\ldots,i_{k-1}\}\neq\emptyset$. Now
assume $i_k\geq 1$ and suppose
$\{i_k-1,i_k+1\}\cap\{i_1,\ldots,i_{k-1}\}=\emptyset$. Then $s_l$ is
admissible with respect to $s_{l+1}\cdots s_{k-1}\cdot\underline{i}$
for $1\leq l\leq k-1$ and hence $M_{s_1\cdots
s_{k-1}\cdot\underline{i}}\neq 0$. Set $\underline{j}=s_1\cdots
s_{k-1}\cdot\underline{i}$. Note $j_1=i_k\neq 0$ and this
contradicts the fact that $L_1=0$ on $M$.

Conversely, let $\underline{i}\in W(\mathcal{Y}_n)$. Recall
$P_{\underline{i}}$ and $D^{\underline{i}}$ from~(\ref{Punderi})
and~(\ref{Dunderi}), respectively. It can be easily checked that
$\tau\cdot\underline{i}\in W(\mathcal{Y}_n)$ for each $\tau\in
P_{\underline{i}}$ and hence $x_1=0$ on $D^{\underline{i}}$. This
implies that $D^{\underline{i}}$ can be factored through the
surjective map $\digamma$ and hence it gives an irreducible
completely splittable $\mathcal{Y}_n$-module. The Proposition
follows from the fact that $\underline{i}$ is a weight of
$D^{\underline{i}}$.
\end{proof}

%\begin{lem}\label{lem:comb.Serg1}
%Each weight $\underline{i}\in W(\mathcal{Y}_n)$ uniquely
%determines a shifted Young diagram $\lambda(\underline{i})$ and a
%standard tableau of shape $\lambda(\underline{i})$. Moreover, if
%$\underline{i}\sim\underline{j}$, then they determine the same
%Young diagram.
%\end{lem}
%
%

Denote by $\nabla^{\circ}(n)$ the subset of $\nabla(n)$ consisting
of $\underline{i}$ satisfying~(\ref{weigofSerg}).
\begin{lem}\label{lem:shiftedY}
The restriction $\mathcal{G}^{\circ}$ of the map $\mathcal{G}$ in
(\ref{mathcalG}) induces a bijection between $\nabla^{\circ}(n)$ and
the set of pairs $(\lambda,T)$ of strict partitions $\lambda$ and
standard tableaux $T$ of shape $\lambda$.
\end{lem}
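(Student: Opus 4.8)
The plan is to deduce this directly from Proposition~\ref{onto}. Since $\mathcal{G}\colon\nabla(n)\to\Delta(n)$ is a bijection with inverse $\mathcal{F}$, it suffices to show that $\mathcal{G}$ restricts to a bijection of $\nabla^{\circ}(n)$ onto the subset $\{((c,\lambda/\mu),T)\in\Delta(n)\mid\mu=\emptyset\}$. Indeed, by Remark~\ref{rem:shiftedY} a placed skew shifted Young diagram with $\mu=\emptyset$ is just a shifted Young diagram $\lambda$ equipped with its unique content function $c_\lambda$, so that subset of $\Delta(n)$ is precisely the set of pairs $(\lambda,T)$ with $\lambda$ a strict partition of $n$ and $T$ a standard tableau of shape $\lambda$. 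Because $\mathcal{G}$ is already a bijection, everything comes down to the single claim that, for $\underline{i}\in\nabla(n)$, the diagram afforded by $\mathcal{G}(\underline{i})$ is a (non-skew) shifted Young diagram if and only if $\underline{i}\in\nabla^{\circ}(n)$.

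I would prove this claim by induction on $n$, writing $\underline{i}'=(i_1,\dots,i_{n-1})$. The base case $n=1$ is immediate, since $\nabla^{\circ}(1)=\{(0)\}$ and $\mathcal{G}((i_1))$ is a single box, which is a shifted Young diagram exactly when $i_1=0$. For the inductive step I use two facts: first, $\underline{i}\in\nabla^{\circ}(n)$ iff $\underline{i}'\in\nabla^{\circ}(n-1)$ and $\{i_n-1,i_n+1\}\cap\{i_1,\dots,i_{n-1}\}\neq\emptyset$; second, as in the proof of Proposition~\ref{onto}, $\mathcal{G}(\underline{i}')$ is obtained from $\mathcal{G}(\underline{i})=((c,\lambda/\mu),T)$ by deleting the box $T(n)$, which is a removable corner of $\lambda/\mu$. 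For the forward direction, if $\underline{i}\in\nabla^{\circ}(n)$ then by induction $\mathcal{G}(\underline{i}')$ affords a shifted Young diagram $\lambda'$, whose realization contains the corner box on the main diagonal. Since the set of contents of $\lambda'$ equals $\{i_1,\dots,i_{n-1}\}$ and meets $\{i_n-1,i_n+1\}$, the construction of $\mathcal{G}$ adjoins the box $T(n)$ via Case~2, 3 or 4, i.e.\ immediately to the right of or immediately below an existing box of $\lambda'$, without moving the rest of the diagram. Hence $\mathcal{G}(\underline{i})$ is realized by adding one box to the realization of $\lambda'$; this realization still contains the main-diagonal corner box, and by Proposition~\ref{onto} it is a legitimate placed skew shifted Young diagram $\lambda/\mu$, so $\mu_1=0$ and therefore $\mu=\emptyset$.

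For the converse, suppose $\mathcal{G}(\underline{i})$ affords a shifted Young diagram $\lambda$ with content function $c_\lambda$. The box $T(n)$ is a removable corner of $\lambda$, so deleting it again gives a shifted Young diagram, which affords $\mathcal{G}(\underline{i}')$; by induction $\underline{i}'\in\nabla^{\circ}(n-1)$, in particular $i_1=0$ (this also holds trivially if $n=1$). To finish I must check $\{i_n-1,i_n+1\}\cap\{i_1,\dots,i_{n-1}\}\neq\emptyset$. Put $T(n)$ at row $r$, column $c$, so $i_n=c-r$. If $i_n\geq1$ then $c-1\geq r$, the box $(r,c-1)$ lies in $\lambda$ just left of $T(n)$, hence carries a label $<n$ and content $i_n-1$, giving $i_n-1\in\{i_1,\dots,i_{n-1}\}$. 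If $i_n=0$ and $n\geq2$, then $T(n)=(r,r)$; being a removable corner forces $\lambda_r=1$ with $r$ the last row of $\lambda$, whence $\lambda_{r-1}\geq2$, the box $(r-1,r)$ lies in $\lambda$ just above $T(n)$ with label $<n$ and content $1$, giving $1\in\{i_1,\dots,i_{n-1}\}$. In either case the condition holds, so $\underline{i}\in\nabla^{\circ}(n)$, completing the induction.

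The only delicate point is the bookkeeping in the inductive step: matching Cases~1--4 of the construction of $\mathcal{G}$ with the operations of adding and removing a corner box of a shifted diagram, and observing that once a shifted Young diagram has been built the box $T(n)$ cannot be adjoined as a new connected component (Case~1) without destroying shiftedness. Once one notes the elementary remark that a strict partition $\mu$ with $\mu_1=0$ is empty, each case is routine, so I do not expect a genuine obstacle here.
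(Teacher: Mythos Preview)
Your proof is correct and follows essentially the same inductive approach as the paper. The paper's proof is terser: it only verifies explicitly that $\mathcal{G}(\underline{i}')$ being a shifted Young diagram together with $\{i_n-1,i_n+1\}\cap\{i_1,\dots,i_{n-1}\}\neq\emptyset$ forces $\mathcal{G}(\underline{i})$ to be a shifted Young diagram (the forward direction), and leaves the converse implicit. You supply that converse argument carefully, which is needed for the bijection claim. Your phrasing of the forward step via ``the realization still contains $(1,1)$, so $\mu_1=0$'' is a slightly slicker packaging of the same case analysis the paper gestures at; it works because the inductive construction of $\mathcal{G}$ fixes box positions and never deletes boxes, so the resulting shape, viewed in those fixed coordinates, is a skew shifted diagram containing $(1,1)$ and hence non-skew.
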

\begin{proof} Let us proceed by induction on $n$. Clearly the statement holds for $n=1$.
Let $\underline{i}\in \nabla^{\circ}(n)$. Then
$\underline{i}^{\prime}:=(i_1,\ldots,i_{n-1})\in
\nabla^{\circ}(n-1)$ and by induction we have
$\mathcal{G}(\underline{i}^{\prime})=(\widetilde{\lambda},S)$ for
some shifted Young diagram $\widetilde{\lambda}$ with $n-1$ boxes
and a standard tableau $S$ of shape $\widetilde{\lambda}$. Note that
$\mathcal{G}(\underline{i})$ is obtained by adding a box labeled by
$n$ to the diagonal of content $i_n$ in $S$. Since
$\{i_n-1,i_n+1\}\cap\{i_1,\ldots,i_{n-1}\}\neq\emptyset$, the
resulting diagram is still a shifted Young diagram.
\end{proof}

Note that if $p=0$, then $W(\mathcal{Y}_n)$ coincides with
$\nabla^{\circ}(n)$. Hence by Theorem~\ref{thm:dimandparam.} we have
the following which recovers Nazarov's result in~\cite{Na1}.

\begin{cor}\label{cor:p0Serg} Suppose that $p=0$ and that $\lambda$ is a strict partition of $n$.
Then,

(1) There exists an irreducible $ \mathcal{Y}_n$-module $D(\lambda)$
satisfying that $\dim
D(\lambda)=2^{n-\lfloor\frac{l(\lambda)}{2}\rfloor}f^{\lambda}$,
where $f^{\lambda}$ is the number of standard $\lambda$-tableaux.
Moreover, $D(\lambda)$ is type $\texttt{M}$ if $l(\lambda)$ is even
and is type $\texttt{Q}$ if $l(\lambda)$ is odd.

(2) The set of shifted Young diagrams with $n$ boxes parameterizes
the irreducible completely splittable $\mathcal{Y}_n$-modules.
\end{cor}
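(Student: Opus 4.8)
The plan is to deduce Corollary~\ref{cor:p0Serg} from Theorem~\ref{thm:dimandparam.} by transporting the classification of irreducible completely splittable $\mhcn$-modules along the surjection $\digamma\colon\mhcn\to\mathcal{Y}_n$, using that a completely splittable $\mathcal{Y}_n$-module is the same thing as a completely splittable $\mhcn$-module on which $x_1$ acts as $0$ (equivalently, all of whose weights lie in $W(\mathcal{Y}_n)$).

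First I would fix a strict partition $\lambda$ of $n$ and give its shifted Young diagram the content function $c_\lambda$ of Remark~\ref{rem:shiftedY}, so that $(c_\lambda,\lambda)\in\mathcal{PS}(n)$ with $\mu=\emptyset$. By Proposition~\ref{onto} there is $\underline{i}\in\nabla(n)$ with $\mathcal{G}(\underline{i})$ affording $(c_\lambda,\lambda)$, and Lemma~\ref{lem:shiftedY} shows moreover $\underline{i}\in\nabla^{\circ}(n)$: the first box has content $0$, and every later box of content $i_k$ is added adjacent to a diagonal of content $i_k\pm1$ already present, which is precisely condition~(\ref{weigofSerg}). Since $p=0$, $W(\mathcal{Y}_n)=\nabla^{\circ}(n)$, so $\underline{i}\in W(\mathcal{Y}_n)$; by the argument in the proof of Proposition~\ref{prop:wei.of sergeev}, $\tau\cdot\underline{i}\in W(\mathcal{Y}_n)$ for all $\tau\in P_{\underline{i}}$, whence $x_1$ acts as $0$ on $D^{\underline{i}}$ and $D^{\underline{i}}$ factors through $\digamma$ to an irreducible completely splittable $\mathcal{Y}_n$-module $D(\lambda):=D^{\underline{i}}$. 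The boxes of content $0$ in $(c_\lambda,\lambda)$ are exactly the $l(\lambda)$ main-diagonal boxes, so $\gamma_0(c_\lambda,\lambda)=l(\lambda)$, and Theorem~\ref{thm:dimandparam.}(1) then yields $\dim D(\lambda)=2^{n-\lfloor l(\lambda)/2\rfloor}f^{\lambda}$ together with the stated type.

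For part~(2), Theorem~\ref{thm:dimandparam.}(2) already exhausts the irreducible completely splittable $\mhcn$-modules by the $D(c,\lambda/\mu)$ with $(c,\lambda/\mu)\in\mathcal{PS}(n)$, and such a module descends to $\mathcal{Y}_n$ exactly when $x_1$ acts as $0$, i.e., using that $x_1$ is semisimple with $x_1^2$ having eigenvalues $q(i_1)$ on the various weight spaces, exactly when its whole weight class lies in $W(\mathcal{Y}_n)=\nabla^{\circ}(n)$. By Lemma~\ref{lem:pocomb.des2} and Lemma~\ref{lem:shiftedY}, the classes in $\nabla^{\circ}(n)/\!\sim$ correspond bijectively to strict partitions of $n$, equivalently to shifted Young diagrams with $n$ boxes; non-isomorphism of the resulting modules is Theorem~\ref{thm:Classficiation}(2). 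The one point requiring care is the combinatorial identification that the placed skew shifted Young diagrams which descend to $\mathcal{Y}_n$ are exactly the honest shifted diagrams $(c_\lambda,\lambda)$ (with $\mu=\emptyset$) — this is where Lemma~\ref{lem:shiftedY} does the essential work, and it is the only place where anything beyond bookkeeping is needed.
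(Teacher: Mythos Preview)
Your proof is correct and follows essentially the same approach as the paper: define $D(\lambda)=D(c_\lambda,\lambda)$ and read off the dimension and type from Theorem~\ref{thm:dimandparam.}, using Lemma~\ref{lem:shiftedY} and Proposition~\ref{prop:wei.of sergeev} to identify which $D(c,\lambda/\mu)$ factor through $\digamma$. Your write-up is more explicit than the paper's (which is quite terse here), in particular in spelling out that $\gamma_0(c_\lambda,\lambda)=l(\lambda)$ and that descent to $\mathcal{Y}_n$ is equivalent to the weight class lying in $\nabla^\circ(n)$, but the underlying argument is the same.
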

\begin{proof}
Suppose $\lambda$ is a strict partition of $n$. Recall the content
function $c_{\lambda}$ from Remark~\ref{rem:shiftedY}. Note that
$(c_{\lambda},\lambda)\in\mathcal{PS}(n)$. Recall the $\mhcn$-module
$D(c_{\lambda},\lambda)$ from~(\ref{D(c,lambda)}) and let
$$
D(\lambda)=D(c_{\lambda},\lambda).
$$
Now the Proposition follows from Theorem~\ref{thm:dimandparam.}.
\end{proof}
%\begin{proof}
%above $\mathcal{Y}_n$-modules $D{\lambda}$ form a complete set of
%pairwise non-isomorphic
%\end{proof}
%
%\begin{rem} It is known that irreducible irreducible
%$\mathcal{Y}_n$-modules are parameterized by strict partitions of
%$n$.
%Hence on each irreducible $\mathcal{Y}_n$-module the
%Jucys-Murphy elements act semisimply or equivalently the
%restriction of each irreducible $\mathcal{Y}_n$-module to the
%subalgebra $\mathcal{Y}_r$ is semisimple for any $1\leq r\leq $.
%\end{rem}
%
{\bf In the remaining part of this section, let us assume that
$p\geq3$}. Set $W_k(\mathcal{Y}_n):=W(\mathcal{Y}_n)\cap W_k(\mhcn)$
for $k=1, 2$.
\begin{lem}\label{Sergpp1}
The restriction $\mathcal{G}_1^{\circ}$ of $\mathcal{G}_1$ to
$W_1(\mathcal{Y}_n)$ gives a bijection from $W_1(\mathcal{Y}_n)$ to
the set of pairs $(\lambda, T)$ of strict partitions $\lambda$ of
$n$ boxes whose first part is less than or equal to $\frac{p+1}{2}$
and standard tableaux $T$ of shape $\lambda$.
%Moreover
%$\underline{i}\sim\underline{j}\in W_1(\mathcal{Y}_n)$ if and only
%if $\mathcal{G}_1^{\circ}(\underline{i})$ and
%$\mathcal{G}_1^{\circ}(\underline{j})$ afford the same shifted Young
%diagram.
\end{lem}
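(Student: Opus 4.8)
The plan is to realise $W_1(\mathcal{Y}_n)$ as the intersection $W_1(\mhcn)\cap\nabla^{\circ}(n)$ and then to feed this into Proposition~\ref{prop:ppcomb0} and Lemma~\ref{lem:shiftedY}. First I would note that the conditions~(\ref{weigofSerg}) defining $W(\mathcal{Y}_n)$ inside $W(\mhcn)$ are exactly the conditions cutting out $\nabla^{\circ}(n)$ inside $\nabla(n)$; since $W_1(\mhcn)\subseteq\nabla(n)$ by Lemma~\ref{lem:restr.2}, this yields
$$W_1(\mathcal{Y}_n)=W(\mathcal{Y}_n)\cap W_1(\mhcn)=\nabla^{\circ}(n)\cap W_1(\mhcn),$$
so that $\mathcal{G}_1^{\circ}$ is simultaneously the restriction of $\mathcal{G}_1$ and of $\mathcal{G}^{\circ}$ to this common subset. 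In particular $\mathcal{G}_1^{\circ}$ is injective, being a restriction of the injective map $\mathcal{G}^{\circ}$ of Lemma~\ref{lem:shiftedY}.

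For the image I would proceed as follows. Recall from the proof of Proposition~\ref{prop:ppcomb0} that, under the identification $W_1(\mhcn)\subseteq\nabla(n)$, the set $W_1(\mhcn)$ consists precisely of the splittable vectors all of whose parts are $\leq\frac{p-1}{2}$. Next, for $\underline{i}\in\nabla^{\circ}(n)$ with $\mathcal{G}^{\circ}(\underline{i})=(\lambda,T)$, the content function attached to $\mathcal{G}(\underline{i})$ is the standard one $c_{\lambda}$ of Remark~\ref{rem:shiftedY}: the box labelled $1$ lies on the main diagonal and receives content $i_1=0$, which on the connected diagram $\lambda$ determines the content function uniquely. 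Hence $(i_1,\ldots,i_n)=(c_{\lambda}(T(1)),\ldots,c_{\lambda}(T(n)))$, so the parts of $\underline{i}$ are exactly the contents of the boxes of $\lambda$, and $\underline{i}$ lies in $W_1(\mathcal{Y}_n)$ if and only if every content of $\lambda$ is $\leq\frac{p-1}{2}$.

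Finally I would translate the content condition into a condition on $\lambda$: for the standard content function the content of the box in row $r$ and column $c$ equals $c-r$, and over all boxes of the shifted diagram $\lambda$ this is maximised by the last box of the first row, where it equals $\lambda_1-1$. Thus the condition reads $\lambda_1-1\leq\frac{p-1}{2}$, i.e.\ $\lambda_1\leq\frac{p+1}{2}$. Combined with Lemma~\ref{lem:shiftedY}, this shows that $\mathcal{G}_1^{\circ}$ is a bijection from $W_1(\mathcal{Y}_n)$ onto the set of pairs $(\lambda,T)$ with $\lambda$ a strict partition of $n$ satisfying $\lambda_1\leq\frac{p+1}{2}$ and $T$ a standard tableau of shape $\lambda$. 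The only delicate point is the bookkeeping of the content-function conventions, but these are entirely fixed by Remark~\ref{rem:shiftedY}, so I expect no real obstacle.
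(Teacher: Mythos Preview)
Your proposal is correct and follows essentially the same route as the paper: identify $W_1(\mathcal{Y}_n)$ as $\nabla^{\circ}(n)\cap W_1(\mhcn)$, invoke the bijections of Proposition~\ref{prop:ppcomb0} and Lemma~\ref{lem:shiftedY}, and translate the content bound $\leq\frac{p-1}{2}$ into $\lambda_1\leq\frac{p+1}{2}$ via the last box of the first row. Your write-up is in fact more explicit than the paper's, which compresses the argument and contains a couple of mislabelled references.
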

\begin{proof}
Observe that $W_1(\mathcal{Y}_n)\subseteq \nabla^{\circ}(n)$. By
Lemma~\ref{lem:shiftedY} and Proposition~\ref{prop:ppcomb0}, there
exists a one-to-one correspondence between $W(\mathcal{Y}_n)\cap
W_1(\mhcn)$ and the set consisting of pairs of shifted Young
diagrams
$\lambda=(\lambda_1,\ldots,\lambda_n)\in\mathcal{PS}_{\frac{p-1}{2}}(n)$
and standard tableaux of shape $\lambda$ with $c(T(k))=i_k$ for each
$1\leq k\leq n$.  Suppose the last box in the first row of $T$ is
labeled by $l$, then $c(T(l))=\lambda_1-1$ and hence
$\lambda_1\leq\frac{p+1}{2}$ since $c(T(l))=i_l\leq\frac{p-1}{2}$.
\end{proof}
\begin{lem}\label{Sergpp2}
The restriction $\mathcal{G}_2^{\circ}$ of the map $\mathcal{G}_2$
to $W_2(\mathcal{Y}_n)$ gives a bijection from $W_2(\mhcn)$ to the
set consisting of pairs $(\lambda, T)$, where $\lambda$ is a strict
partition whose first part is equal to $p-u$ and second part is less
than or equal to $u$ for some $1\leq u\leq\frac{p-3}{2}$, and $T$ is
a standard tableau of shape $\lambda$ satisfying that if
$\lambda_2=u$ then the number in last box of the second row is
greater than the number in the last box of the first row in $T$.
\end{lem}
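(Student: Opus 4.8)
The plan is to bootstrap from the bijection $\mathcal{G}_2\colon W_2(\mhcn)\to\Delta_2(n)$ of Proposition~\ref{prop:ppcomb2}, whose inverse is $\mathcal{F}_2$. Since $W_2(\mathcal{Y}_n)=W(\mathcal{Y}_n)\cap W_2(\mhcn)$ is a subset of $W_2(\mhcn)$, the restriction $\mathcal{G}_2^{\circ}$ is automatically injective with inverse the corresponding restriction of $\mathcal{F}_2$; so the entire content is to identify the image $\mathcal{G}_2\big(W_2(\mathcal{Y}_n)\big)$ with the set of pairs $(\lambda,T)$ in the statement.

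The crux is the following equivalence: for $\underline{i}\in W_2(\mhcn)$ with $u=u_{\underline{i}}$, the reduced vector $\widehat{\underline{i}}$ of~(\ref{ihat}) lies in $\nabla^{\circ}(n-p+2u+1)$ if and only if $\underline{i}\in W(\mathcal{Y}_n)$, i.e.\ $\underline{i}$ satisfies~(\ref{weigofSerg}). First I would observe that both $W(\mathcal{Y}_n)$ and $\nabla^{\circ}$ are closed under $\sim$: an admissible transposition cannot carry a nonzero entry into the first slot without violating~(\ref{weigofSerg}) (and indeed $s_1$ is never admissible with respect to a vector satisfying~(\ref{weigofSerg}), since there $i_1=0$ forces $i_2=1$), while splittability is preserved by admissible transpositions exactly as for $W'(\mhcn)$. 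Hence, invoking the normal forms of Lemma~\ref{lem:ppcom1}(2) and Lemma~\ref{lem:ppcom2}, I may replace $\underline{i}$ and $\widehat{\underline{i}}$ by $(\underline{i}',u,u+1,\ldots,\frac{p-3}{2},\frac{p-1}{2},\frac{p-3}{2},\ldots,u+1,u,u-1,\ldots,u-m)$ and $(\underline{i}',u,u-1,\ldots,u-m)$ respectively, where $\underline{i}'\in\nabla(n-p+2u-m)$ has all parts strictly less than $u$. A direct inspection then shows that in each of the two appended blocks every entry has its requirement in~(\ref{weigofSerg}) witnessed by a strictly earlier entry of the same block, the only exception being the leading $u$, which requires $u-1$ to occur in $\underline{i}'$; thus both normal forms satisfy~(\ref{weigofSerg}) exactly when $\underline{i}'$ does and $u-1$ occurs in $\underline{i}'$, which gives the equivalence.

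Granting this, I would read off the image. If $\underline{i}\in W_2(\mathcal{Y}_n)$, then $\widehat{\underline{i}}\in\nabla^{\circ}(n-p+2u+1)$, so by Lemma~\ref{lem:shiftedY} the pair $\mathcal{G}(\widehat{\underline{i}})=(\widetilde{\lambda},\widetilde{T})$ has $\widetilde{\lambda}$ a genuine strict partition of $n-p+2u+1$ and $\widetilde{T}$ a standard tableau of shape $\widetilde{\lambda}$; since $(c,\widetilde{\lambda})\in\mathcal{PS}^{*}_{u}(n-p+2u+1)$ has all contents $\le u$, the unique box $A$ of content $u$ must be the last box of the first row, whence $\widetilde{\lambda}_1=u+1$ and $\widetilde{\lambda}_2\le u$ by strictness. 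By construction $\mathcal{G}_2(\underline{i})$ appends $p-2u-1$ boxes to the end of the first row of $\widetilde{T}$, so it corresponds to the strict partition $\lambda:=\overline{\widetilde{\lambda}}$ with $\lambda_1=p-u$ and $\lambda_2=\widetilde{\lambda}_2\le u$, together with a standard tableau of shape $\lambda$; a box of $\widetilde{\lambda}$ lies below $A$ precisely when $\lambda_2=u$, and in that case $p$-standardness is exactly the condition that the label of the last box of the second row exceeds that of the last box of the first row. Thus $\mathcal{G}_2$ maps $W_2(\mathcal{Y}_n)$ into the stated set. For the converse, given a pair $(\lambda,T)$ of the stated form, with $\lambda_1=p-u$ and $\lambda_2\le u$, I would delete the last $p-2u-1$ boxes of the first row to obtain a strict partition $\widetilde{\lambda}$ with $\widetilde{\lambda}_1=u+1$, still strict since $\lambda_2\le u$, with restricted tableau $\widetilde{T}$; one checks that $T$ has shape $\overline{\widetilde{\lambda}}$ and is $p$-standard by hypothesis, so $((c,\widetilde{\lambda}),T)\in\Delta_2(n)$, that $\widehat{\underline{i}}=\mathcal{F}(\widetilde{\lambda},\widetilde{T})\in\nabla^{\circ}$ by Lemma~\ref{lem:shiftedY}, and hence $\underline{i}:=\mathcal{F}_2(\lambda,T)\in W_2(\mathcal{Y}_n)$ by the equivalence, with $\mathcal{G}_2(\underline{i})=(\lambda,T)$. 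This yields surjectivity, and $\mathcal{G}_2^{\circ}$ is the asserted bijection.

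The step I expect to be the main obstacle is the equivalence in the second paragraph: one must be sure that no entry deleted in passing from $\underline{i}$ to $\widehat{\underline{i}}$ was the sole witness of~(\ref{weigofSerg}) for a surviving entry, and conversely that reinstating the hook block cannot destroy~(\ref{weigofSerg}). Reducing everything to the explicit normal forms of Lemma~\ref{lem:ppcom1} and Lemma~\ref{lem:ppcom2} is what makes this tractable, since there the hook block is fully explicit and its internal connectivity is immediate; the only genuine interaction with the remainder is the condition $u-1\in\underline{i}'$, which is common to $\underline{i}$ and $\widehat{\underline{i}}$.
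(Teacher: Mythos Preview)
Your proposal is correct and follows essentially the same route as the paper: restrict the bijection $\mathcal{G}_2$ of Proposition~\ref{prop:ppcomb2} to $W_2(\mathcal{Y}_n)$ and identify the image via Lemma~\ref{lem:shiftedY} applied to $\widehat{\underline{i}}$. The paper's own proof is considerably terser---it asserts $\widehat{\underline{i}}\in\nabla^{\circ}(n-p+2u+1)$ as ``clear'' and leaves surjectivity implicit---whereas you supply the two-sided equivalence $\underline{i}\in W_2(\mathcal{Y}_n)\Leftrightarrow\widehat{\underline{i}}\in\nabla^{\circ}$ explicitly by passing to the normal forms of Lemma~\ref{lem:ppcom1}(2) and Lemma~\ref{lem:ppcom2}(3); this extra care is sound and does not change the underlying argument.
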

\begin{proof}

Suppose $\underline{i}\in W_2(\mathcal{Y}_n)$.  It is clear that
$\widehat{\underline{i}}\in \nabla^{\circ}(n-p+2u+1)$ for some
$1\leq u\leq\frac{p-3}{2}$. By Lemma~\ref{lem:shiftedY}, we have
$\mathcal{G}^{\circ}(\widehat{\underline{i}})=(\mu, S)$ for some
shifted Young diagram $\mu\in \mathcal{PS}^*_u(n-p+2u+1)$ and
splittable standard tableau $S$ of shape $\mu$. Suppose
$\mu=(\mu_1,\ldots,\mu_m)$ and set
$\lambda=(\lambda_1,\ldots,\lambda_m):=\overline{\mu}$. Observe that
the last box in the first row of $\mu$ has content $u$. This implies
$\mu_1-1=u$ and hence $\mu_1=u+1, \mu_2\leq u$. Therefore
$\lambda_1=\mu_1+p-2u-1=p-u$ and $\lambda_2=\mu_2\leq u$.  Note that
if $\lambda_2<u$, then the set of splittable standard tableaux of
shape $\lambda$ coincides with the set of standard
$\lambda$-tableaux; otherwise the set of  splittable standard
tableaux of shape $\lambda$ coincides with the set of standard
$\lambda$-tableaux in which the number in last box in the second row
is greater than the number in the last box in the first row.
\end{proof}
If $\lambda=(\lambda_1,\ldots,\lambda_l)$ is strict partition of $n$
satisfying $\lambda_1\leq\frac{p-3}{2}$, then
$(c_{\lambda},\lambda)\in \mathcal{PS}_{\frac{p-1}{2}}(n)$, where
$c_{\lambda}$ is the unique content function on $\lambda$ by
Remark~\ref{rem:shiftedY}. Recall the $\mhcn$-module
$D_p(c_{\lambda},\lambda)$ from~(\ref{D(c,lambda)p}) and let
$$
D_p(\lambda)=D_p(c_{\lambda},\lambda).
$$ Let $f^{\lambda}$ be the number of standard tableaux of shape
$\lambda$. Recall $f^{\lambda/\emptyset}$ and
$\gamma_0(c_{\lambda},\lambda)$ from
Theorem~\ref{thm:dimpandparam.}.  Clearly
$f^{\lambda}=f^{\lambda/\emptyset}$ and moreover
$\gamma_0(c_{\lambda},\lambda)=l(\lambda)$.

If $\lambda=(\lambda_1,\ldots,\lambda_l)$ is strict partition of $n$
satisfying $\lambda_1=p-u$ and $ \lambda_2\leq u$ for some $1\leq
u\leq\frac{p-3}{2}$. Denote by $\widehat{\lambda}$ the strict
partition obtained by removing the last $p-2u-1$ boxes in the first
row of $\lambda$.  Recall $c_{\widehat{\lambda}}$ from
Remark~\ref{rem:shiftedY}. Note that
$(c_{\widehat{\lambda}},\widehat{\lambda})\in \cup_{1\leq
u\leq\frac{p-3}{2}}\mathcal{PS}_u^*(n-p+2u+1)$. Recall the
$\mhcn$-module $D_p(c_{\widehat{\lambda}},\widehat{\lambda})$
from~(\ref{D(c,lambda)p}) and let
$$
D_p(\lambda)=D_p(c_{\widehat{\lambda}},\widehat{\lambda}).
$$
Let $f_p^{\lambda}$ be the number of standard $\lambda$-tableau $T$
if $\lambda_1=p-u, \lambda_2< u$ for some $1\leq
u\leq\frac{p-3}{2}$; if $\lambda_1=p-u, \lambda_2=u$ for some $1\leq
u\leq\frac{p-3}{2}$ let $f_p^{\lambda}$ be the number of standard
$\lambda$-tableau $T$ in which the number in last box of the second
row is greater than the number in the last box of the first row.
Recall $f^{\widehat{\lambda}/\emptyset}$ and
$\gamma_0(c_{\widehat{\lambda}},\widehat{\lambda})$ from
Theorem~\ref{thm:dimpandparam.}. One can easily check that
$f_p^{\lambda}=f_p^{\widehat{\lambda}/\emptyset}$ and moreover
$\gamma_0(c_{\widehat{\lambda}},\widehat{\lambda})=l(\lambda)$.

Combining the above observations and Lemma~\ref{Sergpp1},
Lemma~\ref{Sergpp2} and Theorem~\ref{thm:dimpandparam.}, we have the
following.
\begin{thm}\label{thm:Serg} Let $p\geq 3$. Suppose that
$\lambda=(\lambda_1,\ldots, \lambda_m)$ is strict partition
with $n$ boxes
 satisfying either $(\lambda_1=p-u$ and $
\lambda_2\leq u$ for some $1\leq u\leq\frac{p-3}{2})$ or
$(\lambda_1\leq \frac{p+1}{2})$.

(1) $D_p(\lambda)$ is type $\texttt{M}$ if $l(\lambda)$ is even and
is type $\texttt{Q}$ if $l(\lambda)$ is odd. If $\lambda_1\leq
\frac{p+1}{2}$, then $\dim
D_p(\lambda)=2^{n-\lfloor\frac{l(\lambda)}{2}\rfloor}f^{\lambda}$;
if $\lambda_1=p-u$ and $ \lambda_2\leq u$, then $\dim
D_p(\lambda)=2^{n-\lfloor\frac{l(\lambda)}{2}\rfloor}f_p^{\lambda}$.

(2) The $\mathcal{Y}_n$-modules $D_p(\lambda)$ for strict partitions
$\lambda=(\lambda_1,\ldots, \lambda_m)$ with $n$ boxes
 satisfying either $(\lambda_1=p-u,
\lambda_2\leq u$ for some $1\leq u\leq\frac{p-3}{2})$ or
$(\lambda_1\leq \frac{p+1}{2})$ form a complete set of
non-isomorphic irreducible completely splittable
$\mathcal{Y}_n$-modules.
\end{thm}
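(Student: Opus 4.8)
The plan is to obtain Theorem~\ref{thm:Serg} by assembling the classification of completely splittable $\mhcn$-modules (Theorem~\ref{thm:Classficiation}, Theorem~\ref{thm:dimpandparam.}) with the diagrammatic bijections of Lemma~\ref{Sergpp1} and Lemma~\ref{Sergpp2}; essentially no new computation is needed, only careful bookkeeping. First I would nail down which irreducible completely splittable $\mhcn$-modules descend along $\digamma$. Since $\ker\digamma$ is the two-sided ideal generated by $x_1$, an irreducible completely splittable $\mhcn$-module factors through $\digamma$ exactly when $x_1=0$ on it, and by Proposition~\ref{prop:wei.of sergeev} together with its proof this holds for $D^{\underline{i}}$ precisely when $\underline{i}\in W(\mathcal{Y}_n)$; moreover $W(\mathcal{Y}_n)$ is a union of $\sim$-classes. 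Combining this with Theorem~\ref{thm:Classficiation}(2)(3) and the identification (from the start of Section~\ref{fHCa}) of completely splittable $\mathcal{Y}_n$-modules with completely splittable $\mhcn$-modules killed by $x_1$, I get that the irreducible completely splittable $\mathcal{Y}_n$-modules are, up to isomorphism, exactly the $D^{\underline{i}}$ with $\underline{i}$ ranging over $W(\mathcal{Y}_n)/\!\sim$, with distinct classes yielding non-isomorphic modules. Finally I would split $W(\mathcal{Y}_n)=W_1(\mathcal{Y}_n)\sqcup W_2(\mathcal{Y}_n)$ (inherited from $W(\mhcn)=W_1(\mhcn)\sqcup W_2(\mhcn)$, each piece being $\sim$-stable), so $W(\mathcal{Y}_n)/\!\sim$ is the disjoint union of the two quotients.

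Next I would translate each piece into partitions. For $W_1$: Lemma~\ref{Sergpp1} identifies $W_1(\mathcal{Y}_n)$ with pairs $(\lambda,T)$ where $\lambda$ is strict with $\lambda_1\le\frac{p+1}{2}$ and $T$ is a standard $\lambda$-tableau, and by Proposition~\ref{prop:ppcomb0} two weights in $W_1(\mhcn)$ are $\sim$-equivalent iff the associated shifted diagrams coincide; hence $W_1(\mathcal{Y}_n)/\!\sim$ is in bijection with strict partitions $\lambda$ of $n$ with $\lambda_1\le\frac{p+1}{2}$, and $D_p(\lambda)=D^{\underline{i}}$ for the matching weight. For $W_2$: Lemma~\ref{Sergpp2} together with Lemma~\ref{lem:ppcom3} gives a bijection of $W_2(\mathcal{Y}_n)/\!\sim$ with strict partitions $\lambda$ of $n$ with $\lambda_1=p-u$, $\lambda_2\le u$ for some $1\le u\le\frac{p-3}{2}$. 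A short but necessary check is that these two families of partitions are disjoint, which holds because $\lambda_1=p-u\ge\frac{p+3}{2}>\frac{p+1}{2}$ in the second case; this disjointness, combined with the two bijections, shows that the $D_p(\lambda)$ over the whole index set are pairwise non-isomorphic and exhaust all irreducible completely splittable $\mathcal{Y}_n$-modules, giving part (2).

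Part (1) is then a transcription of Theorem~\ref{thm:dimpandparam.}. Writing $D_p(\lambda)=D^{\underline{i}}$ for the weight matched to $\lambda$, the type is \texttt{M} or \texttt{Q} according to the parity of $\gamma_0$, the number of entries of $\underline{i}$ equal to $0$, by Theorem~\ref{thm:Classficiation}(1) and Lemma~\ref{lem:irrepPn}; and $\gamma_0$ equals the number of content-zero boxes of the associated shifted diagram, which is $l(\lambda)$ since precisely the first-column boxes carry content $0$. For the dimension, Lemma~\ref{lem:irrepPn} with Theorem~\ref{thm:Classficiation} gives $\dim D^{\underline{i}}=2^{\,n-\lfloor\gamma_0/2\rfloor}|P_{\underline{i}}|$, and by Lemma~\ref{lem:lambdaP} $|P_{\underline{i}}|$ is the number of weights $\sim$-equivalent to $\underline{i}$, which Lemma~\ref{Sergpp1} (resp.\ Lemma~\ref{Sergpp2}) counts as $f^{\lambda}$ (resp.\ $f_p^{\lambda}$); equivalently one simply quotes Theorem~\ref{thm:dimpandparam.} and applies the identities $f^{\lambda}=f^{\lambda/\emptyset}$, $f_p^{\lambda}=f_p^{\widehat\lambda/\emptyset}$ and $\gamma_0(c_\lambda,\lambda)=\gamma_0(c_{\widehat\lambda},\widehat\lambda)=l(\lambda)$ recorded just before the theorem.

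The main obstacle here is not any single hard step — every genuinely substantial ingredient (the combinatorial bijections, their compatibility with $\sim$, and the underlying classification of completely splittable $\mhcn$-modules) is already in place — but rather the bookkeeping: verifying that the two partition families are exhaustive and non-overlapping, that $D_p(\lambda)$ is well defined independently of the choice of representative weight $\underline{i}$ (which follows from Theorem~\ref{thm:Classficiation}(2) via Proposition~\ref{prop:ppcomb0} and Lemma~\ref{lem:ppcom3}), and that the box counts in Theorem~\ref{thm:dimpandparam.} line up correctly with the $n$ appearing in the exponent $2^{\,n-\lfloor l(\lambda)/2\rfloor}$ once the $p-2u-1$ extra boxes of $\overline{\lambda/\mu}$ are accounted for.
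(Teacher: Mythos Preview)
Your proposal is correct and follows essentially the same route as the paper, which simply states that the theorem follows by combining Lemma~\ref{Sergpp1}, Lemma~\ref{Sergpp2}, Theorem~\ref{thm:dimpandparam.}, and the observations recorded just before the statement; you have spelled out exactly this bookkeeping in detail. One tiny wording slip: it is not the ``first-column'' boxes of a shifted diagram that carry content $0$ but the diagonal boxes (the leftmost box of each row), of which there are indeed $l(\lambda)$; the conclusion $\gamma_0=l(\lambda)$ is unaffected.
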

\begin{rem}\label{rem:Serg}
(1) A partition $\lambda=(\lambda_1,\lambda_2,\ldots)$ is called
$p$-restricted $p$-strict if $p$ divides $\lambda_r$ whenever
$\lambda_r=\lambda_{r+1}$ for $r\geq 1$ and in addition
$\lambda_r-\lambda_{r+1}<p$ if $p\mid\lambda_r$ and
$\lambda_r-\lambda_{r+1}\leq p$ if $p\nmid\lambda_r$ (cf. \cite[\S
9-a]{BK}). It is known from~\cite[\S 9-b]{BK} that there exists an
irreducible $\mathcal{Y}_n$-module $M(\lambda)$ associated to each
$p$-restricted $p$-strict partition $\lambda$  of $n$ and moreover
$\{M(\lambda)~|~\lambda \text{ is a }p\text{-restricted
}p\text{-strict partition of }n \}$ forms a complete set of pairwise
non-isomorphic irreducible $\mathcal{Y}_n$-modules. If $\lambda$ is
a strict partition with either $\lambda_1=p-u$ and $ \lambda_2\leq
u$ for some $1\leq u\leq\frac{p-3}{2}$ or $\lambda_1\leq
\frac{p+1}{2}$, then $\lambda$ is $p$-restricted $p$-strict and
moreover $D(\lambda)\cong M(\lambda)$ by claiming that they have the
same set of weights.

%In fact, the irreducible representation $D(\lambda)$ is isomorphic
%to $M(\lambda)$. By~\cite[Theorem 22.2.2]{K2} we can identify all
%weights occurring in $M(\lambda)$ for each $p$-restricted
%$p$-strict partition $\lambda$. It can be checked that if
%$\lambda_1=p-u, \lambda_2\leq u$ for some $1\leq
%u\leq\frac{p-3}{2}$ or $\lambda_1\leq \frac{p+1}{2}$, then each
%weight $\underline{i}$ occurring in $M(\lambda)$ satisfies that
%$i_k\neq i_{k+1}$ for all $1\leq k\leq n-1$ and hence $M(\lambda)$
%is completely splittable. On the other hand, by the construction
%for $D(\lambda)$, we can also identify all of its weights. It
%turns out that $D(\lambda)$ and $M(\lambda)$ have the same set of
%weights and therefore they are isomorphic to each other.

(2) It is well known that the representation theory of the spin
symmetric group algebra $\F S^-_n$ is essentially equivalent to that
of $\mathcal{Y}_n$ due to the isomorphism $\mathcal{C}_n\otimes\F
S_n^-\cong\mathcal{Y}_n$. Applying the representation theory of
$\mathcal{Y}_n$ established so far, we can obtain a family of
irreducible representations of the spin symmetric group algebra $\F
S_n^-$ for which dimensions and characters can be explicitly
described. Over the complex field $\mathbb{C}$, these modules were
originally constructed by Nazarov in~\cite{Na1}.
\end{rem}
\section{A larger category}\label{aLc}

Recall that $\mathcal{C}_n$ is the Clifford algebra generated by
$c_1,\ldots, c_n$ subject to the relation~(\ref{clifford}) and
$\mathcal{Y}_n=\mathcal{C}_n\rtimes\F S_n$. The basic spin
$\mathcal{Y}_n$-module $I(n)$ (cf. \cite[(9.11)]{BK}) is defined by
\begin{align}
I(n):={\rm ind}^{\mathcal{Y}_n}_{\F S_n}\textbf{1},\label{basicspin}
\end{align}
where $\textbf{1}$ is the trivial $1$-dimensional $\F S_n$-module.
Note that $\{c_1^{\alpha_1}\cdots
c_n^{\alpha_n}~|~(\alpha_1,\ldots,\alpha_n)\in\mathbb{Z}_2^n\}$
forms a basis of $I(n)$. It can be easily checked that each element
$c_1^{\alpha_1}\cdots c_n^{\alpha_n}$ is a simultaneous eigenvector
for $L_1^2,\ldots, L^2_n$. Hence all $L_k^2, 1\leq k\leq n, $ act
semisimply on $I(n)$. Define the $p$-restricted $p$-strict partition
$\omega_n$ by
\begin{eqnarray*}
\omega_n= \left \{
 \begin{array}{ll}
 (p^{a},b),
 & \text{ if } b\neq0 \\
 (p^{a-1},p-1,1)
 , & \text{ otherwise },
 \end{array}
 \right.
\end{eqnarray*}
where $n=ap+b$ with $0\leq b<p$. By \cite[Lemma 9.7]{BK}, we have
$I(n)\cong M(\omega_n)$ if $p\nmid n$ and if $p\mid n$ then $I(n)$
is an indecomposable module with two composition factors both
isomorphic to $M(\omega_n)$.  By Remark~\ref{rem:Serg}, the
Jucys-Murphy elements $L_k$ do not act semisimply on $M(\omega_n)$.
Hence $L_k^2, 1\leq k\leq n,$ act semisimply on $M(\omega_n)$ which
is not completely splittable.  On the other hand, Wang~\cite{W}
introduced the degenerate spin affine Hecke-Clifford algebra
$\mathfrak{H}^-$, which is the superalgebra with odd generators
$b_i(1\leq i\leq n)$ and $t_i(1\leq i\leq n-1)$ subject to the
relations
\begin{align}
t_i^2=1, t_it_{i+1}t_i&=t_{i+1}t_it_{i+1}, t_it_i=-t_it_i, \quad
|i-j|>1,\notag\\
b_ib_j&=-b_jb_i,\quad i\neq j,\notag\\
t_ib_i=-b_{i+1}t_i+1, t_ib_j&=-b_jt_i,\quad j\neq i,i+1.\notag
\end{align}
Moreover, an algebra isomorphism between $\mhcn$ and
$\mathcal{C}_n\otimes\mathfrak{H}^-$ which maps $x_k$ to
$\sqrt{-2}c_kb_k$ is established.  Since $b_1,\ldots, b_n$ are
anti-commutative, it is reasonable to study $\mathfrak{H}^-$-modules
on which the commuting operators $b_1^2,\ldots,b_n^2$ act
semisimply. As $x_k^2$ is sent to $2b_k^2$, it is reduced to study
the $\mhcn$-modules on which $x_k^2$ act semisimply.

Motivated by the above observations, in this section we shall study
the category of $\mhcn$-modules on which all $x_k^2, 1\leq k\leq n,$
act semisimply.

\subsection{The case for $n=2, 3$.} Recall the irreducible
$\mathfrak{H}_2^{\mathfrak{c}}$-module $V(i,j)$ for $i,j\in\I$
from Proposition~\ref{prop:rank 2}.
%The following lemma classifies
%all non-isomorphic irreducible representations in
%$\text{Rep}_{\I}\mathfrak{H}_2^{\mathfrak{c}}$ on which $x_1^2,
%x_2^2$ act semi-simply.
%
\begin{lem}\label{lem:n=2} Let $i,j\in\I$. Then $x_1^2, x_2^2$
act semisimply on the $\mathfrak{H}_2^{\mathfrak{c}}$-module
$V(i,j)$ if and only if $i\neq j$ or $i=j=0$.
\end{lem}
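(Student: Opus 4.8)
The plan is to read off the cases $i\neq j$ directly from Proposition~\ref{prop:rank 2}, and to handle the remaining case $i=j$ by an explicit computation on the induced module ${\rm ind}_{\mathcal{P}^{\mathfrak{c}}_2}^{\mathfrak{H}_2^{\mathfrak{c}}}L(i)\circledast L(i)$.

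\textbf{The implication ``$i\neq j$ or $i=j=0$ $\Rightarrow$ semisimple''.} Suppose first $i\neq j$. If $i=j\pm1$, then $V(i,j)=L(i)\circledast L(j)$ is completely splittable by Proposition~\ref{prop:rank 2}(1); if $i\neq j,j\pm1$, then $V(i,j)$ is completely splittable by Proposition~\ref{prop:rank 2}(2). In either case $x_1,x_2$ act semisimply, hence so do $x_1^2,x_2^2$. Now suppose $i=j$; then $i\neq j\pm1$, so by Proposition~\ref{prop:rank 2}(2) we have $V(i,i)={\rm ind}_{\mathcal{P}^{\mathfrak{c}}_2}^{\mathfrak{H}_2^{\mathfrak{c}}}L(i)\circledast L(i)$. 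Write $L=L(i)\circledast L(i)$, on which $x_1^2$ and $x_2^2$ act as the scalar $q(i)$, and use $V(i,i)=(1\otimes L)\oplus(s_1\otimes L)$ as a vector space. From relation~(\ref{reln:x2s1}) one computes that in this decomposition $x_1^2$ acts by the matrix $\left(\begin{smallmatrix} q(i)I & -D\\ 0 & q(i)I\end{smallmatrix}\right)$, where $D:=x_1(1-c_1c_2)+(1-c_1c_2)x_2$ is viewed as an operator on $L$; relation~(\ref{reln:x2s2}) gives the analogous matrix for $x_2^2$. When $i=0$ the operator $x_1$ acts as $\sqrt{q(0)}=0$ on $L(0)$, hence $x_1$ and $x_2$ both act as $0$ on $L=L(0)\circledast L(0)$, so $D=0$ and therefore $x_1^2=x_2^2=0$ on $V(0,0)$, which is semisimple.

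\textbf{The implication ``semisimple $\Rightarrow$ $i\neq j$ or $i=j=0$''.} It suffices to prove the contrapositive: if $i=j\neq0$, then $x_1^2$ is \emph{not} semisimple on $V(i,i)$. A short computation with~(\ref{xc}) (the same one used in the proof of Proposition~\ref{prop:rank 2}) gives $D^2=2(x_1^2+x_2^2)=4q(i)$ on $L$. Since $i\neq0$ we have $q(i)=i(i+1)\neq0$ in $\F$, and $4\neq0$ as $p\neq2$; hence $D^2\neq0$, so $D\neq0$. Thus $x_1^2$ acts on $V(i,i)$ as $q(i)$ plus a nonzero nilpotent operator, and so does not act semisimply.

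\textbf{Remarks on difficulty.} The two matrix computations and the identity $D^2=2(x_1^2+x_2^2)$ are routine. The only point needing care is the case $i=j=0$: there $V(0,0)$ fails to be completely splittable (so one cannot simply quote Proposition~\ref{prop:rank 2}), yet $x_1^2$ and $x_2^2$ still act semisimply, in fact as zero, and to see this one must use the vanishing of $x_1,x_2$ on $L(0)$ rather than merely $q(0)=0$. No serious obstacle is anticipated.
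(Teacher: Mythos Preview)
Your proof is correct and follows essentially the same route as the paper's: for $i\neq j$ you quote Proposition~\ref{prop:rank 2}, and for $i=j$ you use the identity $D^2=2(x_1^2+x_2^2)$ on $L(i)\circledast L(i)$ together with the relation~(\ref{reln:x2s1}) to see that $x_1^2-q(i)$ is nilpotent with nonzero off-diagonal block exactly when $q(i)\neq0$. The only cosmetic difference is that you package the computation as a $2\times2$ block matrix on $(1\otimes L)\oplus(s_1\otimes L)$, whereas the paper argues elementwise (semisimplicity forces $x_1^2=q(i)I$, hence $Dz=0$ for all $z$, hence $4q(i)=D^2=0$); the content is identical.
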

\begin{proof} By Proposition~\ref{prop:rank 2}, if $i\neq j$ then $V(i,j)$ is completely splittable and hence
$x^2_1, x^2_2$ act semisimply. It suffices to prove that if $i=j$,
then $x_1^2, x_2^2$ act semisimply on $V(i,j)$ if and only if
$i=j=0$. Now assume $i=j$. By Proposition~\ref{prop:rank 2},
$V(i,j)={\rm
ind}^{\mathfrak{H}^{\mathfrak{c}}_2}_{\mathcal{P}_2^{\mathfrak{c}}}L(i)\circledast
L(j)$. Suppose $x_1^2, x_2^2$ act semisimply on $V(i,j)$ and let
$0\neq z\in V(i,j)$. Then $x^2_1z=q(i)z= x^2_2z$. This together
with~(\ref{reln:x2s1}) shows that
$$
\big(x_1(1-c_1c_{2})+(1-c_1c_{2})x_{2}\big)z=0.
$$
This implies
$$
4q(i)z=2(x_1^2+x^2_{2})z=
\big(x_1(1-c_1c_{2})+(1-c_1c_{2})x_{2}\big)^2z=0.
$$
This means $q(i)=0$ and hence $i=0$ since $p\neq 2$.

Conversely if $i=j=0$, then $x_1=0=x_2$ on $L(i)\circledast L(j)$
and hence $x_1^2=0=x_2^2$ on $V(i,j)$ by the fact that $V(0,0)$
has two composition factors isomorphic to $L(0)\circledast L(0)$
as $\mathcal{P}_2^{\mathfrak{c}}$-modules.
\end{proof}
Observe that the subalgebra generated by $x_k, x_{k+1}, c_k,
c_{k+1}, s_k$ is isomorphic to $\mathfrak{H}_2^{\mathfrak{c}}$ for
each fixed $1\leq k\leq n-1$, . By Lemma~\ref{lem:n=2}, we have
the following.
\begin{cor}\label{cor:n=2} Suppose that $M\in\operatorname{Rep}_{\I}\mhcn$
and $x_k^2, 1\leq k\leq n$ act semisimply. Let
$\underline{i}\in\I^n$ be a weight of $M$. If $i_k=i_{k+1}$ for some
$1\leq k\leq n-1$, then $i_k=i_{k+1}=0$.
\end{cor}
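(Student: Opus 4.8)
The plan is to localize the problem at the pair of indices $k,k+1$ and invoke Lemma~\ref{lem:n=2}. Fix $1\le k\le n-1$ with $i_k=i_{k+1}$, choose $0\neq v\in M_{\underline{i}}$, and let $\mathcal{A}\subseteq\mhcn$ be the subalgebra generated by $s_k,x_k,x_{k+1},c_k,c_{k+1}$. As already used in Corollary~\ref{cor:restr.to 2}, the assignment $s_1\mapsto s_k$, $x_1\mapsto x_k$, $x_2\mapsto x_{k+1}$, $c_1\mapsto c_k$, $c_2\mapsto c_{k+1}$ identifies $\mathcal{A}$ with $\mathfrak{H}_2^{\mathfrak{c}}$. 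Set $N=\mathcal{A}v$, a finite dimensional $\mathcal{A}$-module; since the eigenvalues of $x_k^2,x_{k+1}^2$ on $M$ are among the $q(i)$, $i\in\I$, the module $N$ lies in $\operatorname{Rep}_{\I}\mathfrak{H}_2^{\mathfrak{c}}$ under this identification.

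First I would note that, because $x_k^2$ and $x_{k+1}^2$ act semisimply on $M$, they act semisimply on $N$ and on every subquotient of $N$, in particular on each composition factor of $N$. By Proposition~\ref{prop:rank 2}(3) each composition factor of $N$ is isomorphic to some $V(i,j)$ with $i,j\in\I$, and by Lemma~\ref{lem:n=2} the semisimplicity of $x_1^2,x_2^2$ on such a module forces $i\neq j$ or $i=j=0$.

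Next, since $x_k^2,x_{k+1}^2$ act semisimply, $v$ is a genuine joint eigenvector with eigenvalues $q(i_k),q(i_{k+1})$, so $(i_k,i_{k+1})$ is a weight of $N$ and hence a weight of some composition factor $V(i,j)$ of $N$. From the description in Proposition~\ref{prop:rank 2} and its proof, the weights of $V(i,j)$ are $(i,j)$ (together with $(j,i)$ in the induced case) when $i\neq j$, and the single weight $(i,i)$ when $i=j$; in either case a weight whose two coordinates coincide can occur only if $i=j$. Since $i_k=i_{k+1}$, this forces $i=j=i_k$, and then the previous paragraph yields $i_k=0$, i.e. $i_k=i_{k+1}=0$.

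The only points requiring care are the routine facts that a semisimply acting operator stays semisimple on submodules and quotients, and that the weights of a finite dimensional module are exactly the union of the weights of its composition factors; granting these, the argument is immediate, so I do not anticipate a real obstacle beyond correctly tracking the identification $\mathcal{A}\cong\mathfrak{H}_2^{\mathfrak{c}}$.
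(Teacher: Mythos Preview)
Your proof is correct and follows the same approach as the paper: restrict to the subalgebra generated by $s_k,x_k,x_{k+1},c_k,c_{k+1}$, identify it with $\mathfrak{H}_2^{\mathfrak{c}}$, and invoke Lemma~\ref{lem:n=2}. The paper states this in one line; you have correctly spelled out the passage to composition factors needed to reduce to the irreducible case where Lemma~\ref{lem:n=2} applies, and your weight bookkeeping for the $V(i,j)$ is accurate.
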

\begin{lem}\label{lem:NP}
 For any $z\in V(0,0)$, we have
$$
\big((1+c_1c_2)x_1+(1-c_1c_2)x_2\big)z=0,\quad x_1x_2z=0.
$$
\end{lem}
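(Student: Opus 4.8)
The plan is to compute directly on the induced module $V(0,0)=\mathrm{ind}^{\mathfrak{H}_2^{\mathfrak{c}}}_{\mathcal{P}^{\mathfrak{c}}_2}L(0)\circledast L(0)$, using the vector space decomposition $V(0,0)=1\otimes\big(L(0)\circledast L(0)\big)\oplus s_1\otimes\big(L(0)\circledast L(0)\big)$ recorded in the proof of Proposition~\ref{prop:rank 2}(2). The crucial input is that $q(0)=0$, so by the definition of $L(i)$ the operators $x_1$ and $x_2$ act as zero on $L(0)\circledast L(0)$ (this is also noted in the proof of Lemma~\ref{lem:n=2}). Consequently, on a vector of the form $1\otimes u$ with $u\in L(0)\circledast L(0)$, the action of each $x_k$ is just $x_k(1\otimes u)=1\otimes x_k u=0$, so both claimed identities hold trivially there. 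By linearity it therefore suffices to verify both identities on vectors $s_1\otimes u=s_1(1\otimes u)$.

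For this I would first record the commutation of $x_1,x_2$ with $s_1$: relation~(\ref{px1}) gives $x_2 s_1=s_1 x_1+(1+c_1c_2)$, and conjugating~(\ref{px1}) by $s_1$ together with~(\ref{pc}) (which yields $s_1(1+c_1c_2)s_1=1-c_1c_2$) gives $x_1 s_1=s_1 x_2-(1-c_1c_2)$. Applying these to $1\otimes u$ and using $x_1u=x_2u=0$ produces $x_1(s_1\otimes u)=-\,1\otimes(1-c_1c_2)u$ and $x_2(s_1\otimes u)=1\otimes(1+c_1c_2)u$, where one uses that $c_j$ acts on $1\otimes w$ simply by $1\otimes c_jw$.

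With these two formulas in hand, the identity $x_1x_2z=0$ is immediate on $s_1\otimes u$: $x_1x_2(s_1\otimes u)=x_1\big(1\otimes(1+c_1c_2)u\big)=1\otimes x_1(1+c_1c_2)u=0$, because $(1+c_1c_2)u$ again lies in $L(0)\circledast L(0)$, on which $x_1$ is zero. For the first identity, substitute the two formulas into $\big((1+c_1c_2)x_1+(1-c_1c_2)x_2\big)(s_1\otimes u)$ and use the Clifford relations~(\ref{clifford}), which give $(c_1c_2)^2=-1$ and hence $(1+c_1c_2)(1-c_1c_2)=(1-c_1c_2)(1+c_1c_2)=2$; the two resulting terms become $-\,1\otimes 2u$ and $1\otimes 2u$, which cancel. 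This finishes the verification.

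There is no genuine obstacle here; the statement is a finite rank-two computation. The only point requiring care is the sign bookkeeping when pushing $x_1,x_2,c_1,c_2$ past $s_1$ — in particular obtaining $x_1 s_1=s_1 x_2-(1-c_1c_2)$ with the correct sign from the conjugation step, rather than $-(1+c_1c_2)$ — and remembering to keep all manipulations inside the tensor factor $L(0)\circledast L(0)$, where $x_1$ and $x_2$ vanish identically.
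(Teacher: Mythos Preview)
Your argument is correct. It differs in organization from the paper's proof, which is worth noting. The paper does not decompose $V(0,0)$ into the two PBW pieces; instead it first invokes Lemma~\ref{lem:n=2} to know that $x_1^2=x_2^2=0$ on all of $V(0,0)$, then applies relation~(\ref{reln:x2s1}) to an arbitrary $z$ (using $x_1(1-c_1c_2)=(1+c_1c_2)x_1$) to obtain the first identity directly, and finally left-multiplies that identity by $x_1(1+c_1c_2)$ together with $x_1^2=0$ to extract $x_1x_2z=0$. Your route is more hands-on and slightly more elementary: it only needs the obvious fact $x_1=x_2=0$ on $L(0)\circledast L(0)$ rather than the conclusion of Lemma~\ref{lem:n=2}, at the cost of computing separately on $1\otimes u$ and $s_1\otimes u$. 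Both approaches ultimately rest on the same commutation relation~(\ref{px1}); the paper packages it through~(\ref{reln:x2s1}) while you unpack it directly.
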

\begin{proof} Let $z\in V(0,0)$. By Lemma~\ref{lem:n=2},  $x_1^2=0=x_2^2$ on
$V(0,0)$. This together with~(\ref{reln:x2s1}) shows that
\begin{align}
 \big((1+c_1c_2)x_1+(1-c_1c_2)x_2\big)z=0.\label{xccx}
\end{align}
Multiplying both sides of (\ref{xccx}) by $x_1(1+c_1c_2)$, we obtain
that
$$
(2x_1c_1c_2x_1+2x_1x_2)z=0.
$$
This implies that $x_1x_2z=0$ since $x_1^2z=0$.
\end{proof}
Recall that $\mathfrak{H}^{\mathfrak{c}}_{2,1}$ is the subalgebra
of $\mathfrak{H}_3^{\mathfrak{c}}$ generated by
$\mathcal{P}_3^{\mathfrak{c}}$ and $S_2$.
\begin{lem}\label{lem:n=3case1}

The irreducible $\mathfrak{H}^{\mathfrak{c}}_{2,1}$-module
$V(0,0,1):=V(0,0)\circledast L(1)$ affords an irreducible
$\mathfrak{H}_3^{\mathfrak{c}}$-module via $s_2=\Xi_2$.
\end{lem}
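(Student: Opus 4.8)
The plan is to realize $V(0,0,1)$ concretely as an $\mathfrak{H}_{2,1}^{\mathfrak{c}}$-module, check that the generators $x_1,x_2,x_3,c_1,c_2,c_3,s_1$ of $\mathfrak{H}_{2,1}^{\mathfrak{c}}$ together with $s_2:=\Xi_2$ satisfy all the defining relations of $\mathfrak{H}_3^{\mathfrak{c}}$, and deduce irreducibility of the result from that of $V(0,0,1)$ over the subalgebra $\mathfrak{H}_{2,1}^{\mathfrak{c}}$. Only the braid relation $s_1s_2s_1=s_2s_1s_2$ will take real work.

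Since $\mathfrak{H}_{2,1}^{\mathfrak{c}}\cong\mathfrak{H}_2^{\mathfrak{c}}\otimes\mathcal{P}_1^{\mathfrak{c}}$ as superalgebras, and $V(0,0)$ is an irreducible $\mathfrak{H}_2^{\mathfrak{c}}$-module of type $\texttt{M}$ (by Proposition~\ref{prop:rank 2}(2) it has the same type as $L(0)\circledast L(0)$, which is of type $\texttt{M}$ by Lemma~\ref{lem:irrepPn}), while $L(1)$ is irreducible of type $\texttt{M}$, Lemma~\ref{tensorsmod}(1) shows $V(0,0,1)=V(0,0)\boxtimes L(1)$ is an irreducible $\mathfrak{H}_{2,1}^{\mathfrak{c}}$-module of type $\texttt{M}$. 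Its only weight is $(0,0,1)$, so by Lemma~\ref{lem:n=2} the operators $x_1^2,x_2^2,x_3^2$ act on all of $V(0,0,1)$ as the scalars $q(0)=0$, $q(0)=0$, $q(1)=2$; in particular $x_2^2-x_3^2$ acts as $-2\neq0$, so $\Xi_2$ from~\eqref{Deltak} is a well-defined operator on $V(0,0,1)$.

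Next one checks the relations for $s_2=\Xi_2$. All relations not involving any $s_k$ hold because $V(0,0,1)$ is a $\mathcal{P}_3^{\mathfrak{c}}$-module, and those involving only $s_1$ hold because $V(0,0,1)$ is an $\mathfrak{H}_{2,1}^{\mathfrak{c}}$-module. Relation $s_2^2=1$ follows since $\Xi_2^2=\dfrac{2(x_2^2+x_3^2)}{(x_2^2-x_3^2)^2}$ (computed as in the proof of Proposition~\ref{prop:rank 2}(1)) acts as $\dfrac{2\cdot2}{(-2)^2}=1$; relation~\eqref{px1} for $s_2$ is exactly $\Xi_2x_2-x_3\Xi_2=-(1+c_2c_3)$, the $k=2$ case of~\eqref{Deltax}; relation~\eqref{px2} for $s_2$ holds since $\Xi_2$ involves only $x_2,x_3,c_2,c_3$, each commuting with $x_1$; and~\eqref{pc} for $s_2$ is a routine super-commutator computation, exactly as in the proofs of Proposition~\ref{prop:rank 2}(1) and Theorem~\ref{thm:Classficiation}(1).

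The crux is the braid relation. Set $B:=s_1s_2s_1-s_2s_1s_2$. Using only the relations already established — $s_1^2=s_2^2=1$ and~\eqref{px1},~\eqref{px2},~\eqref{pc} for $s_1$ and for $s_2$ — one commutes $x_1$ to the left past each of $s_1s_2s_1$ and $s_2s_1s_2$: in both cases $x_1$ turns into $x_3$ and, after carefully tracking the Clifford signs, the accompanying lower-order terms turn out to be identical, so that $Bx_1=x_3B$ on $V(0,0,1)$. This sign bookkeeping is the one slightly delicate computation, but it uses no relation beyond those already in hand. Since $x_3^2$ acts as $2$ and $x_1^2$ as $0$, for any $z\in V(0,0,1)$ one gets $2Bz=x_3^2Bz=x_3Bx_1z=Bx_1^2z=0$, whence $Bz=0$ because $p\neq2$. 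Thus $s_1s_2s_1=s_2s_1s_2$, so $V(0,0,1)$ becomes an $\mathfrak{H}_3^{\mathfrak{c}}$-module, and it is irreducible since it is already irreducible on restriction to $\mathfrak{H}_{2,1}^{\mathfrak{c}}$.
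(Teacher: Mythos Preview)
Your argument is correct, and the route you take for the braid relation is genuinely different from the paper's. The paper proceeds by a direct head-on computation: it writes $s_2z=\frac{1}{2}\big((1+c_2c_3)x_2+(1-c_2c_3)x_3\big)z$ explicitly, expands $s_1s_2s_1z$ and $s_2s_1s_2z$ term by term, and then cancels the difference using the auxiliary Lemma~\ref{lem:NP}, which records the identities $\big((1+c_1c_2)x_1+(1-c_1c_2)x_2\big)z=0$ and $x_1x_2z=0$ on $V(0,0)$. Your approach bypasses all of this: the identity $Bx_1=x_3B$ that you invoke is in fact a purely formal consequence of \eqref{px1}, \eqref{px2}, \eqref{pc} for $s_1$ and $s_2$ (one checks that $s_1s_2s_1x_1=x_3s_1s_2s_1-(1+c_1c_3)-(1+c_2c_3)s_1s_2$ and $s_2s_1s_2x_1=x_3s_2s_1s_2-(1+c_2c_3)s_1s_2-(1+c_1c_3)$, and the lower-order pieces coincide), so it holds at the level of operators without any appeal to the fine structure of $V(0,0)$. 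The eigenvalue trick $2Bz=x_3^2Bz=Bx_1^2z=0$ then finishes immediately. Your argument is shorter, avoids Lemma~\ref{lem:NP} entirely, and makes transparent exactly which features of the weight $(0,0,1)$ are being used (only that $q(i_1)=0$ and $q(i_3)\neq 0$); the paper's computation, while longer, has the virtue of being completely explicit and self-contained.
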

\begin{proof}
Since $L(1)$ is of type $\texttt{M}$, by Lemma~\ref{tensorsmod}
$V(0,0,1)=V(0,0)\boxtimes L(1)$. It is routine to check that
$s_2^2=1, s_2x_1=x_1s_2, s_2x_2=x_3s_2-(1+c_2c_3)$ and
$s_2c_1=c_1s_2, s_2c_2=c_3s_2$ on $V(0,0,1)$. It remains to prove
$s_1s_2s_1=s_2s_1s_2$. Let $0\neq z\in V(0,0,1)$.  Note that
$$
x_2^2z=0, x_3^2z=2z
$$ and hence
\begin{align}
s_2z=\frac{1}{2}\big((x_2+x_3)+c_2c_3(x_2-x_3)\big)z
=\frac{1}{2}\big((1+c_2c_3)x_2+(1-c_2c_3)x_3\big)z.\label{n3s2z}
\end{align}
Using~(\ref{n3s2z}) with $z$ replaced by $s_1z$ and (\ref{px1}),
we show by a straightforward calculation that
\begin{align}
s_2s_1z=\frac{1}{2}s_1\big((1+c_1c_3)x_1+(1-c_1c_3)x_3\big)z+\frac{1}{2}(1+c_1c_2+c_2c_3-c_1c_3)z.
\label{n3s1s2}
\end{align}
This implies that
\begin{align}
s_1s_2s_1z=\frac{1}{2}\big((1+c_1c_3)x_1+(1-c_1c_3)x_3\big)z+\frac{1}{2}s_1(1+c_1c_2+c_2c_3-c_1c_3)z\label{s1s2s1}.
\end{align}
On the other hand, it follows from~(\ref{n3s2z}) with $z$ replaced
by $s_2z$ and~(\ref{n3s1s2}) that
\begin{align}
s_2s_1s_2z=&\frac{1}{4}s_1\big((1+c_1c_3)x_1+(1-c_1c_3)x_3\big)\big((1+c_2c_3)x_2+ (1-c_2c_3)x_3\big)z\notag\\
&+\frac{1}{4}(1+c_1c_2+c_2c_3-c_1c_3)\big((1+c_2c_3)x_2+
(1-c_2c_3)x_3\big)z\notag\\
=&\frac{1}{4}s_1\big((1+c_1c_3)x_1+(1-c_1c_3)x_3\big)\big((1+c_2c_3)x_2+ (1-c_2c_3)x_3\big)z\notag\\
&+\frac{1}{2}(c_1c_2+c_2c_3)x_2z+\frac{1}{2}(1-c_1c_3)x_3z.\label{s2s1s2}
\end{align}
The first term on the right hand side of~(\ref{s2s1s2}) can be
simplified as follows
\begin{align}
\frac{1}{4}s_1&\big((1+c_1c_3)x_1+(1-c_1c_3)x_3\big)\big((1+c_2c_3)x_2+ (1-c_2c_3)x_3\big)z\notag\\
=&\frac{1}{4}s_1\Big((1+c_1c_3)(1+c_2c_3)x_1x_2z+
(1-c_2c_3)x_3\big((1+c_1c_2)x_1+(1-c_1c_2)x_2\big)z\Big)\notag\\
&+\frac{1}{4}s_1(1+c_1c_2+c_2c_3-c_1c_3)x_3^2z\notag\\
=&\frac{1}{2}s_1(1+c_1c_2+c_2c_3-c_1c_3)z \quad\text{ by
Lemma~\ref{lem:NP}}\notag.
\end{align}
This together with~(\ref{s1s2s1})~and~(\ref{s2s1s2}) shows that
\begin{align}
(s_1s_2s_1-s_2s_1s_2)z=&\frac{1}{2}((x_1+x_3)+c_1c_3(x_1-x_3))z\notag\\
&-\frac{1}{2}(c_1c_2+c_2c_3)x_2z-\frac{1}{2}(1-c_1c_3)x_3z\notag\\
=&\frac{1}{2}(1+c_1c_3)x_1z-\frac{1}{2}(c_1c_2+c_2c_3)x_2z\notag\\
=&\frac{1}{4}(1-c_1c_2-c_2c_3+c_1c_3)\Big((1+c_1c_2)x_1+(1-c_1c_2)x_2\Big)z\notag
\end{align}
which is zero by Lemma~\ref{lem:NP}.
\end{proof}
An identical argument used for proving Lemma~\ref{lem:n=3case1}
shows that $\mathfrak{H}^{\mathfrak{c}}_{2,1}$-module
$V(1,0,0):=L(1)\circledast V(0,0) $ affords an irreducible
$\mathfrak{H}_3^{\mathfrak{c}}$-module via $s_1=\Xi_1$.

\begin{prop}\label{prop:n=3}
Each irreducible $\mathfrak{H}_3^{\mathfrak{c}}$-module in
$\operatorname{Rep}_{\I}\mathfrak{H}_3^{\mathfrak{c}}$ on which
$x_1^2, x_2^2, x_3^2$ act semisimply is isomorphic to one of the
following.
\begin{enumerate}

\item A completely splittable
$\mathfrak{H}_3^{\mathfrak{c}}$-module $D^{\underline{i}}$ for
$\underline{i}\in W'(\mathfrak{H}_3^{\mathfrak{c}})$ (see
Theorem~\ref{thm:Classficiation}).

\item $V(0,0,1)$.

\item $V(1,0,0)$.

%\item The two non-isomorphic
%$\mathfrak{H}_3^{\mathfrak{c}}$-supermodules $L_1$ and $L_2$ other
%than $D^{(0,1,0)}$ with weights belonging to $S_3\cdot(0,0,1)$.

\item
${\rm
ind}^{\mathfrak{H}_3^{\mathfrak{c}}}_{\mathfrak{H}_{2,1}^{\mathfrak{c}}}V(0,0)\circledast
L(j)$ with $j\neq 0,1$.
\end{enumerate}
\end{prop}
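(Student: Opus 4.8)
The plan is to carry out a weight analysis for $\mathfrak{H}_3^{\mathfrak{c}}$ parallel to the rank-two classification of Proposition~\ref{prop:rank 2}, using Corollary~\ref{cor:n=2} to locate the only ``singular'' weights that can occur. Let $M\in\operatorname{Rep}_{\I}\mathfrak{H}_3^{\mathfrak{c}}$ be irreducible with $x_1^2,x_2^2,x_3^2$ acting semisimply. If $M$ is completely splittable then Theorem~\ref{thm:Classficiation}(3) puts it in case (1), so assume it is not. By Proposition~\ref{prop:equiv.cond.} some weight $\underline{i}=(i_1,i_2,i_3)$ of $M$ has $i_k=i_{k+1}$ for a $k\in\{1,2\}$, whence $i_k=i_{k+1}=0$ by Corollary~\ref{cor:n=2}. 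Since the automorphism $\sigma_3$ of Remark~\ref{rem:twist} preserves the property of being completely splittable and permutes the list (1)--(4) (interchanging (2) and (3) and fixing (4) for each $j$), after replacing $M$ by $M^{\sigma_3}$ if necessary we may assume $(0,0,j)$ is a weight of $M$ for some $j\in\I$. The first step is to observe that $\{z\in M:x_1^2z=x_2^2z=0,\ x_3^2z=q(j)z\}$ is a nonzero $\mathfrak{H}^{\mathfrak{c}}_{2,1}$-submodule of $\operatorname{res}^{\mathfrak{H}_3^{\mathfrak{c}}}_{\mathfrak{H}^{\mathfrak{c}}_{2,1}}M$ --- stability under $s_1$ follows from (\ref{reln:x2s1}), (\ref{reln:x2s2}) and the semisimplicity of $x_1^2,x_2^2$. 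As $\mathfrak{H}^{\mathfrak{c}}_{2,1}$ is the super tensor product of $\mathfrak{H}_2^{\mathfrak{c}}=\langle s_1,x_1,x_2,c_1,c_2\rangle$ and $\mathcal{P}_1^{\mathfrak{c}}=\langle x_3,c_3\rangle$, its irreducibles in $\operatorname{Rep}_{\I}$ are the $U\circledast L(j')$ with $U$ irreducible over $\mathfrak{H}_2^{\mathfrak{c}}$ (Lemma~\ref{tensorsmod}), and the only one with a nonzero vector killed by $x_1^2,x_2^2$ on which $x_3^2=q(j)$ is $V(0,0)\circledast L(j)$, since $V(0,0)$ is the unique irreducible $\mathfrak{H}_2^{\mathfrak{c}}$-module of weight $(0,0)$ (Proposition~\ref{prop:rank 2}(3)). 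Hence a copy of $V(0,0)\circledast L(j)$ sits in the socle of $\operatorname{res}^{\mathfrak{H}_3^{\mathfrak{c}}}_{\mathfrak{H}^{\mathfrak{c}}_{2,1}}M$, so by Frobenius reciprocity the simple module $M$ is a quotient of $\mathcal{I}_j:=\operatorname{ind}^{\mathfrak{H}_3^{\mathfrak{c}}}_{\mathfrak{H}^{\mathfrak{c}}_{2,1}}\bigl(V(0,0)\circledast L(j)\bigr)$.

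Next I would dispose of the three ranges of $j$. When $j\neq 0,1$ the module $\mathcal{I}_j$ is irreducible, by the rank-three version of the proof of Proposition~\ref{prop:rank 2}(2): writing $\mathcal{I}_j=1\otimes X\oplus s_2\otimes X\oplus s_2s_1\otimes X$ with $X=V(0,0)\circledast L(j)$, a nonzero proper submodule would, after replacing a generating vector by $v\pm s_2v$, contain a nonzero $v\in X$ with $\bigl((1+c_2c_3)x_2+(1-c_2c_3)x_3\bigr)v=0$; squaring and using $\bigl((1+c_2c_3)x_2+(1-c_2c_3)x_3\bigr)^2=2(x_2^2+x_3^2)$, which holds on all of $X$ by Lemma~\ref{lem:NP} (so the reducibility of $V(0,0)$ over $\mathcal{P}_2^{\mathfrak{c}}$ does no harm), one gets $2(q(0)+q(j))=(q(0)-q(j))^2$, i.e.\ $2q(j)=q(j)^2$, forcing $j\in\{0,1\}$ --- a contradiction. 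Thus $M\cong\mathcal{I}_j$, which is case (4). When $j=1$, Lemma~\ref{lem:n=3case1} gives that $V(0,0,1)$ is an irreducible $\mathfrak{H}_3^{\mathfrak{c}}$-module restricting to the irreducible $\mathfrak{H}^{\mathfrak{c}}_{2,1}$-module $V(0,0)\circledast L(1)$, hence a quotient of $\mathcal{I}_1$; the goal is to show $M\cong V(0,0,1)$. Taking an $\mathfrak{H}^{\mathfrak{c}}_{2,1}$-submodule $W\cong V(0,0)\circledast L(1)$ inside the $(0,0,1)$-weight subspace of $M$, one has $x_2^2-x_3^2=-q(1)$ invertible on $W$ and $\Phi_1^2=\Phi_2^2=0$ there by (\ref{sqinter}) (here $q(1)=2$); the intertwining relations (\ref{xinter})--(\ref{braidinter}) then show that $W+\Phi_2(W)+\Phi_1\Phi_2(W)$ is an $\mathfrak{H}_3^{\mathfrak{c}}$-submodule, which by irreducibility equals $M$, while the quadratic and braid relations for $s_1,s_2$ (again via $q(1)^2=2q(1)$) force $\Phi_2(W)=0$, so that $s_2=\Xi_2$ on $W=M$ and $M\cong V(0,0,1)$. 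Finally, $j=0$ does not occur: the argument of the previous paragraph shows $M=M_{(0,0,0)}$, so $x_1^2=x_2^2=x_3^2=0$ on $M$, and then Lemma~\ref{lem:K1-1} with $\varepsilon_0(M)=3$ forces $M\cong\operatorname{ind}^{\mathfrak{H}_3^{\mathfrak{c}}}_{\mathcal{P}_3^{\mathfrak{c}}}L(0,0,0)$; one checks --- via the intertwiner $\Phi_1$, Lemma~\ref{lem:NP}, or a dimension count with Lemma~\ref{lem:irrepPn} --- that this module is reducible, contradicting the irreducibility of $M$. Undoing a possible $\sigma_3$-twist, $M\cong V(0,0,1)$ or $V(1,0,0)$, so in every case $M$ lies in (1)--(4).

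The main obstacle is the detailed intertwiner bookkeeping in the two degenerate cases. For $j\neq 0,1$ this is a faithful elaboration of the proof of Proposition~\ref{prop:rank 2}(2) over the $\mathcal{P}_3^{\mathfrak{c}}$-reducible module $V(0,0)\circledast L(j)$, and should go through smoothly once one records that the key quadratic identity of Lemma~\ref{lem:NP} holds on all of $V(0,0)$. The harder points are to pin down, in the case $j=1$, that $V(0,0,1)$ is the only non-completely-splittable composition factor of $\mathcal{I}_1$ meeting the weight $(0,0,1)$, and, in the case $j=0$, that $\operatorname{ind}^{\mathfrak{H}_3^{\mathfrak{c}}}_{\mathcal{P}_3^{\mathfrak{c}}}L(0,0,0)$ is reducible; both of these reduce, after the intertwiner calculus, to the numerical coincidences $q(0)=0$ and $q(1)^2=2q(1)$ together with Lemmas~\ref{lem:NP} and~\ref{lem:n=3case1}.
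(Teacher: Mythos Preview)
Your global strategy---reduce to a quotient of $\mathcal{I}_j=\operatorname{ind}^{\mathfrak{H}_3^{\mathfrak{c}}}_{\mathfrak{H}^{\mathfrak{c}}_{2,1}}\bigl(V(0,0)\circledast L(j)\bigr)$ and split on $j$---matches the paper's, but the execution has a real error in the $j=0$ case and is too sketchy elsewhere.

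\textbf{The $j=0$ case is wrong.} You invoke Lemma~\ref{lem:K1-1} with $\varepsilon_0(M)=3$ to conclude $M\cong\operatorname{ind}^{\mathfrak{H}_3^{\mathfrak{c}}}_{\mathcal{P}_3^{\mathfrak{c}}}L(0,0,0)$, and then claim this induced module is reducible. But that is false: the Kato module $\operatorname{ind}^{\mathfrak{H}_m^{\mathfrak{c}}}_{\mathcal{P}_m^{\mathfrak{c}}}L(i^m)$ is irreducible (this is precisely why Lemma~\ref{lem:K1-1} can identify $\Theta_{i^m}M$ with it for irreducible $M$). The correct contradiction, which the paper uses, is that on the Kato module $x_1$ has Jordan blocks of size $3$ (\cite[Lemma~4.15]{BK}), so $x_1^2\neq 0$ while $(x_1^2)^2=0$; hence $x_1^2$ is nilpotent but nonzero and cannot act semisimply. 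This contradicts the hypothesis on $M$, not its irreducibility.

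\textbf{The $j=1$ argument is not complete.} You assert that ``the quadratic and braid relations force $\Phi_2(W)=0$,'' but never show it; the numerical coincidence $q(1)^2=2q(1)$ only gives $\Phi_2^2=0$ on $W$, not $\Phi_2=0$. You also need $s_2$ to preserve $W+\Phi_2(W)+\Phi_1\Phi_2(W)$ when acting on the $(1,0,0)$-piece, where $x_2^2-x_3^2=0$ and your intertwiner inversion is unavailable. The paper avoids this by a counting argument: \cite[\S 5-d]{BK} bounds the number of irreducibles with weights in $S_3\cdot(0,0,1)$ by three, and $V(0,0,1)$, $V(1,0,0)$, $V(0,1,0)$ are three non-isomorphic such modules; since $M$ is not completely splittable it must be one of the first two.

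\textbf{The $j\neq 0,1$ case needs more.} Your adaptation of the proof of Proposition~\ref{prop:rank 2}(2) treats $\mathcal{I}_j$ as if it had two cosets, but it has three ($1,s_2,s_1s_2$), so ``replacing $v$ by $v\pm s_2v$'' does not produce a vector in $1\otimes X$; you must also strip off the $s_1s_2$-layer. The paper bypasses this by citing the general irreducibility criterion \cite[Theorem~5.18]{BK}. If you want a direct argument, you need a genuine two-step filtration computation.
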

\begin{proof}
We first show that listed pairwise non-isomorphic modules are
irreducible and all $x_k^2$ act semisimply. The case (1), (2) and
(3) are taken care of by Theorem~\ref{thm:Classficiation} and
Lemma~\ref{lem:n=3case1}.
 Using~\cite[Theorem 5.18]{BK}, we have
${\rm
ind}^{\mathfrak{H}_3^{\mathfrak{c}}}_{\mathfrak{H}_{2,1}^{\mathfrak{c}}}V(0,0)\circledast
L(j)$ is irreducible if $j\neq 0, 1$. It is known that as vector
spaces
$${\rm ind}^{\mathfrak{H}_3^{\mathfrak{c}}}_{\mathfrak{H}_{2,1}^{\mathfrak{c}}}V(0,0)\circledast L(j)
=V(0,0)\circledast L(j)\oplus s_2\otimes(V(0,0)\circledast
L(j))\oplus s_1s_2\otimes(V(0,0)\circledast L(j)).
$$
It is clear that for $z\in V(0,0)\circledast L(j)$,
\begin{align}
x_1^2z=0=x_2^2z,\quad x_3^2z=q(j)z.\label{xsq-1}
\end{align}
This together with~(\ref{px2}) implies $x_1^2=0$ on
$s_2\otimes(V(0,0)\circledast L(j))$. Using~(\ref{reln:x2s1})
and~(\ref{reln:x2s2}), we obtain that
\begin{align}
(x_2^2-q(j))\big(s_2\otimes(V(0,0)\circledast L(j))\big)&\subseteq
V(0,0)\circledast
L(j)\notag\\
x_3^2\big(s_2\otimes(V(0,0)\circledast L(j))\big)&\subseteq
V(0,0)\circledast L(j)\notag.
\end{align}
This together with~(\ref{xsq-1}) shows that for any $v\in
s_2\otimes(V(0,0)\circledast L(j))$,
\begin{align}
x_1^2v=0, ~ x_2^2(x_2^2-q(j))v=0,~
x_3^2(x_3^2-q(j))v=0.\label{xsq-2}
\end{align}
Similarly using~(\ref{reln:x2s1}),~(\ref{reln:x2s2})
and~(\ref{xsq-1}) we see that
\begin{align}
(x_1^2-q(j))s_1s_2\otimes(V(0,0)\circledast L(j))&\subseteq
V(0,0)\circledast
L(j)\oplus s_2\otimes (V(0,0)\circledast L(j))\notag\\
(x_3^2)s_1s_2\otimes(V(0,0)\circledast L(j))&\subseteq
V(0,0)\circledast L(j)\notag.
\end{align}
Therefore it follows from~(\ref{xsq-1})~and~(\ref{xsq-2}) that for
any $w\in s_1s_2\otimes(V(0,0)\circledast L(j))$
\begin{align}
x_1^2(x_1^2-q(j))w=0, ~x_3^2(x_3^2-q(j))w=0.\label{xsq-3}
\end{align}
By~(\ref{reln:x2s1})~and~(\ref{xsq-1}), we obtain that for any
$z\in V(0,0)\circledast L(j)$,
\begin{align}
x_2^2s_1s_2\otimes z&=s_1s_2\otimes x_1^2
z+\big((1+c_1c_2)x_1+x_2(1+c_1c_2)\big)s_2\otimes z\notag\\
&=\big((1+c_1c_2)x_1+x_2(1+c_1c_2)\big)s_2\otimes z.
\end{align}
This together with~(\ref{reln:x2s1}) and $x_3^2=q(j)$ on $
V(0,0)\circledast L(j)$ shows that for $z\in V(0,0)\circledast
L(j)$
\begin{align}
(x_2^2-q(j))x_2^2(s_1s_2\otimes
z)&=(x_2^2-q(j))\big((1+c_1c_2)x_1+x_2(1+c_1c_2)\big)s_2\otimes z\notag\\
&=\big((1+c_1c_2)x_1+x_2(1+c_1c_2)\big)(x_2^2-q(j))s_2\otimes z\notag\\
&=((1+c_1c_2)x_1+x_2(1+c_1c_2))(-x_2(1-c_2c_3)-(1-c_2c_3)x_3)z\notag\\
&=0\quad \text{ by Lemma~\ref{lem:NP}}.\notag
\end{align}
Therefore for any $w\in s_1s_2\otimes(V(0,0)\circledast L(j))$,
\begin{align}
(x_2^2-q(j))x_2^2w=0 .\label{xsq-4}
\end{align}
Combining~(\ref{xsq-1}), (\ref{xsq-2}), (\ref{xsq-3}) and
(\ref{xsq-4}), we see that the actions of $x_1, x_2, x_3$ on the
$\mathfrak{H}_3^{\mathfrak{c}}$-module ${\rm
ind}^{\mathfrak{H}_3^{\mathfrak{c}}}_{\mathfrak{H}_{2,1}^{\mathfrak{c}}}V(0,0)\circledast
L(j)$ satisfy
$$
x_1^2(x_1^2-q(j))=0,~ x_2^2(x_2^2-q(j))=0,~x_3^2(x_3^2-q(j))=0.
$$
It follows that $x_1^2, x_2^2, x_3^2$ act semisimply on ${\rm
ind}^{\mathfrak{H}_3^{\mathfrak{c}}}_{\mathfrak{H}_{2,1}^{\mathfrak{c}}}L(0^2)\circledast
L(j)$.

Now assume
$M\in\operatorname{Rep}_{\I}\mathfrak{H}_3^{\mathfrak{c}}$ is
irreducible, on which all $x_k^2, 1\leq k\leq n$ act semisimply. Let
us assume $M$ is not completely splittable, then by
Proposition~\ref{prop:equiv.cond.} $M$ has a weight of the form
$(i,i,j)$ or $(j,i,i)$ for some $i,j\in\I$. By
Corollary~\ref{cor:n=2} we obtain that $i=0$. Hence by Frobenius
reciprocity $M$ is a quotient of ${\rm
ind}^{\mathfrak{H}_3^{\mathfrak{c}}}_{\mathcal{P}_3^{\mathfrak{c}}}L(0)\circledast
L(0)\circledast L(j)$ or ${\rm
ind}^{\mathfrak{H}_3^{\mathfrak{c}}}_{\mathcal{P}_3^{\mathfrak{c}}}L(j)\circledast
L(0)\circledast L(0)$.

If $j=0$, then $M$ is isomorphic to the Kato module ${\rm
ind}^{\mathfrak{H}_3^{\mathfrak{c}}}_{\mathcal{P}_3^{\mathfrak{c}}}L(0)\circledast
L(0)\otimes L(0)$. By~\cite[Lemma 4.15]{BK}, all Jordan blocks of
$x_1$ on $M$ are of size $3$. This means $x_1^4=0$ on $M$ but not
$x_1^2$. Hence $x_1^2$ does not act semisimply on $M$.

If $j=1$, then the weights of $M$ belong to $S_3\cdot (0,0,1)$.
By~\cite[\S 5-d]{BK}, there are at most three non-isomorphic
irreducible $\mathfrak{H}_3^{\mathfrak{c}}$-modules whose weights
belong to the set $S_3\cdot(0,0,1)=\{(0,0,1), (0,1,0), (1,0,0)\}$.
By Theorem~\ref{thm:Classficiation}, the
$\mathcal{P}_3^{\mathfrak{c}}$-module $V(0,1,0)=L(0)\circledast
L(1)\circledast L(0)$ affords an irreducible completely splittable
$\mathfrak{H}_3^{\mathfrak{c}}$-module via $s_1=\Xi_1, s_2=\Xi_2$.
Observe that the modules $V(0,0,1), V(1,0,0)$ and $V(0,1,0)$ are
non-isomorphic and have weights belonging to $S_3\cdot(0,0,1)$.
Since $M$ is not completely splittable, $M\cong V(0,0,1)$ or $M\cong
V(1,0,0)$.

If $j\neq 0,1$, by~\cite[Theorem 5.18]{BK} we have that
\begin{align}
{\rm
ind}^{\mathfrak{H}_3^{\mathfrak{c}}}_{\mathcal{P}_3^{\mathfrak{c}}}L(0)\circledast
L(0)\otimes L(j)&\cong{\rm
ind}^{\mathfrak{H}_3^{\mathfrak{c}}}_{\mathfrak{H}_{2,1}^{\mathfrak{c}}}V(0,0)\circledast L(j)\notag\\
&\cong{\rm
ind}^{\mathfrak{H}_3^{\mathfrak{c}}}_{\mathfrak{H}_{2,1}^{\mathfrak{c}}}L(j)\circledast
V(0,0)\notag\\
&\cong{\rm
ind}^{\mathfrak{H}_3^{\mathfrak{c}}}_{\mathcal{P}_3^{\mathfrak{c}}}L(j)\circledast
L(0)\otimes L(0)\notag
\end{align}
is irreducible. Hence $M\cong{\rm
ind}^{\mathfrak{H}_3^{\mathfrak{c}}}_{\mathfrak{H}_{2,1}^{\mathfrak{c}}}V(0,0)\circledast
L(j)$.
\end{proof}
Observe that the subalgebra generated by $x_k, x_{k+1}, x_{k+2},
c_k, c_{k+1}, c_{k+2}, s_k, s_{k+1}$ is isomorphic to
$\mathfrak{H}_3^{\mathfrak{c}}$ for fixed $1\leq k\leq n-2$ . By
Proposition~\ref{prop:n=3}, we have the following.
\begin{cor}\label{cor:n=3} Suppose that $M\in\operatorname{Rep}_{\I}\mhcn$, on which all
$x_k^2, 1\leq k\leq n$ act semisimply. Let $\underline{i}\in\I^n$ be
a weight of $M$. Then there does not exist $1\leq k\leq n-2$ such
that $i_k=i_{k+1}=i_{k+2}$.
\end{cor}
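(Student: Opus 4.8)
The plan is to restrict $M$ to a rank-three parabolic subalgebra and then feed the result into the classification of Proposition~\ref{prop:n=3}. First I would fix $k$ with $1\leq k\leq n-2$ and work with the subalgebra $\mathcal{A}_k\subseteq\mhcn$ generated by $x_k,x_{k+1},x_{k+2},c_k,c_{k+1},c_{k+2},s_k,s_{k+1}$, which is isomorphic to $\mathfrak{H}_3^{\mathfrak{c}}$ via $s_{k}\mapsto s_1$, $s_{k+1}\mapsto s_2$, $x_{k+j}\mapsto x_{j+1}$, $c_{k+j}\mapsto c_{j+1}$ for $0\leq j\leq 2$. Since $x_k^2,x_{k+1}^2,x_{k+2}^2$ act semisimply on $M$ and all their eigenvalues lie in $\{q(i):i\in\I\}$, the restriction ${\rm res}^{\mhcn}_{\mathcal{A}_k}M$, viewed through this isomorphism, lies in $\operatorname{Rep}_{\I}\mathfrak{H}_3^{\mathfrak{c}}$ and carries a semisimple action of $x_1^2,x_2^2,x_3^2$; the same holds for every $\mathcal{A}_k$-subquotient of it, since semisimplicity of an operator passes to submodules and quotients.

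Next I would set up the weight bookkeeping that lets one pass from $M$ down to an irreducible module. For any $P\in\operatorname{Rep}_{\I}\mathfrak{H}_3^{\mathfrak{c}}$ on which $x_1^2,x_2^2,x_3^2$ act semisimply, let $P=\bigoplus_{\underline{j}\in\I^3}P_{\underline{j}}$ be the joint eigenspace decomposition, with $P_{\underline{j}}$ the subspace on which $x_l^2$ acts as $q(j_l)$. Because these operators are semisimple, $P\mapsto P_{\underline{j}}$ is exact, so in a short exact sequence of such modules the dimensions of the $\underline{j}$-weight spaces are additive; hence the set of weights of $P$ is the union of the weight sets of its composition factors. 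Now suppose, for contradiction, that $i_k=i_{k+1}=i_{k+2}=:a$ for some weight $\underline{i}$ of $M$. Applying the above with $P={\rm res}^{\mhcn}_{\mathcal{A}_k}M$ and using $M_{\underline{i}}\neq0$, the triple $(a,a,a)$ is a weight of $P$, hence a weight of some irreducible composition factor $N\in\operatorname{Rep}_{\I}\mathfrak{H}_3^{\mathfrak{c}}$ of $P$ on which $x_1^2,x_2^2,x_3^2$ act semisimply.

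Finally I would invoke Proposition~\ref{prop:n=3}, which lists all such $N$, and rule out each possibility. If $N$ is a completely splittable $D^{\underline{j}}$, then by Lemma~\ref{lem:separate weig.} every weight of $N$ has distinct first two entries, so $(a,a,a)$ cannot occur. If $N\cong V(0,0,1)$ or $N\cong V(1,0,0)$, I would observe that ${\rm res}^{\mathfrak{H}_2^{\mathfrak{c}}}_{\mathcal{P}_2^{\mathfrak{c}}}V(0,0)$ has $(0,0)$ as its only weight (its two composition factors are both $L(0)\circledast L(0)$, as in the proof of Proposition~\ref{prop:rank 2}), and hence ${\rm res}^{\mathfrak{H}_3^{\mathfrak{c}}}_{\mathcal{P}_3^{\mathfrak{c}}}N$ has the single weight $(0,0,1)$, respectively $(1,0,0)$, so again $(a,a,a)$ is not a weight. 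If $N\cong {\rm ind}^{\mathfrak{H}_3^{\mathfrak{c}}}_{\mathfrak{H}_{2,1}^{\mathfrak{c}}}V(0,0)\circledast L(j)$ with $j\neq 0,1$, then, using the isomorphism $N\cong {\rm ind}^{\mathfrak{H}_3^{\mathfrak{c}}}_{\mathcal{P}_3^{\mathfrak{c}}}L(0)\circledast L(0)\circledast L(j)$ from the proof of Proposition~\ref{prop:n=3} together with the commutation rules~(\ref{reln:x2s1}),~(\ref{reln:x2s2}), the weights of $N$ are contained in $S_3\cdot(0,0,j)=\{(0,0,j),(0,j,0),(j,0,0)\}$, none of which equals $(a,a,a)$ since $j\neq 0$. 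Each case is a contradiction, so no such $k$ exists. The main obstacle is the bookkeeping in the second paragraph---making precise that a repeated-triple weight of $M$ descends to an irreducible $\mathfrak{H}_3^{\mathfrak{c}}$-subquotient---and, secondarily, correctly identifying the weight sets of the modules (2)--(4) of Proposition~\ref{prop:n=3}; in particular one should double-check that $V(0,0,1)$ and $V(1,0,0)$ are single-weight as $\mathcal{P}_3^{\mathfrak{c}}$-modules and that the induced module in (4) has no weight outside $S_3\cdot(0,0,j)$.
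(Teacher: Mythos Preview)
Your proposal is correct and follows essentially the same approach as the paper: restrict to the subalgebra generated by $x_k,x_{k+1},x_{k+2},c_k,c_{k+1},c_{k+2},s_k,s_{k+1}$, identify it with $\mathfrak{H}_3^{\mathfrak{c}}$, and invoke Proposition~\ref{prop:n=3}. The paper states this in a single sentence and leaves the weight bookkeeping and the case-by-case check implicit; your write-up simply makes those steps explicit, and the verifications you give (exactness of weight spaces under semisimplicity, the weight sets of $V(0,0,1)$, $V(1,0,0)$, and the induced module in case~(4)) are all correct.
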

\subsection{Conjecture for general $n$.}
\begin{prop}\label{x2semi}
Suppose that $M\in\operatorname{Rep}_{\I}\mhcn$ is irreducible and
$M_{\underline{i}}\neq 0$ for some $\underline{i}\in\I^n$. If
$x_k^2, 1\leq k\leq n,$ act semisimply on $M$, then $\underline{i}$
satisfies the following.
\begin{enumerate}

\item If $i_k\neq i_{k+1}\pm1$, then $s_k\cdot\underline{i}$ is a
weight of $M$.

\item If $i_k=i_{k+1}$ for some $1\leq k\leq n-1$, then
$i_k=i_{k+1}=0$.

\item There does not exist $1\leq k\leq n-2$ such that
$i_k=i_{k+1}=i_{k+2}$.

\item If $i_k=i_{k+2}$ for some $1\leq k\leq n-2$, then
\begin{enumerate}
\item If $p=0$, then $i_k=i_{k+2}=0$.

\item If $p\geq 3$, then either $(i_{k}=i_{k+2}=\frac{p-3}{2}$ and
$ i_{k+1}=\frac{p-1}{2})$ or $(i_k=i_{k+2}=0)$.

\end{enumerate}

\end{enumerate}
\end{prop}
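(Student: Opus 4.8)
The plan is to establish the four assertions separately, drawing on the small-rank results already obtained. Assertions (2) and (3) require no new argument: they are precisely Corollary~\ref{cor:n=2} and Corollary~\ref{cor:n=3}, applied to the copy of $\mathfrak{H}_2^{\mathfrak{c}}$ inside $\mhcn$ generated by $s_k,x_k,x_{k+1},c_k,c_{k+1}$ and the copy of $\mathfrak{H}_3^{\mathfrak{c}}$ generated by $s_k,s_{k+1},x_k,x_{k+1},x_{k+2},c_k,c_{k+1},c_{k+2}$, respectively.

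For (1), I would use the intertwining element $\Phi_k$ of~(\ref{intertw}). Since $x_k^2$ and $x_{k+1}^2$ act semisimply on $M$ and $M_{\underline{i}}$ is their simultaneous generalized eigenspace, they act on $M_{\underline{i}}$ as the scalars $q(i_k)$ and $q(i_{k+1})$ (cf. Remark~\ref{rem:genera.}). One may assume $i_k\neq i_{k+1}$, for otherwise $s_k\cdot\underline{i}=\underline{i}$ and there is nothing to prove; then $q(i_k)\neq q(i_{k+1})$, and~(\ref{sqinter}) shows that $\Phi_k^2$ acts on $M_{\underline{i}}$ as the scalar $2(q(i_k)+q(i_{k+1}))-(q(i_k)-q(i_{k+1}))^2$, which is nonzero precisely because $i_k\neq i_{k+1}\pm1$ (this is the equivalence used in the proof of Proposition~\ref{prop:rank 2}(2)). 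By~(\ref{xinter}) one has $\Phi_kx_j^2=x_{s_k(j)}^2\Phi_k$, so $\Phi_k$ carries $M_{\underline{i}}$ into $M_{s_k\cdot\underline{i}}$; since $\Phi_k^2$ is invertible on $M_{\underline{i}}$, the map $\Phi_k\colon M_{\underline{i}}\to M_{s_k\cdot\underline{i}}$ is injective, whence $M_{s_k\cdot\underline{i}}\neq0$ and $s_k\cdot\underline{i}$ is a weight of $M$.

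For (4), suppose $i_k=i_{k+2}$ and restrict $M$ to the subalgebra $\cong\mathfrak{H}_3^{\mathfrak{c}}$ generated by $s_k,s_{k+1},x_k,x_{k+1},x_{k+2},c_k,c_{k+1},c_{k+2}$. This restriction lies in $\operatorname{Rep}_{\I}\mathfrak{H}_3^{\mathfrak{c}}$ and has $(i_k,i_{k+1},i_{k+2})$ among its weights, hence $(i_k,i_{k+1},i_{k+2})$ is a weight of some irreducible subquotient $N$, on which $x_1^2,x_2^2,x_3^2$ again act semisimply. By Proposition~\ref{prop:n=3}, $N$ is either a completely splittable module $D^{\underline{j}}$ with $\underline{j}\in W'(\mathfrak{H}_3^{\mathfrak{c}})$, or one of $V(0,0,1)$, $V(1,0,0)$, or ${\rm ind}^{\mathfrak{H}_3^{\mathfrak{c}}}_{\mathfrak{H}_{2,1}^{\mathfrak{c}}}V(0,0)\circledast L(j)$ with $j\neq0,1$. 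The modules $V(0,0,1)$ and $V(1,0,0)$ carry the single weights $(0,0,1)$ and $(1,0,0)$, neither of the form $(a,b,a)$, so these cannot occur; the induced module has weights $(0,0,j),(0,j,0),(j,0,0)$ (the $S_3$-orbit of $(0,0,j)$), of which only $(0,j,0)$ has that form, forcing $i_k=i_{k+2}=0$; and if $N=D^{\underline{j}}$ then every weight of $N$ lies in $W'(\mathfrak{H}_3^{\mathfrak{c}})=W(\mathfrak{H}_3^{\mathfrak{c}})$, so $(i_k,i_{k+1},i_{k+2})\in W(\mathfrak{H}_3^{\mathfrak{c}})$ with $i_k=i_{k+2}$, and Lemma~\ref{lem:restr.1} yields exactly the alternatives stated in (4)(a) for $p=0$ and in (4)(b) for $p\geq3$.

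The main obstacle is the bookkeeping in (4): one must correctly identify the weights of each of the four families in Proposition~\ref{prop:n=3} — in particular verify that $V(0,0,1)$ and $V(1,0,0)$ carry no weight of shape $(a,b,a)$ and that the induced module contributes only $(0,j,0)$ of that shape — and one must justify that restricting the irreducible $M$ to the $\mathfrak{H}_3^{\mathfrak{c}}$-subalgebra distributes the weight $(i_k,i_{k+1},i_{k+2})$ onto a subquotient to which that proposition applies. All remaining steps are direct appeals to results already in hand.
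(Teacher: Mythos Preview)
Your proof is correct and follows essentially the same approach as the paper: parts (2) and (3) are exactly the cited corollaries, part (1) uses the intertwining element (the paper invokes the normalized $\widehat{\Phi}_k$ from Lemma~\ref{lem:newoperator}, whereas you use $\Phi_k$ directly and handle the degenerate case $i_k=i_{k+1}$ separately, which is slightly cleaner), and part (4) is the same restriction to the $\mathfrak{H}_3^{\mathfrak{c}}$-subalgebra followed by Proposition~\ref{prop:n=3}. Your explicit case analysis of the four families in Proposition~\ref{prop:n=3} is more detailed than the paper's terse conclusion, but the logic is identical.
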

\begin{proof}
(1)  If $i_k\neq i_{k+1}\pm1$, by Lemma~\ref{lem:newoperator}
$\widehat{\Phi}_k$ is a well-defined bijection from
$M_{\underline{i}}$ to $M_{s_k\cdot\underline{i}}$. Hence
$M_{s_k\cdot\underline{i}}\neq 0$.

(2)  It follows from Corollary~\ref{cor:n=2}.

(3) It follows from Corollary~\ref{cor:n=3}.

(4) Suppose $i_k=i_{k+2}=u$ and $i_{k+1}=v$ for some $1\leq k\leq
n-2$. Observe that for each fixed $1\leq k\leq n-2$, $x^2_k,
x^2_{k+1}, x^2_{k+2}$ act semisimply on the restriction of $M$ to
the subalgebra generated by $x_k, x_{k+1}, x_{k+2}, c_k, c_{k+1},
c_{k+2}, s_k, s_{k+1}$ which is isomorphic to
$\mathfrak{H}_3^{\mathfrak{c}}$. This implies that $(u,v,u)$ appears
as a weight of a $\mathfrak{H}_3^{\mathfrak{c}}$-module on which
$x_1^2, x_2^2, x_3^2$ act semisimply. By Proposition~\ref{prop:n=3},
if $p=0$, then $u=0$; if $p\geq 3$, then either $u=0, v $ is
arbitrary or $u=\frac{p-3}{2}, v=\frac{p-1}{2}$.
\end{proof}

\begin{cor} Suppose that $M\in\operatorname{Rep}_{\I}\mhcn$ is irreducible and
$M_{\underline{i}}\neq 0$ for some $\underline{i}\in\I^n$. If all
$x_k^2, 1\leq k\leq n$ act semisimply on $M$, then $\underline{i}$
satisfies the following.

\begin{enumerate}

\item If $p=0$ and $u=i_k=i_l\geq 1$ for some $1\leq k<l\leq n$,
then
\begin{align}
\{u-1,u+1\}&\subseteq\{i_{k+1},\ldots,i_{l-1}\},\notag\\
 &or\notag\\
(u,u-1,\ldots,1,0,0,1,\ldots,u-1,u)& \text{ is a subsequence of }
(i_{k+1},\ldots,i_{l-1}).\notag
\end{align}

\item If $p\geq3$ and $u=i_k=i_l\geq 1$ for some $1\leq k<l\leq
n$, then \begin{align}
\{u-1,u+1\}&\subseteq\{i_{k+1},\ldots,i_{l-1}\},\notag\\
 &or\notag\\
(u,u-1,\ldots,1,0,0,1,\ldots,u-1,u)& \text{ is a subsequence of }
(i_{k+1},\ldots,i_{l-1}),\notag\\
&or\notag\\
(u,u+1,\ldots,\frac{p-3}{2},\frac{p-1}{2},\frac{p-3}{2},\ldots,u+1,u)&
\text{ is a subsequence of } (i_{k+1},\ldots,i_{l-1}).\notag
\end{align}
\end{enumerate}
\end{cor}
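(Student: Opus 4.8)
The plan is to read this statement off from Proposition~\ref{x2semi} by the same combinatorial bookkeeping that proves Lemma~\ref{lem:restr.2} and Proposition~\ref{prop:restr.2}(4),(5). The inputs are: the weights of an irreducible $M\in\operatorname{Rep}_{\I}\mhcn$ on which $x_1^2,\dots,x_n^2$ act semisimply are stable under admissible transpositions (Proposition~\ref{x2semi}(1), the analogue of Corollary~\ref{cor:per. action} in the present category), and by Proposition~\ref{x2semi}(2),(4) such weights avoid every local pattern $(\dots,v,v,\dots)$, $(\dots,v,v-1,v,\dots)$, $(\dots,v,v+1,v,\dots)$ with $v\geq 1$, with the single exception that for $p\geq 3$ the pattern $(\dots,\frac{p-3}{2},\frac{p-1}{2},\frac{p-3}{2},\dots)$ is permitted. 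The whole point is that, in contrast with Lemma~\ref{lem:separate weig.} and Lemma~\ref{lem:restr.1}, the descent/ascent is no longer forced to run into a contradiction: it may now stop at $(0,0)$, or (for $p\geq 3$) at $(\frac{p-3}{2},\frac{p-1}{2},\frac{p-3}{2})$, and these terminal configurations are precisely the extra subsequence alternatives in the statement.

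First I would reduce to a \emph{closest} pair. Fix $u=i_k=i_l\geq 1$ with $\{u-1,u+1\}\not\subseteq\{i_{k+1},\dots,i_{l-1}\}$. If $u$ occurs among $i_{k+1},\dots,i_{l-1}$, say at position $m$, then by induction on $l-k$ the conclusion already holds for $(k,m)$, and either of its outcomes is inherited by $(k,l)$; so we may assume $u\notin\{i_{k+1},\dots,i_{l-1}\}$. Then either $u+1$ or $u-1$ is absent from the block $(i_{k+1},\dots,i_{l-1})$, and we split accordingly.

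If $u+1$ is absent, I would run the descent of Lemma~\ref{lem:restr.2}. Since $(\dots,u,u,\dots)$ and $(\dots,u,u-1,u,\dots)$ are forbidden (as $u\geq 1$) and $u+1$ does not occur in the block, moving far-apart contents aside by admissible transpositions forces the value $u-1$ to occur at least twice in the block; choosing consecutive occurrences and iterating yields positions $k=r_0<r_1<\dots<r_u<t_u<\dots<t_1<t_0=l$ with $i_{r_j}=i_{t_j}=u-j$, the relevant forbidden patterns still applying at each level $v=u-j\geq 1$. At level $j=u$ the content has dropped to $0$ and, nothing below $0$ being available, the iteration stops. These positions directly exhibit the descending-ascending sequence $(u,u-1,\dots,1,0,0,1,\dots,u-1,u)$ as a subsequence of $(i_k,\dots,i_l)$, which is the second alternative in (1) and (2). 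If instead $u-1$ is absent, I would run the ascent: for $p=0$ the patterns $(\dots,v,v,\dots)$, $(\dots,v,v+1,v,\dots)$ are forbidden for every $v\geq u$, so the nested construction climbs without bound, which is impossible for a finite weight; hence this case cannot occur when $p=0$, and (1) follows. For $p\geq 3$ the climb can reach only $v=\frac{p-3}{2}$ (beyond that, $\frac{p+1}{2}\notin\I$), where the triple $(\frac{p-3}{2},\frac{p-1}{2},\frac{p-3}{2})$ is permitted, and it then produces positions $r_0<\dots<r_{(p-3)/2-u}<q<t_{(p-3)/2-u}<\dots<t_0$ with $i_q=\frac{p-1}{2}$ (the middle entry $q$ is forced, since otherwise the contents between the two copies of $\frac{p-3}{2}$ could all be pushed aside to create the forbidden $(\frac{p-3}{2},\frac{p-3}{2})$); this exhibits $(u,u+1,\dots,\frac{p-3}{2},\frac{p-1}{2},\frac{p-3}{2},\dots,u+1,u)$ as a subsequence, the third alternative in (2).

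I expect the main obstacle to be exactly what it is in the proofs of Lemma~\ref{lem:restr.2} and Proposition~\ref{prop:restr.2}: verifying at each level that the contents needed to invoke a forbidden pattern can genuinely be brought into position by admissible transpositions, that is, that the current nested block does not already contain an occurrence of the content one level up or down, and checking the degenerate overlaps among the ``levels'' in the small cases $p=3$ and $u$ near $\frac{p-3}{2}$ (including $u=\frac{p-1}{2}$, where no ascent is possible). Everything beyond that is a routine transcription of those proofs with Proposition~\ref{x2semi} inserted in place of the completely splittable input.
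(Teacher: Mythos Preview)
Your proposal is correct and follows essentially the same approach as the paper: reduce to a closest pair with $u\notin\{i_{k+1},\dots,i_{l-1}\}$, then run the nested descent (when $u+1$ is absent) or ascent (when $u-1$ is absent) using Proposition~\ref{x2semi}(1),(2),(4) in place of Lemma~\ref{lem:separate weig.} and Lemma~\ref{lem:restr.1}, exactly mirroring the proofs of Lemma~\ref{lem:restr.2} and Proposition~\ref{prop:restr.2}(4),(5). The only cosmetic difference is that the paper first records (via the ascent argument) that $u-1$ must always lie in the block when $p=0$ and then treats only the case $u+1$ absent, whereas you handle the two missing-neighbor cases symmetrically; the content is identical.
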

\begin{proof}
(1) Without loss of generality, we can assume
$u\not\in\{i_{k+1},\ldots,i_{l-1}\}$. By the technique used in the
proof of Proposition~\ref{prop:restr.2}, one can show that
$u-1\in\{i_{k+1},\ldots, i_{l-1}\}$. Now assume
$u+1\notin\{i_{k+1},\ldots, i_{l-1}\}$. Then $u-1$ appears at
least twice between $i_{k+1}$ and $i_{l-1}$ in $\underline{i}$;
otherwise we can apply admissible transpositions to
$\underline{i}$ to obtain a weight of $M$ of the form $(\cdots, u,
u-1, u,\cdots)$ which contradicts Proposition~\ref{x2semi}(4).
Hence there exist $k<k_1<l_1<l$ such that
$$
i_{k_1}=u-1=i_{l_1}, \{u,
u-1\}\cap\{i_{k_1+1},\ldots,i_{l_1-1}\}=\emptyset.
$$
An identical argument shows that there exist $k_1<k_2<l_2<l_1$
such that
$$
i_{k_2}=u-2=i_{l_2}, \{u, u-1,
u-2\}\cap\{i_{k_2+1},\ldots,i_{l_2-1}\}=\emptyset.
$$
Continuing in this way, we achieve the claim.

(2) By the technique used in (1), one can easily show that if
$u+1\notin\{i_{k+1},\ldots, i_{l-1}\}$ then
$(i_{k+1},\ldots,i_{l-1})$ contains
$(u,u-1,\ldots,1,0,0,1,\ldots,u-1,u)$ as a subsequence. If
$u-1\notin\{i_{k+1},\ldots,i_{l-1}\}$, an identical argument used
in the proof of Proposition~\ref{prop:restr.2}(5) shows that
$(i_{k+1},\ldots,i_{l-1})$ contains
$(u,u+1,\ldots,\frac{p-3}{2},\frac{p-1}{2},\frac{p-3}{2},\ldots,u+1,u)$
as a subsequence.
\end{proof}

\begin{conjecture} Suppose that $M\in\operatorname{Rep}_{\I}\mhcn$ is irreducible. Then
$x_k^2, 1\leq k\leq n$, act semisimply on $M$ if and only if each
weight of $M$ satisfies the list of properties stated in
Proposition~\ref{x2semi}.
\end{conjecture}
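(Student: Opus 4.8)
The forward implication is already established --- it is precisely the content of Proposition~\ref{x2semi} --- so the substance of the conjecture is the converse, and the plan below addresses only that direction.

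The strategy is to mimic the proof of Theorem~\ref{thm:Classficiation}, enlarging the family of modules from completely splittable ones to all irreducibles in the category. One would attach to each weight $\underline{i}$ satisfying the list in Proposition~\ref{x2semi} a ``local model'' $\mpcn$-module, obtained from $L(i_1)\circledast\cdots\circledast L(i_n)$ by replacing the ingredients $L(0)\circledast L(0)$ attached to adjacent equal entries $0$ (the blocks have length at most $2$ by Proposition~\ref{x2semi}(2),(3)) with the rank-two module $V(0,0)$, and treating the pattern $(\frac{p-3}{2},\frac{p-1}{2},\frac{p-3}{2})$ as dictated by the $n=3$ analysis in Proposition~\ref{prop:n=3} and Lemma~\ref{lem:n=3case1}; a first technical point is to make these local models compatible along the equivalence generated by admissible transpositions (those $s_k$ with $i_k\neq i_{k+1}\pm1$, acting by $s_k-\Xi_k$ as in Lemma~\ref{lem:newoperator}) together with the rigid moves at which $s_k=\Xi_k$ as in Lemma~\ref{lem:n=3case1}. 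One then forms $D^{\underline{i}}$ as the direct sum of twists of the local model over an equivalence class, defines the $s_k$-action by the $\Xi_k,\Omega_k$-formula of Theorem~\ref{thm:Classficiation} away from the degenerate entries and by the explicit formulas of Lemma~\ref{lem:n=3case1} and Proposition~\ref{prop:n=3} at them, and must verify: (i) the defining relations of $\mhcn$, above all the braid relation $s_ks_{k+1}s_k=s_{k+1}s_ks_{k+1}$; (ii) irreducibility, via the weight-space argument in the proof of Theorem~\ref{thm:Classficiation}(1); (iii) semisimplicity of the $x_k^2$, which should follow from the intertwining relation~(\ref{xinter}) and~(\ref{reln:x2s1})--(\ref{reln:x2s2}), reducing to the settled cases $n=2,3$; and (iv) that every irreducible $M$ in the category whose weights satisfy the list is one of these $D^{\underline{i}}$, by fixing a weight, producing a $\mpcn$-homomorphism from the local model into $M_{\underline{i}}$ and propagating it along the intertwiners exactly as in the proof of Theorem~\ref{thm:Classficiation}(3).

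The main obstacle, and the reason this remains conjectural, is step (i) in the presence of the degenerate patterns. At a non-degenerate pair one divides by the invertible scalar $\Omega_k$ and the braid identity~(\ref{braidinter}) for the honest intertwiners $\Phi_k$ transfers cleanly, which is exactly the mechanism of Lemma~\ref{lem:newoperator}. But at a degenerate entry $q(i_k)+q(i_{k+1})=0$, so by~(\ref{sqinter}) the operator $\Phi_k^2$, hence $\Omega_k$, acts as $0$ and is not invertible; the clean transfer breaks down and one is reduced to explicit computations of the type done by hand in Lemma~\ref{lem:n=3case1}, which become unmanageable once two degenerate patterns sit close together or interact, since there is no uniform normal form available for products of the $\Phi_k$ on the non-semisimple part of the weight spaces. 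A successful proof would likely need either such a normal form or a more conceptual route --- for instance comparing these modules with the Brundan--Kleshchev classification~\cite{BK} of irreducible $\mhcn$-modules and singling out the relevant ones crystal-theoretically; steps (ii)--(iv) should then go through with only routine modification, as they use the intertwining properties and the weight decomposition rather than the invertibility of $\Omega_k$.
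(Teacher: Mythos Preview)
This statement is a \emph{conjecture} in the paper, not a theorem: the paper offers no proof of it, only the verification for $n\le 3$ in the theorem that immediately follows. So there is no ``paper's own proof'' to compare your proposal against, and your proposal is, appropriately, not a proof either but a strategy with an acknowledged obstruction. Your identification of the forward direction as Proposition~\ref{x2semi} and of the converse as the open part is correct and matches the paper's framing.

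Two technical corrections to your sketch. First, at the degenerate pattern $i_k=i_{k+1}=0$ the problem is not that $\Omega_k$ acts as zero but that $\Omega_k$ and $\Xi_k$ are \emph{undefined}: both involve the factor $(x_k^2-x_{k+1}^2)^{-1}$, and $q(i_k)-q(i_{k+1})=0$ there, so the normalized intertwiner $\widehat{\Phi}_k$ of Lemma~\ref{lem:newoperator} simply does not exist on that weight space. Second, your ``local model'' cannot be a $\mpcn$-module: $V(0,0)$ is an $\mathfrak{H}_2^{\mathfrak{c}}$-module whose restriction to $\mathcal{P}_2^{\mathfrak{c}}$ is a nonsplit extension of $L(0)\circledast L(0)$ by itself, so the local model must carry a partial parabolic action from the start. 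This in turn undermines step~(iv) as written: in the non-completely-splittable case the weight space $M_{\underline{i}}$ is no longer a simple $\mpcn$-module, so one cannot simply choose a $\mpcn$-isomorphism and propagate it along intertwiners as in Theorem~\ref{thm:Classficiation}(3). None of this changes your bottom line---the conjecture is open for $n\ge 4$ and your diagnosis of the obstruction (no clean substitute for $\widehat{\Phi}_k$ at the degenerate blocks, hence no uniform verification of the braid relations) is the right one---but the repair would have to begin one level lower than you suggest.
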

\begin{thm}
The above conjecture holds for $n=2,3$.
\end{thm}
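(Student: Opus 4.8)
The ``only if'' direction is precisely Proposition~\ref{x2semi}, so the task is to prove the converse for $n=2$ and $n=3$: if $M\in\operatorname{Rep}_{\I}\mathfrak{H}_n^{\mathfrak{c}}$ is irreducible and every weight of $M$ satisfies the conditions (1)--(4) of Proposition~\ref{x2semi}, then $x_1^2,\ldots,x_n^2$ act semisimply on $M$.

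\textit{The case $n=2$.} By Proposition~\ref{prop:rank 2}(3), $M\cong V(i,j)$ for some $i,j\in\I$. If $i\neq j$, then $x_1^2,x_2^2$ act semisimply by Lemma~\ref{lem:n=2}. If $i=j$, then $(i,i)$ is a weight of $M$, so condition (2) forces $i=j=0$, and on $V(0,0)$ the operators $x_1^2,x_2^2$ again act semisimply by Lemma~\ref{lem:n=2}. This settles $n=2$.

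\textit{The case $n=3$.} If $M$ is completely splittable, then $x_1,x_2,x_3$ and hence $x_1^2,x_2^2,x_3^2$ act semisimply, and we are done. Assume $M$ is not completely splittable. By Proposition~\ref{prop:equiv.cond.}, $M$ has a weight of the form $(i,i,j)$ or $(j,i,i)$ for some $i,j\in\I$; condition (2) forces $i=0$, and condition (3) forces $j\neq 0$, since $(0,0,0)$ would otherwise be a weight. By Frobenius reciprocity $M$ is then a quotient of $\operatorname{ind}^{\mathfrak{H}_3^{\mathfrak{c}}}_{\mathcal{P}_3^{\mathfrak{c}}}L(0)\circledast L(0)\circledast L(j)$ or of $\operatorname{ind}^{\mathfrak{H}_3^{\mathfrak{c}}}_{\mathcal{P}_3^{\mathfrak{c}}}L(j)\circledast L(0)\circledast L(0)$, and I would now run exactly the case analysis in the proof of Proposition~\ref{prop:n=3}. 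For $j=1$, the weights of $M$ all lie in $S_3\cdot(0,0,1)=\{(0,0,1),(0,1,0),(1,0,0)\}$ because $M$ is a quotient of the corresponding induced module; since $V(0,1,0)$ is completely splittable while $M$ is not, the \cite[\S 5-d]{BK} count of irreducibles with these weights gives $M\cong V(0,0,1)$ or $M\cong V(1,0,0)$. For $j\geq 2$, \cite[Theorem 5.18]{BK} shows both induced modules above are irreducible and isomorphic to $\operatorname{ind}^{\mathfrak{H}_3^{\mathfrak{c}}}_{\mathfrak{H}_{2,1}^{\mathfrak{c}}}V(0,0)\circledast L(j)$, so $M$ is this module. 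In each of these three cases $x_1^2,x_2^2,x_3^2$ act semisimply on $M$, as already established inside the proof of Proposition~\ref{prop:n=3} (for $V(0,0,1)$ and $V(1,0,0)$ via Lemma~\ref{lem:n=3case1} and the remark following it; for the induced module by the explicit computation there using relations~(\ref{reln:x2s1}) and~(\ref{reln:x2s2})). This completes $n=3$.

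Thus the substantive content is already contained in Lemma~\ref{lem:n=2} and Proposition~\ref{prop:n=3}; the only new step is the observation that conditions (2) and (3) of Proposition~\ref{x2semi}, combined with Proposition~\ref{prop:equiv.cond.}, force a non-completely-splittable $M$ to carry a weight of type $(0,0,j)$ or $(j,0,0)$ with $j\neq 0$, which is exactly the hypothesis needed to enter the classification. The point that requires care, and which I expect to be the main obstacle, is ensuring this enumeration is genuinely exhaustive: one must verify that an irreducible $M\in\operatorname{Rep}_{\I}\mathfrak{H}_3^{\mathfrak{c}}$ which is not completely splittable and whose weights obey (1)--(4) is indeed forced into one of the three explicit families $V(0,0,1)$, $V(1,0,0)$, $\operatorname{ind}^{\mathfrak{H}_3^{\mathfrak{c}}}_{\mathfrak{H}_{2,1}^{\mathfrak{c}}}V(0,0)\circledast L(j)$, which rests on the \cite{BK} irreducibility and classification statements invoked in Proposition~\ref{prop:n=3} rather than on any new computation.
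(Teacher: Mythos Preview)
Your proof is correct and follows essentially the same strategy as the paper: reduce to Lemma~\ref{lem:n=2} for $n=2$ and to the classification in Proposition~\ref{prop:n=3} for $n=3$. The only organizational difference is your case split. You branch on ``$M$ completely splittable vs.\ not'' and then invoke Proposition~\ref{prop:equiv.cond.} to produce a weight of shape $(0,0,j)$ or $(j,0,0)$; the paper instead picks a weight $(i_1,i_2,i_3)$ and branches on ``entries distinct vs.\ not,'' which forces it to treat the patterns $(0,j,0)$ and (for $p\ge 3$) $(\tfrac{p-3}{2},\tfrac{p-1}{2},\tfrac{p-3}{2})$ explicitly before observing they lead to completely splittable modules. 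Your route absorbs those cases into the ``completely splittable'' branch and is therefore a little cleaner, but the substance is identical.
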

\begin{proof}
Clearly the above conjecture holds for $n=2$ by Lemma~\ref{lem:n=2}.
Suppose $M$ is an irreducible $\mathfrak{H}_3^{\mathfrak{c}}$-module
whose weights satisfy the list of properties stated in
Proposition~\ref{x2semi}.  Let $(i_1,i_2,i_3)\in\I^3$ be a weight of
$M$. Then by Frobenius reciprocity, $M$ is isomorphic to a quotient
of ${\rm
ind}^{\mathfrak{H}_3^{\mathfrak{c}}}_{\mathcal{P}_3^{\mathfrak{c}}}L(i_1)\circledast
L(i_2)\circledast L(i_3)$.  Hence the weights of $M$ are of the form
$\sigma\cdot(i_1,i_2,i_3)$ for $\sigma\in S_3\cdot$.  If
$i_1,i_2,i_3$ are distinct, then all weights $\underline{j}$ of $M$
satisfy $j_k\neq j_{k+1}$ for $k=1,2$. By
Proposition~\ref{prop:equiv.cond.}, $M$ is completely splittable and
hence all $x_1^2, x_2^2, x_3^2$ act semisimply on it.

Now assume $i_1,i_2,i_3$ are not distinct. If $p=0$, by the
properties in Proposition~\ref{x2semi} we have that $(i_1, i_2,
i_3)$ is of the form $(0,0,j)$, $(0,j,0)$ or $(j,0,0)$ for some
$j\geq 1$. By Proposition~\ref{prop:n=3}, all $x_k^2, 1\leq k\leq 3$
act semisimply on $M$. If $p\geq 3$, by the properties in
Proposition~\ref{x2semi} we see that either $(i_1, i_2,
i_3)=(\frac{p-3}{2},\frac{p-1}{2},\frac{p-3}{2})$ or $(i_1, i_2,
i_3)$ has the form $(0,0,j)$, $(0,j,0)$ or $(j,0,0)$ for some $j\geq
1$. In the latter case,  by Proposition~\ref{prop:n=3}, all $x_k^2,
1\leq k\leq 3$ act semisimply on $M$. Assume $(i_1, i_2,
i_3)=(\frac{p-3}{2},\frac{p-1}{2},\frac{p-3}{2})$. Since $M$
satisfies the properties in Proposition~\ref{x2semi},
$(\frac{p-3}{2},\frac{p-3}{2},\frac{p-1}{2})$ and
$(\frac{p-3}{2},\frac{p-3}{2},\frac{p-1}{2})$ are not the weights of
$M$. Hence $M$ has only one weight, that is,
$(\frac{p-3}{2},\frac{p-1}{2},\frac{p-3}{2})$. By
Proposition~\ref{prop:equiv.cond.}, $M$ is completely splittable and
hence all $x_1^2, x_2^2, x_3^2$ act semisimply on it.
\end{proof}
%
%\begin{rem}

%A big difference lies in the fact that for fixed $p>0$ this bigger
%category can never be empty since it always contain $M(\omega_n)$
%while there does not exist completely splittable $\mhcn$-supermodule
%if $n$ is big enough.  Clearly completely splittable $\mhcn$-modules
%belong to this category. In fact, we have the following.
%\end{rem}

\end{document}